\numberwithin{equation}{section}
\newtheorem{thm}{Theorem}[section]
\newtheorem{lem}[thm]{Lemma}
\newtheorem{cor}[thm]{Corollary}
\newtheorem{pro}[thm]{Proposition}
\newtheorem{ex}[thm]{Example}
\newtheorem{rmk}[thm]{Remark}
\newtheorem{defi}[thm]{Definition}
\newcommand {\emptycomment}[1]{}
\newcommand{\la}{\mathcal{A}}
\newcommand{\gl}{\mathfrak {gl}}
\newcommand{\kl}{\mathfrak l}
\newcommand{\kr}{\mathfrak r}
\newcommand{\kt}{\mathfrak t}
\newcommand{\bz}{\mathbb Z}
\newcommand{\fl}{\mathbf l}
\newcommand{\fr}{\mathbf r}
\newcommand{\Img}{\mathrm{Im}}
\newcommand{\Hom}{\mathrm{Hom}}
\def\id{\mathop {\fam0 id}\nolimits}
\begin{document}

\title[Anti-Leibniz bialgebras and anti-Leibniz Yang-Baxter equation]
{Quasi-triangular, triangular, factorizable anti-Leibniz bialgebras
and anti-Leibniz Yang-Baxter equation}

\author{Bo Hou}
\address{School of Mathematics and Statistics, Henan University, Kaifeng 475004,
China}
\email{bohou1981@163.com}
%
\author{Zhanpeng Cui}
\address{School of Mathematics and Statistics, Henan University, Kaifeng 475004, China}
\email{czp15824833068@163.com}


\begin{abstract}
We introduce the notion of an anti-Leibniz bialgebra which is equivalent to a
Manin triple of anti-Leibniz algebras, is equivalent to a matched pair of
anti-Leibniz algebras. The study of some special anti-Leibniz bialgebras leads
to the introduction of the anti-Leibniz Yang-Baxter equation in an anti-Leibniz algebra.
A symmetric (or an invariant) solution of the anti-Leibniz Yang-Baxter equation
gives an anti-Leibniz bialgebra. The notion of a relative Rota-Baxter operator of an
anti-Leibniz algebra is introduced to construct symmetric solutions of the
anti-Leibniz Yang-Baxter equation. Moreover, we introduce the notions of factorizable
anti-Leibniz bialgebras and skew-symmetric Rota-Baxter anti-Leibniz algebras,
and show that a factorizable anti-Leibniz bialgebra leads to a factorization of the
underlying anti-Leibniz algebra. There is a one-to-one correspondence between
factorizable anti-Leibniz bialgebras and skew-quadratic Rota-Baxter anti-Leibniz algebras.
Finally, we constrict anti-Leibniz bialgebras form Leibniz bialgebras by the
tensor product and constrict infinite-dimensional anti-Leibniz bialgebras form
finite-dimensional anti-Leibniz bialgebras by the completed tensor product.
\end{abstract}

\keywords{anti-Leibniz algebra, anti-Leibniz bialgebra, Yang-Baxter equation,
relative Rota-Baxter operator, factorizable anti-Leibniz bialgebras, infinite-dimensional
anti-Leibniz bialgebras.}
\subjclass[2010]{16T10, 17A30, 16T25, 17B40.}

\maketitle

\tableofcontents 



\vspace{-4mm}

\section{Introduction}\label{sec:intr}

A bialgebra structure on a given algebraic structure is obtained as a coalgebra
structure together which gives the same algebraic structure on the dual space with
a set of compatibility conditions between the multiplications and comultiplications.
One of the most famous examples of bialgebras is the Lie bialgebra \cite{Dri} and
more importantly there have been a lot of bialgebra theories for other algebra
structures that essentially follow the approach of Lie bialgebras such as
antisymmetric infinitesimal bialgebras \cite{Agu,Bai1,BGN}, left-symmetric bialgebras
\cite{Bai}, Jordan bialgebras \cite{Zhe}, Poisson bialgebras \cite{NB},
transposed Poisson bialgebras \cite{LB}, Leibniz bialgebra \cite{TS}
and mock-Lie bialgebras \cite{BCHM}.
In this paper, we give a bialgebra theory for anti-Leibniz algebras.

Leibniz algebra is a non-commutative analogue of Lie algebra, which was first considered
in \cite{Lod} by the study of the periodicity in algebraic K-theory.
A mock-Lie algebra (also called Jacobi-Jordan algebra, Lie-Jordan algebra) is a
commutative algebra satisfying the Jacobi identity, which is a special Jordan algebra.
In recent years, mock-Lie algebra has attracted the attention of many scholars
\cite{BB,BBMM,Ben,BCHM,BF,HC1,Lar,Lar1,Zus}. Recently, the notion of anti-Leibniz algebras
as a noncommutative version of mock-Lie algebras was introduced in \cite{BCM}.
In this paper, we still take a similar approach as the study on Lie bialgebras.
The notion of anti-Leibniz bialgebra is thus introduced as an equivalent structure
of a Manin triple of anti-Leibniz algebras, which is interpreted in terms of
matched pairs of anti-Leibniz algebras. One reason for the usefulness of the Lie
bialgebra is that it has a coboundary theory, which leads to the construction of Lie
bialgebras from solutions of the classical Yang-Baxter equations. An important class
of quasi-triangular Lie bialgebras, factorizable Lie bialgebras are used to connect
classical $r$-matrices with certain factorization problems, and have various
applications in integrable systems \cite{RS,Sem}. Recently, the factorizable Lie
bialgebras, factorizable antisymmetric infinitesimal bialgebras and factorizable
Leibniz bialgebras have been studied in \cite{LS}, \cite{SW} and \cite{BLST}.
Here we also introduce the notion of quasi-triangular anti-Leibniz
bialgebra and the anti-Leibniz Yang-Baxter equation in an anti-Leibniz algebra.
We give the relative Rota-Baxter operators of anti-Leibniz
algebras and construct a solution of the Yang-Baxter equation in an anti-Leibniz algebra.
Moreover, the notion of factorizable anti-Leibniz bialgebra is introduced,
and a one-to-one correspondence between factorizable anti-Leibniz bialgebras
and skew-quadratic Rota-Baxter anti-Leibniz algebras of nonzero weight is given.
We know very little about specific examples of anti-Leibniz algebras and anti-Leibniz
bilgebras. Note that the tensor product of a Leibniz algebra and an anti-commutative
anti-associative algebra is an anti-Leibniz algebra, we take this result to the level
of the bialgebras, and provide some specific examples of anti-Leibniz bialgebras.
The bialgebra theory of anti-Leibniz algebras is based on finite-dimensional
anti-Leibniz algebras. For the infinite-dimensional anti-Leibniz bialgebras,
we provide a construction method using the completed tensor products.

The paper is organized as follows. In Section \ref{sec:bialg}, we recall some basic
definitions of anti-Leibniz algebras and bimodules over anti-Leibniz algebras.
We introduce the notions of matched pairs of anti-Leibniz algebras, Manin triples of
anti-Leibniz algebras and anti-Leibniz bialgebras. The equivalence between them
is interpreted. In Section \ref{sec:spec}, we consider some special classes of
anti-Leibniz bialgebras, such as triangular anti-Leibniz bialgebras and factorizable
anti-Leibniz bialgebras, and introduce the notion of the anti-Leibniz Yang-Baxter equation
in an anti-Leibniz algebra. The notion of relative Rota-Baxter operators of anti-Leibniz
algebras is introduced to provide symmetric solutions of the anti-Leibniz Yang-Baxter
equation in semi-direct product anti-Leibniz algebras and hence gives rise to
anti-Leibniz bialgebras. Moreover, we establish the factorizable theories for
anti-Leibniz bialgebras, and give a one-to-one correspondence between
factorizable anti-Leibniz bialgebras and skew-quadratic Rota-Baxter anti-Leibniz algebras.
At end of this section, we give some examples of anti-Leibniz bialgebras by
Leibniz bialgebras. In Section \ref{sec:infin}, we introduce the notions of completed
tensor product of $\bz$-graded vector spaces, completed anti-Leibniz coalgebras,
and propose a method for constructing infinite-dimensional anti-Leibniz bialgebras
using the completed tensor product of finite-dimensional anti-Leibniz bialgebras and
quadratic $\bz$-graded commutative associative algebra.

Throughout this paper, we fix $\Bbbk$ a field and characteristic zero. All the vector spaces
and algebras are of finite dimension over $\Bbbk$, and all tensor products are also
taking over $\Bbbk$.

\section{Anti-Leibniz algebras and anti-Leibniz bialgebras} \label{sec:bialg}

In this section, we recall the background on anti-Leibniz algebras and bimodules over
anti-Leibniz algebras. We introduce the notions of matched pairs of anti-Leibniz
algebras, Manin triples of anti-Leibniz algebras and anti-Leibniz bialgebras.
The equivalence between them is interpreted.

\begin{defi}[\cite{BCM}]\label{def:anti}
An {\rm anti-Leibniz algebra} over $\Bbbk$ is a $\Bbbk$-vector space $A$ with a
multiplication $\cdot: A\otimes A\rightarrow A$, such that
for any $a_{1}, a_{2}, a_{3}\in A$,
$$
a_{1}(a_{2}a_{3})+(a_{1}a_{2})a_{3}+a_{2}(a_{1}a_{3})=0,
$$
where $a_{1}a_{2}:=a_{1}\cdot a_{2}$ for simply.
\end{defi}

By the definition, it is easy to see that $(a_{1}a_{2})a_{3}=(a_{2}a_{1})a_{3}$
for any elements $a_{1}, a_{2}, a_{3}$ in an anti-Leibniz algebra $(A, \cdot)$.
Anti-Leibniz algebra is a noncommutative version of the mock-Lie algebras.
Recall that a mock-Lie algebra is a vector space $V$ together with a bilinear map
$[-,-]: V\times V\rightarrow V$, such that for any $v_{1}, v_{2}, v_{3}\in V$,
$$
[v_{1}, v_{2}]=[v_{2}, v_{1}],\qquad\qquad
[v_{1}, [v_{2}, v_{3}]]+[v_{2}, [v_{3}, v_{1}]]+[v_{3}, [v_{1}, v_{2}]]=0.
$$
Since $[v_{1}, v_{2}]=[v_{2}, v_{1}]$, the second equation can be rewrite as
$[v_{1}, [v_{2}, v_{3}]]+[[v_{1}, v_{2}], v_{3}]+[v_{2}, [v_{1}, v_{3}]]=0$
or $[v_{1}, [v_{2}, v_{3}]]+[[v_{1}, v_{2}], v_{3}]+[[v_{1}, v_{3}], v_{2}]=0$.
The anti-Leibniz algebra given in Definition \ref{def:anti} is also called a
{\it left anti-Leibniz algebra}. A {\it right anti-Leibniz algebra} $(A, \cdot)$
is given by $a_{1}(a_{2}a_{3})+(a_{1}a_{2})a_{3}+(a_{1}a_{3})a_{2}=0$, for any
$a_{1}, a_{2}, a_{3}\in A$. It is easy to see that $(A, \cdot)$ is a left anti-Leibniz
algebra if and only if its opposite algebra is right anti-Leibniz.
For the classification of anti-Leibniz algebras, we still know very little.
Here we give some simple examples.

\begin{ex}\label{ex:ant-alg}
Let vector space $A=\Bbbk\{e_{1}, e_{2}\}$. Then the non trivial anti-Leibniz
algebraic structure on $A$ is only isomorphic to the algebra given by
\begin{itemize}
\item[$\Lambda^{2}_{1}$]:\quad $e_{1}e_{1}=e_{2}$;
\item[$\Lambda^{2}_{2}$]:\quad $e_{1}e_{1}=-ae_{1}-\frac{a^{2}}{b}e_{2}$,
    $e_{1}e_{2}=e_{2}e_{1}=be_{1}+ae_{2}$, $e_{2}e_{2}=-\frac{b^{2}}{a}e_{1}-be_{2}$,
    with $0\neq a, b\in\Bbbk$.
\end{itemize}
That is to say, all $2$-dimensional anti-Leibniz algebras are commutative.

Let vector space $A=\Bbbk\{e_{1}, e_{2}, e_{3}\}$. Define $e_{1}e_{2}=e_{3}=e_{2}e_{2}$.
Then $A$ is a $3$-dimensional (noncommutative) anti-Leibniz algebra under this product.
\end{ex}

Let $(A, \cdot)$ and $(B, \cdot')$ be two anti-Leibniz algebras.
We call $(B, \cdot')$ is a subalgebra of $(A, \cdot)$ if $B$ is a subspace of $A$
and $\cdot'$ is the restriction of $\cdot$ on $B$. A linear map $f: A\rightarrow B$
is called a {\it homomorphism of anti-Leibniz algebras} if for any $a_{1}, a_{2}\in A$,
$f(a_{1}\cdot a_{2})=f(a_{1})\cdot'f(a_{2})$. A homomorphism $f$ is said to be an
isomorphism if $f$ is a bijection.
Now we introduce the definition of module over an anti-Leibniz algebra.

\begin{defi}\label{def:anti-mod}
Let $(A, \cdot)$ be an anti-Leibniz algebra, $M$ be a $\Bbbk$-vector space.
If there exists a pair of bilinear maps $\kl, \kr: A\rightarrow\gl(M)$, such that
for any $a_{1}, a_{2}\in A$ and $m\in M$,
\begin{align*}
&\kl(a_{1}a_{2})(m)+\kl(a_{1})(\kl(a_{2})(m))+\kl(a_{2})(\kl(a_{1})(m))=0,\\
&\kl(a_{1})(\kr(a_{2})(m))+\kr(a_{2})(\kl(a_{1})(m))+\kr(a_{1}a_{2})(m)=0,\\
&\kr(a_{1}a_{2})(m)+\kr(a_{2})(\kr(a_{1})(m))+\kl(a_{1})(\kr(a_{2})(m))=0,
\end{align*}
we call $(M, \kl, \kr)$ (or simply $M$) is a {\rm bimodule} over $(A, \cdot)$.
\end{defi}

Let $(A, \cdot)$ be an anti-Leibniz algebra, $(M, \kl, \kr)$
be a bimodule over $(A, \cdot)$. Then we get
$$
\kr(a_{1})(\kr(a_{2})(m))=\kr(a_{1})(\kl(a_{2})(m)),
$$
for any $a_{1}, a_{2}\in A$ and $m\in M$.
Now, let $(M, \kl, \kr)$ and $(N, \kl', \kr')$ be two bimodules over $(A, \cdot)$
and $f: M\rightarrow N$ be a linear map. Then $f$ is called a {\it morphism of bimodule}
if $f(\kl(a)(m))=\kl'(a)(f(m))$ and $f(\kr(a)(m))=\kr'(a)(f(m))$ for any $a\in A$ and
$m\in M$. $(M, \kl, \kr)$ and $(N, \kl', \kr')$ are called {\it isomorphic as bimodule}
if the morphism $f$ is an isomorphism. Moreover, if we denote $\fl_{A},\, \fr_{A}:
A\rightarrow\gl(A)$ by $\fl_{A}(a_{1})(a_{2})=a_{1}a_{2}$ and $\fr_{A}(a_{1})(a_{2})
=a_{2}a_{1}$ for any $a_{1}, a_{2}\in A$, then it is easy to see that $(A, \fl_{A},
\fr_{A})$ is a bimodule over $(A, \cdot)$, which is called the {\it regular bimodule}
over $(A, \cdot)$. By direct calculations, we can give an
equivalent condition as follows.

\begin{pro}\label{pro:rep}
Let $(A, \cdot)$ be an anti-Leibniz algebra, $M$ be a $\Bbbk$-vector space,
$\kl,\, \kr: A\rightarrow\gl(M)$ be two bilinear maps. Then $(M, \kl, \kr)$ is a
bimodule over $(A, \cdot)$ if and only if $A\oplus M$ is an anti-Leibniz algebra
under the following operation:
$$
(a_{1}, m_{1})(a_{2}, m_{2})=\big(a_{1}a_{2},\ \ \kl(a_{1})(m_{2})+\kr(a_{2})m_{1}\big),
$$
for all $a_{1}, a_{2}\in A$ and $m_{1}, m_{2}\in M$. This anti-Leibniz algebra
is called the {\rm semi-direct product} of $(A, \cdot)$ by bimodule $(M, \kl, \kr)$,
denoted by $A\ltimes M$.
\end{pro}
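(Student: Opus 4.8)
The plan is to verify the claimed equivalence by directly substituting elements of the form $(a,m)$ into the defining anti-Leibniz identity of Definition \ref{def:anti} for the candidate product on $A\oplus M$, and then to read off the three bimodule axioms of Definition \ref{def:anti-mod} coordinate by coordinate. Concretely, I would fix arbitrary $(a_1,m_1),(a_2,m_2),(a_3,m_3)\in A\oplus M$ and compute the three products
$$
(a_1,m_1)\big((a_2,m_2)(a_3,m_3)\big),\quad \big((a_1,m_1)(a_2,m_2)\big)(a_3,m_3),\quad (a_2,m_2)\big((a_1,m_1)(a_3,m_3)\big)
$$
using the given formula. Upon summing them, the $A$-component is exactly $a_1(a_2a_3)+(a_1a_2)a_3+a_2(a_1a_3)$, which vanishes precisely because $(A,\cdot)$ is itself anti-Leibniz; so the first coordinate imposes no new constraint and the whole content of the statement lies in the $M$-coordinate.

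The heart of the argument is therefore the $M$-component of this sum. After expanding, I would collect the resulting nine terms according to which of $m_1,m_2,m_3$ they act on. The terms acting on $m_3$ assemble into $\big(\kl(a_1)\kl(a_2)+\kl(a_1a_2)+\kl(a_2)\kl(a_1)\big)(m_3)$, which is precisely the left-hand side of the first bimodule axiom for the pair $a_1,a_2$. The terms acting on $m_2$ collect into $\big(\kl(a_1)\kr(a_3)+\kr(a_3)\kl(a_1)+\kr(a_1a_3)\big)(m_2)$, matching the second axiom for the pair $a_1,a_3$, and the terms acting on $m_1$ collect into $\big(\kr(a_2a_3)+\kr(a_3)\kr(a_2)+\kl(a_2)\kr(a_3)\big)(m_1)$, matching the third axiom for the pair $a_2,a_3$.

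With this grouping in hand, both implications follow at once. If $(M,\kl,\kr)$ is a bimodule, each grouped coefficient vanishes by the corresponding axiom, so the $M$-component is zero and $A\oplus M$ satisfies the anti-Leibniz identity. Conversely, since $m_1,m_2,m_3$ range independently over $M$, setting two of them to zero isolates each grouped expression separately; and as $a_1,a_2,a_3$ are arbitrary, the vanishing of these three expressions yields exactly the three bimodule axioms for all admissible arguments. I expect no genuine obstacle beyond careful bookkeeping: the one point requiring attention is to track the argument substitutions, namely the pairs $(a_1,a_2)$, $(a_1,a_3)$ and $(a_2,a_3)$, so that each grouped coefficient is correctly identified with the right axiom of Definition \ref{def:anti-mod}.
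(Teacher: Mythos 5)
Your proof is correct and is exactly the direct verification the paper has in mind (the paper only remarks that the proposition follows ``by direct calculations'' and omits the details). Your grouping of the $M$-component into the three coefficients acting on $m_3$, $m_2$, $m_1$ — matching the three bimodule axioms for the pairs $(a_1,a_2)$, $(a_1,a_3)$, $(a_2,a_3)$ respectively — is accurate, and the converse via setting two of the $m_i$ to zero is the standard and correct way to isolate each axiom.
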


Let $V$ be a vector space. Denote the standard pairing between the dual space
$V^{\ast}$ and $V$ by
\begin{align*}
\langle-,-\rangle:\quad V^{\ast}\otimes V\rightarrow \Bbbk, \qquad\quad
\langle \xi,\; v \rangle:=\xi(v),
\end{align*}
for any $\xi\in V^{\ast}$ and $v\in V$. Let $V$, $W$ be two vector spaces. For a linear
map $\varphi: V\rightarrow W$, the transpose map $\varphi^{\ast}: W^{\ast}\rightarrow
V^{\ast}$ is defined by
\begin{align*}
\langle \varphi^{\ast}(\xi),\; v \rangle:=\langle\xi,\; \varphi(v)\rangle,
\end{align*}
for any $v\in V$ and $\xi\in W^{\ast}$. Let $(A, \cdot)$ be an anti-Leibniz algebra
and $V$ be a vector space. For a linear map $\psi: A\rightarrow\gl(V)$, the linear map
$\psi^{\ast}: A\rightarrow\gl(V^{\ast})$ is defined by
\begin{align*}
\langle\psi^{\ast}(a)(\xi),\; v\rangle:=\langle\xi,\; \psi(a)(v)\rangle,
\end{align*}
for any $a\in A$, $v\in V$, $\xi\in V^{\ast}$. That is, $\psi^{\ast}(a)=\psi(a)^{\ast}$
for all $a\in A$. Then, for any bimodule $(M, \kl, \kr)$ over anti-Leibniz algebra
$(A, \cdot)$, it is easy to see that,  $(M^{\ast}, \kl^{\ast}, \kl^{\ast}-\kr^{\ast})$
is again a bimodule over $(A, \cdot)$. In particular, we get $(A^{\ast}, \fl^{\ast}_{A},
\fl^{\ast}_{A}-\fr^{\ast}_{A})$ is a bimodule over $(A, \cdot)$, which is called the
{\it coregular bimodule}.

We give the definition of matched pair of anti-Leibniz algebras.

\begin{defi}\label{def:mat-pair}
Let $(A, \cdot)$ and $(B, \cdot')$ be two anti-Leibniz algebras, $(B, \kl_{A}, \kr_{A})$
and $(A, \kl_{B}, \kr_{B})$ be bimodules over $(A, \cdot)$ and $(B, \cdot')$ respectively.
If for any $a, a_{1}, a_{2}\in A$ and $b, b_{1}, b_{2}\in B$,
\begin{align}
&\kr_{A}(a)(b_{1}b_{2})+b_{1}\kr_{A}(a)(b_{2})+b_{2}\kr_{A}(a)(b_{1})
+\kr_{A}(\kl_{B}(b_{2})(a))(b_{1})+\kr_{A}(\kl_{B}(b_{1})(a))(b_{2})=0,   \label{mat1}\\
&\kl_{A}(a)(b_{1}b_{2})+\kl_{A}(a)(b_{1})b_{2}+b_{1}\kl_{A}(a)(b_{2})
+\kl_{A}(\kr_{B}(b_{1})(a))(b_{2})+\kr_{A}(\kr_{B}(b_{2})(a))(b_{1})=0,   \label{mat2}\\
&\qquad \kl_{A}(a)(b_{1})b_{2}+\kl_{A}(\kr_{B}(b_{1})(a))(b_{2})
-\kr_{A}(a)(b_{1})b_{2}-\kl_{A}(\kl_{B}(b_{1})(a))(b_{2})=0,              \label{mat3}\\
&\kr_{B}(b)(a_{1}a_{2})+a_{1}\kr_{B}(b)(a_{2})+a_{2}\kr_{B}(b)(a_{1})
+\kr_{B}(\kl_{A}(a_{2})(b))(a_{1})+\kr_{B}(\kl_{A}(a_{1})(b))(a_{2})=0,   \label{mat4}\\
&\kl_{B}(b)(a_{1}a_{2})+\kl_{B}(b)(a_{1})a_{2}+a_{1}\kl_{B}(b)(a_{2})
+\kl_{B}(\kr_{A}(a_{1})(b))(a_{2})+\kr_{B}(\kr_{A}(a_{2})(b))(a_{1})=0,   \label{mat5}\\
&\qquad\kl_{B}(b)(a_{1})a_{2}+\kl_{B}(\kr_{A}(a_{1})(b))(a_{2})
-\kr_{B}(b)(a_{1})a_{2}-\kl_{B}(\kl_{A}(a_{1})(b))(a_{2})=0,              \label{mat6}
\end{align}
where $a_{1}a_{2}:=a_{1}\cdot a_{2}$ and $b_{1}b_{2}:=b_{1}\cdot'b_{2}$,
we called $(A, B, \kl_{A}, \kr_{A}, \kl_{B}, \kr_{B})$ is a {\rm matched pair of
anti-Leibniz algebras} $(A, \cdot)$ and $(B, \cdot')$.
\end{defi}

For the matched pair of anti-Leibniz algebras, we have the following
equivalent description.

\begin{pro}\label{pro:rep-dir}
Let $(A, \cdot)$ and $(B, \cdot')$ be two anti-Leibniz algebras, $\kl_{A},\, \kr_{A}:
A\rightarrow\gl(B)$ and $\kl_{B},\, \kr_{B}: B\rightarrow\gl(A)$ be four linear maps.
Then $(A, B, \kl_{A}, \kr_{A}, \kl_{B}, \kr_{B})$ is a matched pair of
anti-Leibniz algebras $(A, \cdot)$ and $(B, \cdot')$ if and only if $A\oplus B$
under the product
\begin{align}
(a_{1}, b_{1})\ast(a_{2}, b_{2})
=\big(a_{1}a_{2}+\kl_{B}(b_{1})(a_{2})+\kr_{B}(b_{2})(a_{1}),\ \
b_{1}b_{2}+\kl_{A}(a_{1})(b_{2})+\kr_{A}(a_{2})(b_{1})\big),     \label{product}
\end{align}
for all $a_{1}, a_{2}\in A$ and $b_{1}, b_{2}\in B$, is also an anti-Leibniz algebra.
This anti-Leibniz algebra is called a {\rm crossed product} of $(A, \cdot)$ and
$(B, \cdot')$, denoted by $A\bowtie B$.
\end{pro}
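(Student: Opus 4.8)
The plan is to prove Proposition \ref{pro:rep-dir} by direct verification: the product $\ast$ on $A\oplus B$ satisfies the anti-Leibniz identity if and only if the six matched-pair conditions \eqref{mat1}--\eqref{mat6} hold. Since the anti-Leibniz identity $x(yz)+(xy)z+y(xz)=0$ is trilinear, I would substitute generic elements $(a_1,b_1),(a_2,b_2),(a_3,b_3)\in A\oplus B$ into this identity using the formula \eqref{product}, expand everything, and then collect terms. The key organizing principle is that the resulting expression is a pair in $A\oplus B$; I would compare its $A$-component and its $B$-component separately against zero.

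First I would set up notation so the expansion stays manageable. Each application of $\ast$ produces four kinds of terms (the product in one factor, two $\kl$/$\kr$ actions, and one product again), so a triple product generates on the order of a few dozen summands per component. To avoid a bookkeeping disaster, I would exploit the symmetry between $A$ and $B$: the conditions \eqref{mat1}--\eqref{mat3} are exactly the $A$-side versions and \eqref{mat4}--\eqref{mat6} the mirror $B$-side versions (swapping the roles of $A,B$ and of $\kl,\kr$). Therefore I only need to carry out the collection of terms for, say, the $B$-component of the identity in full detail, and then the $A$-component follows by the evident symmetry of \eqref{product} under interchanging the two factors.

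The main structural step is sorting the expanded terms by \emph{type}, i.e.\ by which combination of bimodule actions and algebra products they involve (how many $a$'s versus $b$'s appear as ``outer'' arguments). Terms that are purely internal to $A$ or purely internal to $B$ cancel automatically because $(A,\cdot)$ and $(B,\cdot')$ are already anti-Leibniz algebras and because $(B,\kl_A,\kr_A)$, $(A,\kl_B,\kr_B)$ are bimodules (so the three bimodule axioms of Definition \ref{def:anti-mod} kill the homogeneous pieces). What remains are precisely the genuinely ``mixed'' terms, and regrouping them according to how many arguments come from $A$ and how many from $B$ produces six independent vanishing conditions, which I would identify one-by-one with \eqref{mat1}--\eqref{mat6}. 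The logical equivalence then follows because each mixed-type coefficient must vanish independently: the forward direction collects the identity into these pieces, and the backward direction uses the six equations to cancel them.

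The hard part will be the careful bookkeeping in the mixed terms, particularly keeping straight the order of arguments in expressions such as $\kr_A(a)(b_1b_2)$ versus $\kr_A(\kl_B(b_2)(a))(b_1)$, since the anti-Leibniz product is noncommutative and sign-sensitive; a single transposed argument or dropped term derails the whole matching. I would mitigate this by fixing, once and for all, which of the three slots of the anti-Leibniz identity each of $(a_i,b_i)$ occupies, and by verifying that the grouping exhausts all mixed terms with no leftovers. Because the computation is mechanical once organized, I would in the write-up indicate the grouping scheme and the correspondence of term-types to \eqref{mat1}--\eqref{mat6}, leaving the routine expansion to the reader.
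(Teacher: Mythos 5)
Your approach is the same as the paper's: the paper dismisses this proposition with ``It is straightforward,'' meaning exactly the direct expand-and-collect verification you describe, and your organizing idea (trilinearity of the anti-Leibniz identity plus separation of the $A$- and $B$-components, so that each homogeneous type of triple must vanish independently) is the right one and does work. Two refinements you would hit in executing it. First, in the ``$A\oplus B$ anti-Leibniz $\Rightarrow$ matched pair'' direction you cannot invoke the bimodule axioms to ``cancel automatically'': the proposition's hypothesis is only that $\kl_{A},\kr_{A},\kl_{B},\kr_{B}$ are linear maps, and being a matched pair (Definition \ref{def:mat-pair}) includes the bimodule conditions as part of the conclusion. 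In fact the same type decomposition delivers them for free: a triple with exactly one entry from $B$ contributes its $B$-component as one of the three bimodule axioms for $(B,\kl_{A},\kr_{A})$ and its $A$-component as one of \eqref{mat4}--\eqref{mat6}, and dually for triples with one entry from $A$; so no extra work is needed, only a correct attribution. Second, the correspondence with \eqref{mat1}--\eqref{mat6} is not literally one-to-one by type: for instance the $A$-components of the types $(A,B,A)$ and $(B,A,A)$ give \eqref{mat5} and \eqref{mat5} combined with \eqref{mat6}, so \eqref{mat3} and \eqref{mat6} are recovered as differences of two of the raw vanishing conditions rather than as separate types. Neither point derails the proof, but both should be stated to make the ``if'' direction honest.
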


\begin{proof}
It is straightforward.
\end{proof}

In particular, consider the matched pair of anti-Leibniz algebra structures on $A$
and $A^{\ast}$, we have the following corollary.

\begin{cor}\label{cor:rep-dir}
Let $(A, \cdot)$ and $(A^{\ast}, \cdot')$ be two anti-Leibniz algebras.
Then $(A, A^{\ast}, \fl_{A}^{\ast}, \fl_{A}^{\ast}-\fr_{A}^{\ast}, \fl_{A^{\ast}}^{\ast},
\fl_{A^{\ast}}^{\ast}-\fr_{A^{\ast}}^{\ast})$ is a matched pair of anti-Leibniz algebras
$(A, \cdot)$ and $(A^{\ast}, \cdot')$ if and only if for any $a_{1}, a_{2}\in A$
and $\xi\in A^{\ast}$,
\begin{align}
&(\fl_{A^{\ast}}^{\ast}-\fr_{A^{\ast}}^{\ast})(\xi)(a_{1}a_{2})
+a_{1}(\fl_{A^{\ast}}^{\ast}-\fr_{A^{\ast}}^{\ast})(\xi)(a_{2})
+a_{2}(\fl_{A^{\ast}}^{\ast}-\fr_{A^{\ast}}^{\ast})(\xi)(a_{1})     \label{matt1}\\[-1mm]
&\qquad\qquad+(\fl_{A^{\ast}}^{\ast}-\fr_{A^{\ast}}^{\ast})(\fl_{A}^{\ast}
(a_{2})(\xi))(a_{1})+(\fl_{A^{\ast}}^{\ast}-\fr_{A^{\ast}}^{\ast})
(\fl_{A}^{\ast}(a_{1})(\xi))(a_{2})=0,                             \nonumber\\
&\fl_{A^{\ast}}^{\ast}(\xi)(a_{1}a_{2})+\fl_{A^{\ast}}^{\ast}(\xi)(a_{1})a_{2}
+a_{1}\fl_{A^{\ast}}^{\ast}(\xi)(a_{2}) \label{matt2}\\[-1mm]
&\qquad\qquad+\fl_{A^{\ast}}^{\ast}((\fl_{A}^{\ast}-\fr_{A}^{\ast})(a_{1})(\xi))(a_{2})
+(\fl_{A^{\ast}}^{\ast}-\fr_{A^{\ast}}^{\ast})((\fl_{A}^{\ast}-\fr_{A}^{\ast})
(a_{2})(\xi))(a_{1})=0,                                           \nonumber\\
&\qquad\qquad\qquad\fr_{A^{\ast}}^{\ast}(\xi)(a_{1})a_{2}
-\fl_{A^{\ast}}^{\ast}(\fr_{A}^{\ast}(a_{1})(\xi))(a_{2})=0.      \label{matt3}
\end{align}
\end{cor}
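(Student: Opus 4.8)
The plan is to unwind Definition~\ref{def:mat-pair} for the specific data $(A, A^{\ast}, \fl_{A}^{\ast}, \fl_{A}^{\ast}-\fr_{A}^{\ast}, \fl_{A^{\ast}}^{\ast}, \fl_{A^{\ast}}^{\ast}-\fr_{A^{\ast}}^{\ast})$ and reduce the six compatibility axioms to the three listed identities. The bimodule requirements built into the definition are automatic here: $(A^{\ast}, \fl_{A}^{\ast}, \fl_{A}^{\ast}-\fr_{A}^{\ast})$ is the coregular bimodule over $(A, \cdot)$, and $(A, \fl_{A^{\ast}}^{\ast}, \fl_{A^{\ast}}^{\ast}-\fr_{A^{\ast}}^{\ast})$ is the coregular bimodule over $(A^{\ast}, \cdot')$ under the identification $(A^{\ast})^{\ast}=A$, both established above. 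Hence being a matched pair is equivalent to the six equations \eqref{mat1}--\eqref{mat6} with the substitutions $\kl_{A}=\fl_{A}^{\ast}$, $\kr_{A}=\fl_{A}^{\ast}-\fr_{A}^{\ast}$, $\kl_{B}=\fl_{A^{\ast}}^{\ast}$, $\kr_{B}=\fl_{A^{\ast}}^{\ast}-\fr_{A^{\ast}}^{\ast}$ and $b=\xi\in A^{\ast}$. My strategy is then to show that the three $A^{\ast}$-side axioms \eqref{mat4}--\eqref{mat6} become \eqref{matt1}--\eqref{matt3} verbatim, while the three $A$-side axioms \eqref{mat1}--\eqref{mat3} are equivalent to them by the duality between $A$ and $A^{\ast}$.

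First I would carry out the substitution in \eqref{mat4}--\eqref{mat6}. Axioms \eqref{mat4} and \eqref{mat5} reproduce \eqref{matt1} and \eqref{matt2} directly. For \eqref{mat6} a cancellation occurs: grouping the two terms carrying a trailing factor $a_{2}$ gives $\fl_{A^{\ast}}^{\ast}(\xi)(a_{1})a_{2}-(\fl_{A^{\ast}}^{\ast}-\fr_{A^{\ast}}^{\ast})(\xi)(a_{1})a_{2}=\fr_{A^{\ast}}^{\ast}(\xi)(a_{1})a_{2}$, while $\fl_{A^{\ast}}^{\ast}((\fl_{A}^{\ast}-\fr_{A}^{\ast})(a_{1})(\xi))(a_{2})-\fl_{A^{\ast}}^{\ast}(\fl_{A}^{\ast}(a_{1})(\xi))(a_{2})=-\fl_{A^{\ast}}^{\ast}(\fr_{A}^{\ast}(a_{1})(\xi))(a_{2})$, so that \eqref{mat6} collapses to exactly \eqref{matt3}.

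It then remains to prove that the $A$-side axioms \eqref{mat1}--\eqref{mat3}, which are identities in $A^{\ast}$, are equivalent to \eqref{mat4}--\eqref{mat6}. Since $A$ is finite-dimensional we have $(A^{\ast})^{\ast}=A$, and I would test \eqref{mat1}--\eqref{mat3} by pairing them with an arbitrary $a_{0}\in A$. Each left multiplication $b_{1}\cdot'(-)$ in $A^{\ast}$ is converted to $\fl_{A^{\ast}}^{\ast}(b_{1})$ acting on $a_{0}$ through $\langle b_{1}\cdot'\eta,\,a_{0}\rangle=\langle\eta,\,\fl_{A^{\ast}}^{\ast}(b_{1})(a_{0})\rangle$, and each transpose $\fl_{A}^{\ast},\fr_{A}^{\ast}$ is moved across the pairing by $\langle\psi^{\ast}(a)(\eta),\,v\rangle=\langle\eta,\,\psi(a)(v)\rangle$. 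After all operators have been transferred onto the $A$-argument, the scalar coming from \eqref{mat1} (resp. \eqref{mat2}, \eqref{mat3}) is precisely the pairing of \eqref{matt1} (resp. \eqref{matt2}, \eqref{matt3}), viewed as an element of $A$, against a further covector of $A^{\ast}$, under the matching that sends the two $A$-inputs $a,a_{0}$ to $a_{1},a_{2}$ and the two $A^{\ast}$-inputs $b_{1},b_{2}$ to $\xi$ and that covector. As these identities are required for all arguments simultaneously, \eqref{mat1}--\eqref{mat3} hold iff \eqref{mat4}--\eqref{mat6} do, and the six axioms collapse to \eqref{matt1}--\eqref{matt3}.

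I expect the main obstacle to be the bookkeeping in this last step: every term of \eqref{mat1}--\eqref{mat3} is a nested composite of $\fl_{A}^{\ast},\fr_{A}^{\ast},\fl_{A^{\ast}}^{\ast},\fr_{A^{\ast}}^{\ast}$ together with the two products $\cdot$ and $\cdot'$, and one must track how each transpose crosses the pairing and how the signs from every combination $\fl^{\ast}-\fr^{\ast}$ propagate. Once the pairing conventions are fixed the computation is routine, the symmetry of the matched-pair axioms under $A\leftrightarrow A^{\ast}$ guaranteeing that the $A$-side conditions are genuinely dual to the $A^{\ast}$-side ones.
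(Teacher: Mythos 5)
Your proposal follows essentially the same route as the paper: substituting $\kl_{A}=\fl_{A}^{\ast}$, $\kr_{A}=\fl_{A}^{\ast}-\fr_{A}^{\ast}$, $\kl_{B}=\fl_{A^{\ast}}^{\ast}$, $\kr_{B}=\fl_{A^{\ast}}^{\ast}-\fr_{A^{\ast}}^{\ast}$ turns \eqref{mat4}--\eqref{mat6} into \eqref{matt1}--\eqref{matt3} (with exactly the cancellation you describe for \eqref{mat6}), and \eqref{mat1}--\eqref{mat3} are then disposed of by pairing against elements of $A$ and moving the transposes across the pairing. One caveat: your claim that the dualization of \eqref{mat1} (resp.\ \eqref{mat2}) lands \emph{precisely} on the pairing of \eqref{matt1} (resp.\ \eqref{matt2}) is stronger than what actually comes out of the computation --- the paper finds that only \eqref{mat3}$\Leftrightarrow$\eqref{matt3} is unconditional, while the other two equivalences hold modulo terms that cancel using \eqref{mat3}; this does not damage the corollary, since \eqref{matt3} is available on both sides of the asserted equivalence, but your bookkeeping step should be phrased as a conditional equivalence rather than a term-by-term match.
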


\begin{proof}
Taking $\kl_{A}=\fl_{A}^{\ast}$, $\kr_{A}=\fl_{A}^{\ast}-\fr_{A}^{\ast}$,
$\kl_{B}=\fl_{A^{\ast}}^{\ast}$ and $\kr_{B}=\fl_{A^{\ast}}^{\ast}-\fr_{A^{\ast}}^{\ast}$
in Eqs. \eqref{mat4}-\eqref{mat6}, we get Eqs. \eqref{matt1}-\eqref{matt3} respectively.
Note that for any $a_{1}, a_{2}\in A$ and $\xi_{1}, \xi_{2}\in A^{\ast}$,
\begin{align*}
\langle(\fr_{A^{\ast}}^{\ast}(\xi_{1})(a_{1})a_{2},\; \xi_{2}\rangle
-\langle\fl_{A^{\ast}}^{\ast}(\fr_{A}^{\ast}(a_{1})(\xi_{1}))(a_{2}),\; \xi_{2}\rangle
=&\langle\fr_{A^{\ast}}^{\ast}(\xi_{1})(a_{1})a_{2},\; \xi_{2}\rangle
-\langle a_{2},\; \fr_{A}^{\ast}(a_{1})(\xi_{1})\xi_{2}\rangle\\
=&\langle a_{2},\; \fl_{A^{\ast}}^{\ast}(\fr_{A^{\ast}}^{\ast}(\xi_{1})(a_{1}))(\xi_{2})\rangle
-\langle a_{2},\; \fr_{A}^{\ast}(a_{1})(\xi_{1})\xi_{2}\rangle,
\end{align*}
we get Eq. \eqref{matt1} $\Leftrightarrow$ Eq. \eqref{mat3}. Similarly, if Eq. \eqref{mat3}
holds, we can deduce Eq. \eqref{matt2} $\Leftrightarrow$ Eq. \eqref{mat2} and
Eq. \eqref{matt1} $\Leftrightarrow$ Eq. \eqref{mat1}. Thus we obtain this corollary.
\end{proof}

Let $\mathfrak{B}(-,-)$ be a bilinear form on an anti-Leibniz algebra $(A, \cdot)$.
\begin{enumerate}\itemsep=0pt
\item[-] $\mathfrak{B}(-,-)$ is called {\it nondegenerate} if $\mathfrak{B}(a_{1},\;
        a_{2})=0$ for any $a_{2}\in A$, then $a_{1}=0$;
\item[-] $\mathfrak{B}(-,-)$ is called {\it skew-symmetric} if $\mathfrak{B}(a_{1},\; a_{2})
        =-\mathfrak{B}(a_{2},\; a_{1})$, for any $a_{1}, a_{2}\in A$;
\item[-] $\mathfrak{B}(-,-)$ is called {\it invariant} if $\mathfrak{B}(a_{1}a_{2},\;a_{3})
        =\mathfrak{B}(a_{1},\; a_{2}a_{3}-a_{3}a_{2})$, for any $a_{1}, a_{2}, a_{3}\in A$.
\end{enumerate}
Let $\mathfrak{B}(-,-)$ be a skew-symmetric invariant bilinear form on an anti-Leibniz
algebra $(A, \cdot)$. Then, we also have $\mathfrak{B}(a_{1}a_{2},\; a_{3})=
-\mathfrak{B}(a_{1}a_{3},\; a_{2})=\mathfrak{B}(a_{2},\; a_{1}a_{3})$,
for any $a_{1}, a_{2}, a_{3}\in A$.

\begin{lem}\label{lem:dual}
Let $(A, \cdot)$ be an anti-Leibniz algebra and $(A, \fl_{A}, \fr_{A})$ be the regular
bimodule over $(A, \cdot)$. Then $(A, \fl_{A}, \fr_{A})$ and $(A^{\ast}, \fl_{A}^{\ast},
\fl_{A}^{\ast}-\fr_{A}^{\ast})$ are isomorphic as bimodules over $(A, \cdot)$ if and only
if there exists a nondegenerate skew-symmetric invariant bilinear form $\mathfrak{B}(-,-)$
on $(A, \cdot)$.
\end{lem}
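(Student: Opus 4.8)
The plan is to realise a bimodule isomorphism as a nondegenerate bilinear form and then match the two intertwining conditions against skew-symmetry and invariance. Concretely, I would set up the dictionary sending a linear map $\phi\colon A\rightarrow A^{\ast}$ to the bilinear form $\mathfrak{B}(a_{1},a_{2}):=\langle\phi(a_{1}),\,a_{2}\rangle$, and conversely sending a bilinear form $\mathfrak{B}$ to the map $\phi_{\mathfrak{B}}(a_{1}):=\mathfrak{B}(a_{1},-)$. Since $A$ is finite-dimensional, $\phi$ is a linear isomorphism if and only if $\mathfrak{B}$ is nondegenerate, so it remains to show that $\phi$ is a morphism of bimodules from $(A,\fl_{A},\fr_{A})$ to $(A^{\ast},\fl_{A}^{\ast},\fl_{A}^{\ast}-\fr_{A}^{\ast})$ precisely when the associated $\mathfrak{B}$ is skew-symmetric and invariant.

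First I would translate the two defining conditions of a bimodule morphism into identities for $\mathfrak{B}$. Using $\fl_{A}(c)(a)=ca$ and $\fr_{A}(c)(a)=ac$ and unwinding the definitions of the transpose maps, the intertwining of the left actions $\phi(\fl_{A}(c)(a))=\fl_{A}^{\ast}(c)(\phi(a))$ becomes
\[
\mathfrak{B}(ca,\,b)=\mathfrak{B}(a,\,cb)\qquad\text{for all }a,b,c\in A ,
\]
while the intertwining of the right actions $\phi(\fr_{A}(c)(a))=(\fl_{A}^{\ast}-\fr_{A}^{\ast})(c)(\phi(a))$ becomes
\[
\mathfrak{B}(ac,\,b)=\mathfrak{B}(a,\,cb-bc)\qquad\text{for all }a,b,c\in A .
\]
I will call these the left and right relations.

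For the backward direction this is immediate. Given a nondegenerate skew-symmetric invariant $\mathfrak{B}$, nondegeneracy makes $\phi_{\mathfrak{B}}$ injective, hence bijective by finite dimension. The right relation is exactly the invariance condition $\mathfrak{B}(a_{1}a_{2},a_{3})=\mathfrak{B}(a_{1},a_{2}a_{3}-a_{3}a_{2})$ after relabelling, and the left relation is exactly the consequence $\mathfrak{B}(a_{1}a_{2},a_{3})=\mathfrak{B}(a_{2},a_{1}a_{3})$ recorded in the remark just before the lemma, which was derived from skew-symmetry together with invariance. Hence $\phi_{\mathfrak{B}}$ intertwines both actions and is a bimodule isomorphism.

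The forward direction is where the real work sits. Starting from a bimodule isomorphism $\phi$, nondegeneracy of $\mathfrak{B}$ is clear from injectivity, and the right relation yields invariance at once. The main obstacle is extracting skew-symmetry. My plan is to combine the two relations: rewriting the left relation as $\mathfrak{B}(a,bc)=\mathfrak{B}(ba,c)$ and substituting into the right relation gives $\mathfrak{B}(ac,b)=\mathfrak{B}(ca,b)-\mathfrak{B}(ba,c)$, an identity among forms evaluated on products; I would then cycle the arguments and feed in the anti-Leibniz identity $a_{1}(a_{2}a_{3})+(a_{1}a_{2})a_{3}+a_{2}(a_{1}a_{3})=0$ together with $(a_{1}a_{2})a_{3}=(a_{2}a_{1})a_{3}$ to control the symmetric part of $\mathfrak{B}$. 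Turning the intertwining relations into the antisymmetry $\mathfrak{B}(a_{1},a_{2})=-\mathfrak{B}(a_{2},a_{1})$ is the delicate step I expect to require the most care, and it is exactly here that the nondegeneracy hypothesis and the precise form of the anti-Leibniz axioms must be used.
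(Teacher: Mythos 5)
Your dictionary between linear maps $\phi\colon A\to A^{\ast}$ and bilinear forms, and your treatment of the direction ``nondegenerate skew-symmetric invariant form $\Rightarrow$ bimodule isomorphism'', coincide with the paper's argument: the right relation is exactly invariance, and the left relation is the derived identity $\mathfrak{B}(a_{1}a_{2},a_{3})=\mathfrak{B}(a_{2},a_{1}a_{3})$. That half is complete and correct.

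The forward direction, however, is a genuine gap, and not one that more care will close. The two intertwining relations only constrain $\mathfrak{B}$ when at least one argument lies in $A\cdot A$: writing $S(a,b)=\mathfrak{B}(a,b)+\mathfrak{B}(b,a)$ for the symmetric part, the left relation gives $S(ca,b)=S(a,cb)$ and the right relation gives similarly balanced identities, but nothing forces $S$ to vanish on elements outside the span of products, no matter how you cycle arguments through the anti-Leibniz identity. In fact the implication is false as stated: for the one-dimensional anti-Leibniz algebra with zero multiplication, all four actions $\fl_{A},\fr_{A},\fl_{A}^{\ast},\fl_{A}^{\ast}-\fr_{A}^{\ast}$ vanish, so every linear isomorphism $A\to A^{\ast}$ is a bimodule isomorphism, yet no nondegenerate skew-symmetric bilinear form exists on an odd-dimensional space in characteristic zero. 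So the converse needs an extra hypothesis or a reformulation rather than a cleverer computation. For comparison, the paper's own proof is no more complete here --- it dismisses the converse with ``in a similar way, we can get the conclusion'' --- and only the ``if'' direction is actually invoked later (e.g.\ in the proof of Proposition~\ref{pro:yberb}), so the part of the lemma you have genuinely proved is the part the paper genuinely uses.
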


\begin{proof}
Suppose that there exists a nondegenerate skew-symmetric invariant bilinear form
$\mathfrak{B}(-,-)$ on $(A, \cdot)$. Then there exists a linear isomorphism
$\varphi: A \rightarrow A^{\ast}$ defined by $\langle\varphi(a_{1}), a_{2}\rangle=
\mathfrak{B}(a_{1}, a_{2})$. Moreover, note that, for any $a_{1}, a_{2}, a_{3}\in A$,
\begin{align*}
&\langle\varphi(\fl_{A}(a_{1})(a_{2})),\; a_{3}\rangle
=\mathfrak{B}(a_{1}a_{2},\; a_{3})=\mathfrak{B}(a_{2},\; a_{1}a_{3})
=\langle\fl^{\ast}_{A}(a_{1})(\varphi(a_{2})),\; a_{3}\rangle,\\
&\langle\varphi(\fr_{A}(a_{1})(a_{2})),\; a_{3}\rangle
=\mathfrak{B}(a_{2}a_{1},\; a_{3})=\mathfrak{B}(a_{2},\; a_{1}a_{3}-a_{3}a_{1})
=\langle(\fl^{\ast}_{A}-\fr^{\ast}_{A})(a_{1})(\varphi(a_{2})),\; a_{3}\rangle,
\end{align*}
we get $\varphi$ is an isomorphism. Conversely, in a similar way, we can get the
conclusion.
\end{proof}

Now we give the definition of Manin triple of anti-Leibniz algebras.

\begin{defi}\label{def:manin}
Let $(A, \cdot)$ be an anti-Leibniz algebra. Suppose that there is an anti-Leibniz
algebra structure $(A^{\ast}, \cdot')$ on the dual space $A^{\ast}$ of $A$ and there is
an anti-Leibniz algebra structure $\ast$ on the direct sum $A\oplus A^{\ast}$ such that
$(A, \cdot)$ and $(A^{\ast}, \cdot')$ are subalgebras of $(A\oplus A^{\ast}, \ast)$. If the
natural nondegenerate skew-symmetric bilinear form on $A\oplus A^{\ast}$ given by
\begin{align}
\mathfrak{B}_{d}((a_{1}, \xi_{1}),\; (a_{2}, \xi_{2}))
:=\langle\xi_{1}, a_{2}\rangle-\langle\xi_{2}, a_{1}\rangle, \label{biform}
\end{align}
for any $a_{1}, a_{2}\in A$ and $\xi_{1}, \xi_{2}\in A^{\ast}$, is invariant, then
$(A\oplus A^{\ast}, A, A^{\ast})$ is called a {\rm (standard) Manin triple of
anti-Leibniz algebras} associated to the standard bilinear form $\mathfrak{B}_{d}$,
and it is denoted by $((A\oplus A^{\ast}, \mathfrak{B}_{d}), A, A^{\ast})$ for simply.
\end{defi}

For the matched pair of anti-Leibniz algebras and (standard) Manin triple of
anti-Leibniz algebras, we have the following proposition.

\begin{pro}\label{pro:manin-mat}
Let $(A, \cdot)$ be an anti-Leibniz algebra. Suppose that there is an
anti-Leibniz algebra structure $(A^{\ast}, \cdot')$ on $A^{\ast}$. Then there
exists an anti-Leibniz algebra structure on the vector space $A\oplus A^{\ast}$
such that $(A\oplus A^{\ast}, A, A^{\ast})$ is a (standard) Manin triple of
anti-Leibniz algebras with respect to $\mathfrak{B}_{d}(-,-)$ defined by \eqref{biform}
if and only if $(A, A^{\ast}, \fl_{A}^{\ast}, \fl_{A}^{\ast}-\fr_{A}^{\ast},
\fl_{A^{\ast}}^{\ast}, \fl_{A^{\ast}}^{\ast}-\fr_{A^{\ast}}^{\ast})$ is a matched
pair of anti-Leibniz algebras.
\end{pro}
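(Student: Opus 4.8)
The plan is to reduce the statement to Proposition \ref{pro:rep-dir} by showing that the Manin-triple data on $A\oplus A^{\ast}$ is forced to coincide with the crossed product $A\bowtie A^{\ast}$ equipped with the coregular bimodule actions. I would start from the requirement that $(A,\cdot)$ and $(A^{\ast},\cdot')$ be subalgebras of $(A\oplus A^{\ast},\ast)$. Expanding a general bilinear product $\ast$ by bilinearity into its components in the two summands, the subalgebra conditions say exactly that the product of two elements of $A$ has vanishing $A^{\ast}$-component and restricts to $\cdot$, and symmetrically for $A^{\ast}$. Consequently there are linear maps $\kl_{A},\kr_{A}\colon A\to\gl(A^{\ast})$ and $\kl_{A^{\ast}},\kr_{A^{\ast}}\colon A^{\ast}\to\gl(A)$ for which $\ast$ has precisely the crossed-product shape \eqref{product} for the pair $(A,A^{\ast})$; no "mixing" term can survive. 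At this level, $\ast$ is an anti-Leibniz multiplication if and only if $(A,A^{\ast},\kl_{A},\kr_{A},\kl_{A^{\ast}},\kr_{A^{\ast}})$ is a matched pair, by Proposition \ref{pro:rep-dir}.

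Next I would show that invariance of $\mathfrak{B}_{d}$ forces these four actions to be the coregular ones. I would test the invariance identity $\mathfrak{B}_{d}(x_{1}\ast x_{2},\,x_{3})=\mathfrak{B}_{d}(x_{1},\,x_{2}\ast x_{3}-x_{3}\ast x_{2})$ on triples $x_{1},x_{2},x_{3}$ each taken from $A$ or from $A^{\ast}$, decomposing it into its eight trilinear cases. The two pure cases are trivial, since $\mathfrak{B}_{d}$ vanishes on $A\times A$ and on $A^{\ast}\times A^{\ast}$. Four of the remaining mixed cases together pin down all four actions: using the explicit form \eqref{biform}, the identities $a_{1}a_{2}=\fl_{A}(a_{1})(a_{2})=\fr_{A}(a_{2})(a_{1})$, and the definition of the transpose, these cases yield $\kr_{A}=\fl_{A}^{\ast}-\fr_{A}^{\ast}$, $\kl_{A}=\fl_{A}^{\ast}$, $\kr_{A^{\ast}}=\fl_{A^{\ast}}^{\ast}-\fr_{A^{\ast}}^{\ast}$, and $\kl_{A^{\ast}}=\fl_{A^{\ast}}^{\ast}$. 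The last two mixed cases then hold automatically once these values are substituted, so they impose no further constraint.

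Combining the two steps gives the equivalence: a Manin-triple structure exists precisely when $\ast$ is the crossed product \eqref{product} with the coregular actions $\fl_{A}^{\ast},\,\fl_{A}^{\ast}-\fr_{A}^{\ast},\,\fl_{A^{\ast}}^{\ast},\,\fl_{A^{\ast}}^{\ast}-\fr_{A^{\ast}}^{\ast}$, and by Proposition \ref{pro:rep-dir} this crossed product is anti-Leibniz exactly when the stated sextuple is a matched pair. For the converse direction I would reverse the computation: starting from the matched pair, Proposition \ref{pro:rep-dir} produces the anti-Leibniz crossed product, the subalgebra property is immediate from \eqref{product}, and the same case analysis read backwards shows that $\mathfrak{B}_{d}$ is invariant, so $(A\oplus A^{\ast},A,A^{\ast})$ is a Manin triple.

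I expect the main obstacle to be purely bookkeeping in the invariance computation: tracking the sign coming from the two terms in \eqref{biform} together with the skew combination $x_{2}\ast x_{3}-x_{3}\ast x_{2}$, and applying the transpose correctly each time a multiplication is moved across the pairing $\langle-,-\rangle$. Getting these signs right is exactly what makes the four determining cases collapse to the coregular actions rather than to some twisted variant, and it is the only place where genuine (if routine) care is required.
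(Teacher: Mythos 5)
Your proposal is correct and follows essentially the same route as the paper: reduce to Proposition \ref{pro:rep-dir} via the crossed-product form of the multiplication, and use invariance of $\mathfrak{B}_{d}$ to identify the four actions with the coregular ones. Your case-by-case verification that invariance forces $\kl_{A}=\fl_{A}^{\ast}$, $\kr_{A}=\fl_{A}^{\ast}-\fr_{A}^{\ast}$, etc.\ is in fact a more explicit justification of the converse direction than the paper's rather terse appeal to the isomorphism induced by $\mathfrak{B}_{d}$.
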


\begin{proof}
If $(A, A^{\ast}, \fl_{A}^{\ast}, \fl_{A}^{\ast}-\fr_{A}^{\ast}, \fl_{A^{\ast}}^{\ast},
\fl_{A^{\ast}}^{\ast}-\fr_{A^{\ast}}^{\ast})$ is a matched pair of anti-Leibniz algebras,
we have an anti-Leibniz algebra $(A\oplus A^{\ast}, \ast)$, where the product $\ast$
is given by \eqref{product}. It is easy to see that $(A, \cdot)$ and $(A^{\ast}, \cdot')$
are subalgebras of $(A\oplus A^{\ast}, \ast)$. We only need to show that
$\mathfrak{B}_{d}(-,-)$ is invariant. Indeed, for any $a_{1}, a_{2}, a_{3}\in A$ and
$\xi_{1}, \xi_{2}, \xi_{3}\in A^{\ast}$,
\begin{align*}
&\;\mathfrak{B}_{d}((a_{1}, \xi_{1})\ast(a_{2}, \xi_{2}),\; (a_{3}, \xi_{3}))\\
=&\;\langle\xi_{1}\xi_{2},\; a_{3}\rangle
+\langle\fl_{A}^{\ast}(a_{1})(\xi_{2}),\; a_{3}\rangle
+\langle\fl_{A}^{\ast}(a_{2})(\xi_{1})
-\fr_{A}^{\ast}(a_{2})(\xi_{1}),\; a_{3}\rangle\\[-1mm]
&\qquad-\langle\xi_{3},\; a_{1}a_{2}\rangle
-\langle\xi_{3},\; \fl_{A^{\ast}}^{\ast}(\xi_{1})(a_{2})\rangle
-\langle\xi_{3},\; \fl_{A^{\ast}}^{\ast}(\xi_{2})(a_{1})
+\fr_{A^{\ast}}^{\ast}(\xi_{2})(a_{1})\rangle\\
=&\;\langle\xi_{1},\; a_{2}a_{3}\rangle-\langle\xi_{1}\xi_{3},\; a_{2}\rangle
-\langle\xi_{2}\xi_{3},\; a_{1}\rangle+\langle\xi_{2},\; a_{1}a_{3}\rangle\\[-1mm]
&\qquad-\langle\xi_{1},\; a_{3}a_{2}\rangle+\langle\xi_{1}\xi_{2},\; a_{3}\rangle
+\langle\xi_{3}\xi_{2},\; a_{1}\rangle-\langle\xi_{3},\; a_{1}a_{2}\rangle\\
=&\;\mathfrak{B}_{d}((a_{1}, \xi_{1}),\; (a_{2}, \xi_{2})\ast(a_{3}, \xi_{3})-
(a_{3}, \xi_{3})\ast(a_{2}, \xi_{2})).
\end{align*}
Thus, $((A\oplus A^{\ast}, \mathfrak{B}_{d}), A, A^{\ast})$ is a (standard)
Manin triple of anti-Leibniz algebras.

Conversely, if $((A\oplus A^{\ast}, \mathfrak{B}_{d}), A, A^{\ast})$ is a (standard)
Manin triple of anti-Leibniz algebras, then $\mathfrak{B}_{d}(-,-)$ induces an
isomorphism from $A\oplus A^{\ast}$ to $A\oplus A^{\ast}$. That is, the anti-Leibniz
algebra structure on $A\oplus A^{\ast}$ just the crossed product $A\bowtie A^{\ast}$.
Thus $(A, A^{\ast}, \fl_{A}^{\ast}, \fl_{A}^{\ast}-\fr_{A}^{\ast}, \fl_{A^{\ast}}^{\ast},
\fl_{A^{\ast}}^{\ast}-\fr_{A^{\ast}}^{\ast})$ is a matched pair of anti-Leibniz algebras.
\end{proof}

Finally, we consider anti-Leibniz bialgebras. Let $A$ be a vector space and
$\Delta: A\rightarrow A\otimes A$ be a linear map. Then $(A, \Delta)$ is called
an {\it anti-Leibniz coalgebra} if
$$
(\Delta\otimes\id)\Delta+(\id\otimes\Delta)\Delta
+(\tau\otimes\id)(\id\otimes\Delta)\Delta=0,
$$
where the twisting map $\tau: A\otimes A\rightarrow A\otimes A$ is
given by $\tau(a_{1}\otimes a_{2})=a_{2}\otimes a_{1}$ for all $a_{1}, a_{2}\in A$.
Clearly, the notion of anti-Leibniz coalgebras is the dualization of the notion of
anti-Leibniz algebras, that is, $(A, \Delta)$ is an anti-Leibniz coalgebra
if and only if $(A^{\ast}, \Delta^{\ast})$ is an anti-Leibniz algebra.

\begin{defi}\label{def:bialg}
An {\rm anti-Leibniz bialgebra} is a triple $(A, \cdot, \Delta)$ containing
an anti-Leibniz algebra $(A, \cdot)$ and an anti-Leibniz coalgebra $(A, \Delta)$
such that for any $a_{1}, a_{2}\in A$,
\begin{align}
&\Delta(a_{1}a_{2})+\Big(\big(\id\otimes\,\fr_{A}(a_{2})-\fr_{A}(a_{2})\otimes\id
+\fl_{A}(a_{2})\otimes\id\big)(\id\otimes\id-\tau)\Big)\Delta(a_{1})   \label{bialg1}\\[-2mm]
&\qquad\qquad\qquad\qquad\qquad\qquad
+\big(\id\otimes\,\fl_{A}(a_{1})+\fl_{A}(a_{1})\otimes\id\big)\Delta(a_{2})=0,  \nonumber\\
&\qquad\quad\big(\fr_{A}(a_{1})\otimes\id\big)\Delta(a_{2})
-\tau\Big(\big(\fr_{A}(a_{2})\otimes\id\big)\Delta(a_{1})\Big)=0.           \label{bialg2}
\end{align}
\end{defi}

\begin{ex}\label{ex:bialg}
Consider the anti-Leibniz algebra $\Lambda^{2}_{1}=(\Bbbk\{e_{1}, e_{2}\}, \cdot)$ given
in Example \ref{ex:ant-alg}, define linear map $\Delta: \Lambda^{2}_{1}\rightarrow
\Lambda^{2}_{1}\otimes\Lambda^{2}_{1}$ by $\Delta(e_{1})=ke_{2}\otimes e_{2}$,
$k\in\Bbbk$, and $\Delta(e_{2})=0$, then we get a $2$-dimensional anti-Leibniz bialgebra.
\end{ex}

Let $(A, \cdot, \Delta)$ and $(B, \cdot', \Delta')$ be two anti-Leibniz bialgebras.
A linear map $\psi: A\rightarrow B$ is a {\it homomorphism of anti-Leibniz
bialgebras} if $\psi$ satisfies, for any $a_{1}, a_{2}\in A$, $\psi(a_{1}\cdot a_{2})
=\psi(a_{1})\cdot'\psi(a_{2})$ and $(\psi\otimes\psi)\Delta=\Delta'\psi$.

\begin{pro}\label{pro:bi-mat}
Let $(A, \cdot)$ be an anti-Leibniz algebra. Suppose that there is a linear map
$\Delta: A\rightarrow A\otimes A$ such that $(A, \Delta)$ is an anti-Leibniz coalgebra.
Then the triple $(A, \cdot, \Delta)$ is an anti-Leibniz bialgebra if and only if
$(A, A^{\ast}, \fl_{A}^{\ast}, \fl_{A}^{\ast}-\fr_{A}^{\ast}, \fl_{A^{\ast}}^{\ast},
\fl_{A^{\ast}}^{\ast}-\fr_{A^{\ast}}^{\ast})$ is a matched pair of anti-Leibniz algebras, where
$(A^{\ast}, \Delta^{\ast})$ is the dual algebra of anti-Leibniz coalgebra $(A, \Delta)$.
\end{pro}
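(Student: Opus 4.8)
The plan is to deduce the proposition from Corollary \ref{cor:rep-dir}, which already expresses the matched pair condition for $(A, A^{\ast}, \fl_{A}^{\ast}, \fl_{A}^{\ast}-\fr_{A}^{\ast}, \fl_{A^{\ast}}^{\ast}, \fl_{A^{\ast}}^{\ast}-\fr_{A^{\ast}}^{\ast})$ as the three identities \eqref{matt1}--\eqref{matt3}. Since the text already records that $(A, \Delta)$ is an anti-Leibniz coalgebra if and only if $(A^{\ast}, \Delta^{\ast})$ is an anti-Leibniz algebra, the two anti-Leibniz structures required even to speak of a matched pair are in place, and $(A^{\ast}, \fl_{A}^{\ast}, \fl_{A}^{\ast}-\fr_{A}^{\ast})$, $(A, \fl_{A^{\ast}}^{\ast}, \fl_{A^{\ast}}^{\ast}-\fr_{A^{\ast}}^{\ast})$ are automatically the coregular bimodules. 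Hence it suffices to prove that the pair of bialgebra axioms \eqref{bialg1}--\eqref{bialg2} is equivalent to the triple \eqref{matt1}--\eqref{matt3}.

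First I would fix the dictionary relating $\Delta$ to the dual operations. Writing $\langle\fl_{A^{\ast}}(\xi)(\eta),\; a\rangle=\langle\xi\otimes\eta,\; \Delta(a)\rangle$ and $\fr_{A^{\ast}}(\xi)(\eta)=\fl_{A^{\ast}}(\eta)(\xi)$, the operators $\fl_{A^{\ast}}^{\ast}(\xi),\, \fr_{A^{\ast}}^{\ast}(\xi)\in\gl(A)$ are characterized by pairing $\Delta$ against the two dual arguments, while $\fl_{A}^{\ast}$, $\fr_{A}^{\ast}$ are the transposes of the regular left/right multiplications. The core of the argument is then to pair each of \eqref{bialg1} and \eqref{bialg2}, which are identities in $A\otimes A$, against an arbitrary $\xi_{1}\otimes\xi_{2}\in A^{\ast}\otimes A^{\ast}$, and to push every occurrence of $\fl_{A}(a_{i})$, $\fr_{A}(a_{i})$, $\id$ and $\tau$ acting on $\Delta(a_{j})$ over to the dual side via the defining relation $\langle\psi^{\ast}(a)(\xi),\; v\rangle=\langle\xi,\; \psi(a)(v)\rangle$; under this transfer $\tau$ merely interchanges the two dual slots.

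Carrying this out, I expect the shorter identity \eqref{bialg2} to dualize directly to \eqref{matt3}: pairing $(\fr_{A}(a_{1})\otimes\id)\Delta(a_{2})$ against $\xi_{1}\otimes\xi_{2}$ yields the term $\fl_{A^{\ast}}^{\ast}(\fr_{A}^{\ast}(a_{1})(\xi_{1}))(a_{2})$, and the twisted term $\tau\big((\fr_{A}(a_{2})\otimes\id)\Delta(a_{1})\big)$ yields $\fr_{A^{\ast}}^{\ast}(\xi_{1})(a_{1})\cdot a_{2}$, which are precisely the two summands of \eqref{matt3}. The longer identity \eqref{bialg1} I expect to reproduce the remaining identities \eqref{matt1} and \eqref{matt2}: the prefactor $\id\otimes\id-\tau$ together with the combination $\id\otimes\,\fr_{A}(a_{2})-\fr_{A}(a_{2})\otimes\id+\fl_{A}(a_{2})\otimes\id$ is exactly what generates the $\fl_{A^{\ast}}^{\ast}-\fr_{A^{\ast}}^{\ast}$ terms occurring in \eqref{matt1}--\eqref{matt2}, while $\Delta(a_{1}a_{2})$ and $\big(\id\otimes\,\fl_{A}(a_{1})+\fl_{A}(a_{1})\otimes\id\big)\Delta(a_{2})$ produce the remaining summands.

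The main obstacle will be the bookkeeping in this last step. A single tensor identity \eqref{bialg1} has to account for both scalar-type identities \eqref{matt1} and \eqref{matt2}, so one must verify that no genuinely new, third condition is forced; this is where the standing anti-Leibniz coalgebra axiom for $\Delta$ (equivalently the algebra axiom for $(A^{\ast}, \cdot')$) has to be invoked to collapse the terms, mirroring the way the six conditions \eqref{mat1}--\eqref{mat6} were reduced to the three conditions \eqref{matt1}--\eqref{matt3} in the proof of Corollary \ref{cor:rep-dir}. Throughout, care is needed with the non-commutativity of the anti-Leibniz product, so that $\fl_{A}$ and $\fr_{A}$ are never silently interchanged, and with the exact placement of $\tau$; once these are controlled, the equivalence reduces to a direct, if lengthy, matching of terms.
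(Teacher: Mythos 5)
Your proposal follows essentially the same route as the paper's proof: reduce the matched-pair condition to \eqref{matt1}--\eqref{matt3} via Corollary \ref{cor:rep-dir}, then dualize \eqref{bialg2} to \eqref{matt3} and \eqref{bialg1} to \eqref{matt1}--\eqref{matt2} by pairing against $\xi_{1}\otimes\xi_{2}$. One small correction: what collapses \eqref{matt1} and \eqref{matt2} into the single tensor identity \eqref{bialg1} is not the anti-Leibniz coalgebra axiom but the already-established condition \eqref{bialg2} (equivalently \eqref{matt3}), which lets one trade $(\fr_{A}(a_{1})\otimes\id)\Delta(a_{2})$ for $\tau\big((\fr_{A}(a_{2})\otimes\id)\Delta(a_{1})\big)$ in the dualized expression.
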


\begin{proof}
For any $a_{1}, a_{2}\in A$ and $\xi_{1}, \xi_{2}\in A^{\ast}$, since
\begin{align*}
&\;\langle\fr_{A^{\ast}}^{\ast}(\xi_{2})(a_{1})a_{2},\; \xi_{1}\rangle
-\langle\fl_{A^{\ast}}^{\ast}(\fr_{A}^{\ast}(a_{1})(\xi_{2}))(a_{2}),\; \xi_{1}\rangle\\
=&\;\langle a_{1},\; \fr_{A}^{\ast}(a_{2})(\xi_{1})\xi_{2}\rangle
-\langle a_{2},\; \fr_{A}^{\ast}(a_{1})(\xi_{2})\xi_{1}\rangle\\
=&\;\langle(\fr_{A}(a_{2})\otimes\id)\Delta(a_{1}),\; \xi_{1}\otimes\xi_{2}\rangle
-\langle((\fr_{A}(a_{1})\otimes\id)\Delta(a_{2}),\; \xi_{2}\otimes\xi_{1}\rangle,
\end{align*}
we get Eq. \eqref{matt3} $\Leftrightarrow$ Eq. \eqref{bialg2}. Moreover, note that

\begin{align*}
&\;\langle\fl_{A^{\ast}}^{\ast}(\xi_{2})(a_{1}a_{2}),\; \xi_{1}\rangle
+\langle\fl_{A^{\ast}}^{\ast}(\xi_{2})(a_{1})a_{2},\; \xi_{1}\rangle
+\langle a_{1}\fl_{A^{\ast}}^{\ast}(\xi_{2})(a_{2}),\; \xi_{1}\rangle\\[-1mm]
&\quad+\langle\fl_{A^{\ast}}^{\ast}(\fl_{A}^{\ast}(a_{1})(\xi_{2}))(a_{2}),\; \xi_{1}\rangle
-\langle\fl_{A^{\ast}}^{\ast}(\fr_{A}^{\ast}(a_{1})(\xi_{2}))(a_{2}),\; \xi_{1}\rangle
+\langle\fl_{A^{\ast}}^{\ast}(\fl_{A}^{\ast}(a_{2})(\xi_{2}))(a_{1}),\; \xi_{1}\rangle\\[-1mm]
&\quad-\langle\fr_{A^{\ast}}^{\ast}(\fl_{A}^{\ast}(a_{2})(\xi_{2}))(a_{1}),\; \xi_{1}\rangle
-\langle\fl_{A^{\ast}}^{\ast}(\fr_{A}^{\ast}(a_{2})(\xi_{2}))(a_{1}),\; \xi_{1}\rangle
+\langle\fr_{A^{\ast}}^{\ast}(\fr_{A}^{\ast}(a_{2})(\xi_{2}))(a_{1}),\; \xi_{1}\rangle
\qquad\qquad\quad
\end{align*}
\begin{align*}
=&\;\langle\Delta(a_{1}a_{2}),\; \xi_{2}\otimes\xi_{1}\rangle
+\langle(\id\otimes\,\fr_{A}(a_{2}))\Delta(a_{1}),\; \xi_{2}\otimes\xi_{1}\rangle
+\langle(\id\otimes\,\fl_{A}(a_{1}))\Delta(a_{2}),\; \xi_{2}\otimes\xi_{1}\rangle\\[-1mm]
&\quad+\langle(\fl_{A}(a_{1})\otimes\id)\Delta(a_{2}),\; \xi_{2}\otimes\xi_{1}\rangle
-\langle(\fr_{A}(a_{1})\otimes\id)\Delta(a_{2}),\; \xi_{2}\otimes\xi_{1}\rangle
+\langle(\fl_{A}(a_{2})\otimes\id)\Delta(a_{1}),\; \xi_{2}\otimes\xi_{1}\rangle\\[-1mm]
&\quad-\langle(\id\otimes\,\fl_{A}(a_{2}))\Delta(a_{1}),\; \xi_{1}\otimes\xi_{2}\rangle
-\langle(\fr_{A}(a_{2})\otimes\id)\Delta(a_{1}),\; \xi_{2}\otimes\xi_{1}\rangle
+\langle(\id\otimes\,\fr_{A}(a_{2}))\Delta(a_{1}),\;\xi_{1}\otimes\xi_{2}\rangle,
\end{align*}
we get Eq. \eqref{matt2} holds if and only if
\begin{align*}
0=&\;\Delta(a_{1}a_{2})+(\id\otimes\,\fr_{A}(a_{2}))\Delta(a_{1})
+(\id\otimes\,\fl_{A}(a_{1}))\Delta(a_{2})+(\fl_{A}(a_{1})\otimes\id)\Delta(a_{2})
-(\fr_{A}(a_{1})\otimes\id)\Delta(a_{2})\\[-1mm]
&\quad+(\fl_{A}(a_{2})\otimes\id)\Delta(a_{1})-\tau(\id\otimes\,\fl_{A}(a_{2}))\Delta(a_{1})
-(\fr_{A}(a_{2})\otimes\id)\Delta(a_{1})+\tau(\id\otimes\,\fr_{A}(a_{2}))\Delta(a_{1})\\
=&\;\Delta(a_{1}a_{2})+(\id\otimes\,\fr_{A}(a_{2}))\Delta(a_{1})
+(\id\otimes\,\fl_{A}(a_{1}))\Delta(a_{2})+(\fl_{A}(a_{1})\otimes\id)\Delta(a_{2})
-\tau(\fr_{A}(a_{2})\otimes\id)\Delta(a_{1})\\[-1mm]
&\quad+(\fl_{A}(a_{2})\otimes\id)\Delta(a_{1})-\tau(\id\otimes\,\fl_{A}(a_{2}))\Delta(a_{1})
-(\fr_{A}(a_{2})\otimes\id)\Delta(a_{1})+\tau(\id\otimes\,\fr_{A}(a_{2}))\Delta(a_{1})\\
=&\;\Delta(a_{1}a_{2})+\Big(\big(\id\otimes\,\fr_{A}(a_{2})-\fr_{A}(a_{2})\otimes\id
+\fl_{A}(a_{2})\otimes\id\big)(\id\otimes\id-\tau)\Big)\Delta(a_{1}) \\[-1mm]
&\quad+\big(\id\otimes\,\fl_{A}(a_{1})+\fl_{A}(a_{1})\otimes\id\big)\Delta(a_{2}).
\end{align*}
That is, Eq. \eqref{matt2} $\Leftrightarrow$ Eq. \eqref{bialg1} if Eq. \eqref{matt2} holds.
Similarly, we also have Eq. \eqref{matt1} $\Leftrightarrow$ Eq. \eqref{bialg1} if Eq.
\eqref{matt2} holds. Thus, $(A, \cdot, \Delta)$ is an anti-Leibniz
bialgebra if and only if $(A, A^{\ast}, \fl_{A}^{\ast}, \fl_{A}^{\ast}-\fr_{A}^{\ast},
\fl_{A^{\ast}}^{\ast}, \fl_{A^{\ast}}^{\ast}-\fr_{A^{\ast}}^{\ast})$ is a matched
pair of anti-Leibniz algebras.
\end{proof}

Combining Propositions \ref{pro:manin-mat} and \ref{pro:bi-mat},
we have the following conclusion.

\begin{thm}\label{thm:equ}
Let $(A, \cdot)$ be an anti-Leibniz algebra. Suppose that there is a linear map
$\Delta: A\rightarrow A\otimes A$ such that $(A, \Delta)$ is an anti-Leibniz coalgebra.
Then the following conditions are equivalent:
\begin{enumerate}\itemsep=0pt
\item[$(i)$] $(A, A^{\ast}, \fl_{A}^{\ast}, \fl_{A}^{\ast}-\fr_{A}^{\ast},
     \fl_{A^{\ast}}^{\ast}, \fl_{A^{\ast}}^{\ast}-\fr_{A^{\ast}}^{\ast})$ is a matched
      pair of anti-Leibniz algebras;
\item[$(ii)$] $((A\oplus A^{\ast}, \mathfrak{B}_{d}), A, A^{\ast})$ is a (standard)
     Manin triple of anti-Leibniz algebras $(A, \cdot)$ and $(A^{\ast}, \Delta^{\ast})$;
\item[$(iii)$] $(A, \cdot, \Delta)$ is an anti-Leibniz bialgebra.
\end{enumerate}
\end{thm}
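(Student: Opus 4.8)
The plan is to obtain the theorem as an immediate formal consequence of the two preceding propositions, using the matched-pair condition $(i)$ as the central pivot through which the other two statements are linked. Proposition \ref{pro:manin-mat} already furnishes the equivalence $(i)\Leftrightarrow(ii)$: it shows that the matched pair $(A, A^{\ast}, \fl_{A}^{\ast}, \fl_{A}^{\ast}-\fr_{A}^{\ast}, \fl_{A^{\ast}}^{\ast}, \fl_{A^{\ast}}^{\ast}-\fr_{A^{\ast}}^{\ast})$ exists precisely when the crossed-product structure on $A\oplus A^{\ast}$ renders $\mathfrak{B}_{d}$ invariant, i.e.\ exactly when a standard Manin triple is obtained. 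Proposition \ref{pro:bi-mat} likewise furnishes $(i)\Leftrightarrow(iii)$: it identifies the bialgebra compatibility conditions \eqref{bialg1}--\eqref{bialg2} with the dualized matched-pair conditions \eqref{matt1}--\eqref{matt3}.

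Accordingly, I would first invoke Proposition \ref{pro:manin-mat} to record $(i)\Leftrightarrow(ii)$ and Proposition \ref{pro:bi-mat} to record $(i)\Leftrightarrow(iii)$. Chaining these two biconditionals through the common condition $(i)$ then yields $(ii)\Leftrightarrow(i)\Leftrightarrow(iii)$, hence the full equivalence of all three statements. No independent computation is needed at the level of the theorem itself; the argument is purely a concatenation of results already in hand.

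The one point I would verify explicitly is that the hypotheses of the two propositions coincide with those of the theorem: both take as input a fixed anti-Leibniz algebra $(A, \cdot)$ together with a comultiplication $\Delta$ making $(A, \Delta)$ an anti-Leibniz coalgebra, so that $(A^{\ast}, \Delta^{\ast})$ is the dual anti-Leibniz algebra featuring in each. Since both propositions accept exactly this data and both phrase their conclusions through the same matched pair, the junction is seamless and the reformulation of $(ii)$ and $(iii)$ around $(i)$ is unambiguous.

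The substantive difficulty therefore lies not in this theorem but in the two supporting propositions, and most acutely in the dualization bookkeeping of Proposition \ref{pro:bi-mat}: matching each term of the tensor compatibility \eqref{bialg1}--\eqref{bialg2} against the correct transpose operator in \eqref{matt1}--\eqref{matt3} demands care with the twist $\tau$ and with the signs arising from the coregular action $\fl_{A}^{\ast}-\fr_{A}^{\ast}$. Granting those propositions, the present theorem admits essentially a one-line proof.
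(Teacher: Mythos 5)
Your proposal is correct and matches the paper exactly: the paper derives Theorem \ref{thm:equ} by simply combining Proposition \ref{pro:manin-mat} (which gives $(i)\Leftrightarrow(ii)$) with Proposition \ref{pro:bi-mat} (which gives $(i)\Leftrightarrow(iii)$), just as you do. Your observation that the real work lies in the supporting propositions, especially the dualization bookkeeping in Proposition \ref{pro:bi-mat}, is also accurate.
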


As a direct conclusion, we have

\begin{cor}\label{cor:dualbia}
Let $(A, \cdot, \Delta)$ be an anti-Leibniz bialgebra. Then $(A^{\ast}, \Delta^{\ast},
\bar{\Delta})$ is also an anti-Leibniz bialgebra, where $\bar{\Delta}: A^{\ast}\rightarrow
A^{\ast}\otimes A^{\ast}$ is given by $\langle\bar{\Delta}(\xi), a_{1}\otimes a_{2}
\rangle=\langle\xi, a_{1}a_{2}\rangle$, for any $\xi\in A^{\ast}$ and $a_{1}, a_{2}\in A$.
\end{cor}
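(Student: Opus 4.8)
The plan is to reduce the statement to Theorem \ref{thm:equ} and to exploit the manifest symmetry of the matched-pair axioms under interchanging the two constituent algebras. First I would note that $\bar\Delta$ is nothing but the transpose of the multiplication $\cdot$ on $A$; equivalently, under the canonical identification $(A^{\ast})^{\ast}\cong A$ (legitimate since everything is finite-dimensional), the dual algebra of the coalgebra $(A^{\ast},\bar\Delta)$ is precisely $(A,\cdot)$. Consequently $(A^{\ast},\bar\Delta)$ is an anti-Leibniz coalgebra because its dual $(A,\cdot)$ is an anti-Leibniz algebra, while $(A^{\ast},\Delta^{\ast})$ is an anti-Leibniz algebra because $(A,\Delta)$ is an anti-Leibniz coalgebra. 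Thus both structures required on $A^{\ast}$ are already available, and the only thing to verify is the compatibility \eqref{bialg1}--\eqref{bialg2} for the triple $(A^{\ast},\Delta^{\ast},\bar\Delta)$.

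Next I would feed the hypothesis into Theorem \ref{thm:equ}: since $(A,\cdot,\Delta)$ is an anti-Leibniz bialgebra, the sextuple $(A,A^{\ast},\fl_A^{\ast},\fl_A^{\ast}-\fr_A^{\ast},\fl_{A^{\ast}}^{\ast},\fl_{A^{\ast}}^{\ast}-\fr_{A^{\ast}}^{\ast})$ is a matched pair, i.e. it satisfies all of \eqref{mat1}--\eqref{mat6}. The structural observation that drives the argument is that the matched-pair axioms are symmetric under the swap of the two algebras: equations \eqref{mat4}, \eqref{mat5}, \eqref{mat6} are obtained from \eqref{mat1}, \eqref{mat2}, \eqref{mat3} by the substitution $A\leftrightarrow B$, $\kl_A\leftrightarrow\kl_B$, $\kr_A\leftrightarrow\kr_B$. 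Hence $(A,B,\kl_A,\kr_A,\kl_B,\kr_B)$ is a matched pair if and only if $(B,A,\kl_B,\kr_B,\kl_A,\kr_A)$ is. Applying this to our situation shows that $(A^{\ast},A,\fl_{A^{\ast}}^{\ast},\fl_{A^{\ast}}^{\ast}-\fr_{A^{\ast}}^{\ast},\fl_A^{\ast},\fl_A^{\ast}-\fr_A^{\ast})$ is again a matched pair of anti-Leibniz algebras.

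Finally I would identify this swapped matched pair with the one attached to the candidate bialgebra $(A^{\ast},\Delta^{\ast},\bar\Delta)$. Writing $C=A^{\ast}$ with algebra structure $\Delta^{\ast}$, and using $C^{\ast}=(A^{\ast})^{\ast}\cong A$ together with $\bar\Delta^{\ast}=\cdot$, the regular bimodule maps $\fl_{C^{\ast}},\fr_{C^{\ast}}$ become $\fl_A,\fr_A$, so the matched pair required by Proposition \ref{pro:bi-mat} for $(C,\Delta^{\ast},\bar\Delta)$ is exactly the swapped pair $(A^{\ast},A,\fl_{A^{\ast}}^{\ast},\fl_{A^{\ast}}^{\ast}-\fr_{A^{\ast}}^{\ast},\fl_A^{\ast},\fl_A^{\ast}-\fr_A^{\ast})$ produced above. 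By the implication $(i)\Rightarrow(iii)$ of Theorem \ref{thm:equ}, applied now with $A^{\ast}$ in the role of the ambient algebra, the triple $(A^{\ast},\Delta^{\ast},\bar\Delta)$ is an anti-Leibniz bialgebra. I expect the only genuine bookkeeping obstacle to be the careful tracking of the identification $(A^{\ast})^{\ast}\cong A$ and the accompanying transpose maps, ensuring that $\fl_{(A^{\ast})^{\ast}}$ and $\fr_{(A^{\ast})^{\ast}}$ truly coincide with $\fl_A$ and $\fr_A$; once this is pinned down, the symmetry of \eqref{mat1}--\eqref{mat6} does all the work and no fresh computation is needed.
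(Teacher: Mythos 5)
Your proposal is correct and follows essentially the same route as the paper, which offers no separate argument but presents the corollary as a direct consequence of Theorem \ref{thm:equ}; the content you supply — the manifest symmetry of Eqs. \eqref{mat1}--\eqref{mat6} under interchanging the two algebras (equivalently, the symmetry of the Manin triple in $A$ and $A^{\ast}$), plus the double-dual identification $(A^{\ast})^{\ast}\cong A$ and $\bar{\Delta}^{\ast}=\cdot$ — is exactly the intended "direct conclusion." No gaps.
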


\section{Some special anti-Leibniz bialgebras and anti-Leibniz Yang-Baxter equation}
\label{sec:spec}
In this section, we consider some special classes of anti-Leibniz bialgebras called
quasi-triangular anti-Leibniz bialgebras, triangular anti-Leibniz bialgebras,
factorizable anti-Leibniz bialgebras, and introduce the notion of the Yang-Baxter equation
in an anti-Leibniz algebra. The notion of relative Rota-Baxter operators of anti-Leibniz algebras
is introduced to provide antisymmetric solutions of the Yang-Baxter equation in semi-direct
product anti-Leibniz algebras and hence give rise to anti-Leibniz bialgebras.

\subsection{Triangular anti-Leibniz bialgebras}\label{subsec:tria}
Here, we consider a special class of anti-Leibniz bialgebras $(A, \cdot, \Delta_{r})$,
where the comultiplication $\Delta_{r}$ is given by
\begin{align}
\Delta_{r}(a):=((\fr_{A}-\fl_{A})(a)\otimes\id+\id\otimes\,\fr_{A}(a))(r), \label{cobo}
\end{align}
for any $a\in A$ and some $r\in A\otimes A$. In this case, we also call
$(A, \cdot, \Delta_{r})$ is {\rm an anti-Leibniz bialgebra induced by $r$}.
Such anti-Leibniz bialgebras are quite similar as the coboundary Lie bialgebras
for Lie algebras and the coboundary infinitesimal bialgebras for associative algebras.
Now, we consider the anti-Leibniz coalgebra $(A, \Delta_{r})$.

\begin{pro}\label{pro:coalg}
Let $(A, \cdot)$ be an anti-Leibniz algebra and $r=\sum_{i}x_{i}\otimes y_{i}\in A\otimes A$.
Define a linear map $\Delta_{r}: A\rightarrow A\otimes A$ by Eq. \eqref{cobo}.
Then $(A, \Delta_{r})$ is an anti-Leibniz coalgebra if and only if
\begin{align}
&\big((\fl_{A}-\fr_{A})(a)\otimes\id\otimes\id\big)\big([\![r, r]\!]\big)
+\Big(\id\otimes\id\otimes\,\fr_{A}(a)+\id\otimes\,(\fl_{A}-\fr_{A})(a)\otimes\id
\Big)\Big((\tau\otimes\id)\big([\![r, r]\!]\big)\Big) \label{coalg}\\[-1mm]
&-\Big(\id\otimes\id\otimes\,\fr_{A}(a)-\id\otimes\,(\fl_{A}-\fr_{A})(a)\otimes\id\Big)
\Big(\sum_{i}((\fr_{A}-\fl_{A})(x_{i})\otimes\id+\id\otimes\,\fr_{A}(x_{i}))(r-\tau(r))
\otimes y_{i}\Big)     \nonumber
\end{align}
\begin{align}
& +\sum_{j}((\fl_{A}-\fr_{A})(x_{j})\otimes\id\otimes\id)\big(\tau(((\fr_{A}-\fl_{A})(a)
\otimes\id+\id\otimes\,\fr_{A}(a))(r-\tau(r)))\otimes y_{j}\big),\qquad\qquad \nonumber
\end{align}
for any $a\in A$, where we denote by
\begin{align*}
[\![r, r]\!]:&=r_{12}r_{13}+r_{12}r_{23}-r_{23}r_{12}-r_{23}r_{13}\\
&=\sum_{i,j}x_{i}x_{j}\otimes y_{i}\otimes y_{j}+x_{i}\otimes y_{i}x_{j}\otimes y_{j}
-x_{j}\otimes x_{i}y_{j}\otimes y_{i}-x_{j}\otimes x_{i}\otimes y_{i}y_{j}.
\end{align*}
\end{pro}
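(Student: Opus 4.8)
The plan is to substitute the comultiplication \eqref{cobo} into the defining identity of an anti-Leibniz coalgebra,
\[
(\Delta_r\otimes\id)\Delta_r+(\id\otimes\Delta_r)\Delta_r+(\tau\otimes\id)(\id\otimes\Delta_r)\Delta_r=0,
\]
and to reduce it, term by term, to the displayed identity \eqref{coalg}. The first step is purely preparatory: unwinding $\fl_A$ and $\fr_A$ in \eqref{cobo} gives the explicit form
\[
\Delta_r(a)=\sum_i (x_ia-ax_i)\otimes y_i+\sum_i x_i\otimes y_ia .
\]
This expression is the workhorse for everything that follows, since its two summands behave quite differently under a second application of $\Delta_r$ (one inserts the operator $(\fr_A-\fl_A)$, the other $\fr_A$, and the legs carrying $a$ change position).

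Next I would compute the three terms of the coaxiom separately. Applying $\Delta_r\otimes\id$, $\id\otimes\Delta_r$, and $(\tau\otimes\id)(\id\otimes\Delta_r)$ to $\Delta_r(a)$, and inserting a second copy $r=\sum_j x_j\otimes y_j$, each produces a collection of cubic expressions that split into two families. Family (a) consists of terms in which $a$ enters only through an \emph{outer} operator $\fl_A(a)$ or $\fr_A(a)$ acting on a single leg, while the remaining legs carry products like $x_ix_j$, $y_ix_j$, etc.; these are exactly the monomials of $[\![r,r]\!]$. Family (b) consists of \emph{mixed} terms in which $a$ sits inside a triple product, e.g.\ $(x_ia)x_j$, $a(x_ix_j)$ or $x_i(y_ia)$. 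For family (b) I would repeatedly invoke the anti-Leibniz identity $a_1(a_2a_3)+(a_1a_2)a_3+a_2(a_1a_3)=0$ of Definition \ref{def:anti}, together with its consequence $(a_1a_2)a_3=(a_2a_1)a_3$ recorded just afterwards, to pull the factor $a$ outward and rewrite each triple product in a normal form compatible with the legs of $[\![r,r]\!]$ and of $r-\tau(r)$.

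The decisive step is the regrouping of the summed contributions. The family-(a) terms in which the outer operator acts on the first leg assemble into the block $((\fl_A-\fr_A)(a)\otimes\id\otimes\id)([\![r,r]\!])$, while those in which it lands on the second or third leg — once the $\tau\otimes\id$ twist of the third coaxiom term is accounted for — assemble into the $(\tau\otimes\id)([\![r,r]\!])$ block of \eqref{coalg}; this is done by matching against the four monomials $x_ix_j\otimes y_i\otimes y_j$, $x_i\otimes y_ix_j\otimes y_j$, $-x_j\otimes x_iy_j\otimes y_i$, $-x_j\otimes x_i\otimes y_iy_j$ in the definition of $[\![r,r]\!]$. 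Every surviving term fails to symmetrize, and these are precisely collected by the two correction blocks of \eqref{coalg} built from $r-\tau(r)$: they are exactly the terms that would cancel if $r$ were symmetric, so their presence measures the asymmetry of $r$.

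The part I expect to be most delicate is not any single algebraic identity but the combinatorial bookkeeping of tensor-leg positions and of the twisting maps $\tau$ — those introduced by the third term of the coaxiom and the $(\tau\otimes\id)$ appearing in \eqref{coalg}. Keeping the signs and index placements consistent across the three expansions (which of $x_i,y_i$ versus $x_j,y_j$ occupies which slot) is where essentially all the risk lies; the only structural inputs are the anti-Leibniz identity and the bimodule axioms of Definition \ref{def:anti-mod}. Once all terms are sorted into the two $[\![r,r]\!]$ blocks and the two $r-\tau(r)$ corrections, the equivalence with \eqref{coalg} follows by inspection, giving both directions of the ``if and only if'' simultaneously.
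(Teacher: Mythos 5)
Your proposal is correct and follows essentially the same route as the paper: a direct expansion of the three terms of the coalgebra axiom using the explicit form $\Delta_r(a)=\sum_i(x_ia-ax_i)\otimes y_i+x_i\otimes y_ia$, normalization of the mixed triple products via the anti-Leibniz identity and its consequence $(a_1a_2)a_3=(a_2a_1)a_3$, and regrouping of the surviving terms into the two $[\![r,r]\!]$ blocks and the two $r-\tau(r)$ correction blocks. The paper organizes this bookkeeping by naming five intermediate sums $\mathcal{A},\dots,\mathcal{E}$, but the decomposition and the structural inputs are the ones you describe (only the anti-Leibniz identity is needed; the bimodule axioms you mention are not separately invoked).
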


\begin{proof}
First, for any $a\in A$, we have
\begin{align*}
&\;(\Delta_{r}\otimes\id)\Delta_{r}(a)+(\id\otimes\Delta_{r})\Delta_{r}(a)
+(\tau\otimes\id)(\id\otimes\Delta_{r})\Delta_{r}(a)\\
=&\; \sum_{i,j}x_{j}\otimes y_{j}x_{i}\otimes y_{i}a-x_{i}x_{j}\otimes y_{j}\otimes y_{i}a
+x_{j}x_{i}\otimes y_{j}\otimes y_{i}a-x_{j}\otimes y_{j}(ax_{i})\otimes y_{i}\\[-4mm]
&\qquad+(ax_{i})x_{j}\otimes y_{j}\otimes y_{i}-x_{j}(ax_{i})\otimes y_{j}\otimes y_{i}
+x_{j}\otimes y_{j}(x_{i}a)\otimes y_{i}-(x_{i}a)x_{j}\otimes y_{j}\otimes y_{i}\\
&\qquad+x_{j}(x_{i}a)\otimes y_{j}\otimes y_{i}+x_{i}\otimes x_{j}\otimes y_{j}(y_{i}a)
-x_{i}\otimes(y_{i}a)x_{j}\otimes y_{j}+x_{i}\otimes x_{j}(y_{i}a)\otimes y_{j}\\
&\qquad-ax_{i}\otimes x_{j}\otimes y_{j}y_{i}+ax_{i}\otimes y_{i}x_{j}\otimes y_{j}
-ax_{i}\otimes x_{j}y_{i}\otimes y_{j}+x_{i}a\otimes x_{j}\otimes y_{j}y_{i}\\
&\qquad-x_{i}a\otimes y_{i}x_{j}\otimes y_{j}+x_{i}a\otimes x_{j}y_{i}\otimes y_{j}
+x_{j}\otimes x_{i}\otimes y_{j}(y_{i}a)-(y_{i}a)x_{j}\otimes x_{i}\otimes y_{j}\\
&\qquad+x_{j}(y_{i}a)\otimes x_{i}\otimes y_{j}-x_{j}\otimes ax_{i}\otimes y_{j}y_{i}
+y_{i}x_{j}\otimes ax_{i}\otimes y_{j}-x_{j}y_{i}\otimes ax_{i}\otimes y_{j}\\
&\qquad+x_{j}\otimes x_{i}a\otimes y_{j}y_{i}-y_{i}x_{j}\otimes x_{i}a\otimes y_{j}
+x_{j}y_{i}\otimes x_{i}a\otimes y_{j}.
\end{align*}
By the anti-Leibniz identity, we get
\begin{align*}
\mathcal{A}:=&\sum_{i,j}x_{j}\otimes y_{j}x_{i}\otimes y_{i}a
-x_{i}x_{j}\otimes y_{j}\otimes y_{i}a+x_{j}x_{i}\otimes y_{j}\otimes y_{i}a
+x_{i}\otimes x_{j}\otimes y_{j}(y_{i}a)+x_{j}\otimes x_{i}\otimes y_{j}(y_{i}a)\\[-2mm]
=&(\id\otimes\id\otimes\,\fr_{A}(a))\big((\tau\otimes\id)\big([\![r, r]\!]\big)\big)
+\sum_{j}((\fr_{A}-\fl_{A})(x_{j})\otimes\id+\id\otimes\,\fr_{A}(x_{j}))(r-\tau(r))
\otimes y_{j}a,\\
\mathcal{B}:=&\sum_{i,j}-x_{j}(ax_{i})\otimes y_{j}\otimes y_{i}
-ax_{i}\otimes x_{j}\otimes y_{j}y_{i}+ax_{i}\otimes y_{i}x_{j}\otimes y_{j}
-ax_{i}\otimes x_{j}y_{i}\otimes y_{j})\\[-2mm]
=&(\fl_{A}(a)\otimes\id\otimes\id)\big([\![r, r]\!]\big)
+\sum_{i,j}(ax_{i})x_{j}\otimes y_{i}\otimes y_{j},\\
\mathcal{C}:=&\sum_{i,j}-x_{j}\otimes ax_{i}\otimes y_{j}y_{i}
+y_{i}x_{j}\otimes ax_{i}\otimes y_{j}-x_{j}y_{i}\otimes ax_{i}\otimes y_{j})\\[-2mm]
=&(\id\otimes\,\fl_{A}(a)\otimes\id)(\tau\otimes\id)\big([\![r, r]\!]\big)
-\sum_{i,j}y_{i}\otimes a(x_{i}x_{j})\otimes y_{j},\\
\mathcal{D}:=&\sum_{i,j}x_{j}(x_{i}a)\otimes y_{j}\otimes y_{i}
+x_{i}a\otimes x_{j}\otimes y_{j}y_{i}-x_{i}a\otimes y_{i}x_{j}\otimes y_{j}
+x_{i}a\otimes x_{j}y_{i}\otimes y_{j})\\[-2mm]
=&-(\fr_{A}(a)\otimes\id\otimes\id)\big([\![r, r]\!]\big)
-\sum_{i,j}x_{j}(x_{i}a)\otimes y_{i}\otimes y_{j},\\
\mathcal{E}:=&\sum_{i,j}x_{j}\otimes x_{i}a\otimes y_{j}y_{i}
-y_{i}x_{j}\otimes x_{i}a\otimes y_{j}+x_{j}y_{i}\otimes x_{i}a\otimes y_{j})\\[-2mm]
=&-(\id\otimes\fr_{A}(a)\otimes\id)\big((\tau\otimes\id)\big([\![r, r]\!]\big)\big)
+\sum_{i,j} y_{i}\otimes(x_{i}x_{j})a\otimes y_{j},
\end{align*}
Second, since $(ax_{i})x_{j}\otimes y_{j}\otimes y_{i}-(x_{i}a)x_{j}\otimes
y_{j}\otimes y_{i}=0$, we obtain
\begin{align*}
&\;(\Delta_{r}\otimes\id)\Delta_{r}(a)+(\id\otimes\Delta_{r})\Delta_{r}(a)
+(\tau\otimes\id)(\id\otimes\Delta_{r})\Delta_{r}(a)\\
=&\; \mathcal{A}+\mathcal{B}+\mathcal{C}+\mathcal{D}+\mathcal{E}
+\Big(\sum_{i,j}x_{j}(y_{i}a)\otimes x_{i}\otimes y_{j}
-(y_{i}a)x_{j}\otimes x_{i}\otimes y_{j}\\[-4mm]
&\qquad-x_{j}\otimes y_{j}(ax_{i})\otimes y_{i}+x_{j}\otimes y_{j}(x_{i}a)\otimes y_{i}
-x_{i}\otimes(y_{i}a)x_{j}\otimes y_{j}+x_{i}\otimes x_{j}(y_{i}a)\otimes y_{j}\Big)\\
=&\;\big((\fl_{A}-\fr_{A})(a)\otimes\id\otimes\id\big)\big([\![r, r]\!]\big)
+\Big(\id\otimes\id\otimes\,\fr_{A}(a)+\id\otimes\,(\fl_{A}-\fr_{A})(a)\otimes\id
\Big)\Big((\tau\otimes\id)\big([\![r, r]\!]\big)\Big)
\end{align*}
\begin{align*}
&\quad+\Big(\sum_{i,j}x_{j}(y_{i}a)\otimes x_{i}\otimes y_{j}
-(y_{i}a)x_{j}\otimes x_{i}\otimes y_{j}-x_{j}\otimes y_{j}(ax_{i})\otimes y_{i}
+x_{j}\otimes y_{j}(x_{i}a)\otimes y_{i}\\[-4mm]
&\qquad\qquad-x_{i}\otimes(y_{i}a)x_{j}\otimes y_{j}+x_{i}\otimes x_{j}(y_{i}a)\otimes y_{j}
+(ax_{i})x_{j}\otimes y_{i}\otimes y_{j}-y_{i}\otimes a(x_{i}x_{j})\otimes y_{j}\\[-1mm]
&\qquad\qquad-x_{j}(x_{i}a)\otimes y_{i}\otimes y_{j}
+y_{i}\otimes(x_{i}x_{j})a\otimes y_{j}\Big)\qquad\qquad\\[-2mm]
&\quad +\sum_{j}((\fr_{A}-\fl_{A})(x_{j})\otimes\id+\id\otimes\,\fr_{A}(x_{j}))(r-\tau(r))
\otimes y_{j}a.
\end{align*}
Finally, note that
\begin{align*}
&\;-\sum_{j}((\fl_{A}-\fr_{A})(x_{j})\otimes\id\otimes\id)\tau(((\fr_{A}-\fl_{A})(a)\otimes\id
+\id\otimes\,\fr_{A}(a))(r-\tau(r)))\otimes y_{j}\\[-2mm]
=&\;\sum_{i,j}x_{j}(x_{i}a)\otimes y_{i}\otimes y_{j}-x_{j}(y_{i}a)\otimes x_{i}\otimes y_{j}
+x_{j}y_{i}\otimes ax_{i}\otimes y_{j}-x_{j}y_{i}\otimes x_{i}a\otimes y_{j}\\[-5mm]
&\qquad-x_{j}x_{i}\otimes ay_{i}\otimes y_{j}+x_{j}x_{i}\otimes y_{i}a\otimes y_{j}
-(x_{i}a)x_{j}\otimes y_{i}\otimes y_{j}+(y_{i}a)x_{j}\otimes x_{i}\otimes y_{j}\\[-1mm]
&\qquad-y_{i}x_{j}\otimes ax_{i}\otimes y_{j}+y_{i}x_{j}\otimes x_{i}a\otimes y_{j}
+x_{i}x_{j}\otimes ay_{i}\otimes y_{j}-x_{i}x_{j}\otimes y_{i}a\otimes y_{j},
\end{align*}
and
\begin{align*}
&\;-(\id\otimes\,(\fl_{A}+\fr_{A})(a)\otimes\id)\Big(\sum_{j}((\fr_{A}-\fl_{A})(x_{j})
\otimes\id+\id\otimes\,\fr_{A}(x_{j}))(r-\tau(r))\otimes y_{j}\Big)\\[-2mm]
=&\;\sum_{i,j}y_{i}x_{j}\otimes ax_{i}\otimes y_{j}-y_{i}x_{j}\otimes x_{i}a\otimes y_{j}
-x_{i}x_{j}\otimes ay_{i}\otimes y_{j}+x_{i}x_{j}\otimes y_{i}a\otimes y_{j}\\[-5mm]
&\qquad-x_{j}y_{i}\otimes ax_{i}\otimes y_{j}+x_{j}y_{i}\otimes x_{i}a\otimes y_{j}
+x_{j}x_{i}\otimes ay_{i}\otimes y_{j}-x_{j}x_{i}\otimes y_{i}a\otimes y_{j}\\[-1mm]
&\qquad-x_{i}\otimes a(y_{i}x_{j})\otimes y_{j}+x_{i}\otimes(y_{i}x_{j})a\otimes y_{j}
+y_{i}\otimes a(x_{i}x_{j})\otimes y_{j}-y_{i}\otimes(x_{i}x_{j})a\otimes y_{j},
\end{align*}
we get
\begin{align*}
&\;(\Delta_{r}\otimes\id)\Delta_{r}(a)+(\id\otimes\Delta_{r})\Delta_{r}(a)
+(\tau\otimes\id)(\id\otimes\Delta_{r})\Delta_{r}(a)\\
=&\;\big((\fl_{A}-\fr_{A})(a)\otimes\id\otimes\id\big)\big([\![r, r]\!]\big)
+\Big(\id\otimes\id\otimes\,\fr_{A}(a)+\id\otimes\,(\fl_{A}-\fr_{A})(a)\otimes\id
\Big)\Big((\tau\otimes\id)\big([\![r, r]\!]\big)\Big)\\[-1mm]
&\; -\Big(\id\otimes\id\otimes\fr_{A}(a)-\id\otimes(\fl_{A}-\fr_{A})(a)\otimes\id\Big)
\Big(\sum_{i}((\fr_{A}-\fl_{A})(x_{i})\otimes\id+\id\otimes\,\fr_{A}(x_{i}))(r-\tau(r))
\otimes y_{i}\Big)\\[-3mm]
&\; +\sum_{j}((\fl_{A}-\fr_{A})(x_{j})\otimes\id\otimes\id)\big(\tau(((\fr_{A}-\fl_{A})(a)
\otimes\id+\id\otimes\,\fr_{A}(a))(r-\tau(r)))\otimes y_{j}\big).
\end{align*}
Thus, $(A, \Delta_{r})$ is an anti-Leibniz coalgebra if and only if Eq. \eqref{coalg} holds.
\end{proof}

Next, we consider the anti-Leibniz bialgebra $(A, \cdot, \Delta_{r})$.

\begin{pro}\label{pro:bialg}
Let $(A, \cdot)$ be an anti-Leibniz algebra and $r=\sum_{i}x_{i}\otimes y_{i}\in A\otimes A$.
Define a linear map $\Delta_{r}: A\rightarrow A\otimes A$ by Eq. \eqref{cobo}.
Then Eq. \eqref{bialg1} if and only if
\begin{align}
&\Big(((\fr_{A}-\fl_{A})(a_{2})\otimes\id)\tau+(\id\otimes\id-\tau)
(\fr_{A}(a_{2})\otimes\id)\Big)\big(((\fr_{A}-\fl_{A})(a_{1})\otimes\id
+\id\otimes\,\fr_{A}(a_{1}))(r-\tau(r))\big)          \label{bial1}\\[-1mm]
&\;+\tau(\fr_{A}(a_{1})\otimes\id)\big(((\fr_{A}-\fl_{A})(a_{2})\otimes\id
+\id\otimes\,\fr_{A}(a_{2}))(r-\tau(r))\big)=0, \nonumber
\end{align}
and Eq. \eqref{bialg2} if and only if
\begin{align}
(\fr_{A}(a_{1})\otimes\id)\big(((\fr_{A}-\fl_{A})(a_{2})\otimes\id
+\id\otimes\,\fr_{A}(a_{2}))(r-\tau(r))\big)=0,                         \label{bial2}
\end{align}
for any $a_{1}, a_{2}\in A$.
\end{pro}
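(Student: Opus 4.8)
The plan is to substitute the coboundary comultiplication $\Delta_{r}$ of \eqref{cobo} into the two compatibility conditions \eqref{bialg1} and \eqref{bialg2}, expand each term against $r=\sum_{i}x_{i}\otimes y_{i}$, and simplify. The engine of every cancellation is the identity $(a_{1}a_{2})a_{3}=(a_{2}a_{1})a_{3}$ noted after Definition \ref{def:anti}, equivalently $\fr_{A}(a_{3})\fl_{A}(a_{1})=\fr_{A}(a_{3})\fr_{A}(a_{1})$ as operators (the relation recorded after Definition \ref{def:anti-mod}). I will also use the two regular-bimodule relations $\fl_{A}(a_{1}a_{2})=-\fl_{A}(a_{1})\fl_{A}(a_{2})-\fl_{A}(a_{2})\fl_{A}(a_{1})$ and $\fr_{A}(a_{1}a_{2})=-\fr_{A}(a_{2})\fr_{A}(a_{1})-\fl_{A}(a_{1})\fr_{A}(a_{2})$ obtained from Definition \ref{def:anti-mod} for the regular bimodule $(A,\fl_{A},\fr_{A})$.

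I would dispatch the second equivalence first, as it is immediate. Expanding $(\fr_{A}(a_{1})\otimes\id)\Delta_{r}(a_{2})$, the part coming from $(\fr_{A}-\fl_{A})(a_{2})\otimes\id$ is $\sum_{i}\big((x_{i}a_{2}-a_{2}x_{i})a_{1}\big)\otimes y_{i}$, which vanishes term by term since $(x_{i}a_{2})a_{1}=(a_{2}x_{i})a_{1}$; hence $(\fr_{A}(a_{1})\otimes\id)\Delta_{r}(a_{2})=\sum_{i}(x_{i}a_{1})\otimes(y_{i}a_{2})$ and, symmetrically, $\tau\big((\fr_{A}(a_{2})\otimes\id)\Delta_{r}(a_{1})\big)=\sum_{i}(y_{i}a_{1})\otimes(x_{i}a_{2})$. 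Performing the same cancellation on the left-hand side of \eqref{bial2} (where the antisymmetrized $r-\tau(r)$ produces exactly the two groups above) shows that \eqref{bialg2} and \eqref{bial2} reduce to the identical expression $\sum_{i}(x_{i}a_{1})\otimes(y_{i}a_{2})-\sum_{i}(y_{i}a_{1})\otimes(x_{i}a_{2})$, so the two are literally the same equation.

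The first equivalence is the substantial computation. Here I would expand $\Delta_{r}(a_{1}a_{2})=\big((\fr_{A}-\fl_{A})(a_{1}a_{2})\otimes\id+\id\otimes\,\fr_{A}(a_{1}a_{2})\big)(r)$ by inserting the two bimodule relations for $\fl_{A}(a_{1}a_{2})$ and $\fr_{A}(a_{1}a_{2})$, and independently expand $\big((\id\otimes\,\fr_{A}(a_{2})-\fr_{A}(a_{2})\otimes\id+\fl_{A}(a_{2})\otimes\id)(\id\otimes\id-\tau)\big)\Delta_{r}(a_{1})$ and $\big(\id\otimes\,\fl_{A}(a_{1})+\fl_{A}(a_{1})\otimes\id\big)\Delta_{r}(a_{2})$ into words in $a_{1},a_{2},x_{i},y_{i}$. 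Because \eqref{bialg1} lives in $A\otimes A$, every summand is a single decomposable tensor, so the remaining work is purely to collect the resulting several dozen terms and annihilate them in pairs via the anti-Leibniz identity, in the same spirit as the proof of Proposition \ref{pro:coalg}. The main obstacle is exactly this bookkeeping: the terms produced by $\Delta_{r}(a_{1}a_{2})$ must be matched against the mixed $\fl_{A}$--$\fr_{A}$ terms from the other two groups so that everything not demanded by \eqref{bial1} cancels, leaving precisely the two operator blocks $((\fr_{A}-\fl_{A})(a_{2})\otimes\id)\tau+(\id\otimes\id-\tau)(\fr_{A}(a_{2})\otimes\id)$ applied to $\big((\fr_{A}-\fl_{A})(a_{1})\otimes\id+\id\otimes\,\fr_{A}(a_{1})\big)(r-\tau(r))$ and $\tau(\fr_{A}(a_{1})\otimes\id)$ applied to the corresponding expression in $a_{2}$. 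Regrouping the survivors in this form identifies the left-hand side of \eqref{bialg1} with that of \eqref{bial1}, and the equivalence follows; careful sign-tracking through the many terms is the only real difficulty.
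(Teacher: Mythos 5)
Your approach is the same as the paper's: substitute $\Delta_{r}$ into \eqref{bialg1} and \eqref{bialg2}, expand against $r=\sum_{i}x_{i}\otimes y_{i}$, and cancel using the anti-Leibniz identity together with $(a_{1}a_{2})a_{3}=(a_{2}a_{1})a_{3}$. Your treatment of the second equivalence is complete and correct, and it reproduces the paper's computation exactly: both sides collapse to $\sum_{i}x_{i}a_{1}\otimes y_{i}a_{2}-y_{i}a_{1}\otimes x_{i}a_{2}$ because the contributions of $(\fr_{A}-\fl_{A})(a_{2})\otimes\id$ die under a further right multiplication.

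For the first equivalence, however, what you have written is a roadmap rather than a proof. You correctly name the tools (the two regular-bimodule identities for $\fl_{A}(a_{1}a_{2})$ and $\fr_{A}(a_{1}a_{2})$, the identity $\fr_{A}(a_{3})\fl_{A}(a_{1})=\fr_{A}(a_{3})\fr_{A}(a_{1})$) and you correctly state the target grouping, but the entire content of that half of the proposition is the verification that the several dozen expanded terms cancel to leave precisely the two operator blocks of \eqref{bial1} -- and that verification is asserted, not performed. The paper's proof devotes its length to exactly this bookkeeping (e.g.\ the explicit cancellations $\sum_{i}x_{i}\otimes y_{i}(a_{1}a_{2})+x_{i}\otimes a_{1}(y_{i}a_{2})+x_{i}\otimes(y_{i}a_{1})a_{2}=0$ and its companions, followed by a regrouping of the survivors such as $\sum_{i}x_{i}a_{1}\otimes y_{i}a_{2}-y_{i}a_{1}\otimes x_{i}a_{2}=-\tau\big((\fr_{A}(a_{2})\otimes\id)(((\fr_{A}-\fl_{A})(a_{1})\otimes\id+\id\otimes\,\fr_{A}(a_{1}))(r-\tau(r)))\big)$). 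Since the proposition is purely computational, "the terms cancel as required" cannot be taken on faith: you need to exhibit the pairing of terms, or at least enough of it that a reader can reconstruct the rest. As it stands, the first equivalence is unproved in your write-up, even though the plan would succeed if carried out.
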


\begin{proof}
First, by the definition of $\Delta_{r}$, for any $a_{1}, a_{2}\in A$, we have
\begin{align*}
&\;\Delta_{r}(a_{1}a_{2})+\Big(\big(\id\otimes\,\fr_{A}(a_{2})-\fr_{A}(a_{2})\otimes\id
+\fl_{A}(a_{2})\otimes\id\big)(\id-\tau)\Big)\Delta_{r}(a_{1})\\[-1mm]
&\qquad+\big(\id\otimes\,\fl_{A}(a_{1})+\fl_{A}(a_{1})\otimes\id\big)\Delta_{r}(a_{2})\\
=&\;\sum_{i}x_{i}(a_{1}a_{2})\otimes y_{i}-(a_{1}a_{2})x_{i}\otimes y_{i}
+x_{i}\otimes y_{i}(a_{1}a_{2})-a_{1}x_{i}\otimes y_{i}a_{2}
+x_{i}a_{1}\otimes y_{i}a_{2}\\[-4mm]
&\qquad+x_{i}\otimes(y_{i}a_{1})a_{2}+(a_{1}x_{i})a_{2}\otimes y_{i}
-(x_{i}a_{1})a_{2}\otimes y_{i}-x_{i}a_{2}\otimes y_{i}a_{1}
-a_{2}(a_{1}x_{i})\otimes y_{i}
\end{align*}
\begin{align*}
&\qquad+a_{2}(x_{i}a_{1})\otimes y_{i}+a_{2}x_{i}\otimes y_{i}a_{1}
+y_{i}\otimes(a_{1}x_{i})a_{2}-y_{i}\otimes(x_{i}a_{1})a_{2}
-y_{i}a_{1}\otimes x_{i}a_{2}\\[-1mm]
&\qquad-y_{i}a_{2}\otimes a_{1}x_{i}+y_{i}a_{2}\otimes x_{i}a_{1}
+(y_{i}a_{1})a_{2}\otimes x_{i}+a_{2}y_{i}\otimes a_{1}x_{i}
-a_{2}y_{i}\otimes x_{i}a_{1}\\[-1mm]
&\qquad-a_{2}(y_{i}a_{1})\otimes x_{i}-a_{2}x_{i}\otimes a_{1}y_{i}
+x_{i}a_{2}\otimes a_{1}y_{i}+x_{i}\otimes a_{1}(y_{i}a_{2})
-a_{1}(a_{2}x_{i})\otimes y_{i}\\[-1mm]
&\qquad+a_{1}(x_{i}a_{2})\otimes y_{i}+a_{1}x_{i}\otimes y_{i}a_{2}.
\end{align*}
Note that
\begin{align*}
&\quad\sum_{i}x_{i}\otimes y_{i}(a_{1}a_{2})+x_{i}\otimes a_{1}(y_{i}a_{2})
+x_{i}\otimes(y_{i}a_{1})a_{2}=0
=\sum_{i}y_{i}\otimes(a_{1}x_{i})a_{2}-y_{i}\otimes(x_{i}a_{1})a_{2},\\
&\quad\sum_{i}-(a_{1}a_{2})x_{i}\otimes y_{i}-a_{1}(a_{2}x_{i})\otimes y_{i}
-a_{2}(a_{1}x_{i})\otimes y_{i}=0
=\sum_{i}(a_{1}x_{i})a_{2}\otimes y_{i}-(x_{i}a_{1})a_{2}\otimes y_{i},\\
&\sum_{i}x_{i}a_{1}\otimes y_{i}a_{2}-y_{i}a_{1}\otimes x_{i}a_{2}
=-\tau\Big((\fr_{A}(a_{2})\otimes\id)\big(((\fr_{A}-\fl_{A})(a_{1})\otimes\id
+\id\otimes\,\fr_{A}(a_{1}))(r-\tau(r))\big)\Big),\\
&\sum_{i}y_{i}a_{2}\otimes x_{i}a_{1}-x_{i}a_{2}\otimes y_{i}a_{1}
=\tau\Big((\fr_{A}(a_{1})\otimes\id)\big(((\fr_{A}-\fl_{A})(a_{2})\otimes\id
+\id\otimes\,\fr_{A}(a_{2}))(r-\tau(r))\big)\Big),\\
&\qquad\qquad\qquad \sum_{i}a_{2}y_{i}\otimes a_{1}x_{i}-a_{2}x_{i}\otimes a_{1}y_{i}
+a_{2}x_{i}\otimes y_{i}a_{1}\\[-5mm]
&\qquad\qquad\qquad\qquad -a_{2}y_{i}\otimes x_{i}a_{1}+a_{2}(x_{i}a_{1})\otimes y_{i}
-a_{2}(y_{i}a_{1})\otimes x_{i}\\[-1mm]
&\qquad\qquad\quad=-(\fl_{A}(a_{2})\otimes\id)\Big(\tau\big(((\fr_{A}-\fl_{A})
(a_{1})\otimes\id+\id\otimes\,\fr_{A}(a_{1}))(r-\tau(r))\big)\Big),\\
&\qquad\qquad\sum_{i}x_{i}a_{2}\otimes a_{1}y_{i}-y_{i}a_{2}\otimes a_{1}x_{i}
+x_{i}(a_{1}a_{2})\otimes y_{i}+a_{1}(x_{i}a_{2})\otimes y_{i}
+(y_{i}a_{1})a_{2}\otimes x_{i}\\[-2mm]
&\qquad\quad=(\fr_{A}(a_{2})\otimes\id)\Big((\tau+\id\otimes\id)\big(((\fr_{A}-\fl_{A})
(a_{1})\otimes\id+\id\otimes\,\fr_{A}(a_{1}))(r-\tau(r))\big)\Big),
\end{align*}
we get Eq. \eqref{bialg1} holds if and only if Eq. \eqref{bial1} holds.
Finally, since for any $a_{1}, a_{2}\in A$,
\begin{align*}
&\;\big(\fr_{A}(a_{1})\otimes\id\big)\Delta_{r}(a_{2})
-\tau\Big(\big(\fr_{A}(a_{2})\otimes\id\big)\Delta_{r}(a_{1})\Big)\\
=&\; \sum_{i}x_{i}a_{1}\otimes y_{i}a_{2}-y_{i}a_{1}\otimes x_{i}a_{2}\\[-2mm]
=&\; (\fr_{A}(a_{1})\otimes\id)\big(((\fr_{A}-\fl_{A})(a_{2})\otimes\id
+\id\otimes\,\fr_{A}(a_{2}))(r-\tau(r))\big)
\end{align*}
we obtain this proposition.
\end{proof}

\begin{defi}\label{def:YBE}
Let $(A, \cdot)$ be an anti-Leibniz algebra and $r\in A\otimes A$. Then the equation
\begin{align}
[\![r, r]\!]:=r_{12}r_{13}+r_{12}r_{23}-r_{23}r_{12}-r_{23}r_{13}=0   \label{YBE}
\end{align}
is called the {\rm anti-Leibniz Yang-Baxter equation} (or {\rm aLYBE}) in the anti-Leibniz
algebra $(A, \cdot)$.

An element $r\in A\otimes A$ is called an {\rm anti-Leibniz $r$-matrix}
if $r$ is a solution of the {\rm aLYBE}, is called {\rm invariant}, if for any $a\in A$,
\begin{align}
((\fr_{A}-\fl_{A})(a)\otimes\id+\id\otimes\,\fr_{A}(a))(r)=0.   \label{invar}
\end{align}
\end{defi}

By Propositions \ref{pro:coalg} and \ref{pro:bialg}, we can provide sufficient
and necessary conditions for $(A, \cdot, \Delta_{r})$ to be an anti-Leibniz bialgebra.

\begin{cor}\label{cor:bia}
Let $(A, \cdot)$ be an anti-Leibniz algebra, $r=\sum_{i}x_{i}\otimes y_{i}\in A\otimes A$,
and $\Delta_{r}: A\rightarrow A\otimes A$ be linear map given by Eq. \eqref{cobo}.
Then $(A, \cdot, \Delta_{r})$ is an anti-Leibniz bialgebra if and only if Eqs.
\eqref{coalg}, \eqref{bial1} and \eqref{bial2} hold. In particular,
\begin{enumerate}\itemsep=0pt
\item[$(i)$] if $r-\tau(r)$ is invariant and $r$ is a solution of the {\rm aLYBE} in
     $(A, \cdot)$, or
\item[$(ii)$]  if $r$ is a symmetric solution of the {\rm aLYBE} in $(A, \cdot)$,
\end{enumerate}
then $(A, \cdot, \Delta_{r})$ is an anti-Leibniz bialgebra, which is called the
anti-Leibniz bialgebra induced by the solution $r$.
\end{cor}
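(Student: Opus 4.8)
The plan is to combine the two preceding propositions, which together characterize precisely when $(A,\cdot,\Delta_r)$ is an anti-Leibniz bialgebra. By Definition~\ref{def:bialg}, the triple $(A,\cdot,\Delta_r)$ is an anti-Leibniz bialgebra exactly when (a) $(A,\Delta_r)$ is an anti-Leibniz coalgebra, (b) Eq.~\eqref{bialg1} holds, and (c) Eq.~\eqref{bialg2} holds. Proposition~\ref{pro:coalg} translates condition (a) into Eq.~\eqref{coalg}, while Proposition~\ref{pro:bialg} translates conditions (b) and (c) into Eqs.~\eqref{bial1} and~\eqref{bial2} respectively. Thus the first assertion of the corollary is just the conjunction of these three equivalences, and I would state it as an immediate consequence of Propositions~\ref{pro:coalg} and~\ref{pro:bialg} with essentially no further computation.

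For the two special cases, the strategy is to verify that the stated hypotheses force Eqs.~\eqref{coalg}, \eqref{bial1}, and~\eqref{bial2} to hold. First I would treat case $(ii)$: if $r$ is symmetric, then $\tau(r)=r$, so $r-\tau(r)=0$ and every term in Eqs.~\eqref{bial1} and~\eqref{bial2} containing the factor $(r-\tau(r))$ vanishes immediately. For the same reason, all the correction terms in Eq.~\eqref{coalg} built from $(r-\tau(r))$ drop out, leaving only the terms involving $[\![r,r]\!]$; since $r$ solves the aLYBE, $[\![r,r]\!]=0$ and these vanish as well. Hence all three conditions hold and $(A,\cdot,\Delta_r)$ is a bialgebra.

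Next I would treat case $(i)$: here $r-\tau(r)$ is invariant, meaning by Eq.~\eqref{invar} that $((\fr_A-\fl_A)(a)\otimes\id+\id\otimes\,\fr_A(a))(r-\tau(r))=0$ for all $a\in A$. This is precisely the inner factor appearing in Eqs.~\eqref{bial1}, \eqref{bial2}, and in the two correction terms of Eq.~\eqref{coalg}, so invariance annihilates all of them directly. The remaining terms of Eq.~\eqref{coalg} are exactly those proportional to $[\![r,r]\!]$ and its image under $(\tau\otimes\id)$, which vanish because $r$ is an anti-Leibniz $r$-matrix. Therefore Eqs.~\eqref{coalg}, \eqref{bial1}, and~\eqref{bial2} all hold, and the conclusion follows.

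The only genuine subtlety, and the step I expect to require the most care, is confirming in case $(i)$ that invariance of $r-\tau(r)$ really does kill both the third line of Eq.~\eqref{coalg} (where the factor $(r-\tau(r))$ sits inside an expression of the form $\sum_i((\fr_A-\fl_A)(x_i)\otimes\id+\id\otimes\,\fr_A(x_i))(r-\tau(r))\otimes y_i$) and the fourth line (which carries a $\tau$ applied to the invariance expression). One must check that the inner bracketed operator acting on $r-\tau(r)$ matches the operator in Eq.~\eqref{invar} componentwise in the summation indices; once this bookkeeping is done, invariance applied termwise in the $x_i,y_i$ expansion makes both lines vanish. I would verify this matching carefully but would not otherwise belabor the computation, since the bulk of the work has already been discharged in Propositions~\ref{pro:coalg} and~\ref{pro:bialg}.
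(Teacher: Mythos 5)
Your proposal is correct and matches the paper's intended argument: the paper states this corollary without a separate proof, presenting it as an immediate consequence of Propositions \ref{pro:coalg} and \ref{pro:bialg}, and your verification that invariance of $r-\tau(r)$ (resp. symmetry of $r$) together with $[\![r,r]\!]=0$ annihilates every term of Eqs. \eqref{coalg}, \eqref{bial1} and \eqref{bial2} is exactly the intended check.
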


\begin{defi}\label{def:tria}
Let $(A, \cdot)$ be an anti-Leibniz algebra and $r\in A\otimes A$. If $r$ is a solution
of the {\rm aLYBE} in $(A, \cdot)$ and $r-\tau(r)$ is invariant, the induced
anti-Leibniz bialgebra $(A, \cdot, \Delta_{r})$ is called {\rm quasi-triangular};
if $r$ is a symmetric solution of the {\rm aLYBE} in $(A, \cdot)$, the induced
anti-Leibniz bialgebra $(A, \cdot, \Delta_{r})$ is called {\rm triangular}.
\end{defi}

\begin{ex}\label{ex:YBE}
Let $(A=\Bbbk\{e_{1}, e_{2}\}, \cdot)$ be the $2$-dimensional anti-Leibniz algebra
$\Lambda^{2}_{1}$ given in Example \ref{ex:ant-alg}, i.e., $e_{1}e_{1}=e_{2}$. Then
$r=e_{1}\otimes e_{2}+e_{2}\otimes e_{1}$ is a symmetric solution of the {\rm aLYBE}
in $(A, \cdot)$. Thus we get a triangular anti-Leibniz bialgebra $(A, \cdot, \Delta_{r})$,
where the linear map $\Delta_{r}: A\rightarrow A\otimes A$ is given by
$\Delta_{r}(e_{1})=e_{2}\otimes e_{2}$, $\Delta(e_{2})=0$. This triangular anti-Leibniz
bialgebra just the $2$-dimensional anti-Leibniz bialgebra in Example \ref{ex:bialg}.
\end{ex}

For any vector space $V$, we define $\tau_{13}\in\gl(V\otimes V\otimes V)$ by
$\tau_{13}(v_{1}\otimes v_{2}\otimes v_{3})=v_{3}\otimes v_{2}\otimes v_{1}$ for any
$v_{1}, v_{2}, v_{3}\in V$. Then by direct calculations, we have

\begin{pro}\label{pro:dualYBE}
Let $(A, \cdot)$ be an anti-Leibniz algebra and $r=\sum_{i}x_{i}\otimes y_{i}\in A\otimes A$.
Then,
$$
[\![\tau(r),\; \tau(r)]\!]=-\tau_{13}[\![r,\; r]\!].
$$
That is to say, $r$ is a solution of the {\rm aLYBE} in $(A, \cdot)$ if and only if
$\tau(r)$ is a solution of the {\rm aLYBE} in $(A, \cdot)$.
\end{pro}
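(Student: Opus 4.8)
The plan is to prove the identity $[\![\tau(r),\tau(r)]\!]=-\tau_{13}[\![r,r]\!]$ by a direct expansion of both sides, using the explicit four-term form of the bracket recorded in the proof of Proposition~\ref{pro:coalg}. No appeal to the anti-Leibniz axiom is needed: the claim is a purely formal consequence of the combinatorics of the legs of $r\otimes r$, so the whole argument is a bookkeeping computation rather than a structural one.

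First I would substitute $\tau(r)=\sum_i y_i\otimes x_i$ into the defining formula. Treating $\tau(r)=\sum_i u_i\otimes v_i$ with $u_i=y_i$ and $v_i=x_i$ and feeding this into the expansion of the bracket gives
$$[\![\tau(r),\tau(r)]\!]=\sum_{i,j}\big(y_iy_j\otimes x_i\otimes x_j+y_i\otimes x_iy_j\otimes x_j-y_j\otimes y_ix_j\otimes x_i-y_j\otimes y_i\otimes x_ix_j\big).$$
Next I would apply $\tau_{13}$, defined by $\tau_{13}(a\otimes b\otimes c)=c\otimes b\otimes a$, to each summand of
$$[\![r,r]\!]=\sum_{i,j}\big(x_ix_j\otimes y_i\otimes y_j+x_i\otimes y_ix_j\otimes y_j-x_j\otimes x_iy_j\otimes y_i-x_j\otimes x_i\otimes y_iy_j\big),$$
and negate. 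Comparing the two sums term by term shows that they agree with the same index labels $i,j$: for instance $-\tau_{13}(-x_j\otimes x_i\otimes y_iy_j)=y_iy_j\otimes x_i\otimes x_j$ matches the first summand above, and the remaining three correspond in the same fashion. This yields the stated equality.

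Finally, since $\tau_{13}$ is a linear involution and hence invertible, the identity gives $[\![\tau(r),\tau(r)]\!]=0$ if and only if $\tau_{13}[\![r,r]\!]=0$, equivalently $[\![r,r]\!]=0$; this is precisely the asserted equivalence of solutions of the aLYBE. The only point requiring care is correctly tracking the positions of the tensor legs and the signs under $\tau_{13}$, but there is no genuine obstacle here, and in particular the anti-Leibniz identity plays no role.
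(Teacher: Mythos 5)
Your computation is correct: substituting $\tau(r)=\sum_i y_i\otimes x_i$ into the four-term expansion of $[\![\cdot,\cdot]\!]$ and comparing with $-\tau_{13}$ applied to the expansion of $[\![r,r]\!]$ does give an exact term-by-term match (the term coming from $r_{12}r_{13}$ pairs with the $-\tau(r)_{23}\tau(r)_{13}$ term, and so on), and the anti-Leibniz identity is indeed never used. This is precisely the "direct calculation" the paper invokes without writing out, so your argument matches the intended proof.
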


Thus, by Corollary \ref{cor:bia} and Proposition \ref{pro:dualYBE}, we have

\begin{cor}\label{cor:extbia}
If $(A, \cdot, \Delta_{r})$ is a quasi-triangular (resp. triangular) anti-Leibniz bialgebra,
then $(A, \cdot, \Delta_{\tau(r)})$ is also a quasi-triangular (resp. triangular)
anti-Leibniz bialgebra.
\end{cor}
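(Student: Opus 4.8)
The plan is to read off the two defining conditions of Definition \ref{def:tria} and to verify each of them for the element $\tau(r)$ in place of $r$, exploiting only the involutivity $\tau\circ\tau=\id$ together with the transformation law for the bracket $[\![-,-]\!]$ already recorded in Proposition \ref{pro:dualYBE}. The triangular case I would dispose of first: if $r$ is a symmetric solution of the aLYBE, then by definition $\tau(r)=r$, so the comultiplication $\Delta_{\tau(r)}$ defined by \eqref{cobo} literally coincides with $\Delta_{r}$, and $(A,\cdot,\Delta_{\tau(r)})$ is the very same triangular anti-Leibniz bialgebra. Thus the triangular assertion is immediate and requires no computation.

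For the quasi-triangular case the two things to check are that $\tau(r)$ solves the aLYBE and that $\tau(r)-\tau(\tau(r))$ is invariant in the sense of \eqref{invar}. For the first, Proposition \ref{pro:dualYBE} gives
$$
[\![\tau(r), \tau(r)]\!]=-\tau_{13}[\![r, r]\!],
$$
and since the quasi-triangular hypothesis supplies $[\![r, r]\!]=0$, the operator $\tau_{13}$ annihilates the zero element and hence $[\![\tau(r), \tau(r)]\!]=0$; so $\tau(r)$ is again an anti-Leibniz $r$-matrix. For the second, I would use $\tau\circ\tau=\id$ to rewrite $\tau(r)-\tau(\tau(r))=\tau(r)-r=-(r-\tau(r))$, and then observe that the invariance condition \eqref{invar} is \emph{linear} in its argument: for each fixed $a\in A$ it asks that an element lie in the kernel of the fixed endomorphism $(\fr_{A}-\fr_{A})(a)\otimes\id+\id\otimes\,\fr_{A}(a)$ of $A\otimes A$, and the intersection of these kernels over all $a$ is a linear subspace. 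Since $r-\tau(r)$ is invariant by hypothesis, its negative $-(r-\tau(r))=\tau(r)-\tau(\tau(r))$ is invariant as well. With both conditions established, Definition \ref{def:tria} (equivalently Corollary \ref{cor:bia}$(i)$) yields that $(A,\cdot,\Delta_{\tau(r)})$ is quasi-triangular.

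There is essentially no obstacle in this argument; it is a formal bookkeeping check. The only two points demanding care are, first, recording explicitly that invariance is a linear rather than affine or quadratic condition, so that passing from $r-\tau(r)$ to $-(r-\tau(r))$ preserves it, and second, invoking Proposition \ref{pro:dualYBE} to transport the aLYBE across $\tau$ rather than attempting to recompute $[\![\tau(r),\tau(r)]\!]$ by hand. Both the triangular and quasi-triangular conclusions then follow directly, completing the corollary.
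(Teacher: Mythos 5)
Your proposal is correct and follows essentially the same route as the paper, which likewise derives the corollary directly from Proposition \ref{pro:dualYBE} (to transport the aLYBE across $\tau$) together with Corollary \ref{cor:bia}, using that invariance of $r-\tau(r)$ is a linear condition preserved under negation and that the triangular case is trivial since $\tau(r)=r$. The only blemish is a typo in your invariance operator, where $(\fr_{A}-\fr_{A})(a)\otimes\id$ should read $(\fr_{A}-\fl_{A})(a)\otimes\id$; this does not affect the argument.
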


For the construction of anti-Leibniz bialgebras, we have

\begin{thm}\label{thm:double-coba}
Let $(A, \cdot, \Delta)$ be an anti-Leibniz bialgebra. Then there is a canonical
anti-Leibniz bialgebra structure on $A\oplus A^{\ast}$ such that both
$\iota_{A}: A\rightarrow A\oplus A^{\ast}$, $a\mapsto (a, 0)$, and
$\iota_{A^{\ast}}: A^{\ast}\rightarrow A \oplus A^{\ast}$, $\xi\mapsto(0, \xi)$,
are homomorphisms of anti-Leibniz bialgebras, where the anti-Leibniz bialgebra
structure on $A\oplus A^{\ast}$ is $(A\oplus A^{\ast}, \ast, \Delta_{\tilde{r}})$,
$\Delta_{\tilde{r}}$ is given by Eq. \eqref{cobo} for some $\tilde{r}\in
(A\oplus A^{\ast})\otimes(A\oplus A^{\ast})$.
\end{thm}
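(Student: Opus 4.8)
The plan is to recognize $A\oplus A^{\ast}$ as the anti-Leibniz algebra of the Manin triple attached to $(A,\cdot,\Delta)$, and to take for $\tilde r$ the canonical element associated to the standard invariant form. By Theorem \ref{thm:equ}, the bialgebra $(A,\cdot,\Delta)$ is equivalent to the standard Manin triple $((A\oplus A^{\ast},\mathfrak{B}_{d}),A,A^{\ast})$; thus $A\oplus A^{\ast}$ carries the crossed product anti-Leibniz structure $\ast=A\bowtie A^{\ast}$ of Proposition \ref{pro:rep-dir} (with $\kl_{A}=\fl_{A}^{\ast}$, $\kr_{A}=\fl_{A}^{\ast}-\fr_{A}^{\ast}$, $\kl_{A^{\ast}}=\fl_{A^{\ast}}^{\ast}$, $\kr_{A^{\ast}}=\fl_{A^{\ast}}^{\ast}-\fr_{A^{\ast}}^{\ast}$), in which $(A,\cdot)$ and $(A^{\ast},\Delta^{\ast})$ are subalgebras and $\mathfrak{B}_{d}$ of \eqref{biform} is a nondegenerate skew-symmetric invariant bilinear form. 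Fixing a basis $\{e_{i}\}$ of $A$ with dual basis $\{e_{i}^{\ast}\}$ of $A^{\ast}$, I would set
\[
\tilde r=\sum_{i}(e_{i},0)\otimes(0,e_{i}^{\ast})\in(A\oplus A^{\ast})\otimes(A\oplus A^{\ast}),
\]
the image of the element of $A\otimes A^{\ast}$ representing $\id_{A}$.

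With $(A\oplus A^{\ast},\ast)$ and $\tilde r$ in hand, the strategy is to invoke Corollary \ref{cor:bia}(i), which requires two checks. First, $\tilde r-\tau(\tilde r)=\sum_{i}\big((e_{i},0)\otimes(0,e_{i}^{\ast})-(0,e_{i}^{\ast})\otimes(e_{i},0)\big)$ coincides up to sign with the canonical element determined by $\mathfrak{B}_{d}$, so its invariance in the sense of \eqref{invar} is a direct transcription of the skew-symmetry and invariance of $\mathfrak{B}_{d}$ recorded just before Lemma \ref{lem:dual}. Second, and this is the crux, one must verify the anti-Leibniz Yang-Baxter equation $[\![\tilde r,\tilde r]\!]=0$ of \eqref{YBE} in $(A\oplus A^{\ast},\ast)$. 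Expanding $r_{12}r_{13}+r_{12}r_{23}-r_{23}r_{12}-r_{23}r_{13}$ and writing out the mixed products of $A\bowtie A^{\ast}$ turns every tensor factor into an application of the coregular actions $\fl_{A}^{\ast}$, $\fl_{A}^{\ast}-\fr_{A}^{\ast}$, $\fl_{A^{\ast}}^{\ast}$, $\fl_{A^{\ast}}^{\ast}-\fr_{A^{\ast}}^{\ast}$; grouping the resulting terms, the equation collapses, component by component, to the matched pair relations \eqref{matt1}--\eqref{matt3} for $(A,A^{\ast},\fl_{A}^{\ast},\fl_{A}^{\ast}-\fr_{A}^{\ast},\fl_{A^{\ast}}^{\ast},\fl_{A^{\ast}}^{\ast}-\fr_{A^{\ast}}^{\ast})$, which hold because $(A,\cdot,\Delta)$ is a bialgebra (Proposition \ref{pro:bi-mat}). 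I expect this bookkeeping to be the main obstacle; it is the anti-Leibniz analogue of the classical fact that the canonical element of a Manin triple is an $r$-matrix. Granting it, Corollary \ref{cor:bia}(i) produces the quasi-triangular anti-Leibniz bialgebra $(A\oplus A^{\ast},\ast,\Delta_{\tilde r})$ with $\Delta_{\tilde r}$ given by \eqref{cobo}.

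Finally I would check that $\iota_{A}$ and $\iota_{A^{\ast}}$ are morphisms of anti-Leibniz bialgebras. Both are algebra homomorphisms since $A$ and $A^{\ast}$ are subalgebras of $(A\oplus A^{\ast},\ast)$, so only compatibility with comultiplication remains. Computing $\Delta_{\tilde r}(a,0)$ from \eqref{cobo} (here and below $\fl,\fr$ denote the left and right multiplications of $(A\oplus A^{\ast},\ast)$): the two summands of $((\fr-\fl)(a,0)\otimes\id+\id\otimes\,\fr(a,0))(\tilde r)$ contribute an $A\otimes A$-component equal, after pairing against $A^{\ast}\otimes A^{\ast}$ and using $\langle\xi\cdot'\eta,a\rangle=\langle\xi\otimes\eta,\Delta(a)\rangle$, to $\Delta(a)$, while their $A\otimes A^{\ast}$-parts are $\sum_{i}(e_{i}a-ae_{i})\otimes e_{i}^{\ast}$ and $\sum_{i}e_{i}\otimes(\fl_{A}^{\ast}-\fr_{A}^{\ast})(a)(e_{i}^{\ast})$, which are negatives of one another, and the $A^{\ast}\otimes A$ and $A^{\ast}\otimes A^{\ast}$ components vanish because both terms have first tensor leg in $A$. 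Hence $\Delta_{\tilde r}(a,0)=(\iota_{A}\otimes\iota_{A})\Delta(a)$. By the symmetric computation with the roles of $A$ and $A^{\ast}$ interchanged, $\Delta_{\tilde r}(0,\xi)=(\iota_{A^{\ast}}\otimes\iota_{A^{\ast}})\bar\Delta(\xi)$, where $\bar\Delta$ is the dual cobracket of Corollary \ref{cor:dualbia}. This gives the compatibility of both inclusions with the coalgebra structures and completes the proof; a final remark that $\tilde r$, being the element representing $\mathfrak{B}_{d}$, is basis-independent shows the resulting bialgebra structure is canonical.
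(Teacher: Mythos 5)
Your proposal is correct and follows essentially the same route as the paper: the same canonical element $\tilde r=\sum_i e_i\otimes f_i$ viewed in the crossed product $A\bowtie A^{\ast}$, verification that $\tilde r$ is a solution of the aLYBE with $\tilde r-\tau(\tilde r)$ invariant, appeal to Corollary \ref{cor:bia}, and the same direct computation showing $\Delta_{\tilde r}\circ\iota_{A}=(\iota_{A}\otimes\iota_{A})\Delta$ (and dually for $\iota_{A^{\ast}}$). One small correction that does not affect validity: $[\![\tilde r,\tilde r]\!]=0$ does not ``collapse to'' the matched-pair relations \eqref{matt1}--\eqref{matt3}; in the paper's pairing computation it cancels formally from the definitions of the mixed products in $A\bowtie A^{\ast}$ alone, the bialgebra hypothesis being needed only to guarantee that $\ast$ is an anti-Leibniz product in the first place.
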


\begin{proof}
Let $r\in A\otimes A^{\ast}\subset(A\oplus A^{\ast})\otimes(A\oplus A^{\ast})$ correspond
to the identity map $\id: A \rightarrow A$. That is, $\tilde{r}=\sum_{i}e_{i}\otimes f_{i}$,
where $\{e_{1}, e_{2}, \cdots, e_{n}\}$ is a basis of $A$ and $\{f_{1}, f_{2}, \cdots,
f_{n}\}$ is its dual basis. Since $(A, \cdot, \Delta)$ is an anti-Leibniz bialgebra,
there is an anti-Leibniz algebra structure on $A^{\ast}$, i.e.,
$\xi_{1}\xi_{2}=\Delta^{\ast}(\xi_{1}\otimes\xi_{2})$, and an anti-Leibniz
algebra structure $\ast$ on $A\oplus A^{\ast}$ which is given by
$$
(a_{1}, \xi_{1})\ast(a_{2}, \xi_{2})=\big(a_{1}a_{2}+\fl_{A^{\ast}}^{\ast}(\xi_{1})(a_{2})
+(\fl_{A^{\ast}}^{\ast}-\fr_{A^{\ast}}^{\ast})(\xi_{2})(a_{1}),\ \
\xi_{1}\xi_{2}+\fl_{A}^{\ast}(a_{1})(\xi_{2})+(\fl_{A}^{\ast}-\fr_{A}^{\ast})
(a_{2})(\xi_{1})\big),
$$
for any $a_{1}, a_{2}\in A$ and $\xi_{1}, \xi_{2}\in A^{\ast}$.
We denote $A\bowtie A^{\ast}:=(A\oplus A^{\ast}, \ast)$ and show that there is a
quasi-triangular anti-Leibniz bialgebra structure on $A\bowtie A^{\ast}$, i.e., Eqs.
\eqref{YBE} and \eqref{invar} hold. Note that for any $(e_{s}, f_{t})$,
$(e_{k}, f_{l})$, $(e_{p}, f_{q})\in A\oplus A^{\ast}$,
\begin{align*}
&\;\langle\tilde{r}_{12}\tilde{r}_{13}+\tilde{r}_{12}\tilde{r}_{23}
-\tilde{r}_{23}\tilde{r}_{12}-\tilde{r}_{23}\tilde{r}_{13},\ \
(e_{s}, f_{t})\otimes(e_{k}, f_{l})\otimes(e_{p}, f_{q})\rangle\\
=&\;\sum_{i,j}\langle e_{i}\ast e_{j}\otimes f_{i}\otimes f_{j}
+e_{i}\otimes f_{i}\ast e_{j}\otimes f_{j}-e_{j}\otimes e_{i}\ast f_{j}\otimes f_{i}
-e_{j}\otimes e_{i}\otimes f_{i}\ast f_{j},\\[-5mm]
&\qquad\qquad\qquad\qquad\qquad\qquad\qquad\qquad\qquad\qquad
(e_{s}, f_{t})\otimes(e_{k}, f_{l})\otimes(e_{p}, f_{q})\rangle\\
=&\;\sum_{i,j}\langle f_{t}, e_{i}e_{j}\rangle\langle f_{i}, e_{k}\rangle
\langle f_{j}, e_{p}\rangle+\langle f_{t}, e_{i}\rangle\langle f_{i}f_{l}, e_{j}\rangle
\langle f_{i}, e_{p}\rangle+\langle f_{t}, e_{i}\rangle\langle f_{l}, e_{j}e_{k}\rangle
\langle f_{j}, e_{p}\rangle\\[-5mm]
&\qquad-\langle f_{t}, e_{i}\rangle\langle f_{l}, e_{k}e_{j}\rangle
\langle f_{j}, e_{p}\rangle-\langle f_{t},  e_{j}\rangle\langle f_{j}f_{l}, e_{i}\rangle
\langle f_{i}, e_{p}\rangle+\langle f_{t},  e_{j}\rangle\langle f_{l}f_{j}, e_{i}\rangle
\langle f_{i}, e_{p}\rangle\\[-1mm]
&\qquad-\langle f_{t},  e_{j}\rangle\langle f_{j}, e_{i}e_{k}\rangle
\langle f_{i}, e_{p}\rangle-\langle f_{t},  e_{j}\rangle\langle f_{l}, e_{i}\rangle
\langle f_{i}f_{j}, e_{p}\rangle\\
=&\;0,
\end{align*}
we get $[\![\tilde{r}, \tilde{r}]\!]=0$. Similarly, one can check that
$((\fr_{A\oplus A^{\ast}}-\fl_{A\oplus A^{\ast}})(x)\otimes\id+\id\otimes\,\fr_{A
\oplus A^{\ast}}(x))(\tilde{r}-\tau(\tilde{r}))=0$ for any $x\in A\bowtie A^{\ast}$.
Hence, there is a quasi-triangular anti-Leibniz bialgebra structure on
$A\bowtie A^{\ast}$ by Corollary \ref{cor:bia}.

For any $e_{j}\in A$, note that
\begin{align*}
\Delta_{\tilde{r}}(e_{j})&=((\fr_{A\oplus A^{\ast}}-\fl_{A\oplus A^{\ast}})(e_{j})
\otimes\id+\id\otimes\,\fr_{A\oplus A^{\ast}}(e_{j}))\Big(\sum_{i}e_{i}
\otimes f_{i}\Big)\\[-2mm]
&=\sum_{i, k}e_{i}e_{j}\otimes f_{i}-e_{j}e_{i}\otimes f_{i}
+e_{i}\otimes\langle f_{i}f_{k}, e_{j}\rangle e_{k}
+e_{i}\otimes\langle f_{i}, e_{j}e_{k}\rangle f_{k}
-e_{i}\otimes\langle f_{i},e_{k}e_{j}\rangle f_{k}\\[-2mm]
&=\sum_{i, k}\langle f_{i}f_{k}, e_{j}\rangle e_{i}\otimes e_{k}\\[-2mm]
&=\Delta(e_{j}).
\end{align*}
Therefore, $\iota_{A}: A\rightarrow A\oplus A^{\ast}$ is a homomorphism of
anti-Leibniz bialgebras. Similarly, $\iota_{A^{\ast}}: A^{\ast}\rightarrow A
\oplus A^{\ast}$ is also a homomorphism of
anti-Leibniz bialgebras. The proof is finished.
\end{proof}

With the anti-Leibniz bialgebra structure given in the theorem above,
$(A\oplus A^{\ast}, \ast, \Delta_{\tilde{r}})$ is called the {\it double of anti-Leibniz
bialgebra $(A, \cdot, \Delta)$}.

\begin{ex}\label{ex:double}
Let $(A=\Bbbk\{e_{1}, e_{2}\}, \cdot, \Delta)$ be the $2$-dimensional anti-Leibniz
bialgebra in Example \ref{ex:bialg}, i.e., $e_{1}e_{1}=e_{2}$, $\Delta(e_{1})
=e_{2}\otimes e_{2}$. Denote $\{f_{1}, f_{2}\}$ the dual basis
of $\{e_{1}, e_{2}\}$ and $\tilde{r}=e_{1}\otimes f_{1}+e_{2}\otimes f_{2}$.
Then we get a $4$-dimensional anti-Leibniz bialgebra $(D(A)=\Bbbk\{e_{1}, e_{2},
f_{1}, f_{2}\}, \ast, \Delta_{\tilde{r}})$, where
\begin{align*}
&e_{1}\ast e_{1}=e_{2}, \qquad f_{2}\ast f_{2}=f_{1},\qquad
e_{1}\ast f_{2}=f_{1}, \qquad f_{2}\ast e_{1}=e_{2},\\
&\qquad\qquad\Delta_{\tilde{r}}(e_{1})=e_{2}\otimes e_{2}, \qquad\qquad
\Delta_{\tilde{r}}(f_{2})=f_{1}\otimes f_{1}.
\end{align*}
\end{ex}

\subsection{Relative Rota-Baxter operator and anti-Leibniz Yang-Baxter equation}
\label{subsec:o-oper}
In this subsection, we are discussing the relationship between (relative) Rota-Baxter
operators on an anti-Leibniz algebra and some special resolutions of the {\rm aLYBE},
and show that each relative Rota-Baxter operator induces an anti-Leibniz bialgebra.
First, we recall the definition of relative Rota-Baxter operator.

\begin{defi}\label{def:ooper}
Let $(A, \cdot)$ be an anti-Leibniz algebra and $(M, \kl, \kr)$ be a bimodule over
$(A, \cdot)$. A linear map $R: M\rightarrow A$ is called a {\rm relative Rota-Baxter
operator of $(A, \cdot)$ associated to $(M, \kl, \kr)$} if $R$ satisfies
$$
R(m_{1})R(m_{2})=R\Big(\kl(R(m_{1}))(m_{2})+\kr(R(m_{2}))(m_{1})\Big),
$$
for any $m_{1}, m_{2}\in M$. In particular, if the bimodule $(M, \kl, \kr)$
is the regular bimodule $(A, \fl_{A}, \fr_{A})$, the operator $R$ is called
a {\rm Rota-Baxter operator} on $(A, \cdot)$.
\end{defi}

Let $V$ be a vector space. For all $r\in V\otimes V$, there is a linear map $r^{\sharp}:
V^{\ast}\rightarrow V$ in the following way:
$$
\langle\xi_{1}\otimes\xi_{2},\; r\rangle=\langle r^{\sharp}(\xi_{1}),\; \xi_{2}\rangle ,
$$
for any $\xi_{1}, \xi_{2}\in V^{\ast}$. If $r=\sum_{i}x_{i}\otimes y_{i}\in V\otimes V$,
it is easy to see that $r^{\sharp}(\xi)=\sum_{i}\langle\xi, x_{i}\rangle y_{i}$, for any
$\xi\in V^{\ast}$. We say that $r\in V\otimes V$ is {\it nondegenerate}
if the map $r^{\sharp}$ is a linear isomorphism.

\begin{pro}\label{pro:r-map}
Let $(A, \cdot)$ be an anti-Leibniz algebra and $r=\sum_{i}x_{i}\otimes y_{i}\in A\otimes A$.
Then $r$ is a solution of the {\rm aLYBE} in $(A, \cdot)$ if and only if
for any $\xi_{1}, \xi_{2}\in A^{\ast}$,
$$
\tau(r)^{\sharp}(\xi_{1})\tau(r)^{\sharp}(\xi_{2})
=\tau(r)^{\sharp}\Big(\fl_{A}^{\ast}(r^{\sharp}(\xi_{1}))(\xi_{2})
+(\fl_{A}^{\ast}-\fr_{A}^{\ast})(\tau(r)^{\sharp}(\xi_{2}))(\xi_{1})\Big).
$$
\end{pro}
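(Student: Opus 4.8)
The plan is to turn the asserted operator identity, which is an equality of two elements of $A$, into a scalar identity by pairing both sides with an arbitrary $\xi_{3}\in A^{\ast}$, and then to recognize the resulting expression as the pairing of $[\![r,r]\!]$ against a suitably permuted triple of covectors. Throughout I would use the explicit formulas $r^{\sharp}(\xi)=\sum_{i}\langle\xi, x_{i}\rangle y_{i}$ and $\tau(r)^{\sharp}(\xi)=\sum_{i}\langle\xi, y_{i}\rangle x_{i}$, together with the defining relations $\langle\fl_{A}^{\ast}(a)(\xi), b\rangle=\langle\xi, ab\rangle$ and $\langle\fr_{A}^{\ast}(a)(\xi), b\rangle=\langle\xi, ba\rangle$ for the transpose maps of the coregular bimodule $(A^{\ast}, \fl_{A}^{\ast}, \fl_{A}^{\ast}-\fr_{A}^{\ast})$.

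First I would expand the left-hand side. Since $\tau(r)^{\sharp}(\xi_{1})=\sum_{i}\langle\xi_{1}, y_{i}\rangle x_{i}$ and likewise for $\xi_{2}$, pairing the product with $\xi_{3}$ gives
$$\langle\tau(r)^{\sharp}(\xi_{1})\tau(r)^{\sharp}(\xi_{2}),\; \xi_{3}\rangle=\sum_{i,j}\langle\xi_{1}, y_{i}\rangle\langle\xi_{2}, y_{j}\rangle\langle\xi_{3}, x_{i}x_{j}\rangle.$$
Next, writing $a=r^{\sharp}(\xi_{1})$ and $c=\tau(r)^{\sharp}(\xi_{2})$, I would evaluate the covector $\eta=\fl_{A}^{\ast}(a)(\xi_{2})+(\fl_{A}^{\ast}-\fr_{A}^{\ast})(c)(\xi_{1})$ on each basis output $y_{k}$ by unwinding the transpose relations, then apply $\tau(r)^{\sharp}$ and pair with $\xi_{3}$. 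This presents the right-hand side as $T_{1}+T_{2}-T_{3}$, where after relabeling the summation indices
$$T_{1}=\sum_{i,j}\langle\xi_{1}, x_{i}\rangle\langle\xi_{2}, y_{i}y_{j}\rangle\langle\xi_{3}, x_{j}\rangle,\quad T_{2}=\sum_{i,j}\langle\xi_{2}, y_{i}\rangle\langle\xi_{1}, x_{i}y_{j}\rangle\langle\xi_{3}, x_{j}\rangle,\quad T_{3}=\sum_{i,j}\langle\xi_{2}, y_{j}\rangle\langle\xi_{1}, y_{i}x_{j}\rangle\langle\xi_{3}, x_{i}\rangle.$$

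The crux is then to match these against the pairing of the double bracket. I would pair $[\![r,r]\!]=r_{12}r_{13}+r_{12}r_{23}-r_{23}r_{12}-r_{23}r_{13}$ against the cyclically shifted triple $\xi_{3}\otimes\xi_{1}\otimes\xi_{2}$ (rather than $\xi_{1}\otimes\xi_{2}\otimes\xi_{3}$) and read off its four terms from the expansion in the statement. After the index relabeling the term $r_{12}r_{13}$ reproduces the left-hand side above, the term $r_{12}r_{23}$ gives $+T_{3}$, the term $-r_{23}r_{12}$ gives $-T_{2}$, and the term $-r_{23}r_{13}$ gives $-T_{1}$. Summing yields the key identity
$$\langle[\![r,r]\!],\; \xi_{3}\otimes\xi_{1}\otimes\xi_{2}\rangle=\langle\tau(r)^{\sharp}(\xi_{1})\tau(r)^{\sharp}(\xi_{2})-\tau(r)^{\sharp}(\eta),\; \xi_{3}\rangle,$$
valid for all $\xi_{1}, \xi_{2}, \xi_{3}\in A^{\ast}$. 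Because the pairing of $A^{\otimes 3}$ with $(A^{\ast})^{\otimes 3}$ is nondegenerate, the element $[\![r,r]\!]$ vanishes if and only if the displayed operator identity holds for all $\xi_{1}, \xi_{2}$, which is exactly the claimed equivalence.

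The main obstacle I anticipate is purely combinatorial: pinning down the correct permutation of tensor slots (the cyclic shift sending the test triple $\xi_{1}\otimes\xi_{2}\otimes\xi_{3}$ to the pairing order $\xi_{3}\otimes\xi_{1}\otimes\xi_{2}$) and tracking the signs, so that the three $T$-terms produced by the inner application of $\tau(r)^{\sharp}$ line up precisely with the last three terms of $[\![r,r]\!]$. The appearance of both $r^{\sharp}$ and $\tau(r)^{\sharp}$ in the operator equation, rather than a single map, is what forces this bookkeeping, since the two tensor legs of $r$ feed the two arguments of the product through different transpose maps.
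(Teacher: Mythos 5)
Your proposal is correct and follows essentially the same route as the paper's proof: pair both sides against an arbitrary $\xi_{3}$, expand $T_{1},T_{2},T_{3}$ via the transpose relations, and identify the difference with $\langle[\![r,r]\!],\xi_{3}\otimes\xi_{1}\otimes\xi_{2}\rangle$ (the paper phrases this same permutation as $(\id\otimes\tau)(\tau\otimes\id)$ applied to $[\![r,r]\!]$ paired against $\xi_{1}\otimes\xi_{2}\otimes\xi_{3}$), then conclude by nondegeneracy of the pairing. The term-by-term matching you describe checks out.
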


\begin{proof}
For any $\xi_{1}, \xi_{2}, \xi_{3}\in A^{\ast}$, we have
\begin{align*}
\langle\tau(r)^{\sharp}(\xi_{1})\tau(r)^{\sharp}(\xi_{2}),\; \xi_{3}\rangle
&=\langle r,\; \fr_{A}^{\ast}(\tau(r)^{\sharp}(\xi_{2}))(\xi_{3})\otimes\xi_{1}\rangle
=\sum_{i}\langle x_{i},\; \fr_{A}^{\ast}(\tau(r)^{\sharp}(\xi_{2}))(\xi_{3})\rangle
\langle y_{i}, \xi_{1}\rangle\\[-2mm]
&=\sum_{i}\langle r,\; \fl_{A}^{\ast}(x_{i})(\xi_{3})\otimes\xi_{2}\rangle
\langle y_{i}, \xi_{1}\rangle
=\sum_{i,j}\langle x_{j},\; \fl_{A}^{\ast}(x_{i})(\xi_{3})\rangle
\langle y_{j}, \xi_{2}\rangle\langle y_{i}, \xi_{1}\rangle\\[-2mm]
&=\sum_{i,j}\langle y_{i}\otimes y_{j}\otimes x_{i}x_{j},\;
\xi_{1}\otimes\xi_{2}\otimes\xi_{3}\rangle.
\end{align*}
Similarly, we have
\begin{align*}
\langle\tau(r)^{\sharp}\big(\fl_{A}^{\ast}(r^{\sharp}(\xi_{1}))
(\xi_{2})\big),\; \xi_{3}\rangle
&=\sum_{i,j}\langle x_{j}\otimes y_{j}y_{i}\otimes x_{i},\;
\xi_{1}\otimes\xi_{2}\otimes\xi_{3}\rangle,\\[-2mm]
\langle\tau(r)^{\sharp}\big(\fl_{A}^{\ast}(\tau(r)^{\sharp}
(\xi_{2}))(\xi_{1})\big),\; \xi_{3}\rangle
&=\sum_{i,j}\langle x_{j}y_{i}\otimes y_{j}\otimes x_{i},\;
\xi_{1}\otimes\xi_{2}\otimes\xi_{3}\rangle,\\[-2mm]
\langle\tau(r)^{\sharp}\big(\fr_{A}^{\ast}(\tau(r)^{\sharp}
(\xi_{2}))(\xi_{1})\big),\; \xi_{3}\rangle
&=\sum_{i,j}\langle y_{i}x_{j}\otimes y_{j}\otimes x_{i},\;
\xi_{1}\otimes\xi_{2}\otimes\xi_{3}\rangle.
\end{align*}
Hence, we have
\begin{align*}
&\;\langle\tau(r)^{\sharp}(\xi_{1})\tau(r)^{\sharp}(\xi_{2})
-\tau(r)^{\sharp}\big(\fl_{A}^{\ast}(r^{\sharp}(\xi_{1}))(\xi_{2})
+(\fl_{A}^{\ast}-\fr_{A}^{\ast})(\tau(r)^{\sharp}(\xi_{2}))(\xi_{1})\big),\; \xi_{3}\rangle\\
&=\sum_{i,j}\langle y_{i}\otimes y_{j}\otimes x_{i}x_{j}-x_{j}\otimes y_{j}y_{i}\otimes x_{i}
-x_{j}y_{i}\otimes y_{j}\otimes x_{i}+y_{i}x_{j}\otimes y_{j}\otimes x_{i},\;
\xi_{1}\otimes\xi_{2}\otimes\xi_{3}\rangle,\\[-2mm]
&=\sum_{i,j}\langle(\id\otimes\tau)\big((\tau\otimes\id)([\![r, r]\!])\big),\;
\xi_{1}\otimes\xi_{2}\otimes\xi_{3}\rangle.
\end{align*}
That is to say, this proposition hold.
\end{proof}

If $r\in A\otimes A$ is symmetric, i.e., $r=\tau(r)$, then we have

\begin{cor}\label{cor:r-map1}
Let $(A, \cdot)$ be an anti-Leibniz algebra and $r\in A\otimes A$ be symmetric.
Then $r$ is a solution of the {\rm aLYBE} in $(A, \cdot)$ if and only if $r^{\sharp}$
is a relative Rota-Baxter operator of $(A, \cdot)$ associated to the coregular
bimodule $(A^{\ast}, \fl_{A}^{\ast}, \fl_{A}^{\ast}-\fr_{A}^{\ast})$.
\end{cor}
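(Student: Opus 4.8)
The plan is to deduce this corollary directly from Proposition \ref{pro:r-map} by specializing to the symmetric case, with essentially no new computation required. First I would observe that when $r=\tau(r)$ the two maps attached to $r$ coincide. Writing $r=\sum_{i}x_{i}\otimes y_{i}$, symmetry means $\sum_{i}x_{i}\otimes y_{i}=\sum_{i}y_{i}\otimes x_{i}$, so that for every $\xi\in A^{\ast}$ we have $r^{\sharp}(\xi)=\sum_{i}\langle\xi,x_{i}\rangle y_{i}=\sum_{i}\langle\xi,y_{i}\rangle x_{i}=\tau(r)^{\sharp}(\xi)$; hence $\tau(r)^{\sharp}=r^{\sharp}$.

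Substituting $\tau(r)^{\sharp}=r^{\sharp}$ into the equivalence of Proposition \ref{pro:r-map}, the condition that $r$ be a solution of the aLYBE in $(A,\cdot)$ becomes exactly
$$
r^{\sharp}(\xi_{1})r^{\sharp}(\xi_{2})
=r^{\sharp}\Big(\fl_{A}^{\ast}(r^{\sharp}(\xi_{1}))(\xi_{2})
+(\fl_{A}^{\ast}-\fr_{A}^{\ast})(r^{\sharp}(\xi_{2}))(\xi_{1})\Big),
$$
for all $\xi_{1},\xi_{2}\in A^{\ast}$.

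It then remains only to match this identity with the defining equation of a relative Rota-Baxter operator. By Definition \ref{def:ooper}, a linear map $R\colon M\rightarrow A$ associated to a bimodule $(M,\kl,\kr)$ is a relative Rota-Baxter operator precisely when $R(m_{1})R(m_{2})=R(\kl(R(m_{1}))(m_{2})+\kr(R(m_{2}))(m_{1}))$. Taking $M=A^{\ast}$ with the coregular bimodule structure $(A^{\ast},\fl_{A}^{\ast},\fl_{A}^{\ast}-\fr_{A}^{\ast})$, that is $\kl=\fl_{A}^{\ast}$ and $\kr=\fl_{A}^{\ast}-\fr_{A}^{\ast}$, and setting $R=r^{\sharp}$, the defining equation is literally the displayed identity above upon writing $m_{1}=\xi_{1}$ and $m_{2}=\xi_{2}$. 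Thus the two conditions are identical, and the stated equivalence follows at once.

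Since the argument is a pure specialization of Proposition \ref{pro:r-map}, there is no substantive obstacle to overcome; the only point requiring a moment of care is the verification that $\tau(r)^{\sharp}=r^{\sharp}$ under symmetry, which I isolate as the first step so that the remaining comparison reduces to reading off the definition of the coregular bimodule and of a relative Rota-Baxter operator.
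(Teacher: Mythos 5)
Your proposal is correct and is exactly the argument the paper intends: the corollary is stated as the immediate specialization of Proposition \ref{pro:r-map} to the symmetric case, where $\tau(r)^{\sharp}=r^{\sharp}$ turns the equivalence of that proposition into the defining identity of a relative Rota-Baxter operator for the coregular bimodule $(A^{\ast}, \fl_{A}^{\ast}, \fl_{A}^{\ast}-\fr_{A}^{\ast})$. Nothing is missing.
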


Let $(A, \cdot, \mathfrak{B})$ be an anti-Leibniz algebra with a nondegenerate
skew-symmetric invariant bilinear form. Then under the natural bijection $\Hom(A\otimes A,
\Bbbk)\cong\Hom(A, A^{\ast})$, the bilinear form $\mathfrak{B}(-,-)$ corresponds to a
linear map $\varphi: A\rightarrow A^{\ast}$, which is given by $\langle\varphi(a_{1}),\;
a_{2}\rangle=\mathfrak{B}(a_{1}, a_{2})$, for any $a_{1}, a_{2}\in A$.
Define a $2$-tensor $r_{\mathfrak{B}}$ to be the tensor form of $\varphi^{-1}$, i.e.,
$\langle\xi_{1}\otimes\xi_{2},\; r_{\mathfrak{B}}\rangle=\langle\varphi^{-1}
(\xi_{1}),\; \xi_{2}\rangle$. Then, by direct calculations, we have

\begin{lem}\label{lem:inv-inv}
Let $(A, \cdot, \mathfrak{B})$ be an anti-Leibniz algebra with a nondegenerate
bilinear form. Then $\mathfrak{B}(-,-)$ is skew-symmetric invariant if and only if
$r_{\mathfrak{B}}$ is skew-symmetric invariant.
\end{lem}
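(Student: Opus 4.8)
The plan is to transport every statement about the $2$-tensor $r_{\mathfrak{B}}$ to a statement about the form $\mathfrak{B}$ through the single identity $r_{\mathfrak{B}}^{\sharp}=\varphi^{-1}$, which is immediate from the two defining relations $\langle\xi_{1}\otimes\xi_{2},\, r_{\mathfrak{B}}\rangle=\langle\varphi^{-1}(\xi_{1}),\, \xi_{2}\rangle$ and $\langle\xi_{1}\otimes\xi_{2},\, r\rangle=\langle r^{\sharp}(\xi_{1}),\, \xi_{2}\rangle$. Since $\mathfrak{B}$ is nondegenerate, $\varphi$ is a bijection, so I may write each covector as $\xi_{i}=\varphi(b_{i})$ with $b_{i}\in A$ and apply repeatedly $\langle\varphi(b),\, c\rangle=\mathfrak{B}(b,c)$. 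I would dispose of the skew-symmetry equivalence unconditionally, and then establish the invariance equivalence under the standing (and already-equivalent) hypothesis of skew-symmetry; since every step is an equivalence, both implications of the lemma come out at once, and this is exactly why the two properties are bundled in the statement.

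For skew-symmetry, I first note that $\langle\xi_{1}\otimes\xi_{2},\, \tau(r_{\mathfrak{B}})\rangle=\langle\xi_{2}\otimes\xi_{1},\, r_{\mathfrak{B}}\rangle=\langle\varphi^{-1}(\xi_{2}),\, \xi_{1}\rangle$. Writing $\xi_{i}=\varphi(b_{i})$, this reads $\mathfrak{B}(b_{2},b_{1})$, whereas $\langle\xi_{1}\otimes\xi_{2},\, r_{\mathfrak{B}}\rangle=\mathfrak{B}(b_{1},b_{2})$. Hence $\tau(r_{\mathfrak{B}})=-r_{\mathfrak{B}}$ holds if and only if $\mathfrak{B}(b_{2},b_{1})=-\mathfrak{B}(b_{1},b_{2})$ for all $b_{1},b_{2}$, i.e. $\mathfrak{B}$ is skew-symmetric. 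This part is routine.

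The substantive part is the invariance. Pairing the defining equation \eqref{invar} for $r_{\mathfrak{B}}$ against an arbitrary $\xi_{1}\otimes\xi_{2}$ and moving the operators onto the covectors through the transpose identity $\langle\psi^{\ast}(a)(\xi),v\rangle=\langle\xi,\psi(a)(v)\rangle$, I would rewrite the invariance of $r_{\mathfrak{B}}$ as
\begin{align*}
\langle r_{\mathfrak{B}}^{\sharp}\big((\fr_{A}^{\ast}-\fl_{A}^{\ast})(a)(\xi_{1})\big),\, \xi_{2}\rangle
+\langle r_{\mathfrak{B}}^{\sharp}(\xi_{1}),\, \fr_{A}^{\ast}(a)(\xi_{2})\rangle=0
\end{align*}
for all $a\in A$ and $\xi_{1},\xi_{2}\in A^{\ast}$. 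Substituting $r_{\mathfrak{B}}^{\sharp}=\varphi^{-1}$ and $\xi_{i}=\varphi(b_{i})$, the second term becomes $\mathfrak{B}(b_{2},\, b_{1}a)$ directly, while the first term, after using the established skew-symmetry of $\mathfrak{B}$ to absorb the pair $\varphi,\varphi^{-1}$, becomes $-\mathfrak{B}(b_{1},\, b_{2}a-ab_{2})$. Thus invariance of $r_{\mathfrak{B}}$ is equivalent to
\begin{align*}
\mathfrak{B}(b_{1},\, ab_{2}-b_{2}a)+\mathfrak{B}(b_{2},\, b_{1}a)=0\qquad(a,b_{1},b_{2}\in A),
\end{align*}
and one further application of skew-symmetry turns this into $\mathfrak{B}(b_{1}a,\, b_{2})=\mathfrak{B}(b_{1},\, ab_{2}-b_{2}a)$, which is precisely the invariance condition for $\mathfrak{B}$. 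Reading these equivalences backwards recovers invariance of $\mathfrak{B}$ from that of $r_{\mathfrak{B}}$.

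I expect the only delicate point to be the bookkeeping in the invariance step: keeping the order of the arguments of $\langle-,-\rangle$ straight, applying $\fl_{A}^{\ast},\fr_{A}^{\ast}$ in the correct tensor slot, and inserting skew-symmetry at exactly the stage where $\varphi^{-1}$ is applied to $(\fr_{A}^{\ast}-\fl_{A}^{\ast})(a)(\varphi(b_{1}))$ so that it collapses back to a value of $\mathfrak{B}$. Once the pairing conventions are fixed, the displayed identity drops out and no genuinely hard input beyond the invariance relation itself is required.
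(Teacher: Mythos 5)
Your proof is correct, and since the paper itself only says ``by direct calculations,'' your argument is precisely the computation the paper omits: transporting skew-symmetry and the invariance condition \eqref{invar} across $r_{\mathfrak{B}}^{\sharp}=\varphi^{-1}$ via the standard pairing. The bookkeeping (dualizing $\fl_A,\fr_A$ to $\fl_A^{\ast},\fr_A^{\ast}$ and inserting skew-symmetry exactly once to collapse $\varphi^{-1}$ back into a value of $\mathfrak{B}$) checks out, and treating the two properties as a conjunction — skew-symmetry unconditionally, then invariance under the skew-symmetry hypothesis — is a legitimate way to obtain the stated equivalence.
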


For any $r\in A\otimes A$, define a linear map $R_{r}: A\rightarrow A$, $a\mapsto
R_{r}(a):=r^{\sharp}(\varphi(a))$, then we have the following proposition and corollary.

\begin{pro}\label{pro:yberb}
Let $(A, \cdot, \mathfrak{B})$ be an anti-Leibniz algebra with a nondegenerate
skew-symmetric invariant bilinear form and $r\in A\otimes A$. Then $r^{\sharp}$ is
a relative Rota-Baxter operator of $(A, \cdot)$ associated to $(A^{\ast},
\fl_{A}^{\ast}, \fl_{A}^{\ast}-\fr_{A}^{\ast})$ if and only if $R_{r}: A\rightarrow A$
is a Rota-Baxter operator on $(A, \cdot)$.
\end{pro}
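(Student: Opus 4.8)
The plan is to use that $\varphi\colon A\to A^{\ast}$ is not only a linear isomorphism but an isomorphism of bimodules from the regular bimodule $(A,\fl_{A},\fr_{A})$ to the coregular bimodule $(A^{\ast},\fl_{A}^{\ast},\fl_{A}^{\ast}-\fr_{A}^{\ast})$. This is exactly the content of Lemma \ref{lem:dual}: the skew-symmetric invariance of $\mathfrak{B}$ is equivalent to the intertwining identities
$$\varphi\circ\fl_{A}(a)=\fl_{A}^{\ast}(a)\circ\varphi,\qquad \varphi\circ\fr_{A}(a)=(\fl_{A}^{\ast}-\fr_{A}^{\ast})(a)\circ\varphi,$$
for every $a\in A$. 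Since $R_{r}=r^{\sharp}\circ\varphi$ and $\varphi$ is invertible, every element of $A^{\ast}$ has the form $\varphi(a)$, so I may test the defining equation of $r^{\sharp}$ only on such elements without loss of generality.

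First I would write out the relative Rota-Baxter condition for $r^{\sharp}$ associated to $(A^{\ast},\fl_{A}^{\ast},\fl_{A}^{\ast}-\fr_{A}^{\ast})$, substitute $\xi_{i}=\varphi(a_{i})$, and use $r^{\sharp}(\varphi(a_{i}))=R_{r}(a_{i})$ to rewrite it as
$$R_{r}(a_{1})R_{r}(a_{2})=r^{\sharp}\Big(\fl_{A}^{\ast}(R_{r}(a_{1}))(\varphi(a_{2}))+(\fl_{A}^{\ast}-\fr_{A}^{\ast})(R_{r}(a_{2}))(\varphi(a_{1}))\Big).$$
Applying the two intertwining identities to the argument of $r^{\sharp}$ turns $\fl_{A}^{\ast}(R_{r}(a_{1}))(\varphi(a_{2}))$ into $\varphi(R_{r}(a_{1})\cdot a_{2})$ and $(\fl_{A}^{\ast}-\fr_{A}^{\ast})(R_{r}(a_{2}))(\varphi(a_{1}))$ into $\varphi(a_{1}\cdot R_{r}(a_{2}))$, so the argument collapses to $\varphi\big(R_{r}(a_{1})a_{2}+a_{1}R_{r}(a_{2})\big)$. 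Using $r^{\sharp}\circ\varphi=R_{r}$ once more, the right-hand side becomes $R_{r}\big(R_{r}(a_{1})a_{2}+a_{1}R_{r}(a_{2})\big)$, which is precisely the Rota-Baxter identity for $R_{r}$ on the regular bimodule.

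Because $\varphi$ is a bijection, this chain of rewritings is reversible, so the relative Rota-Baxter equation for $r^{\sharp}$ holds for all $\xi_{1},\xi_{2}\in A^{\ast}$ if and only if the Rota-Baxter equation for $R_{r}$ holds for all $a_{1},a_{2}\in A$, which is the claimed equivalence. I do not expect a genuine obstacle here; the only delicate point is the bookkeeping of matching $\fl_{A}^{\ast}$ with $\fl_{A}$ and $\fl_{A}^{\ast}-\fr_{A}^{\ast}$ with $\fr_{A}$, so that the two summands of the coregular condition align correctly with $R_{r}(a_{1})a_{2}$ and $a_{1}R_{r}(a_{2})$. Since everything reduces to Lemma \ref{lem:dual}, no fresh computation with the anti-Leibniz identity is required.
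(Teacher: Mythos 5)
Your proposal is correct and follows essentially the same route as the paper: both proofs use Lemma \ref{lem:dual} to view $\varphi$ as the bimodule isomorphism intertwining $(\fl_{A},\fr_{A})$ with $(\fl_{A}^{\ast},\fl_{A}^{\ast}-\fr_{A}^{\ast})$, substitute $\xi_{i}=\varphi(a_{i})$, and translate the relative Rota-Baxter identity for $r^{\sharp}$ into the Rota-Baxter identity for $R_{r}=r^{\sharp}\circ\varphi$, with invertibility of $\varphi$ giving the equivalence. Your bookkeeping of which summand corresponds to $R_{r}(a_{1})a_{2}$ versus $a_{1}R_{r}(a_{2})$ is accurate (indeed more carefully stated than the paper's displayed computation, which has an apparent $\fr_{A}^{\ast}$/$\fl_{A}^{\ast}$ slip in the first summand).
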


\begin{proof}
For any $a_{1}, a_{2}\in A$, by Lemma \ref{lem:dual}, we have
$R_{r}(a_{1})R_{r}(a_{2})=r^{\sharp}(\varphi(a_{1}))r^{\sharp}(\varphi(a_{2}))$ and
\begin{align*}
R_{r}(a_{1}R_{r}(a_{2})+R_{r}(a_{1})a_{2})
=&\; r^{\sharp}\big(\varphi\big(a_{1}r^{\sharp}(\varphi(a_{2}))
+r^{\sharp}(\varphi(a_{1}))a_{2}\big)\big)\\
=&\; r^{\sharp}\Big(\fr_{A}^{\ast}(r^{\sharp}(\varphi(a_{1})))(\varphi(a_{2}))
+(\fl_{A}^{\ast}-\fr_{A}^{\ast})(r^{\sharp}(\varphi(a_{2})))(\varphi(a_{1}))\Big).
\end{align*}
Thus, $r^{\sharp}$ is a relative Rota-Baxter operator if and only if $R_{r}$
is a Rota-Baxter operator.
\end{proof}

As a direct conclusion of Corollary \ref{cor:r-map1} and Proposition \ref{pro:yberb}, we have

\begin{cor}\label{cor:yberb1}
Let $(A, \cdot, \mathfrak{B})$ be an anti-Leibniz algebra with a nondegenerate
skew-symmetric invariant bilinear form and $r\in A\otimes A$ be symmetric.
Then, $r$ is a solution of the {\rm aLYBE} in $(A, \cdot)$ if and only if
$R_{r}$ is a Rota-Baxter operator on $(A, \cdot)$.
\end{cor}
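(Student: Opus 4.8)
The plan is to obtain the statement by chaining the two immediately preceding biconditionals, since the hypotheses of the corollary are precisely tailored so that each may be applied without any further work. First I would note that the symmetry assumption $r=\tau(r)$ gives $\tau(r)^{\sharp}=r^{\sharp}$, which is exactly the situation treated in Corollary \ref{cor:r-map1}. Applying that corollary yields the first equivalence: $r$ is a solution of the {\rm aLYBE} in $(A,\cdot)$ if and only if $r^{\sharp}$ is a relative Rota-Baxter operator of $(A,\cdot)$ associated to the coregular bimodule $(A^{\ast},\fl_{A}^{\ast},\fl_{A}^{\ast}-\fr_{A}^{\ast})$.

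Second, I would invoke Proposition \ref{pro:yberb}, whose hypotheses---a nondegenerate skew-symmetric invariant bilinear form $\mathfrak{B}$ on $(A,\cdot)$ together with an element $r\in A\otimes A$---are identical to the standing assumptions of the corollary. This supplies the second equivalence: $r^{\sharp}$ is a relative Rota-Baxter operator associated to $(A^{\ast},\fl_{A}^{\ast},\fl_{A}^{\ast}-\fr_{A}^{\ast})$ if and only if $R_{r}=r^{\sharp}\circ\varphi$ is a Rota-Baxter operator on $(A,\cdot)$. Composing the two equivalences then gives the desired conclusion: $r$ is a solution of the {\rm aLYBE} $\Leftrightarrow$ $r^{\sharp}$ is a relative Rota-Baxter operator $\Leftrightarrow$ $R_{r}$ is a Rota-Baxter operator on $(A,\cdot)$, which is precisely the claim.

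The only thing requiring genuine care---and it is a matter of bookkeeping rather than computation---is confirming that the two cited results can be invoked simultaneously. The symmetry of $r$ is exactly what licenses Corollary \ref{cor:r-map1} (it collapses $\tau(r)^{\sharp}$ to $r^{\sharp}$), while the presence of the nondegenerate skew-symmetric invariant form is exactly what licenses Proposition \ref{pro:yberb} (in particular it guarantees that $\varphi$ is the linear isomorphism used to define $R_{r}$, via Lemma \ref{lem:dual}). Since both of these are standing hypotheses, no further estimate or identity is needed, and the corollary follows as a formal composition of biconditionals.
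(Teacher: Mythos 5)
Your proof is correct and follows exactly the route the paper intends: the paper states this corollary as ``a direct conclusion of Corollary \ref{cor:r-map1} and Proposition \ref{pro:yberb}'', i.e.\ precisely the chaining of the two biconditionals you describe. Your added remark that symmetry of $r$ licenses the first equivalence while the bilinear form (via Lemma \ref{lem:dual}) licenses the second is accurate bookkeeping and matches the paper's implicit reasoning.
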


More generally, if $r\in A\otimes A$ such that $r-\tau(r)$ is invariant, we consider when
$r$ is a solution of the {\rm aLYBE} in $(A, \cdot)$.

\begin{pro}\label{pro:inv-sol1}
Let $(A, \cdot)$ be an anti-Leibniz algebra. Suppose $r\in A\otimes A$ such that
$r-\tau(r)$ is invariant. Then, $r$ is a solution of the {\rm aLYBE} in $(A, \cdot)$
if and only if for any $\xi_{1}, \xi_{2}\in A^{\ast}$,
$$
r^{\sharp}(\xi_{1})r^{\sharp}(\xi_{2})=r^{\sharp}\Big(\fl_{A}^{\ast}(r^{\sharp}(\xi_{1}))
(\xi_{2})+(\fl_{A}^{\ast}-\fr_{A}^{\ast})(r^{\sharp}(\xi_{2}))(\xi_{1})
-\fl_{A}^{\ast}((r^{\sharp}-\tau(r)^{\sharp})(\xi_{1}))(\xi_{2})\Big).
$$
\end{pro}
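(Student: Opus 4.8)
The plan is to bypass any direct three-tensor computation by reusing the characterization already obtained in Proposition \ref{pro:r-map}, together with the symmetry $r\mapsto\tau(r)$ of the aLYBE recorded in Proposition \ref{pro:dualYBE}. The guiding observation is that the correction term appearing in the claimed identity is engineered precisely to convert $r^{\sharp}(\xi_{1})$ into $\tau(r)^{\sharp}(\xi_{1})$ in the first argument, so that the whole right-hand side becomes the $\tau(r)$-version of the condition in Proposition \ref{pro:r-map}.

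First I would apply Proposition \ref{pro:r-map} with the tensor $\tau(r)$ substituted for $r$. Since $\tau(\tau(r))=r$, we have $\tau(\tau(r))^{\sharp}=r^{\sharp}$, while $\tau(r)^{\sharp}$ now occupies the slot that $r^{\sharp}$ occupied in that proposition. This says that $\tau(r)$ is a solution of the aLYBE in $(A,\cdot)$ if and only if, for all $\xi_{1},\xi_{2}\in A^{\ast}$,
\begin{equation}
r^{\sharp}(\xi_{1})r^{\sharp}(\xi_{2})=r^{\sharp}\Big(\fl_{A}^{\ast}(\tau(r)^{\sharp}(\xi_{1}))(\xi_{2})+(\fl_{A}^{\ast}-\fr_{A}^{\ast})(r^{\sharp}(\xi_{2}))(\xi_{1})\Big). \tag{$\star$}
\end{equation}
By Proposition \ref{pro:dualYBE}, $\tau(r)$ solves the aLYBE exactly when $r$ does, so $(\star)$ is equivalent to $r$ being an anti-Leibniz $r$-matrix.

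It then remains to recognize that $(\star)$ is literally the identity in the statement. For this I would use only that the assignment $a\mapsto\fl_{A}^{\ast}(a)$ is linear, whence
$$
\fl_{A}^{\ast}(r^{\sharp}(\xi_{1}))(\xi_{2})-\fl_{A}^{\ast}\big((r^{\sharp}-\tau(r)^{\sharp})(\xi_{1})\big)(\xi_{2})=\fl_{A}^{\ast}(\tau(r)^{\sharp}(\xi_{1}))(\xi_{2}).
$$
Substituting this into the right-hand side of the claimed equation merges its first and third $\fl_{A}^{\ast}$-terms into the single term $\fl_{A}^{\ast}(\tau(r)^{\sharp}(\xi_{1}))(\xi_{2})$, reducing it exactly to $(\star)$. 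Chaining the three equivalences (claimed identity $\Leftrightarrow(\star)\Leftrightarrow$ $\tau(r)$ solves the aLYBE $\Leftrightarrow$ $r$ solves the aLYBE) then yields the proposition.

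The step I expect to carry the weight is not a hard calculation but the conceptual choice of route. Recognizing the $\tau(r)$-shortcut is what makes the argument a two-line reduction rather than a fresh expansion of $[\![r,r]\!]$. If one instead tried to derive the claim directly from Proposition \ref{pro:r-map} applied to $r$ itself, the hypothesis that $r-\tau(r)$ is invariant would have to be invoked to reconcile the mismatched occurrences of $r^{\sharp}$ and $\tau(r)^{\sharp}$; the $\tau(r)$-route instead makes transparent that the correction term is precisely the defect $(r^{\sharp}-\tau(r)^{\sharp})(\xi_{1})=(r-\tau(r))^{\sharp}(\xi_{1})$, which vanishes exactly in the symmetric case and thereby recovers Corollary \ref{cor:r-map1}.
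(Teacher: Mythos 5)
Your proof is correct, but it takes a genuinely different route from the paper's. The paper proves the proposition by using the invariance of $r-\tau(r)$ twice: first to rewrite the correction term $-\fl_{A}^{\ast}((r^{\sharp}-\tau(r)^{\sharp})(\xi_{1}))(\xi_{2})$ as $-(\fl_{A}^{\ast}-\fr_{A}^{\ast})((r^{\sharp}-\tau(r)^{\sharp})(\xi_{2}))(\xi_{1})$, which merges with the middle term to give $(\fl_{A}^{\ast}-\fr_{A}^{\ast})(\tau(r)^{\sharp}(\xi_{2}))(\xi_{1})$, and then again (together with Proposition \ref{pro:r-map}) to trade the outer occurrences of $\tau(r)^{\sharp}$ for $r^{\sharp}$. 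Your route instead applies Proposition \ref{pro:r-map} to $\tau(r)$, invokes Proposition \ref{pro:dualYBE} to pass back to $r$, and then observes that the claimed identity collapses to the resulting condition $(\star)$ by nothing more than linearity of $a\mapsto\fl_{A}^{\ast}(a)$. I checked the reduction: the right-hand side of $(\star)$ corresponds, under pairing with $\xi_{1}\otimes\xi_{2}\otimes\xi_{3}$, to a cyclic permutation of $[\![\tau(r),\tau(r)]\!]$, so the chain of equivalences is sound. What your approach buys is twofold: it avoids any fresh manipulation of the invariance condition, and it exposes that the hypothesis that $r-\tau(r)$ is invariant is actually not needed for this particular equivalence --- the stated identity characterizes solutions of the aLYBE for arbitrary $r$. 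That is a genuine (mild) strengthening of the proposition; the hypothesis only serves to place the result in the quasi-triangular context where it is subsequently used, and, as you note, to make the link with Corollary \ref{cor:r-map1} in the symmetric case transparent.
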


\begin{proof}
First, since $r-\tau(r)$ is skew-symmetric, it is easy to see that $r-\tau(r)$ is invariant
if and only if $\fl_{A}^{\ast}((r^{\sharp}-\tau(r)^{\sharp})(\xi_{1}))(\xi_{2})=
(\fl_{A}^{\ast}-\fr_{A}^{\ast})((r^{\sharp}-\tau(r)^{\sharp})(\xi_{2}))(\xi_{1})$,
for any $\xi_{1}, \xi_{2}\in A^{\ast}$. Thus, by direct calculations, we have
\begin{align*}
&\;r^{\sharp}\Big(\fl_{A}^{\ast}(r^{\sharp}(\xi_{1}))
(\xi_{2})+(\fl_{A}^{\ast}-\fr_{A}^{\ast})(r^{\sharp}(\xi_{2}))(\xi_{1})
-\fl_{A}^{\ast}((r^{\sharp}-\tau(r)^{\sharp})(\xi_{1}))(\xi_{2})\Big)\\
=&\; r^{\sharp}\Big(\fl_{A}^{\ast}(r^{\sharp}(\xi_{1}))
(\xi_{2})+(\fl_{A}^{\ast}-\fr_{A}^{\ast})(r^{\sharp}(\xi_{2}))(\xi_{1})
-(\fl_{A}^{\ast}-\fr_{A}^{\ast})((r^{\sharp}-\tau(r)^{\sharp})(\xi_{2}))(\xi_{1})\Big)\\
=&\; r^{\sharp}\Big(\fl_{A}^{\ast}(r^{\sharp}(\xi_{1}))
(\xi_{2})+(\fl_{A}^{\ast}-\fr_{A}^{\ast})(\tau(r)^{\sharp}(\xi_{2}))(\xi_{1})\Big).
\end{align*}
Second, since $r-\tau(r)$ is invariant, by Proposition \ref{pro:r-map}, $r$ is a solution
of the {\rm aLYBE} in $(A, \cdot)$ if and only if $r^{\sharp}(\xi_{1})r^{\sharp}(\xi_{2})=
r^{\sharp}\big(\fl_{A}^{\ast}(r^{\sharp}(\xi_{1}))(\xi_{2})+(\fl_{A}^{\ast}-\fr_{A}^{\ast})
(\tau(r)^{\sharp}(\xi_{2}))(\xi_{1})\big)$. Thus, we get this proposition.
\end{proof}

Let $(A, \cdot)$ be an anti-Leibniz algebra. By the Definition \ref{def:YBE}, it is
easy to see that $r\in A\otimes A$ is invariant if and only if $r^{\sharp}(\xi)a=
r^{\sharp}\big((\fl_{A}^{\ast}-\fr_{A}^{\ast})(a)(\xi)\big)$ for any $a\in A$ and
$\xi\in A^{\ast}$.

\begin{cor}\label{cor:inv-solu}
Let $(A, \cdot, \mathfrak{B})$ be an anti-Leibniz algebra with a nondegenerate
skew-symmetric invariant bilinear form and $r\in A\otimes A$ be invariant.
Then, $r$ is a solution of the {\rm aLYBE} in $(A, \cdot)$ if and only if
for any $a_{1}, a_{2}\in A$,
$$
R_{r}(a_{1})R_{r}(a_{1})=R_{r}\Big(R_{r}(a_{1})a_{2}+a_{1}R_{r}(a_{2})
-a_{1}R_{r}(a_{2})+a_{1}\tau(r)^{\sharp}(\varphi(a_{2}))\Big),
$$
where $R_{r}: A\rightarrow A$ is defined by $R_{r}(a)=r^{\sharp}(\varphi(a))$.
\end{cor}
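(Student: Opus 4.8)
The plan is to read off the stated identity by transporting the dual-space criterion of Proposition \ref{pro:inv-sol1} across the linear isomorphism $\varphi\colon A\to A^{\ast}$, $\langle\varphi(a_{1}),a_{2}\rangle=\mathfrak{B}(a_{1},a_{2})$, attached to the nondegenerate skew-symmetric invariant form $\mathfrak{B}$. Invariance of $r$ lets us invoke Proposition \ref{pro:inv-sol1}, which characterises $r$ being a solution of the {\rm aLYBE} by the requirement that, for all $\xi_{1},\xi_{2}\in A^{\ast}$,
\begin{align*}
r^{\sharp}(\xi_{1})r^{\sharp}(\xi_{2})
&=r^{\sharp}\Big(\fl_{A}^{\ast}(r^{\sharp}(\xi_{1}))(\xi_{2})+(\fl_{A}^{\ast}-\fr_{A}^{\ast})(r^{\sharp}(\xi_{2}))(\xi_{1})\\
&\qquad-\fl_{A}^{\ast}\big((r^{\sharp}-\tau(r)^{\sharp})(\xi_{1})\big)(\xi_{2})\Big).
\end{align*}
Because $\varphi$ is bijective, letting $\xi_{1},\xi_{2}$ range over $A^{\ast}$ is the same as letting $a_{1},a_{2}$ range over $A$ through $\xi_{i}=\varphi(a_{i})$, so it suffices to evaluate this identity at $\xi_{i}=\varphi(a_{i})$ and rewrite each side inside $A$.

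The rewriting uses the dictionary recorded in the proof of Lemma \ref{lem:dual}, namely $\fl_{A}^{\ast}(a)(\varphi(b))=\varphi(ab)$ and $(\fl_{A}^{\ast}-\fr_{A}^{\ast})(a)(\varphi(b))=\varphi(ba)$ for all $a,b\in A$, together with $r^{\sharp}(\varphi(a))=R_{r}(a)$ by the definition of $R_{r}$. Substituting $\xi_{i}=\varphi(a_{i})$, the left-hand side becomes $R_{r}(a_{1})R_{r}(a_{2})$. On the right-hand side the first two inner summands turn into $\varphi\big(R_{r}(a_{1})a_{2}\big)$ and $\varphi\big(a_{1}R_{r}(a_{2})\big)$, while the correction summand $\fl_{A}^{\ast}\big((r^{\sharp}-\tau(r)^{\sharp})(\varphi(a_{1}))\big)(\varphi(a_{2}))$ splits, by linearity of $\fl_{A}^{\ast}$ and $r^{\sharp}$, into $\varphi\big(R_{r}(a_{1})a_{2}\big)-\varphi\big(\tau(r)^{\sharp}(\varphi(a_{1}))\,a_{2}\big)$. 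Since this summand enters with a minus sign, its $\varphi(R_{r}(a_{1})a_{2})$ part cancels the first summand, and applying the outer $r^{\sharp}=R_{r}\circ\varphi^{-1}$ to what remains collapses the whole right-hand side to $R_{r}$ evaluated at a single element of $A$, producing the displayed identity.

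The step requiring care is the treatment of the correction summand, which is exactly the feature distinguishing the present (merely invariant) case from the symmetric case of Corollary \ref{cor:yberb1}, where $r^{\sharp}=\tau(r)^{\sharp}$ makes that summand vanish and leaves the plain Rota-Baxter identity: here one must keep the left/right multiplications and the two arguments $a_{1},a_{2}$ in the correct positions while expanding $(r^{\sharp}-\tau(r)^{\sharp})(\varphi(a_{1}))$, and then invoke the invariance characterisation $r^{\sharp}(\xi)a=r^{\sharp}\big((\fl_{A}^{\ast}-\fr_{A}^{\ast})(a)(\xi)\big)$ recalled just before the statement (equivalently $R_{r}(b)a=R_{r}(ba)$) to bring the surviving terms into the asserted order. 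Everything else is a purely formal rewriting under the dictionary above, and since $\varphi$ is invertible the resulting identity in $A$ is equivalent to the criterion of Proposition \ref{pro:inv-sol1}, which is what the corollary asserts.
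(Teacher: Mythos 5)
Your proposal follows the paper's own route exactly: substitute $\xi_{i}=\varphi(a_{i})$ into the criterion of Proposition \ref{pro:inv-sol1} and translate term by term through the dictionary $\fl_{A}^{\ast}(a)(\varphi(b))=\varphi(ab)$, $(\fl_{A}^{\ast}-\fr_{A}^{\ast})(a)(\varphi(b))=\varphi(ba)$, $r^{\sharp}\varphi=R_{r}$; the cancellation you describe inside the correction summand is precisely how the stated right-hand side arises, and the paper's proof is the same computation written as four displayed translations.

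One step, however, is mis-justified. After the cancellation your direct expansion yields $R_{r}\big(a_{1}R_{r}(a_{2})+\tau(r)^{\sharp}(\varphi(a_{1}))a_{2}\big)$, whereas the corollary asserts $R_{r}\big(R_{r}(a_{1})a_{2}+a_{1}\tau(r)^{\sharp}(\varphi(a_{2}))\big)$, and the identity you invoke to reconcile the two, $r^{\sharp}(\xi)a=r^{\sharp}\big((\fl_{A}^{\ast}-\fr_{A}^{\ast})(a)(\xi)\big)$ (equivalently $R_{r}(b)a=R_{r}(ba)$), cannot perform this swap: it rewrites a product $R_{r}(b)\cdot a$ formed \emph{outside} $R_{r}$, while the terms you need to move sit \emph{inside} the argument of $R_{r}$ and involve $\tau(r)^{\sharp}$ as well as $r^{\sharp}$. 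What is actually needed is the invariance of $r-\tau(r)$ in the form recorded at the start of the proof of Proposition \ref{pro:inv-sol1} (see also Lemma \ref{lem:inva}), namely $\fl_{A}^{\ast}(\mathcal{I}(\xi_{1}))(\xi_{2})=(\fl_{A}^{\ast}-\fr_{A}^{\ast})(\mathcal{I}(\xi_{2}))(\xi_{1})$ with $\mathcal{I}=r^{\sharp}-\tau(r)^{\sharp}$, which under $\varphi$ reads $\mathcal{I}(\varphi(a_{1}))\,a_{2}=a_{1}\,\mathcal{I}(\varphi(a_{2}))$ and gives exactly $\tau(r)^{\sharp}(\varphi(a_{1}))a_{2}-R_{r}(a_{1})a_{2}=a_{1}\tau(r)^{\sharp}(\varphi(a_{2}))-a_{1}R_{r}(a_{2})$. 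With that one substitution your argument is complete and coincides with the paper's, which performs the same rewriting silently in its displayed translation of the correction term.
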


\begin{proof}
Since $r_{\mathfrak{B}}$ is skew-symmetric invariant in this case, we get
$\varphi\big(r^{\sharp}(\xi_{1})\varphi^{-1}(\xi_{2})\big)=\fl_{A}^{\ast}(r^{\sharp}
(\xi_{1}))(\xi_{2})$ and $\varphi\big(\varphi^{-1}(\xi_{1})r^{\sharp}(\xi_{2})\big)
=(\fl_{A}^{\ast}-\fr_{A}^{\ast})(r^{\sharp}(\xi_{2}))(\xi_{1})$. For any $a_{1},
a_{2}\in A$, suppose $a_{1}=\varphi^{-1}(\xi_{1})$ and $a_{2}=\varphi^{-1}(\xi_{2})$,
$\xi_{1}, \xi_{2}\in A^{\ast}$. Then, we have
\begin{align*}
R_{r}(a_{1})R_{r}(a_{1})&=r^{\sharp}(\xi_{1})r^{\sharp}(\xi_{2}),\\
R_{r}(a_{1}R_{r}(a_{2}))&=r^{\sharp}\big(\fl_{A}^{\ast}(r^{\sharp}(\xi_{1}))(\xi_{2})\big),\\
R_{r}(R_{r}(a_{1})a_{2})&=r^{\sharp}\big((\fl_{A}^{\ast}-\fr_{A}^{\ast})(r^{\sharp}
(\xi_{2}))(\xi_{1})\big),\\
R_{r}\big(a_{1}\tau(r)^{\sharp}(\varphi(a_{2}))-a_{1}R_{r}(a_{2})\big)&=r^{\sharp}\big(
\fl_{A}^{\ast}((\tau(r)^{\sharp}-r^{\sharp})(\xi_{1}))(\xi_{2})\big).
\end{align*}
By Proposition \ref{pro:inv-sol1}, we get the conclusion.
\end{proof}

In the Lie algebra context, it is well-known that the dual description of a classical
$r$-matrix is a symplectic structure on a Lie algebra. For the anti-Leibniz algebras, we have

\begin{pro}\label{pro:ybe}
Let $(A, \cdot)$ be an anti-Leibniz algebra and $r\in A\otimes A$.
Suppose that $r$ is symmetric and nondegenerate. The linear isomorphism
$r^{\sharp}: A^{\ast}\rightarrow A$ define a bilinear form $\omega(-,-)$ on
$(A, \cdot)$ by $\omega(a_{1}, a_{2})=\langle(r^{\sharp})^{-1}(a_{1}),\, a_{2}\rangle$
for any $a_{1}, a_{2}\in A$. Then $r$ is a solution of the {\rm aLYBE} in $(A, \cdot)$
if and only if the bilinear form $\omega(-,-)$ satisfies
\begin{align}
\omega(a_{2}a_{3}, a_{1})+\omega(a_{1}a_{3}, a_{2})
-\omega(a_{3}a_{1}, a_{2})-\omega(a_{2}a_{1}, a_{3})=0, \label{symp}
\end{align}
for any $a_{1}, a_{2}, a_{3}\in A$.
\end{pro}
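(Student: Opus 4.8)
The plan is to reduce the statement to the relative Rota--Baxter picture already established for symmetric $r$, and then to unwind the resulting operator identity into the stated condition on $\omega$. Since $r$ is nondegenerate, $r^{\sharp}$ is a linear isomorphism and $\omega(a_{1}, a_{2})=\langle(r^{\sharp})^{-1}(a_{1}),\,a_{2}\rangle$ is a well-defined bilinear form. Because $r=\tau(r)$, we have $\langle r^{\sharp}(\xi_{1}),\,\xi_{2}\rangle=\langle\xi_{1}\otimes\xi_{2},\,r\rangle=\langle\xi_{2}\otimes\xi_{1},\,r\rangle=\langle r^{\sharp}(\xi_{2}),\,\xi_{1}\rangle$, so $r^{\sharp}$ is self-adjoint, hence so is $(r^{\sharp})^{-1}$, and therefore $\omega$ is a \emph{symmetric} bilinear form, $\omega(a_{1}, a_{2})=\omega(a_{2}, a_{1})$; this symmetry will be used to shift products between the two slots of $\omega$. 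The entry point is Corollary \ref{cor:r-map1}: since $r$ is symmetric, $r$ solves the aLYBE if and only if $r^{\sharp}$ is a relative Rota--Baxter operator of $(A, \cdot)$ associated to the coregular bimodule $(A^{\ast}, \fl_{A}^{\ast}, \fl_{A}^{\ast}-\fr_{A}^{\ast})$.

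Next I would rewrite this operator identity in terms of $\omega$. Putting $a_{1}=r^{\sharp}(\xi_{1})$ and $a_{2}=r^{\sharp}(\xi_{2})$, so that $\xi_{i}=(r^{\sharp})^{-1}(a_{i})$, the Rota--Baxter condition reads $a_{1}a_{2}=r^{\sharp}\big(\fl_{A}^{\ast}(a_{1})(\xi_{2})+(\fl_{A}^{\ast}-\fr_{A}^{\ast})(a_{2})(\xi_{1})\big)$. Applying the isomorphism $(r^{\sharp})^{-1}$ and pairing with an arbitrary $a_{3}\in A$, the left side becomes $\omega(a_{1}a_{2}, a_{3})$, while the two right-hand summands unfold, via the definitions of $\fl_{A}^{\ast}$, $\fr_{A}^{\ast}$ and of $\omega$, into $\langle\xi_{2},\,a_{1}a_{3}\rangle=\omega(a_{2}, a_{1}a_{3})$ and $\langle\xi_{1},\,a_{2}a_{3}-a_{3}a_{2}\rangle=\omega(a_{1}, a_{2}a_{3})-\omega(a_{1}, a_{3}a_{2})$. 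Flipping each term by the symmetry of $\omega$, the Rota--Baxter condition is seen to be equivalent to
\begin{align*}
\omega(a_{1}a_{2}, a_{3})=\omega(a_{1}a_{3}, a_{2})+\omega(a_{2}a_{3}, a_{1})-\omega(a_{3}a_{2}, a_{1}),\tag{$\star$}
\end{align*}
for all $a_{1}, a_{2}, a_{3}\in A$. As $(r^{\sharp})^{-1}$ is bijective and $a_{3}$ is arbitrary, no information is lost, so $(\star)$ is genuinely equivalent to the operator identity.

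Finally I would match $(\star)$ with \eqref{symp}. Both are identities quantified over all triples, so one may relabel freely: swapping $a_{1}\leftrightarrow a_{2}$ in $(\star)$ gives $\omega(a_{2}a_{1}, a_{3})=\omega(a_{2}a_{3}, a_{1})+\omega(a_{1}a_{3}, a_{2})-\omega(a_{3}a_{1}, a_{2})$, and moving every term to one side reproduces, up to an overall sign, the left-hand side of \eqref{symp}. Hence $(\star)$ holds for all triples if and only if \eqref{symp} holds for all triples; chaining this with Corollary \ref{cor:r-map1} ($r$ solves the aLYBE $\iff$ $r^{\sharp}$ is a relative Rota--Baxter operator $\iff$ $(\star)$ $\iff$ \eqref{symp}) completes the argument.

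I expect the only genuine obstacle to be bookkeeping rather than conceptual difficulty: there are several $\omega$-terms carrying products in different slots, and the equivalence with \eqref{symp} is invisible termwise, emerging only after one both invokes the symmetry of $\omega$ (to place every product in the first slot, producing $(\star)$) and exploits the relabeling symmetry $a_{1}\leftrightarrow a_{2}$ of the universally quantified identity. To avoid sign errors the safest route is to unfold the two right-hand summands of the Rota--Baxter condition separately, record the intermediate identity $(\star)$ explicitly, and only then compare with \eqref{symp}.
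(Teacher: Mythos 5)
Your proof is correct, but it takes a different route from the paper. The paper proves the proposition by a direct computation: writing $r=\sum_i x_i\otimes y_i$ and $a_i=r^{\sharp}(\xi_i)$, it expands the left-hand side of \eqref{symp} into a sum of triple pairings and recognizes the result as $\langle\xi_1\otimes\xi_2\otimes\xi_3,\,[\![r,r]\!]\rangle$, concluding by nondegeneracy of $r$. You instead route through Corollary \ref{cor:r-map1} (itself a consequence of Proposition \ref{pro:r-map}), translate the relative Rota--Baxter identity for $r^{\sharp}$ into the $\omega$-language via the reparametrization $\xi_i=(r^{\sharp})^{-1}(a_i)$ and pairing against an arbitrary $a_3$, and then reconcile your intermediate identity $(\star)$ with \eqref{symp} by the relabeling $a_1\leftrightarrow a_2$. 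All the steps check out: the symmetry of $\omega$ follows correctly from $r=\tau(r)$, no information is lost in the pairing step since the pairing is nondegenerate and $(r^{\sharp})^{-1}$ is bijective, and the relabeling argument is legitimate because both identities are universally quantified (note that $(\star)$ and \eqref{symp} are genuinely different termwise, since $a_1a_2\neq a_2a_1$ in general, so this last step is not cosmetic). Your approach buys a shorter, more conceptual argument by reusing an already-established equivalence instead of recomputing $[\![r,r]\!]$ from scratch; the paper's direct computation is self-contained and makes the appearance of $[\![r,r]\!]$ explicit, but is essentially a second derivation of content already present in Proposition \ref{pro:r-map}.
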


\begin{proof}
Let $r=\sum_{i}x_{i}\otimes y_{i}\in A\otimes A$ be symmetric. Then $r^{\sharp}(\xi)
=\sum_{i}\langle\xi, x_{i}\rangle y_{i}=\sum_{i}\langle\xi, y_{i}\rangle x_{i}$ for any
$\xi\in A^{\ast}$ and the bilinear form $\omega(-,-)$ is symmetric, i.e., $\omega(a_{1},
a_{2})=\omega(a_{2}, a_{1})$ for any $a_{1}, a_{2}\in A$. Since $r$ is nondegenerate,
for any $a_{1}, a_{2}, a_{3}\in A$, there are $\xi_{1}, \xi_{2}, \xi_{3}\in A^{\ast}$
such that $r^{\sharp}(\xi_{i})=a_{i}$ for $i=1,2,3$. Thus, we have

\begin{align*}
&\;\omega(a_{2}a_{3}, a_{1})+\omega(a_{1}a_{3}, a_{2})
-\omega(a_{3}a_{1}, a_{2})-\omega(a_{2}a_{1}, a_{3})\\
=&\;\langle r^{\sharp}(\xi_{2})r^{\sharp}(\xi_{3}),\; \xi_{1}\rangle
+\langle r^{\sharp}(\xi_{1})r^{\sharp}(\xi_{3}),\; \xi_{2}\rangle
-\langle r^{\sharp}(\xi_{3})r^{\sharp}(\xi_{1}),\; \xi_{2}\rangle
-\langle r^{\sharp}(\xi_{2})r^{\sharp}(\xi_{1}),\; \xi_{3}\rangle\\[-1mm]
=&\;\sum_{i,j}\Big(\langle\xi_{1}, x_{i}x_{j}\rangle\langle\xi_{2}, y_{i}\rangle
\langle\xi_{3}, y_{j}\rangle+\langle\xi_{1}, x_{i}\rangle
\langle\xi_{2}, y_{i}x_{j}\rangle\langle\xi_{3}, y_{j}\rangle\\[-5mm]
&\qquad\quad-\langle\xi_{1}, x_{j}\rangle
\langle\xi_{2}, x_{i}y_{j}\rangle\langle\xi_{3}, y_{i}\rangle\Big)-\langle\xi_{1},
x_{j}\rangle\langle\xi_{2}, x_{i}\rangle\langle\xi_{3}, y_{i}y_{j}\rangle\Big)\\[-1mm]
=&\;\langle\xi_{1}\otimes\xi_{2}\otimes\xi_{3},\;
r_{12}r_{13}+r_{12}r_{23}-r_{23}r_{12}-r_{23}r_{13}\rangle.
\end{align*}
That is, $\omega(a_{2}a_{3}, a_{1})+\omega(a_{1}a_{3}, a_{2})
-\omega(a_{3}a_{1}, a_{2})-\omega(a_{2}a_{1}, a_{3})=0$ for any $a_{1}, a_{2},
a_{3}\in A$ if and only if $\langle\xi_{1}\otimes\xi_{2}\otimes
\xi_{3},\; [\![r,\; r]\!]\rangle=0$ for any $\xi_{1}, \xi_{2}, \xi_{3}\in A^{\ast}$,
if and only if $[\![r,\; r]\!]=0$. The proof is finished.
\end{proof}

Next, we show that each relative Rota-Baxter operator induces an anti-Leibniz bialgebra.

\begin{pro}\label{pro:O-cons}
Let $(A, \cdot)$ be an anti-Leibniz algebra and $(M, \kl, \kr)$ be a bimodule over
$(A, \cdot)$. Let $P: M\rightarrow A$ be a linear map which is identified as an element in
$(A\ltimes M^{\ast})\otimes(A \ltimes M^{\ast})$ through $\Hom(M, A)\cong A\otimes M^{\ast}
\subset(A\ltimes M^{\ast})\otimes(A \ltimes M^{\ast})$. Then $r:=P+\tau(P)$ is a
symmetric solution of the {\rm aLYBE} in semi-direct product anti-Leibniz
algebra $A \ltimes M^{\ast}$ if and only if $P$ is a relative Rota-Baxter operator of
$(A, \cdot)$ associated to $(M, \kl, \kr)$.
\end{pro}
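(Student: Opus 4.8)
The plan is to leverage the fact that, by construction, $r=P+\tau(P)$ is symmetric, so Corollary~\ref{cor:r-map1} applies verbatim to the anti-Leibniz algebra $A\ltimes M^{\ast}$: the element $r$ solves the {\rm aLYBE} in $A\ltimes M^{\ast}$ if and only if $r^{\sharp}$ is a relative Rota-Baxter operator of $A\ltimes M^{\ast}$ associated to its coregular bimodule $\big((A\ltimes M^{\ast})^{\ast},\,\fl^{\ast}_{A\ltimes M^{\ast}},\,\fl^{\ast}_{A\ltimes M^{\ast}}-\fr^{\ast}_{A\ltimes M^{\ast}}\big)$. Thus the entire statement reduces to unwinding this single relative Rota-Baxter identity for $r^{\sharp}$ and matching it against the defining identity of $P$ from Definition~\ref{def:ooper}.

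First I would pin down $r^{\sharp}$ explicitly. Writing $P=\sum_{i}u_{i}\otimes\mu_{i}$ with $u_{i}\in A$ and $\mu_{i}\in M^{\ast}$, so that $P(m)=\sum_{i}\langle\mu_{i},m\rangle u_{i}$, we have $r=\sum_{i}(u_{i}\otimes\mu_{i}+\mu_{i}\otimes u_{i})$ inside $(A\ltimes M^{\ast})\otimes(A\ltimes M^{\ast})$. Using the canonical identification $(A\ltimes M^{\ast})^{\ast}=A^{\ast}\oplus M$ together with the defining relation of $\sharp$, a short computation gives
\[
r^{\sharp}(\alpha,m)=\big(P(m),\,P^{\ast}(\alpha)\big),\qquad \alpha\in A^{\ast},\ m\in M,
\]
where $P^{\ast}\colon A^{\ast}\to M^{\ast}$ is the transpose of $P$. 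This is the central bookkeeping step: it shows that $r^{\sharp}$ is assembled entirely out of $P$ and $P^{\ast}$, and that the product in $A\ltimes M^{\ast}$ I will need is the one built from the dual bimodule $(M^{\ast},\kl^{\ast},\kl^{\ast}-\kr^{\ast})$ via Proposition~\ref{pro:rep}.

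Next I would expand both sides of the relative Rota-Baxter identity for $r^{\sharp}$ at $\zeta_{j}=(\alpha_{j},m_{j})$. On the left, the product $r^{\sharp}(\zeta_{1})\cdot r^{\sharp}(\zeta_{2})$ in $A\ltimes M^{\ast}$ has $A$-component $P(m_{1})P(m_{2})$ and an $M^{\ast}$-component governed by $\kl^{\ast},\kr^{\ast}$. On the right, I would compute the coregular actions $\fl^{\ast}_{A\ltimes M^{\ast}}$ and $\fr^{\ast}_{A\ltimes M^{\ast}}$ on $(\alpha,m)$ in terms of $\fl^{\ast}_{A},\fr^{\ast}_{A}$ and of $\kl,\kr$; here the $A$-entry produces an $\fl^{\ast}_{A}$-type term together with a term whose value on $a\in A$ is $\langle\eta,(\kl-\kr)(a)(m)\rangle$, while the $M$-entry is $\kl(u)(m)$, respectively $\kr(u)(m)$. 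Applying $r^{\sharp}$ and comparing $A$-components, the identity collapses to exactly
\[
P(m_{1})P(m_{2})=P\big(\kl(P(m_{1}))(m_{2})+\kr(P(m_{2}))(m_{1})\big),
\]
which is the defining relative Rota-Baxter identity for $P$; so the $A$-component already delivers one implication and half of the converse.

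The genuine work, and the step I expect to be the main obstacle, is the $M^{\ast}$-component of the identity. Pairing it against an arbitrary $m\in M$ and again using $r^{\sharp}(\alpha,m)=(P(m),P^{\ast}(\alpha))$, the equation separates into an $\alpha_{2}$-part and an $\alpha_{1}$-part. The $\alpha_{2}$-part reproduces the relative Rota-Baxter identity once more, with $m_{2}$ replaced by $m$, so it is equivalent to the $A$-component condition. The $\alpha_{1}$-part is the delicate one: after rearranging it takes the form
\[
P\big((\kl-\kr)(P(m_{2}))(m)\big)=P\big((\kl-\kr)(P(m))(m_{2})\big)+P(m_{2})P(m)-P(m)P(m_{2}),
\]
and I would verify that this is an automatic consequence of the relative Rota-Baxter identity for $P$ by substituting the two expansions of $P(m_{2})P(m)$ and $P(m)P(m_{2})$ and cancelling the $\kl$- and $\kr$-terms. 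Collecting the $A$-component, the $\alpha_{2}$-part, and the $\alpha_{1}$-part then shows that the relative Rota-Baxter identity for $r^{\sharp}$ holds for all $\zeta_{1},\zeta_{2}$ if and only if $P$ satisfies its defining identity, which by the reduction in the first paragraph is precisely the asserted equivalence.
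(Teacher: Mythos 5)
Your proof is correct, but it takes a genuinely different route from the paper. The paper proves this proposition by brute force: it writes $P=\sum_i P(e_i)\otimes f_i$ in a basis, expands the four terms $r_{12}r_{13}$, $r_{12}r_{23}$, $r_{23}r_{12}$, $r_{23}r_{13}$ inside $A\ltimes M^{\ast}$ using the dual bimodule actions $\kl^{\ast}$, $\kl^{\ast}-\kr^{\ast}$, and shows after cancellation that $[\![r,r]\!]$ collapses to $\sum_{i,j}\big(P(e_i)P(e_j)-P(\kr(P(e_j))(e_i))-P(\kl(P(e_i))(e_j))\big)\otimes f_i\otimes f_j$ plus its mirror image in the third slot, whose vanishing is visibly the relative Rota--Baxter identity. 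You instead invoke Corollary \ref{cor:r-map1} (legitimately --- it rests only on Proposition \ref{pro:r-map}, so there is no circularity, and $A\ltimes M^{\ast}$ is finite-dimensional) to convert the aLYBE for the symmetric element $r$ into the relative Rota--Baxter identity for $r^{\sharp}$ on the coregular bimodule of $A\ltimes M^{\ast}$, compute $r^{\sharp}(\alpha,m)=(P(m),P^{\ast}(\alpha))$, and decompose the identity into an $A$-component, an $\alpha_2$-part and an $\alpha_1$-part. I checked the one step you flagged as delicate: the $\alpha_1$-part identity $P\big((\kl-\kr)(P(m_{2}))(m)\big)=P\big((\kl-\kr)(P(m))(m_{2})\big)+P(m_{2})P(m)-P(m)P(m_{2})$ does follow by substituting the Rota--Baxter expansions of $P(m_2)P(m)$ and $P(m)P(m_2)$ and cancelling, and the $A$-component alone already yields the converse direction, so the equivalence is complete. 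Your approach buys conceptual economy by reusing the operator-theoretic reformulation of the aLYBE and avoids the long tensor bookkeeping; the paper's direct computation is self-contained and exhibits $[\![r,r]\!]$ explicitly, which is reused in spirit elsewhere in Section \ref{sec:spec}. Either argument is acceptable.
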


\begin{proof}
Let $\{e_{1}, e_{2},\cdots, e_{n}\}$ be a basis of $M$ and $\{f_{1}, f_{2},\cdots, f_{n}\}$
be its dual basis in $M^{\ast}$. Then $P=\sum_{i=1}^{n}P(e_{i})\otimes f_{i}
\in(A\ltimes M^{\ast})\otimes(A\ltimes M^{\ast})$, $r=P+\tau(P)=\sum_{i=1}^{n}
\big(P(e_{i})\otimes f_{i}+f_{i}\otimes P(e_{i}))$, and
\begin{align*}
r_{12}r_{13}&=\sum_{i,j=1}^{n}P(e_{i})P(e_{j})\otimes f_{i}\otimes f_{j}
-\kl^{\ast}(P(e_{i}))(f_{j})\otimes f_{i}\otimes P(e_{j})
-(\kl^{\ast}-\kr^{\ast})(P(e_{j}))(f_{i})\otimes P(e_{i})\otimes f_{j},\\[-2mm]
r_{12}r_{23}&=\sum_{i,j=1}^{n}P(e_{i})\otimes(\kl^{\ast}-\kr^{\ast})(P(e_{j}))(f_{i})
\otimes f_{j}-f_{i}\otimes P(e_{i})P(e_{j})\otimes f_{j}
+f_{i}\otimes\kl^{\ast}(P(e_{i}))(f_{j})\otimes P(e_{j}),\\[-2mm]
r_{23}r_{12}&=\sum_{i,j=1}^{n}P(e_{j})\otimes\kl^{\ast}(P(e_{i}))(f_{j})
\otimes f_{i}-f_{j}\otimes P(e_{i})P(e_{j})\otimes f_{i}
+f_{j}\otimes(\kl^{\ast}-\kr^{\ast})(P(e_{j}))(f_{i})\otimes P(e_{i}),\\[-2mm]
r_{23}r_{13}&=\sum_{i,j=1}^{n}-f_{j}\otimes P(e_{i})\otimes(\kl^{\ast}-\kr^{\ast})
(P(e_{j}))(f_{i})-P(e_{j})\otimes f_{i}\otimes\kl^{\ast}(P(e_{i}))(f_{j})
+f_{i}\otimes f_{j}\otimes P(e_{i})P(e_{j}).
\end{align*}
Note that $\kl^{\ast}(P(e_{i}))(f_{j})=\sum_{k=1}f_{j}(\kl(P(e_{i}))
(e_{k}))f_{k}$, we get
\begin{align*}
\sum_{i,j=1}^{n}P(e_{j})\otimes\kl^{\ast}(P(e_{i}))(f_{j})\otimes f_{i}
=&\;\sum_{i,j, k=1}^{n}f_{j}(\kl(P(e_{i}))(e_{k}))P(e_{j})\otimes f_{k}\otimes f_{i}\\[-2mm]
=&\;\sum_{i,j=1}^{n}P\Big(\sum_{k=1}^{n}f_{k}(\kl(P(e_{i}))(e_{j}))e_{k}\Big)
\otimes f_{j}\otimes f_{i}
=\sum_{i,j=1}^{n}P(\kl(P(e_{i}))(e_{j}))\otimes f_{j}\otimes f_{i}.
\end{align*}
Similarly, we also have $\sum_{i,j=1}^{n}f_{j}\otimes\kl^{\ast}(P(e_{j}))(f_{i})
\otimes P(e_{i})=\sum_{i,j=1}^{n}f_{j}\otimes f_{i}\otimes P(\kl(P(e_{i}))(e_{j}))$,
$\sum_{i,j=1}^{n}P(e_{i})\otimes\kr^{\ast}(P(e_{j}))(f_{i})\otimes f_{j}=
\sum_{i,j=1}^{n}P(\kr(P(e_{j}))(e_{i}))\otimes f_{i}\otimes f_{j}$ and
$\sum_{i,j=1}^{n}f_{i}\otimes\kr^{\ast}(P(e_{i}))(f_{j})
\otimes P(e_{i})=f_{i}\otimes f_{j}\otimes P(\kr(P(e_{j}))(e_{i}))$. Thus
\begin{align*}
r_{12}r_{13}+r_{12}r_{23}-r_{23}r_{12}-r_{23}r_{13}
=&\;\sum_{i,j=1}^{n}\Big(\big(P(e_{i})P(e_{j})-P(\kr(P(e_{j}))(e_{i})
-P(\kl(P(e_{i}))(e_{j}))\big)\otimes f_{i}\otimes f_{j}\\[-4mm]
&\qquad\quad+f_{i}\otimes f_{j}\otimes\big(P(\kl(P(e_{i}))(e_{j}))+P(\kr(P(e_{j}))(e_{i}))
-P(e_{i})P(e_{j})\big)\Big).
\end{align*}
Thus, $r$ is a solution of the {\rm aLYBE} in $A\ltimes M^{\ast}$ if and only if
$P$ is a relative Rota-Baxter operator of $(A, \cdot)$ associated to $(M, \kl, \kr)$.
\end{proof}

As a direct conclusion, we have

\begin{cor}\label{cor:bia-semi-dir}
Let $(A, \cdot)$ be an anti-Leibniz algebra and $(M, \kl, \kr)$ be a bimodule over
$(A, \cdot)$. If $P: M\rightarrow A$ is a relative Rota-Baxter operator of
$(A, \cdot)$ associated to $(M, \kl, \kr)$, then there is an anti-Leibniz algebra
$(A \ltimes M^{\ast}, \Delta_{r})$, where $\Delta_{r}$ is given by Eq. \eqref{cobo} for
$r=P+\tau(P)$.
\end{cor}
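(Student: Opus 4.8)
The plan is to obtain the statement as an immediate consequence of Proposition \ref{pro:O-cons} together with Corollary \ref{cor:bia}, with no fresh computation required; this is precisely why it is flagged as a direct conclusion. First I would observe that the $2$-tensor $r = P + \tau(P)$ is symmetric by its very construction, since $\tau(r) = \tau(P) + \tau(\tau(P)) = \tau(P) + P = r$. Thus $r$ is a symmetric element of $(A \ltimes M^{\ast}) \otimes (A \ltimes M^{\ast})$.

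Next I would feed the hypothesis into Proposition \ref{pro:O-cons}: because $P$ is a relative Rota-Baxter operator of $(A, \cdot)$ associated to the bimodule $(M, \kl, \kr)$, that proposition tells us precisely that $r = P + \tau(P)$ is a symmetric solution of the aLYBE in the semi-direct product anti-Leibniz algebra $A \ltimes M^{\ast}$. With a symmetric solution in hand, I would apply Corollary \ref{cor:bia}(ii), taking the ambient anti-Leibniz algebra to be $A \ltimes M^{\ast}$ rather than $A$ itself. Part (ii) asserts that any symmetric solution of the aLYBE induces an anti-Leibniz bialgebra whose comultiplication is the coboundary $\Delta_r$ defined by Eq. \eqref{cobo}. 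This yields the claimed anti-Leibniz bialgebra $(A \ltimes M^{\ast}, \Delta_r)$.

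Since the two invoked results do all the work, there is no genuine obstacle here; the single point requiring care is purely bookkeeping, namely that the structure maps $\fl$ and $\fr$ entering \eqref{cobo} must be those of the enlarged algebra $A \ltimes M^{\ast}$ and not of $A$. I would also record, consistent with the aim announced at the opening of this subsection, that the conclusion is an anti-Leibniz \emph{bialgebra} rather than merely an anti-Leibniz algebra.
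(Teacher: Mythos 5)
Your proposal is correct and matches the paper's intent exactly: the paper states this as a direct consequence of Proposition \ref{pro:O-cons} combined with Corollary \ref{cor:bia}(ii), which is precisely the chain you describe, and your observation that the conclusion should read anti-Leibniz \emph{bialgebra} (with the structure maps in Eq. \eqref{cobo} taken in $A\ltimes M^{\ast}$) correctly identifies the typo in the statement.
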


\subsection{Factorizable anti-Leibniz bialgebras and skew-quadratic Rota-Baxter anti-Leibniz
algebras}\label{subsec:fact}
In this subsection, we establish the factorizable theories for anti-Leibniz bialgebras.
Let $(A, \cdot)$ be an anti-Leibniz algebra and $r\in A\otimes A$. We have defined a
linear map $r^{\sharp}: A^{\ast}\rightarrow A$ by
$$
\langle r^{\sharp}(\xi_{1}),\; \xi_{2}\rangle=\langle\xi_{1}\otimes\xi_{2},\; r\rangle.
$$
for any $\xi_{1}, \xi_{2}\in A^{\ast}$. If $(A, \cdot, \Delta_{r})$ is an anti-Leibniz
bialgebra, then the anti-Leibniz algebra structure $\cdot_{r}$ on $A^{\ast}$ dual to
the comultiplication $\Delta_{r}$ defined by Eq. \eqref{cobo} is given by
$$
\xi_{1}\cdot_{r}\xi_{2}=\fl_{A}^{\ast}(r^{\sharp}(\xi_{1}))(\xi_{2})
+\fl_{A}^{\ast}(\tau(r)^{\sharp}(\xi_{2}))(\xi_{1})-\fr_{A}^{\ast}(\tau(r)^{\sharp}
(\xi_{2}))(\xi_{1}),
$$
for any $\xi_{1}, \xi_{2}\in A^{\ast}$. Let $(A, \cdot)$ be an anti-Leibniz algebra.
In Subsection \ref{subsec:o-oper}, we have get that an element $r\in A\otimes A$ is
invariant if and only if $r^{\sharp}(\xi)a=r^{\sharp}\big((\fl_{A}^{\ast}
-\fr_{A}^{\ast})(a)(\xi)\big)$ for any $a\in A$ and $\xi\in A^{\ast}$.

\begin{lem}\label{lem:inva}
Let $(A,\cdot)$ be an anti-Leibniz algebra and $r\in A\otimes A$.
Denote by $\mathcal{I}=r^{\sharp}-\tau(r)^{\sharp}: A^{\ast}\rightarrow A$. Then
$r-\tau(r)$ is invariant if and only if $\mathcal{I}(\fl_{A}^{\ast}-\fr_{A}^{\ast})(a)
=\fr_{A}(a)\mathcal{I}$ or $\mathcal{I}\fr_{A}^{\ast}(a)=(\fl_{A}^{\ast}
-\fr_{A}^{\ast})(a)\mathcal{I}$, for any $a\in A$.
\end{lem}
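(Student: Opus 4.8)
The plan is to read off both operator identities from the dual characterization of invariance recorded just before the lemma: for any $s\in A\otimes A$, the tensor $s$ is invariant if and only if $s^{\sharp}(\xi)a=s^{\sharp}\big((\fl_{A}^{\ast}-\fr_{A}^{\ast})(a)(\xi)\big)$ for all $a\in A$ and $\xi\in A^{\ast}$. I would apply this not to $r$ but to $s:=r-\tau(r)$. Since $\sharp$ is linear, $s^{\sharp}=r^{\sharp}-\tau(r)^{\sharp}=\mathcal{I}$, so the characterization says that $r-\tau(r)$ is invariant if and only if $\mathcal{I}(\xi)a=\mathcal{I}\big((\fl_{A}^{\ast}-\fr_{A}^{\ast})(a)(\xi)\big)$ for all $a,\xi$. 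Writing $\mathcal{I}(\xi)a=\fr_{A}(a)(\mathcal{I}(\xi))$ and reading this as an equality of maps $A^{\ast}\to A$ gives exactly the first stated identity $\mathcal{I}(\fl_{A}^{\ast}-\fr_{A}^{\ast})(a)=\fr_{A}(a)\mathcal{I}$.

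For the second identity I would exploit that $s=r-\tau(r)$ is a skew-symmetric tensor, since $\tau(r-\tau(r))=-(r-\tau(r))$. A one-line pairing computation with the defining relation $\langle \mathcal{I}(\xi),\eta\rangle=\langle \xi\otimes\eta,\;r-\tau(r)\rangle$ then shows that $\mathcal{I}$ is skew-adjoint, i.e. $\langle\mathcal{I}(\xi),\eta\rangle=-\langle\mathcal{I}(\eta),\xi\rangle$ for all $\xi,\eta\in A^{\ast}$; equivalently $\mathcal{I}^{\ast}=-\mathcal{I}$ under $A^{\ast\ast}\cong A$. Now I take the transpose of the first identity. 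Using $\big((\fl_{A}^{\ast}-\fr_{A}^{\ast})(a)\big)^{\ast}=(\fl_{A}-\fr_{A})(a)$, $\big(\fr_{A}(a)\big)^{\ast}=\fr_{A}^{\ast}(a)$, and $\mathcal{I}^{\ast}=-\mathcal{I}$, the identity $\mathcal{I}(\fl_{A}^{\ast}-\fr_{A}^{\ast})(a)=\fr_{A}(a)\mathcal{I}$ transposes to $-(\fl_{A}-\fr_{A})(a)\mathcal{I}=-\mathcal{I}\fr_{A}^{\ast}(a)$, i.e. $\mathcal{I}\fr_{A}^{\ast}(a)=(\fl_{A}-\fr_{A})(a)\mathcal{I}$, which is the second operator identity of the lemma (with $\fl_{A}-\fr_{A}$ acting on $A$ after $\mathcal{I}$). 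Because taking transposes is an involutive bijection on linear maps, the two operator identities are equivalent, so each is equivalent to the invariance of $r-\tau(r)$.

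Should one wish to avoid invoking the recorded characterization as a black box, the first equivalence can instead be obtained by the same computation that establishes it: expand $\big((\fr_{A}-\fl_{A})(a)\otimes\id+\id\otimes\,\fr_{A}(a)\big)(r-\tau(r))=0$, pair against $\xi\otimes\eta$, and transfer the operators onto the covector arguments via $\langle\fl_{A}^{\ast}(a)(\xi),v\rangle=\langle\xi,\fl_{A}(a)(v)\rangle$ and its $\fr_{A}$ analogue. I expect the only real obstacle to be bookkeeping: keeping the starred versus unstarred operators and the order of composition straight while dualizing, and correctly using the identification $A^{\ast\ast}\cong A$. Once the skew-adjointness $\mathcal{I}^{\ast}=-\mathcal{I}$ is in hand, however, passing between the two forms is purely formal and uses no further input from the anti-Leibniz axioms.
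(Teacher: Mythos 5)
Your proof is correct and follows essentially the same route as the paper's own: obtain the first operator identity $\mathcal{I}(\fl_{A}^{\ast}-\fr_{A}^{\ast})(a)=\fr_{A}(a)\mathcal{I}$ from the dual characterization of invariance applied to $r-\tau(r)$ (using $(r-\tau(r))^{\sharp}=\mathcal{I}$), note $\mathcal{I}^{\ast}=-\mathcal{I}$ from skew-symmetry, and pass to the second identity by transposition. You also correctly identified that the second identity should read $\mathcal{I}\fr_{A}^{\ast}(a)=(\fl_{A}-\fr_{A})(a)\mathcal{I}$ (without the stars on the right, as in the paper's proof, since otherwise the composition does not typecheck).
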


\begin{proof}
It is easy to see that $\mathcal{I}^{\ast}=-\mathcal{I}$ if we identify $(A^{\ast})^{\ast}$
with $A$, and $\langle\mathcal{I}(\xi),\; \xi_{2}\rangle=(r-\tau(r))(\xi_{1}\otimes\xi_{2})$
for any $\xi_{1}, \xi_{2}\in A^{\ast}$. Since $r-\tau(r)$ is invariant if and only if
$\fr_{A}(a)((r^{\sharp}-\tau(r)^{\sharp})(\xi))=(r^{\sharp}-\tau(r)^{\sharp})
\big((\fl_{A}^{\ast}-\fr_{A}^{\ast})(a)(\xi)\big)$, by the dual, we get
$\mathcal{I}(\fl_{A}^{\ast}-\fr_{A}^{\ast})(a)=\fr_{A}(a)\mathcal{I}$ is equivalent to
$\mathcal{I}\fr_{A}^{\ast}(a)=(\fl_{A}-\fr_{A})(a)\mathcal{I}$ for any $a\in A$.
The proof is finished.
\end{proof}

\begin{pro}\label{pro:r-homo}
Let $(A, \cdot)$ be an anti-Leibniz algebra and $r\in A\otimes A$ such that
$r-\tau(r)$ is invariant. The following are equivalent.
\begin{enumerate}\itemsep=0pt
\item[$(i)$] $r$ is a solution of the {\rm aLYBE} in $(A, \cdot)$;
\item[$(ii)$] $(A^{\ast}, \cdot_{r})$ is an anti-Leibniz algebra and $r^{\sharp}:
     (A^{\ast}, \cdot_{r})\rightarrow(A, \cdot)$ is a homomorphism of anti-Leibniz algebras;
\item[$(iii)$] $(A^{\ast}, \cdot_{r})$ is an anti-Leibniz algebra and $\tau(r)^{\sharp}:
     (A^{\ast}, \cdot_{r})\rightarrow(A, \cdot)$ is a homomorphism of anti-Leibniz algebras.
\end{enumerate}
\end{pro}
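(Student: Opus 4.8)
The plan is to reduce the whole statement to the two solvability criteria already in hand, Proposition~\ref{pro:inv-sol1} and Proposition~\ref{pro:r-map}, once one observes that their right-hand sides are nothing but the images of the dual product under $r^{\sharp}$ and $\tau(r)^{\sharp}$. Recall that $\xi_{1}\cdot_{r}\xi_{2}=\fl_{A}^{\ast}(r^{\sharp}(\xi_{1}))(\xi_{2})+(\fl_{A}^{\ast}-\fr_{A}^{\ast})(\tau(r)^{\sharp}(\xi_{2}))(\xi_{1})$, so that the homomorphism property of $r^{\sharp}$ (respectively $\tau(r)^{\sharp}$) from $(A^{\ast},\cdot_{r})$ to $(A,\cdot)$ reads $r^{\sharp}(\xi_{1})r^{\sharp}(\xi_{2})=r^{\sharp}(\xi_{1}\cdot_{r}\xi_{2})$ (respectively $\tau(r)^{\sharp}(\xi_{1})\tau(r)^{\sharp}(\xi_{2})=\tau(r)^{\sharp}(\xi_{1}\cdot_{r}\xi_{2})$) for all $\xi_{1},\xi_{2}\in A^{\ast}$.

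First I would dispatch the homomorphism assertions. For $(iii)$, the identity $\tau(r)^{\sharp}(\xi_{1}\cdot_{r}\xi_{2})=\tau(r)^{\sharp}\big(\fl_{A}^{\ast}(r^{\sharp}(\xi_{1}))(\xi_{2})+(\fl_{A}^{\ast}-\fr_{A}^{\ast})(\tau(r)^{\sharp}(\xi_{2}))(\xi_{1})\big)$ shows that the homomorphism condition for $\tau(r)^{\sharp}$ is exactly the criterion of Proposition~\ref{pro:r-map}; hence $(i)$ is equivalent to $\tau(r)^{\sharp}$ being a homomorphism. For $(ii)$, using the invariance of $r-\tau(r)$ one rewrites $r^{\sharp}(\xi_{1}\cdot_{r}\xi_{2})$ in the form appearing on the right of Proposition~\ref{pro:inv-sol1} (this is precisely the simplification carried out in the proof of that proposition), so that the homomorphism condition for $r^{\sharp}$ coincides with its criterion, and $(i)$ is equivalent to $r^{\sharp}$ being a homomorphism. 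In both cases the reverse implications $(ii)\Rightarrow(i)$ and $(iii)\Rightarrow(i)$ require nothing beyond these identities.

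It remains to supply, for the forward implications, the assertion that $(A^{\ast},\cdot_{r})$ is an anti-Leibniz algebra whenever $(i)$ holds. Since $\cdot_{r}=\Delta_{r}^{\ast}$, this is equivalent, by the duality between anti-Leibniz coalgebras and anti-Leibniz algebras, to $(A,\Delta_{r})$ being an anti-Leibniz coalgebra, which by Proposition~\ref{pro:coalg} is controlled by Eq.~\eqref{coalg}. Here I would invoke the hypothesis: applying Eq.~\eqref{invar} to the invariant tensor $r-\tau(r)$ gives $((\fr_{A}-\fl_{A})(b)\otimes\id+\id\otimes\,\fr_{A}(b))(r-\tau(r))=0$ for every $b\in A$, and since the two correction terms of Eq.~\eqref{coalg} are each assembled from such expressions (with $b=a$ in one, and with $b=x_{i}$ summed against $y_{i}$ in the other), both of them vanish. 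Thus Eq.~\eqref{coalg} collapses to $\big((\fl_{A}-\fr_{A})(a)\otimes\id\otimes\id\big)([\![r, r]\!])+\big(\id\otimes\id\otimes\,\fr_{A}(a)+\id\otimes\,(\fl_{A}-\fr_{A})(a)\otimes\id\big)\big((\tau\otimes\id)([\![r, r]\!])\big)=0$, which is automatic once $[\![r, r]\!]=0$; hence $(i)$ forces $(A^{\ast},\cdot_{r})$ to be an anti-Leibniz algebra, completing $(i)\Rightarrow(ii)$ and $(i)\Rightarrow(iii)$. The only real obstacle is the bookkeeping in this last step—correctly matching the indices $x_{i},y_{i}$ in the correction terms of Eq.~\eqref{coalg} to the invariance relation so that their vanishing is genuine—while the rest is a direct transcription of the homomorphism property into Propositions~\ref{pro:inv-sol1} and~\ref{pro:r-map}.
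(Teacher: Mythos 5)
Your proposal is correct and follows essentially the same route as the paper: both reduce the homomorphism conditions to the solvability criteria already established (Proposition~\ref{pro:r-map} for $\tau(r)^{\sharp}$ and, in your case, Proposition~\ref{pro:inv-sol1} for $r^{\sharp}$, where the paper instead redoes the latter as a direct tensor computation of $r^{\sharp}(\xi_{1}\cdot_{r}\xi_{2})-r^{\sharp}(\xi_{1})r^{\sharp}(\xi_{2})$ with identical content), and both obtain the anti-Leibniz structure on $(A^{\ast},\cdot_{r})$ from the coalgebra criterion of Proposition~\ref{pro:coalg}, whose two correction terms vanish by the invariance of $r-\tau(r)$. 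One minor remark: your assignment of Proposition~\ref{pro:r-map} to item $(iii)$ (the map $\tau(r)^{\sharp}$) is the accurate one, whereas the paper's proof attaches that citation to the label $(i)\Leftrightarrow(ii)$ --- this appears to be a transposition of labels in the paper rather than a genuinely different argument.
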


\begin{proof}
$(i)\Leftrightarrow(ii)$. If $r$ is a solution of the {\rm aLYBE} in $(A, \cdot)$,
then $(A, \cdot, \Delta_{r})$ is a quasi-triangular anti-Leibniz bialgebra, and so that
$(A^{\ast}, \cdot_{r})$ is an anti-Leibniz algebra. Since the product $\cdot_{r}$ is
given by $\xi_{1}\cdot_{r}\xi_{2}=\fl_{A}^{\ast}(r^{\sharp}(\xi_{1}))(\xi_{2})
+\fl_{A}^{\ast}(\tau(r)^{\sharp}(\xi_{2}))(\xi_{1})-\fr_{A}^{\ast}(\tau(r)^{\sharp}
(\xi_{2}))(\xi_{1})$, this equivalence is following Proposition \ref{pro:r-map}.

$(i)\Leftrightarrow(iii)$. By a direct calculation,
for any $\xi_{1}, \xi_{2}, \xi_{3}\in A^{\ast}$, we have
\begin{align*}
\langle r^{\sharp}(\xi_{1})r^{\sharp}(\xi_{2}),\; \xi_{3}\rangle
&=\sum_{i, j}\langle x_{j}\otimes x_{i}\otimes y_{j}y_{i},\;
\xi_{1}\otimes\xi_{2}\otimes\xi_{3}\rangle,\\[-2mm]
\langle r^{\sharp}(\fr_{A}^{\ast}(\tau(r)^{\sharp}(\xi_{2}))(\xi_{1})),\; \xi_{3}\rangle
&=\sum_{i, j}\langle x_{i}x_{j}\otimes y_{j}\otimes y_{i},\;
\xi_{1}\otimes\xi_{2}\otimes\xi_{3}\rangle,\\[-2mm]
\langle r^{\sharp}(\fl_{A}^{\ast}(\tau(r)^{\sharp}(\xi_{2}))(\xi_{1})),\; \xi_{3}\rangle
&=\sum_{i, j}\langle x_{j}x_{i}\otimes y_{j}\otimes y_{i},\;
\xi_{1}\otimes\xi_{2}\otimes\xi_{3}\rangle,\\[-2mm]
\langle r^{\sharp}(\fl_{A}^{\ast}(r^{\sharp}(\xi_{1}))(\xi_{2})),\; \xi_{3}\rangle
&=\sum_{i, j}\langle x_{j}\otimes y_{j}x_{i}\otimes y_{i},\;
\xi_{1}\otimes\xi_{2}\otimes\xi_{3}\rangle.
\end{align*}
Hence,
\begin{align*}
&\;\langle r^{\sharp}(\xi_{1}\cdot_{r}\xi_{2})-r^{\sharp}(\xi_{1})r^{\sharp}(\xi_{2}),\; \xi_{3}\rangle\\
=&\;\langle r^{\sharp}(\fl_{A}^\ast(\tau(r)^{\sharp}(\xi_{2}))(\xi_{1}))
+r^{\sharp}(\fl_{A}^{\ast}(r^{\sharp}(\xi_{1}))(\xi_{2}))
-r^{\sharp}(\fr_{A}^{\ast}(\tau(r)^{\sharp}(\xi_{2}))(\xi_{1}))
-r^{\sharp}(\xi_{1})r^{\sharp}(\xi_{2}),\; \xi_{3}\rangle\\
=&\;\sum_{i, j}\langle x_{j}x_{i}\otimes y_{j}\otimes y_{i}
+x_{j}\otimes y_{j}x_{i}\otimes y_{i}-x_{i}x_{j}\otimes y_{j}\otimes y_{i}
-x_{j}\otimes x_{i}\otimes y_{j}y_{i},\; \xi_{1}\otimes\xi_{2}\otimes\xi_{3}\rangle\\
=&\;\langle((\fr_{A}-\fl_{A})(x_{j})\otimes\id+\id\otimes\fr_{A}(x_{j}))(r-\tau(r))
\otimes y_{j}+(\tau\otimes\id)([\![r, r]\!]),\; \xi_{1}\otimes\xi_{2}\otimes\xi_{3}\rangle.
\end{align*}
That is to say, if $r-\tau(r)$ is invariant and $r$ is a solution of the {\rm aLYBE}
in $(A, \cdot)$, then $r^{\sharp}$ is a homomorphism of anti-Leibniz algebras.
Conversely, it is directly available from the above calculation.
\end{proof}

Triangular anti-Leibniz bialgebras is an important class of anti-Leibniz bialgebras.
Another important class of anti-Leibniz bialgebras is the factorizable
anti-Leibniz bialgebras.

\begin{defi}\label{defi:fact}
A quasi-triangular anti-Leibniz bialgebra $(A, \cdot, \Delta)$ is called
{\rm factorizable} if the linear map $\mathcal{I}=r^{\sharp}-\tau(r)^{\sharp}: A^{\ast}
\rightarrow A$ is a linear isomorphism of vector spaces.
\end{defi}

Clearly, quasi-triangular anti-Leibniz bialgebras contain triangular anti-Leibniz
bialgebras and factorizable anti-Leibniz bialgebras as two subclasses.
Moreover, it is easy to see that a quasi-triangular anti-Leibniz bialgebra
$(A, \cdot, \Delta)$ is a triangular anti-Leibniz bialgebra if and only if
$\mathcal{I}=r^{\sharp}-\tau(r)^{\sharp}: A^{\ast}\rightarrow A$ is the zero map.
Factorizable anti-Leibniz bialgebras are however concerned with the opposite case.
Following, for convenience, we consider the linear map $\mathcal{I}=r^{\sharp}
-\tau(r)^{\sharp}: A^{\ast}\rightarrow A$ as a composition of maps as follows:
$$
A^{\ast}\xrightarrow{\quad r^{\sharp}\oplus \tau(r)^{\sharp}\quad} A\oplus A
\xrightarrow{\quad(a_{1}, a_{2})\mapsto a_{1}-a_{2}\quad} A.
$$
The following result justifies the terminology of a factorizable anti-Leibniz bialgebra.

\begin{pro}\label{pro:deco}
Let $(A, \cdot)$ be an anti-Leibniz algebra and $r\in A\otimes A$. If there is an
anti-Leibniz bialgebra $(A, \cdot, \Delta_{r})$ induced by $r$ and it is factorizable,
then $\Img(r^{\sharp}\oplus \tau(r)^{\sharp})$ is an anti-Leibniz subalgebra of the direct
sum anti-Leibniz algebra $A\oplus A$, which is isomorphic to the anti-Leibniz
algebra $(A^{\ast}, \cdot_{r})$. Moreover, any $a\in A$ has a unique decomposition
$a=a_{+}+a_{-}$, where $a_{+}\in\Img(r^{\sharp})$ and $a_{-}\in\Img(\tau(r)^{\sharp})$.
\end{pro}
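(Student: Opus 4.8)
The plan is to derive everything formally from the homomorphism property established in Proposition \ref{pro:r-homo} together with the invertibility of $\mathcal{I}$ that is built into factorizability. Since $(A, \cdot, \Delta_r)$ is a factorizable (in particular quasi-triangular) anti-Leibniz bialgebra, $r$ is a solution of the {\rm aLYBE} and $r-\tau(r)$ is invariant, so the hypotheses of Proposition \ref{pro:r-homo} are met. Invoking the implications $(i)\Rightarrow(ii)$ and $(i)\Rightarrow(iii)$ of that proposition, I obtain that $(A^{\ast}, \cdot_{r})$ is an anti-Leibniz algebra and that \emph{both} $r^{\sharp}$ and $\tau(r)^{\sharp}$ are homomorphisms of anti-Leibniz algebras from $(A^{\ast}, \cdot_{r})$ to $(A, \cdot)$. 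Securing both components simultaneously is exactly what the equivalence of $(ii)$ and $(iii)$ provides, and it is the one point where care is needed.

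First I would record that $A\oplus A$ carries its componentwise (direct product) anti-Leibniz structure, together with the elementary fact that a linear map into a direct product is an algebra homomorphism precisely when each of its two components is. Applying this to $r^{\sharp}\oplus\tau(r)^{\sharp}\colon A^{\ast}\to A\oplus A$, $\xi\mapsto(r^{\sharp}(\xi),\,\tau(r)^{\sharp}(\xi))$, the previous paragraph immediately yields that $r^{\sharp}\oplus\tau(r)^{\sharp}$ is a homomorphism of anti-Leibniz algebras; hence its image $\Img(r^{\sharp}\oplus\tau(r)^{\sharp})$ is an anti-Leibniz subalgebra of $A\oplus A$. To identify it with $(A^{\ast}, \cdot_{r})$ I would use the factorization recorded just before the statement: the composite of $r^{\sharp}\oplus\tau(r)^{\sharp}$ with the difference map $(a_{1}, a_{2})\mapsto a_{1}-a_{2}$ is exactly $\mathcal{I}=r^{\sharp}-\tau(r)^{\sharp}$, which is a linear isomorphism by factorizability. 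In particular $r^{\sharp}\oplus\tau(r)^{\sharp}$ is injective, so it is an injective homomorphism onto its image and therefore restricts to an isomorphism of anti-Leibniz algebras $(A^{\ast}, \cdot_{r})\cong\Img(r^{\sharp}\oplus\tau(r)^{\sharp})$.

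Finally, for the decomposition, given $a\in A$ I would set $\xi:=\mathcal{I}^{-1}(a)\in A^{\ast}$, which exists and is unique because $\mathcal{I}$ is bijective, and then put $a_{+}:=r^{\sharp}(\xi)\in\Img(r^{\sharp})$ and $a_{-}:=-\tau(r)^{\sharp}(\xi)=\tau(r)^{\sharp}(-\xi)\in\Img(\tau(r)^{\sharp})$, the latter membership holding since $\Img(\tau(r)^{\sharp})$ is a subspace. By construction $a_{+}+a_{-}=(r^{\sharp}-\tau(r)^{\sharp})(\xi)=\mathcal{I}(\xi)=a$, and the uniqueness of $\xi$ furnishes the canonical decomposition. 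I do not expect a serious obstacle here: once Proposition \ref{pro:r-homo} is in hand the argument is essentially bookkeeping, and the only things to watch are that \emph{both} components (not just one) are homomorphisms and that the sign in $a_{-}=-\tau(r)^{\sharp}(\xi)$ is chosen so that $a_{-}$ genuinely lands in $\Img(\tau(r)^{\sharp})$.
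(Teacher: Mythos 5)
Your proposal is correct and follows essentially the same route as the paper: both arguments obtain that $r^{\sharp}$ and $\tau(r)^{\sharp}$ are homomorphisms (via Proposition \ref{pro:r-homo}), conclude that $\Img(r^{\sharp}\oplus\tau(r)^{\sharp})$ is a subalgebra isomorphic to $(A^{\ast},\cdot_{r})$ from the injectivity forced by the invertibility of $\mathcal{I}$, and produce the decomposition by writing $a=\mathcal{I}(\mathcal{I}^{-1}(a))$ with $a_{+}=r^{\sharp}(\mathcal{I}^{-1}(a))$ and $a_{-}=-\tau(r)^{\sharp}(\mathcal{I}^{-1}(a))$. The only difference is that you spell out the componentwise-homomorphism bookkeeping slightly more explicitly than the paper does.
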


\begin{proof}
Since $(A, \cdot, \Delta_{r})$ is quasi-triangular anti-Leibniz bialgebra,
both $r^{\sharp}$ and $\tau(r)^{\sharp}$ are homomorphisms of anti-Leibniz algebras.
Therefore, $\Img(r^{\sharp}\oplus \tau(r)^{\sharp})$ is an anti-Leibniz subalgebra
of the direct sum anti-Leibniz algebra $A\oplus A$. Since $\mathcal{I}:
A^{\ast}\rightarrow A$ is a linear isomorphism, it follows that $r^{\sharp}\oplus
\tau(r)^{\sharp}$ is injective, then the anti-Leibniz algebra $\Img(r^{\sharp}\oplus
\tau(r)^{\sharp})$ is isomorphic to the anti-Leibniz algebra $(A^{\ast}, \cdot_{r})$.
Moreover, since $\mathcal{I}$ is an isomorphism again, for any $a\in A$, we have
\begin{align*}
a=(r^{\sharp}-\tau(r)^{\sharp})(\mathcal{I}^{-1}(a))
=r^{\sharp}(\mathcal{I}^{-1}(a))-\tau(r)^{\sharp}(\mathcal{I}^{-1}(a)),
\end{align*}
which implies that $a=a_{+}+a_{-}$, where $a_{+}=r^{\sharp}(\mathcal{I}^{-1}(a))$ and
$a_{-}=-\tau(r)^{\sharp}(\mathcal{I}^{-1}(a))$. The uniqueness also follows from the fact
that $\mathcal{I}$ is an isomorphism.
\end{proof}

In Theorem \ref{thm:double-coba}, for any anti-Leibniz bialgebra $(A, \cdot, \Delta)$,
we construct a new anti-Leibniz bialgebra $(A\oplus A^{\ast}, \ast, \Delta_{\tilde{r}})$,
which is called the double of anti-Leibniz bialgebra $(A, \cdot, \Delta)$.
The comultiplication $\Delta_{\tilde{r}}$ is given by Eq. \eqref{cobo} for $\tilde{r}=\sum_{i}
e_{i}\otimes f_{i}$, where $\{e_{1}, e_{2}, \cdots, e_{n}\}$ is a basis of $A$ and
$\{f_{1}, f_{2}, \cdots, f_{n}\}$ be its dual basis. The importance of factorizable
anti-Leibniz bialgebras in the study of anti-Leibniz bialgebras can also be
observed from the following proposition that the double space of an arbitrary
anti-Leibniz bialgebra admits a factorizable anti-Leibniz bialgebra structure.

\begin{pro}\label{pro:doufact}
With the above notations, the double anti-Leibniz bialgebra $(A\bowtie A^\ast, \ast,
\Delta_{\tilde{r}})$ of any anti-Leibniz bialgebra $(A, \cdot, \Delta)$ is factorizable.
\end{pro}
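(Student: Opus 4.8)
The plan is to verify the defining condition of Definition \ref{defi:fact} directly. Since Theorem \ref{thm:double-coba} already shows that $(A\bowtie A^{\ast}, \ast, \Delta_{\tilde{r}})$ is a quasi-triangular anti-Leibniz bialgebra with $\tilde{r}=\sum_{i}e_{i}\otimes f_{i}$, it remains only to check that the map $\mathcal{I}=\tilde{r}^{\sharp}-\tau(\tilde{r})^{\sharp}\colon (A\oplus A^{\ast})^{\ast}\rightarrow A\oplus A^{\ast}$ is a linear isomorphism. The entire proof therefore reduces to computing the two maps $\tilde{r}^{\sharp}$ and $\tau(\tilde{r})^{\sharp}$ explicitly and reading off their difference.

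First I would fix the identification $(A\oplus A^{\ast})^{\ast}\cong A^{\ast}\oplus A$, under which an element $(\xi, a)\in A^{\ast}\oplus A$ pairs with $(b, \eta)\in A\oplus A^{\ast}$ by $\langle(\xi, a),\,(b, \eta)\rangle=\langle\xi, b\rangle+\langle\eta, a\rangle$. Writing the basis vectors as $e_{i}=(e_{i}, 0)$ and the dual basis as $f_{i}=(0, f_{i})$ inside $A\oplus A^{\ast}$, so that $\tilde{r}=\sum_{i}(e_{i}, 0)\otimes(0, f_{i})$, I would use the formula $\tilde{r}^{\sharp}(\phi)=\sum_{i}\langle\phi,(e_{i},0)\rangle(0,f_{i})$ together with the dual-basis identity $\sum_{i}\langle\xi, e_{i}\rangle f_{i}=\xi$. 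This gives $\tilde{r}^{\sharp}((\xi, a))=(0, \xi)$. Applying the same computation to $\tau(\tilde{r})=\sum_{i}(0, f_{i})\otimes(e_{i}, 0)$, now using $\sum_{i}\langle f_{i}, a\rangle e_{i}=a$, yields $\tau(\tilde{r})^{\sharp}((\xi, a))=(a, 0)$.

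Combining these two computations, $\mathcal{I}((\xi, a))=\tilde{r}^{\sharp}((\xi, a))-\tau(\tilde{r})^{\sharp}((\xi, a))=(0, \xi)-(a, 0)=(-a, \xi)$, which is manifestly a linear isomorphism $A^{\ast}\oplus A\rightarrow A\oplus A^{\ast}$ with inverse $(b, \eta)\mapsto(\eta, -b)$. By Definition \ref{defi:fact}, this shows that the double anti-Leibniz bialgebra $(A\bowtie A^{\ast}, \ast, \Delta_{\tilde{r}})$ is factorizable.

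I do not expect a genuine obstacle here; the only delicate point is the bookkeeping—keeping the identification $(A\oplus A^{\ast})^{\ast}\cong A^{\ast}\oplus A$ consistent and tracking that $\tilde{r}^{\sharp}$ lands in the $A^{\ast}$-summand while $\tau(\tilde{r})^{\sharp}$ lands in the $A$-summand, so that their difference $\mathcal{I}$ comes out as the invertible "swap with a sign" map rather than something degenerate. Once the two $\sharp$-maps are pinned down, the isomorphism is immediate; notably, the conclusion uses only the explicit form of $\tilde{r}$ and not the solution property of the anti-Leibniz Yang-Baxter equation, which was already secured in Theorem \ref{thm:double-coba}.
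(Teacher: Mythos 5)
Your proof is correct and follows essentially the same route as the paper: both reduce the claim to the explicit computation $\tilde{\mathcal{I}}(\xi,a)=(-a,\xi)$, which is visibly a linear isomorphism. The only cosmetic difference is that you cite Theorem \ref{thm:double-coba} for quasi-triangularity, whereas the paper's proof re-verifies the invariance of $\tilde{r}-\tau(\tilde{r})$ via Lemma \ref{lem:inva} before drawing the same conclusion.
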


\begin{proof}
Let $\tilde{r}=\sum_{i}e_{i}\otimes f_{i}$, where $\{e_{1}, e_{2}, \cdots, e_{n}\}$ is
a basis of $A$ and $\{f_{1}, f_{2}, \cdots, f_{n}\}$ be its dual basis in $A^{\ast}$.
Since $\tilde{r}-\tau(\tilde{r})=\sum_{i=1}^{n}(e_{i}\otimes f_{i}-f_{i}\otimes e_{i})$,
for any $(\xi, a)\in A^{\ast}\oplus A$, we get $\tilde{\mathcal{I}}(\xi, a)=(-a, \xi)
\in A\oplus A^{\ast}$, where $\tilde{\mathcal{I}}=\tilde{r}^{\sharp}
-\tau(\tilde{r})^{\sharp}$. Thus, for any $(a_{1}, \xi_{1})$, $(a_{2}, \xi_{2})
\in A\oplus A^{\ast}$,
\begin{align*}
&\;\fr^{\ast}_{A\bowtie A^{\ast}}(a_{1}, \xi_{1})
\big(\tilde{\mathcal{I}}(\xi_{2}, a_{2})\big)\\
=&\;(-a_{2}a_{1}+\fl_{A^{\ast}}^{\ast}(\xi_{2})(a_{1})
+(\fl_{A^{\ast}}^{\ast}-\fr_{A^{\ast}}^{\ast})(\xi_{1})(-a_{2}),\ \
\xi_{2}\cdot_{A^{\ast}}\xi_{1}+\fl_{A}^{\ast}(-a_{2})(\xi_{1})
+(\fl_{A}^{\ast}-\fr_{A}^{\ast})(a_{1})(\xi_{2}))\\
=&\;\tilde{\mathcal{I}}(\xi_{2}\cdot_{A^{\ast}}\xi_{1}+\fl_{A}^{\ast}(-a_{2})(\xi_{1})
+(\fl_{A}^{\ast}-\fr_{A}^{\ast})(a_{1})(\xi_{2}),\ \
a_{2}a_{1}-\fl_{A^{\ast}}^{\ast}(\xi_{2})(a_{1})+(\fr_{A^{\ast}}^{\ast}
-\fl_{A^{\ast}}^{\ast})(\xi_{1})(-a_{2}))\\
=&\;\tilde{\mathcal{I}}\big(((\fl_{A\bowtie A^{\ast}}^{\ast}-\fr_{A\bowtie A^{\ast}}^{\ast})
(a_{1}, \xi_{1}))(\xi_{2}, a_{2})\big).
\end{align*}
By Lemma \ref{lem:inva}, we get $\tilde{r}-\tau(\tilde{r})$ is invariant.
Thus, $(A\bowtie A^{\ast}, \ast, \Delta_{\tilde{r}})$ is a quasi-triangular
anti-Leibniz bialgebra. Finally, since $\mathcal{I}(\xi, a)=(-a, \xi)$ is a linear
isomorphism, we get that $(A\bowtie A^{\ast}, \ast, \Delta_{\tilde{r}})$ is a
factorizable anti-Leibniz bialgebra.
\end{proof}

Proposition \ref{pro:doufact} provides a method for constructing
factorizable anti-Leibniz bialgebras.

\begin{ex}\label{ex:double-fact}
Consider the $2$-dimensional anti-Leibniz bialgebra $(A, \cdot, \Delta)$ given by
Example \ref{ex:YBE}, where $A=\Bbbk\{e_{1}, e_{2}\}$, the nonzero multiplication and
comultiplication: $e_{1}e_{1}=e_{2}$ and $\Delta(e_{1})=e_{2}\otimes e_{2}$.
Denote by $\{f_{1}, f_{2}\}$ the dual basis of $\{e_{1}, e_{2}\}$ and
$\tilde{r}=e_{1}\otimes f_{1}+e_{2}\otimes f_{2}\in A\otimes A^{\ast}\subset
(A\oplus A^{\ast})\otimes(A\oplus A^{\ast})$. Then we get a $4$-dimensional
anti-Leibniz bialgebra $(A\oplus A^{\ast}, \ast, \Delta_{\tilde{r}})$, where
\begin{align*}
&e_{1}\ast e_{1}=e_{2}, \qquad f_{2}\ast f_{2}=f_{1},\qquad
e_{1}\ast f_{2}=f_{1}, \qquad f_{2}\ast e_{1}=e_{2},\\
&\qquad\qquad\Delta_{\tilde{r}}(e_{1})=e_{2}\otimes e_{2}, \qquad\qquad
\Delta_{\tilde{r}}(f_{2})=f_{1}\otimes f_{1}.
\end{align*}
One can check $(A\oplus A^{\ast}, \ast, \Delta_{\tilde{r}})$ be a quasi-triangular
anti-Leibniz bialgebra. Note that $r-\tau(r)=e_{1}\otimes f_{1}+e_2\otimes f_{2}
-f_{1}\otimes e_{1}-f_{2}\otimes e_{2}$, we get that the linear map
$\tilde{\mathcal{I}}: (A\oplus A^{\ast})^{\ast}\rightarrow A\oplus A^{\ast}$ is given by
$$
\tilde{\mathcal{I}}(e_{1}^{\star})=f_{1},\qquad\quad
\tilde{\mathcal{I}}(e_{2}^{\star})=f_{2},\qquad\quad
\tilde{\mathcal{I}}(f_{1}^{\star})=-e_{1},\qquad\quad
\tilde{\mathcal{I}}(f_{2}^{\star})=-e_{2},
$$
where $\{e_{1}^{\star}, e_{2}^{\star}, f_{1}^{\star}, f_{2}^{\star}\}$ in
$(A\oplus A^{\ast})^{\ast}$ is the dual basis of $\{e_{1}, e_{2}, f_{1}, f_{2}\}$ in
$A\oplus A^{\ast}$. Hence, we get $\tilde{\mathcal{I}}$ is a linear isomorphism,
and so that $(A\oplus A^{\ast}, \ast, \Delta_{\tilde{r}})$ is factorizable
anti-Leibniz bialgebra.
\end{ex}

Next, we will give a characterization of factorizable anti-Leibniz bialgebras by
the Rota-Baxter operator on anti-Leibniz algebras with a nondegenerate
skew-symmetric invariant bilinear form. Let $(A, \cdot)$ be an anti-Leibniz algebra
and $\lambda\in\Bbbk$. A linear map $R: A\rightarrow A$ is called a
{\it Rota-Baxter operator of weight $\lambda$} on $(A, \cdot)$ if
$$
R(a_{1})R(a_{2})=R\Big(R(a_{1})a_{2}+a_{1}R(a_{2})+\lambda a_{1}a_{2}\Big),
$$
for any $a_{1}, a_{2}\in A$. Let $(A, \cdot, R)$ be an anti-Leibniz algebra with a
Rota-Baxter operator $R$ of weight $\lambda$. Then there is a new multiplication
$\cdot_{R}$ on $A$ defined by
$$
a_{1}\cdot_{R}a_{2}=R(a_{1})a_{2}+a_{1}R(a_{2})+\lambda a_{1}a_{2},
$$
for any $a_{1}, a_{2}\in A$. This anti-Leibniz algebra is denoted by $A_{R}$.
It is easy to see that $R$ is a homomorphism of anti-Leibniz algebras from
$(A, \cdot_{R})$ to $(A, \cdot)$.

\begin{defi}\label{defi:quaRB}
A linear map $R: A\rightarrow A$ is called a {\rm Rota-Baxter operator of weight $\lambda$
on an anti-Leibniz algebra $(A, \cdot)$ with a nondegenerate skew-symmetric invariant
bilinear form $\mathfrak{B}(-,-)$} if $R$ is a Rota-Baxter operator of weight $\lambda$
on $(A, \cdot)$ and for any $a_{1}, a_{2}\in A$,
\begin{align*}
\mathfrak{B}(R(a_{1}), a_{2})+\mathfrak{B}(a_{1}, R(a_{2}))
+\lambda\mathfrak{B}(a_{1}, a_{2})=0.
\end{align*}
The quadruple $(A, \cdot, \mathfrak{B}, R)$ is called a {\rm skew-quadratic Rota-Baxter
anti-Leibniz algebra of weight $\lambda$}.
\end{defi}

By direct calculations, we have

\begin{pro}\label{pro:skew-dual}
Let $(A, \cdot)$ be an anti-Leibniz algebra, $\mathfrak{B}(-,-)$ be a
nondegenerate skew-symmetric invariant bilinear form on $(A, \cdot)$ and
$R: A\rightarrow A$ is a linear map. Then, $(A, \cdot, \mathfrak{B}, R)$ is
a skew-quadratic Rota-Baxter anti-Leibniz algebra of weight $\lambda$ if and only if
$(A, \cdot, -\mathfrak{B}, -(\lambda\id+R))$ is a skew-quadratic Rota-Baxter
anti-Leibniz algebra of weight $\lambda$.
\end{pro}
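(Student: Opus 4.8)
The plan is to check that the assignment $(\mathfrak{B}, R)\mapsto(-\mathfrak{B},\, -(\lambda\id+R))$ preserves each of the three defining axioms of a skew-quadratic Rota-Baxter anti-Leibniz algebra of weight $\lambda$. Writing $R':=-(\lambda\id+R)$ and $\mathfrak{B}':=-\mathfrak{B}$, I first observe that this assignment is an involution: applying it to $(\mathfrak{B}', R')$ returns $-(\lambda\id+R')=-\lambda\id-(-\lambda\id-R)=R$ together with $-\mathfrak{B}'=\mathfrak{B}$. Hence it suffices to establish only the forward implication, and the converse follows automatically from the involution.

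Since multiplying a bilinear form by $-1$ preserves nondegeneracy, skew-symmetry, and invariance, $\mathfrak{B}'$ is again a nondegenerate skew-symmetric invariant bilinear form on $(A,\cdot)$. The substantive step is verifying that $R'$ is a Rota-Baxter operator of weight $\lambda$. I expand $R'(a_1)R'(a_2)=(-\lambda a_1-R(a_1))(-\lambda a_2-R(a_2))$ and then replace $R(a_1)R(a_2)$ using the Rota-Baxter relation for $R$, obtaining
\begin{align*}
R'(a_1)R'(a_2)=\lambda^2 a_1 a_2+\lambda R(a_1)a_2+\lambda a_1 R(a_2)+R\big(R(a_1)a_2+a_1 R(a_2)+\lambda a_1 a_2\big).
\end{align*}
On the other side, a short computation shows that the argument $R'(a_1)a_2+a_1 R'(a_2)+\lambda a_1 a_2$ collapses to $-\lambda a_1 a_2-R(a_1)a_2-a_1 R(a_2)$; applying $R'=-\lambda\id-R$ to this element yields exactly the same expression as above, so $R'(a_1)R'(a_2)=R'\big(R'(a_1)a_2+a_1 R'(a_2)+\lambda a_1 a_2\big)$.

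Finally, for the quadratic compatibility I expand
\begin{align*}
\mathfrak{B}'(R'(a_1),a_2)+\mathfrak{B}'(a_1,R'(a_2))+\lambda\mathfrak{B}'(a_1,a_2)
\end{align*}
using $\mathfrak{B}'=-\mathfrak{B}$ and $R'=-\lambda\id-R$. The two $\lambda$-linear contributions coming from $R'$ combine to $2\lambda\mathfrak{B}(a_1,a_2)$, and together with $\lambda\mathfrak{B}'(a_1,a_2)=-\lambda\mathfrak{B}(a_1,a_2)$ they leave a net $\lambda\mathfrak{B}(a_1,a_2)$, so the whole expression reduces to $\mathfrak{B}(R(a_1),a_2)+\mathfrak{B}(a_1,R(a_2))+\lambda\mathfrak{B}(a_1,a_2)$, which vanishes by the compatibility axiom for $(\mathfrak{B},R)$. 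There is no genuine obstacle here: the entire argument is an elementary, if sign-sensitive, direct verification, and the only point demanding a little care is the bookkeeping of the $\lambda$-linear terms in the Rota-Baxter and compatibility computations.
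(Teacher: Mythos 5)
Your proof is correct, and it matches the paper's approach: the paper simply asserts the proposition "by direct calculations," and your verification (including the useful observation that the assignment is an involution, so only one direction needs checking) is exactly the computation being alluded to.
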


The following theorem shows that there is a one-to-one correspondence between factorizable
anti-Leibniz bialgebras and skew-quadratic Rota-Baxter anti-Leibniz algebras of nonzero weight.

\begin{thm}\label{thm:corre}
Let $(A, \cdot)$ be an anti-Leibniz algebra and $r\in A\otimes A$.
Denote $\mathcal{I}=r^{\sharp}-\tau(r)^{\sharp}$ and define a bilinear form
$\mathfrak{B}_{\mathcal{I}}$ by $\mathfrak{B}_{\mathcal{I}}(a_{1}, a_{2})=\langle
\mathcal{I}^{-1}(a_{1}),\; a_{2}\rangle$, for any $a_{1}, a_{2}\in A$.
If there is an anti-Leibniz bialgebra $(A, \cdot, \Delta)$ induced by $r$ and it is
factorizable, then $(A, \cdot, \mathfrak{B}_{\mathcal{I}}, R=\lambda\tau(r)^{\sharp}
\mathcal{I}^{-1})$ is a skew-quadratic Rota-Baxter anti-Leibniz algebra of weight $\lambda$.

Conversely, for any skew-quadratic Rota-Baxter anti-Leibniz algebra $(A, \cdot,
\mathfrak{B}, R)$ of weight $\lambda$, we have a linear isomorphism
$\mathcal{I}_{\mathfrak{B}}: A^{\ast}\rightarrow A$ by
$\langle\mathcal{I}^{-1}_{\mathfrak{B}}(a_{1}),\; a_{2}\rangle=\mathfrak{B}(a_{1}, a_{2})$,
for any $a_{1}, a_{2}\in A$. If $\lambda\neq0$, we define
$$
r^{\sharp}:=\mbox{$\frac{1}{\lambda}$}(R+\lambda\id)\mathcal{I}_{\mathfrak{B}}:\quad
A^{\ast}\longrightarrow A,
$$
and define $r\in A\otimes A$ by $\langle r^{\sharp}(\xi_{1}),\; \xi_{2}\rangle
=\langle r,\; \xi_{1}\otimes\xi_{2}\rangle$, for any $\xi_{1}, \xi_{2}\in A^{\ast}$.
Then $r$ is a solution of the {\rm aLYBE} in $(A, \cdot)$ and gives rise
to a factorizable anti-Leibniz bialgebra $(A, \cdot, \Delta_{r})$, where $\Delta_{r}$
is given by Eq. \eqref{cobo}.
\end{thm}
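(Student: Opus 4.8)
The plan is to treat the two directions separately, in each case reducing the claim to the dictionary already set up between the map $r^{\sharp}$, the induced product $\cdot_{r}$ on $A^{\ast}$, and the Rota-Baxter data. For the forward direction, assume $(A,\cdot,\Delta_{r})$ is factorizable; being factorizable it is in particular quasi-triangular, so $r$ solves the aLYBE and $r-\tau(r)$ is invariant, and $\mathcal{I}=r^{\sharp}-\tau(r)^{\sharp}$ is an isomorphism. First I would observe that $\mathfrak{B}_{\mathcal{I}}$ is nondegenerate (immediate from invertibility of $\mathcal{I}$) and skew-symmetric invariant: since $(r-\tau(r))^{\sharp}=\mathcal{I}$, the tensor $r-\tau(r)$ is exactly the tensor $r_{\mathfrak{B}_{\mathcal{I}}}$ attached to the form $\mathfrak{B}_{\mathcal{I}}$, so Lemma \ref{lem:inv-inv} transfers the skew-symmetric invariance of $r-\tau(r)$ to $\mathfrak{B}_{\mathcal{I}}$. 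Next I verify that $R=\lambda\tau(r)^{\sharp}\mathcal{I}^{-1}$ is a Rota-Baxter operator of weight $\lambda$; the key bookkeeping identity is $R+\lambda\,\id=\lambda\,r^{\sharp}\mathcal{I}^{-1}$, so that in the unique decomposition $a=a_{+}+a_{-}$ of Proposition \ref{pro:deco} one has $R(a)=-\lambda a_{-}$ and $(R+\lambda\id)(a)=\lambda a_{+}$.

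Since both $r^{\sharp}$ and $\tau(r)^{\sharp}$ are homomorphisms $(A^{\ast},\cdot_{r})\to(A,\cdot)$ (Proposition \ref{pro:r-homo}), for $\xi_{i}=\mathcal{I}^{-1}(a_{i})$ and $\eta=\xi_{1}\cdot_{r}\xi_{2}$ I get $r^{\sharp}(\eta)=a_{1,+}a_{2,+}$ and $\tau(r)^{\sharp}(\eta)=a_{1,-}a_{2,-}$. A short expansion then shows $R(a_{1})a_{2}+a_{1}R(a_{2})+\lambda a_{1}a_{2}=\lambda\,\mathcal{I}(\eta)$, whence its $\Img(\tau(r)^{\sharp})$-component is $-\lambda a_{1,-}a_{2,-}$ and applying $R$ returns $\lambda^{2}a_{1,-}a_{2,-}=R(a_{1})R(a_{2})$, which is precisely the Rota-Baxter identity. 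The compatibility $\mathfrak{B}_{\mathcal{I}}(Ra_{1},a_{2})+\mathfrak{B}_{\mathcal{I}}(a_{1},Ra_{2})+\lambda\mathfrak{B}_{\mathcal{I}}(a_{1},a_{2})=0$ I would then obtain by a direct pairing computation: grouping the first and third terms gives $\langle\mathcal{I}^{-1}(R+\lambda\id)(a_{1}),\,a_{2}\rangle=-\lambda\langle r,\,\xi_{1}\otimes\xi_{2}\rangle$ (using $\mathcal{I}^{\ast}=-\mathcal{I}$, which reflects the skew-symmetry of $r-\tau(r)$), while the middle term equals $\lambda\langle\tau(r),\,\xi_{2}\otimes\xi_{1}\rangle=\lambda\langle r,\,\xi_{1}\otimes\xi_{2}\rangle$, so the sum cancels.

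For the converse, let $(A,\cdot,\mathfrak{B},R)$ be skew-quadratic of weight $\lambda\neq0$ and write $\mathcal{J}=\mathcal{I}_{\mathfrak{B}}^{-1}$, so $\mathfrak{B}(a_{1},a_{2})=\langle\mathcal{J}(a_{1}),a_{2}\rangle$. I first extract the operator consequences of the defining data: skew-symmetry of $\mathfrak{B}$ gives $\mathcal{I}_{\mathfrak{B}}^{\ast}=-\mathcal{I}_{\mathfrak{B}}$, and the skew-quadratic compatibility rewrites as $\mathcal{J}R+R^{\ast}\mathcal{J}+\lambda\mathcal{J}=0$, i.e. $(R+\lambda\id)^{\ast}\mathcal{J}=-\mathcal{J}R$. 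Feeding these into the relation $\langle\tau(r)^{\sharp}(\xi_{1}),\xi_{2}\rangle=\langle r^{\sharp}(\xi_{2}),\xi_{1}\rangle$ with $r^{\sharp}=\tfrac1\lambda(R+\lambda\id)\mathcal{I}_{\mathfrak{B}}$, I would obtain $\tau(r)^{\sharp}=\tfrac1\lambda R\,\mathcal{I}_{\mathfrak{B}}$, and therefore $\mathcal{I}=r^{\sharp}-\tau(r)^{\sharp}=\mathcal{I}_{\mathfrak{B}}$ is an isomorphism. Invariance of $r-\tau(r)$ follows from Lemma \ref{lem:inv-inv} applied to the invariant form $\mathfrak{B}$ (as $r-\tau(r)=r_{\mathfrak{B}}$). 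To see that $r$ solves the aLYBE I transport $\cdot_{r}$ along $\mathcal{I}_{\mathfrak{B}}$: using the intertwining relations $\mathcal{I}_{\mathfrak{B}}(\fl_{A}^{\ast}(a)(\xi))=a\cdot\mathcal{I}_{\mathfrak{B}}(\xi)$ and $\mathcal{I}_{\mathfrak{B}}((\fl_{A}^{\ast}-\fr_{A}^{\ast})(a)(\xi))=\mathcal{I}_{\mathfrak{B}}(\xi)\cdot a$ coming from Lemma \ref{lem:dual}, I compute $\mathcal{I}_{\mathfrak{B}}(\xi_{1}\cdot_{r}\xi_{2})=\tfrac1\lambda\big(\mathcal{I}_{\mathfrak{B}}(\xi_{1})\cdot_{R}\mathcal{I}_{\mathfrak{B}}(\xi_{2})\big)$, so $(A^{\ast},\cdot_{r})$ is an anti-Leibniz algebra, and a further expansion using that $R\colon(A,\cdot_{R})\to(A,\cdot)$ is a homomorphism shows $r^{\sharp}$ is a homomorphism $(A^{\ast},\cdot_{r})\to(A,\cdot)$. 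Proposition \ref{pro:r-homo} then yields the aLYBE, and Corollary \ref{cor:bia} together with invertibility of $\mathcal{I}=\mathcal{I}_{\mathfrak{B}}$ gives a factorizable anti-Leibniz bialgebra.

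The main obstacle I anticipate is the transpose bookkeeping in the converse direction: correctly converting the scalar compatibility of $\mathfrak{B}$ into the operator identity $(R+\lambda\id)^{\ast}\mathcal{J}=-\mathcal{J}R$ and combining it with $\mathcal{I}_{\mathfrak{B}}^{\ast}=-\mathcal{I}_{\mathfrak{B}}$ to pin down $\tau(r)^{\sharp}=\tfrac1\lambda R\,\mathcal{I}_{\mathfrak{B}}$, and hence $\mathcal{I}=\mathcal{I}_{\mathfrak{B}}$, since a single sign or adjoint-side slip there propagates through the remaining identifications. By contrast, once the forward Rota-Baxter identity is phrased through the decomposition $a=a_{+}+a_{-}$, it collapses to the single algebraic identity $R(a_{1})a_{2}+a_{1}R(a_{2})+\lambda a_{1}a_{2}=\lambda\,\mathcal{I}(\xi_{1}\cdot_{r}\xi_{2})$ and becomes essentially automatic.
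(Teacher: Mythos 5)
Your proposal is correct and follows essentially the same route as the paper's proof: the forward direction rests on the identity $R(a_{1})a_{2}+a_{1}R(a_{2})+\lambda a_{1}a_{2}=\lambda\,\mathcal{I}\big(\mathcal{I}^{-1}(a_{1})\cdot_{r}\mathcal{I}^{-1}(a_{2})\big)$ (which is exactly the paper's Eq.~\eqref{I}, merely repackaged through the decomposition $a=a_{+}+a_{-}$ of Proposition~\ref{pro:deco}), and the converse uses the same transpose bookkeeping to get $\tau(r)^{\sharp}=\frac{1}{\lambda}R\,\mathcal{I}_{\mathfrak{B}}$, the same transport of $\cdot_{r}$ along $\frac{1}{\lambda}\mathcal{I}_{\mathfrak{B}}$ to $(A,\cdot_{R})$, and the same appeal to Proposition~\ref{pro:r-homo}. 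The only cosmetic deviations are citing Lemma~\ref{lem:inv-inv} for the skew-symmetric invariance of $\mathfrak{B}_{\mathcal{I}}$ where the paper argues directly via Lemma~\ref{lem:inva}, which is an equivalent argument.
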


\begin{proof}
Since, $r^{\sharp},\; \tau(r)^{\sharp}: (A^{\ast}, \cdot_{r})\rightarrow(A, \cdot)$ are
homomorphisms of anti-Leibniz algebras, for all $a_{1}, a_{2}\in A$, we have
\begin{align}
\mathcal{I}\big(\mathcal{I}^{-1}(a_{1})\cdot_{r}\mathcal{I}^{-1}(a_{2})\big)
=&\; (r^{\sharp}-\tau(r)^{\sharp})(\mathcal{I}^{-1}(a_{1})\cdot_{r}
\mathcal{I}^{-1}(a_{2}))                                               \label{I}\\
=&\;(\mathcal{I}+\tau(r)^{\sharp})(\mathcal{I}^{-1}(a_{1}))
(\mathcal{I}+\tau(r)^{\sharp})(\mathcal{I}^{-1}(a_{2}))
-\tau(r)^{\sharp}(\mathcal{I}^{-1}(a_{1}))\tau(r)^{\sharp}
(\mathcal{I}^{-1}(a_{2}))  \nonumber\\
=&\; \tau(r)^{\sharp}(\mathcal{I}^{-1}(a_{1}))a_{2}
+a_{1}\tau(r)^{\sharp}(\mathcal{I}^{-1}(a_2))+a_{1}a_{2}.        \nonumber
\end{align}
Therefore, for any $a_{1}, a_{2}\in A$,
\begin{align*}
R\big(R(a_{1})a_{2}+a_{1}R(a_2)+\lambda a_{1}a_{2}\big)
=&\; \lambda^{2}\tau(r)^{\sharp}\mathcal{I}^{-1}\big(\tau(r)^{\sharp}
(\mathcal{I}^{-1}(a_{1}))a_{2}+a_{1}\tau(r)^{\sharp}(\mathcal{I}^{-1}(a_{2}))+a_{1}a_{2}\big)\\
=&\; \lambda^{2}\tau(r)^{\sharp}\big(\mathcal{I}^{-1}(a_{1})
\cdot_{r}\mathcal{I}^{-1}(a_{2})\big)\\
=&\; R(a_{1})R(a_{2}),
\end{align*}
which implies that $R$ is a Rota-Baxter operator of weight $\lambda$ on $(A, \cdot)$.

Next, we prove that $\mathfrak{B}_{\mathcal{I}}(-,-)$ is nondegenerate skew-symmetric
invariant. $\mathfrak{B}_{\mathcal{I}}(-,-)$ is nondegenerate since $\mathcal{I}$ is an
isomorphism. Moreover, since $(\mathcal{I}^{-1})^{\ast}=-\mathcal{I}^{-1}$, we get
$$
\mathfrak{B}_{\mathcal{I}}(a_{1}, a_{2})=\langle\mathcal{I}^{-1}(a_{1}),\; a_{2}\rangle
=-\langle a_{1},\; \mathcal{I}^{-1}(a_{2})\rangle=-\mathfrak{B}_{\mathcal{I}}(a_{2}, a_{1}),
$$
for any $a_{1}, a_{2}\in A$. Hence, $\mathfrak{B}_\mathcal{I}$ is skew-symmetric.
Moreover, since the $r-\tau(r)$ is invariant, by Lemma \ref{lem:inva}, we have
$\mathcal{I}^{-1}\fr_{A}(a)=(\fl_{A}^{\ast}-\fr_{A}^{\ast})(a)\mathcal{I}^{-1}$. Thus,
$$
\mathfrak{B}_{\mathcal{I}}(a_{1},\; a_{2}a_{3})
-\mathfrak{B}_{\mathcal{I}}(a_{1},\; a_{2}a_{3})
=\langle\mathcal{I}^{-1}(\fr_{A}(a_{2})(a_{1}))
-(\fl_{A}^{\ast}-\fr_{A}^{\ast})(a_{2})(\mathcal{I}^{-1}(a_{1}))),\ \ a_{3}\rangle=0,
$$
for any $a_{1}, a_{2}, a_{3}\in A$, which implies that $\mathfrak{B}_{\mathcal{I}}$ is
invariant. Moreover, since $(\tau(r)^{\sharp})^{\ast}=r^{\sharp}$, we get
\begin{align*}
&\; \mathfrak{B}_{\mathcal{I}}(a_{1}, R(a_{2}))+\mathfrak{B}_{\mathcal{I}}(R(a_{1}), a_{2})
+\lambda\mathfrak{B}_{\mathcal{I}}(a_{1}, a_{2})\\
=&\; \lambda\langle\mathcal{I}^{-1}(a_{1}),\; \tau(r)^{\sharp}(\mathcal{I}^{-1}(a_{2}))\rangle
+\lambda\langle\mathcal{I}^{-1}(\tau(r)^{\sharp}(\mathcal{I}^{-1}(a_{1}))),\; a_{2}\rangle
+\lambda\langle\mathcal{I}^{-1}(a_{1}),\; a_{2}\rangle\\
=&\;-\lambda\langle\mathcal{I}^{-1}(a_{1}),\; a_{2}\rangle
+\lambda\langle\mathcal{I}^{-1}(a_{1}),\; a_{2}\rangle\\
=&\;0,
\end{align*}
for $a_{1}, a_{2}\in A$. Thus, $(A, \cdot, \mathfrak{B}_{\mathcal{I}}, R)$ is a
skew-quadratic Rota-Baxter anti-Leibniz algebras of weight $\lambda$.

Conversely, suppose that $(A, \cdot, \mathfrak{B}, R)$ is a skew-quadratic Rota-Baxter
anti-Leibniz algebra of weight $\lambda$. First, since $\mathfrak{B}(-,-)$ is
skew-symmetric and $\mathfrak{B}(a_{1}, R(a_{2}))+\mathfrak{B}(R(a_{1}), a_{2})
+\lambda\mathfrak{B}(a_{1}, a_{2})=0$, we get $\mathcal{I}_{\mathfrak{B}}^{\ast}=
-\mathcal{I}_{\mathfrak{B}}$ and
$$
\langle\mathcal{I}_{\mathfrak{B}}^{-1}(a_{1}),\; R(a_{2})\rangle+\langle
\mathcal{I}_{\mathfrak{B}}^{-1}(R(a_{1})),\; a_{2}\rangle
+\lambda\langle\mathcal{I}_{\mathfrak{B}}^{-1}(a_{1}),\; a_{2}\rangle=0,
$$
for any $a_{1}, a_{2}\in A$, which implies that $R^{\ast}\mathcal{I}_{\mathfrak{B}}^{-1}
+\mathcal{I}_{\mathfrak{B}}^{-1}R+\lambda\mathcal{I}_{\mathfrak{B}}^{-1}=0$, and so that,
$\mathcal{I}_{\mathfrak{B}}R^{\ast}+R\mathcal{I}_{\mathfrak{B}}
+\lambda\mathcal{I}_{\mathfrak{B}}=0$. Therefore, we get
$\tau(r)^{\sharp}=(r^{\sharp})^{\ast}=-\frac{1}{\lambda}\big(\mathcal{I}_{\mathfrak{B}}R^{\ast}
+\lambda\mathcal{I}_{\mathfrak{B}}\big)=\frac{1}{\lambda}R\mathcal{I}_{\mathfrak{B}}$,
and $\mathcal{I}_{\mathfrak{B}}=r^{\sharp}-\tau(r)^{\sharp}$. Define a multiplication
$\cdot_{r}$ on $A^{\ast}$ by
$$
\xi_{1}\cdot_{r}\xi_{2}=\fl_{A}^{\ast}(r^{\sharp}(\xi_{1}))(\xi_{2})
+\fl_{A}^{\ast}(\tau(r)^{\sharp}(\xi_{2}))(\xi_{1})
-\fr_{A}^{\ast}(\tau(r)^{\sharp}(\xi_{2}))(\xi_{1}),
$$
for any $\xi_{1}, \xi_{2}\in A^{\ast}$. Second, we will show that $\frac{1}{\lambda}
\mathcal{I}_{\mathfrak{B}}: (A, \cdot_{r})\rightarrow(A, \cdot_{R})$ is an isomorphism
of anti-Leibniz algebras. Indeed, for any $a_{1}, a_{2}, a_{3}\in A$, since
$\mathfrak{B}(a_{1}a_{2},\; a_{3})=\mathfrak{B}(a_{1},\; a_{2}a_{3}-a_{3}a_{2})$, we get
$\langle\mathcal{I}^{-1}_{\mathfrak{B}}(\fr_{A}(a_{2})(a_{1})),\; a_{3}\rangle
=\langle(\fl_{A}^{\ast}-\fr_{A}^{\ast})(a_{2})(\mathcal{I}^{-1}_{\mathfrak{B}}(a_{1})),\;
a_{3}\rangle$, and so that, $\fr_{A}(a)\mathcal{I}_{\mathfrak{B}}=
\mathcal{I}_{\mathfrak{B}}\big((\fl_{A}^{\ast}-\fr_{A}^{\ast})(a_{1})\big)$.
That is, $r-\tau(r)$ is invariant. Thus, for any $\xi_{1}, \xi_{2}\in A^{\ast}$, we have
\begin{align*}
\mathcal{I}_{\mathfrak{B}}(\xi_{1}\cdot_{r}\xi_{2})
&=\mathcal{I}_{\mathfrak{B}}\Big(\fl_{A}^{\ast}(r^{\sharp}(\xi_{1}))(\xi_{2})
+\fl_{A}^{\ast}(\tau(r)^{\sharp}(\xi_{2}))(\xi_{1})
-\fr_{A}^{\ast}(\tau(r)^{\sharp}(\xi_{2}))(\xi_{1})\Big)\\
&=\fr_{A}(r^{\sharp}(\xi_{1}))(\mathcal{I}_{\mathfrak{B}}(\xi_{2}))
+\fr_{A}(\tau(r)^{\sharp}(\xi_{2}))(\mathcal{I}_{\mathfrak{B}}(\xi_{1}))\\
&=r^{\sharp}(\xi_{1})r^{\sharp}(\xi_{2})-r^{\sharp}(\xi_{1})\tau(r)^{\sharp}(\xi_{2})
+r^{\sharp}(\xi_{1})\tau(r)^{\sharp}(\xi_{2})-\tau(r)^{\sharp}(\xi_{1})\tau(r)^{\sharp}(\xi_{2})\\
&=r^{\sharp}(\xi_{1})r^{\sharp}(\xi_{2})-\tau(r)^{\sharp}(\xi_{1})\tau(r)^{\sharp}(\xi_{2}),
\end{align*}
and
\begin{align*}
&\;\mathcal{I}_{\mathfrak{B}}(\xi_{1})\cdot_{R}\mathcal{I}_{\mathfrak{B}}(\xi_{2})\\
=&\;R(\mathcal{I}_{\mathfrak{B}}(\xi_{1}))\mathcal{I}_{\mathfrak{B}}(\xi_{2})
+\mathcal{I}_{\mathfrak{B}}(\xi_{1})R(\mathcal{I}_{\mathfrak{B}}(\xi_{2}))
+\lambda\mathcal{I}_{\mathfrak{B}}(\xi_{1})\mathcal{I}_{\mathfrak{B}}(\xi_{2})\\
=&\;\lambda \tau(r)^{\sharp}(\xi_{1})\big(r^{\sharp}(\xi_{2})-\tau(r)^{\sharp}(\xi_{2})\big)
+\lambda\big(r^{\sharp}(\xi_{1})-\tau(r)^{\sharp}(\xi_{1})\big)\tau(r)^{\sharp}(\xi_{2})
+\lambda\big(r^{\sharp}(\xi_{1})-\tau(r)^{\sharp}(\xi_{1})\big)
\big(r^{\sharp}(\xi_{2})-\tau(r)^{\sharp}(\xi_{2})\big)\\
=&\;\lambda r^{\sharp}(\xi_{1})r^{\sharp}(\xi_{2})-\lambda
\tau(r)^{\sharp}(\xi_{1})\tau(r)^{\sharp}(\xi_{2}),
\end{align*}
Thus, $\frac{1}{\lambda}\mathcal{I}_\mathfrak{B}$ is an anti-Leibniz algebra
isomorphism from $(A^{\ast}, \cdot_{r})$ to $(A, \cdot_{R})$.

Finally, since $R+\lambda\id,\; R: (A, \cdot_{P})\rightarrow(A, \cdot)$ are
homomorphisms of anti-Leibniz algebras, we deduce that
$r^{\sharp}=\frac{1}{\lambda}(R+\lambda\id)\mathcal{I}_{\mathfrak{B}}$ and
$\tau(r)^{\sharp}=\frac{1}{\lambda}R\mathcal{I}_{\mathfrak{B}}$ are homomorphisms of
anti-Leibniz algebras from $(A^{\ast}, \cdot_{r})$ to $(A, \cdot)$.
Hence, by Proposition \ref{pro:r-homo}, we get that $r$ is a solution of the {\rm aLYBE}
in $(A, \cdot)$, and so that $(A, \cdot, \Delta)$ is a factorizable anti-Leibniz
bialgebra, since $\mathcal{I}_{\mathfrak{B}}=r^{\sharp}-\tau(r)^{\sharp}$ is an isomorphism.
\end{proof}

Thus, if $\lambda\neq0$, we have the following commutative diagram for the
factorizable anti-Leibniz bialgebras and skew-quadratic Rota-Baxter anti-Leibniz algebra:
{\small
$$
\xymatrix@C=3cm@R=1cm{
\txt{$(A, \cdot, \Delta_{r})$\\ {\tiny factorizable anti-Leibniz bialgebra}}
\ar@{<->}[r]^-{{\rm Cor.\; } \ref{cor:extbia}}
\ar@<-1ex>@{<->}[d]_-{{\rm Thm.\; } \ref{thm:corre}}
& \txt{$(A, \cdot, \Delta_{\tau(r)})$\\ {\tiny factorizable anti-Leibniz bialgebra}}
\ar@<-1ex>@{<->}[d]^-{{\rm Thm.\; } \ref{thm:corre}}\\
\txt{$(A, \cdot, \mathcal{B}, R)$\\ {\tiny skew-quadratic Rota-Baxter anti-Leibniz algebra}}
\ar@{<->}[r]^-{{\rm Prop.\; } \ref{pro:skew-dual}}
& \txt{$(A, \cdot, -\mathcal{B}, -(\lambda\id+R))$\\
{\tiny skew-quadratic Rota-Baxter anti-Leibniz algebra}}}
$$}

Moreover, following Theorem \ref{thm:corre}, we have two corollaries.

\begin{cor}\label{cor:corre1}
Let $(A, \cdot)$ be an anti-Leibniz algebra and $r\in A\otimes A$. If there is an
anti-Leibniz bialgebra $(A, \cdot, \Delta)$ induced by $r$ and it is factorizable,
then $-\lambda\id-P$ is also a Rota-Baxter operator of weight $\lambda$ on quadratic
anti-Leibniz algebra $(A, \cdot, \mathfrak{B}_{\mathcal{I}})$, where
$\mathcal{I}=r^{\sharp}-\tau(r)^{\sharp}$, $P=\lambda \tau(r)^{\sharp}\mathcal{I}^{-1}:
A\rightarrow A$ and the bilinear form $\mathfrak{B}_{\mathcal{I}}$ is defined
by $\mathfrak{B}_{\mathcal{I}}(a_{1}, a_{2})=\langle\mathcal{I}^{-1}(a_{1}),\;
a_{2}\rangle$, for any $a_{1}, a_{2}\in A$.
\end{cor}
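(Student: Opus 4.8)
The plan is to derive this corollary immediately from Theorem \ref{thm:corre} and Proposition \ref{pro:skew-dual}, so that essentially no new computation is required.

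First I would invoke the forward direction of Theorem \ref{thm:corre}. Since $(A, \cdot, \Delta)$ is an anti-Leibniz bialgebra induced by $r$ and it is factorizable, that theorem produces the skew-quadratic Rota-Baxter anti-Leibniz algebra $(A, \cdot, \mathfrak{B}_{\mathcal{I}}, R)$ of weight $\lambda$, where $R = \lambda\tau(r)^{\sharp}\mathcal{I}^{-1}$ and $\mathfrak{B}_{\mathcal{I}}(a_1, a_2) = \langle\mathcal{I}^{-1}(a_1),\, a_2\rangle$. This $R$ is exactly the map $P$ appearing in the statement, and the proof of Theorem \ref{thm:corre} already establishes that $\mathfrak{B}_{\mathcal{I}}$ is nondegenerate, skew-symmetric and invariant (nondegeneracy and skew-symmetry because $\mathcal{I}$ is an isomorphism with $(\mathcal{I}^{-1})^{\ast} = -\mathcal{I}^{-1}$, and invariance via Lemma \ref{lem:inva}). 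In particular $P$ is a Rota-Baxter operator of weight $\lambda$ on $(A, \cdot)$ satisfying the compatibility of Definition \ref{defi:quaRB} with respect to $\mathfrak{B}_{\mathcal{I}}$.

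Next I would apply Proposition \ref{pro:skew-dual} to this datum, with the role of $R$ played by $P$. It yields at once that $(A, \cdot, -\mathfrak{B}_{\mathcal{I}}, -(\lambda\id + P))$ is again a skew-quadratic Rota-Baxter anti-Leibniz algebra of weight $\lambda$, so in particular $-\lambda\id - P$ is a Rota-Baxter operator of weight $\lambda$ on $(A, \cdot)$. To match the statement verbatim I would then observe that the compatibility condition of Definition \ref{defi:quaRB} is homogeneous of degree one in the bilinear form, hence holds with respect to $\mathfrak{B}_{\mathcal{I}}$ exactly when it holds with respect to $-\mathfrak{B}_{\mathcal{I}}$; thus $-\lambda\id - P$ is a Rota-Baxter operator of weight $\lambda$ on the quadratic anti-Leibniz algebra $(A, \cdot, \mathfrak{B}_{\mathcal{I}})$, as claimed.

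There is no genuine obstacle, since the content is entirely carried by the construction in Theorem \ref{thm:corre} and by the involution $R \mapsto -(\lambda\id + R)$ of Proposition \ref{pro:skew-dual}. Should a self-contained argument be preferred, one could instead verify directly that $S := -\lambda\id - P$ satisfies $S(a_1)S(a_2) = S\big(S(a_1)a_2 + a_1 S(a_2) + \lambda a_1 a_2\big)$: upon expanding both sides, the terms $\lambda^{2}a_1 a_2$, $\lambda P(a_1)a_2$ and $\lambda a_1 P(a_2)$ match, and the remaining terms collapse to the Rota-Baxter identity already known for $P$.
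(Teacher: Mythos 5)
Your proposal is correct and matches the paper's intent: the paper introduces this corollary with ``following Theorem \ref{thm:corre}'' and proves it by ``direct calculation,'' which is exactly the combination of the forward direction of Theorem \ref{thm:corre} (giving $(A,\cdot,\mathfrak{B}_{\mathcal{I}},P)$) with the involution of Proposition \ref{pro:skew-dual}, plus your correct observation that the compatibility condition of Definition \ref{defi:quaRB} is linear in the bilinear form and so is insensitive to the sign of $\mathfrak{B}_{\mathcal{I}}$. Your fallback direct verification of the Rota--Baxter identity for $-\lambda\id-P$ also checks out.
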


\begin{proof}
It can be obtained by direct calculation.
\end{proof}

\begin{cor}\label{cor:iso}
Let $(A, \cdot)$ be an anti-Leibniz algebra and $r\in A\otimes A$.
Define $\mathcal{I}=r^{\sharp}-\tau(r)^{\sharp}$ and $P=\lambda \tau(r)^{\sharp}\mathcal{I}^{-1}:
A\rightarrow A$ where $0\neq\lambda\in\Bbbk$. If there is an anti-Leibniz
bialgebra $(A, \cdot, \Delta)$ induced by $r$ and it is factorizable,
then $(A, \cdot_{P}, \Delta_{\mathcal{I}})$ is an anti-Leibniz bialgebra, where
$$
\Delta_{\mathcal{I}}^{\ast}(\xi_{1}\otimes\xi_{2})=
-\mbox{$\frac{1}{\lambda}$}\mathcal{I}^{-1}(\mathcal{I}(\xi_{1})\mathcal{I}(\xi_{2})),
$$
for any $\xi_{1}, \xi_{2}\in A^{\ast}$. Moreover, $\frac{1}{\lambda}\mathcal{I}: A^{\ast}
\rightarrow A$ gives an isomorphism of anti-Leibniz bialgebras from $(A^{\ast}, \cdot_{r},
\Delta_{A^{\ast}})$ to $(A, \cdot_{P}, \Delta_{\mathcal{I}})$, where $\Delta_{A^{\ast}}$
is the dual of product in $(A, \cdot)$, i.e., $\Delta_{A^{\ast}}^{\ast}
(a_{1}\otimes a_{2})=a_{1}a_{2}$, for any $a_{1}, a_{2}\in A$.
\end{cor}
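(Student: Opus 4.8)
The plan is to recognize the whole statement as an instance of transport of bialgebra structure along the linear isomorphism $\phi:=\frac{1}{\lambda}\mathcal{I}\colon A^{\ast}\to A$, using the machinery already assembled for Theorem \ref{thm:corre}. First I would observe that the operator $P=\lambda\tau(r)^{\sharp}\mathcal{I}^{-1}$ is exactly the Rota-Baxter operator $R$ produced by the forward direction of Theorem \ref{thm:corre}; hence $P$ is a Rota-Baxter operator of weight $\lambda$ on $(A,\cdot)$ and $(A,\cdot_{P})$ is the associated Rota-Baxter anti-Leibniz algebra $A_{P}$, with $a_{1}\cdot_{P}a_{2}=P(a_{1})a_{2}+a_{1}P(a_{2})+\lambda a_{1}a_{2}$. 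The computation in the proof of Theorem \ref{thm:corre} (with $\mathcal{I}_{\mathfrak{B}}=\mathcal{I}$ and $R=P$) shows that $\phi=\frac{1}{\lambda}\mathcal{I}$ is an isomorphism of anti-Leibniz algebras from $(A^{\ast},\cdot_{r})$ to $(A,\cdot_{P})$; concretely $\mathcal{I}(\xi_{1})\cdot_{P}\mathcal{I}(\xi_{2})=\lambda\,\mathcal{I}(\xi_{1}\cdot_{r}\xi_{2})$, which I would record for later use.

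Next I would supply the coalgebra half. Since $(A,\cdot,\Delta_{r})$ is an anti-Leibniz bialgebra, Corollary \ref{cor:dualbia} gives that its dual $(A^{\ast},\cdot_{r},\Delta_{A^{\ast}})$ is again an anti-Leibniz bialgebra, where $\Delta_{A^{\ast}}=\bar{\Delta}$ satisfies $\langle\Delta_{A^{\ast}}(\xi),\,a_{1}\otimes a_{2}\rangle=\langle\xi,\,a_{1}a_{2}\rangle$, i.e. $\Delta_{A^{\ast}}^{\ast}(a_{1}\otimes a_{2})=a_{1}a_{2}$. I would then \emph{define} $\Delta_{\mathcal{I}}:=(\phi\otimes\phi)\Delta_{A^{\ast}}\phi^{-1}$, so that $\phi$ is tautologically a coalgebra morphism, and check that this agrees with the $\Delta_{\mathcal{I}}$ of the statement by dualizing. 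Using $\mathcal{I}^{\ast}=-\mathcal{I}$ and $(\mathcal{I}^{-1})^{\ast}=-\mathcal{I}^{-1}$ (Lemma \ref{lem:inva} and the proof of Theorem \ref{thm:corre}) one has $\phi^{\ast}=-\frac{1}{\lambda}\mathcal{I}$ and $(\phi^{-1})^{\ast}=-\lambda\mathcal{I}^{-1}$, whence $\Delta_{\mathcal{I}}^{\ast}(\xi_{1}\otimes\xi_{2})=(\phi^{-1})^{\ast}\big(\phi^{\ast}(\xi_{1})\cdot\phi^{\ast}(\xi_{2})\big)=-\frac{1}{\lambda}\mathcal{I}^{-1}\big(\mathcal{I}(\xi_{1})\mathcal{I}(\xi_{2})\big)$, matching the claimed formula (here $\cdot$ denotes the product in $A$, realized as $\Delta_{A^{\ast}}^{\ast}$).

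Finally I would conclude by transport of structure. The coassociativity condition for an anti-Leibniz coalgebra and the compatibility conditions \eqref{bialg1}--\eqref{bialg2} are built solely from $\fl_{A},\fr_{A},\Delta,\id$ and $\tau$. Since $\phi$ conjugates the multiplication operators, $\fl_{\cdot_{P}}(\phi\xi)=\phi\,\fl_{\cdot_{r}}(\xi)\phi^{-1}$ and likewise for $\fr$ (because $\phi$ is an algebra isomorphism), intertwines the comultiplications by construction, and commutes with the twist, $\tau\circ(\phi\otimes\phi)=(\phi\otimes\phi)\circ\tau$, applying $\phi^{\otimes 2}$ or $\phi^{\otimes 3}$ carries each identity valid for $(A^{\ast},\cdot_{r},\Delta_{A^{\ast}})$ to the corresponding identity for $(A,\cdot_{P},\Delta_{\mathcal{I}})$. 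Thus $(A,\cdot_{P},\Delta_{\mathcal{I}})$ is an anti-Leibniz bialgebra and $\phi=\frac{1}{\lambda}\mathcal{I}$ is an isomorphism of anti-Leibniz bialgebras. The only genuine bookkeeping---and the step most prone to sign errors---is the dualization pinning down $\Delta_{\mathcal{I}}^{\ast}$; the naturality of the bialgebra axioms under $\phi$ is conceptually routine but should be spelled out so that the transport argument is airtight.
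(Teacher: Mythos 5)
Your proposal is correct and follows essentially the same route as the paper: it uses Eq.~\eqref{I} from the proof of Theorem~\ref{thm:corre} to show $\frac{1}{\lambda}\mathcal{I}$ is an algebra isomorphism from $(A^{\ast},\cdot_{r})$ onto $(A,\cdot_{P})$, identifies $\Delta_{\mathcal{I}}$ as the transport of $\Delta_{A^{\ast}}$ along this map via the dualization $(\frac{1}{\lambda}\mathcal{I})^{\ast}=-\frac{1}{\lambda}\mathcal{I}$, and concludes by transport of the bialgebra structure. The sign bookkeeping for $\Delta_{\mathcal{I}}^{\ast}$ checks out, and making the appeal to Corollary~\ref{cor:dualbia} explicit is a harmless (indeed clarifying) addition to what the paper leaves implicit.
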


\begin{proof}
First, we show that $\frac{1}{\lambda}\mathcal{I}$ is an anti-Leibniz algebra
isomorphism from $(A^{\ast}, \cdot_{r})$ to $(A, \cdot_{P})$. In fact, for any
$\xi_{1}, \xi_{2}\in A^{\ast}$, taking $a_{1}=\mathcal{I}(\xi_{1})\in A$ and
$a_{2}=\mathcal{I}(\xi_{2})\in A$, by Eq. \eqref{I}, we have
$$
\mbox{$\frac{1}{\lambda}$}\mathcal{I}(\xi_{1}\cdot_{r}\xi_{2})
=\mbox{$\frac{1}{\lambda^{2}}$}\big(P(\mathcal{I}(\xi_{1}))\mathcal{I}(\xi_{2})
+\mathcal{I}(\xi_{1})P(\mathcal{I}(\xi_{2}))+\lambda\mathcal{I}(\xi_{1})\mathcal{I}(\xi_{2})\big)
=\mbox{$\frac{1}{\lambda}$}\mathcal{I}(\xi_{1})\cdot_{P}
\mbox{$\frac{1}{\lambda}$}\mathcal{I}(\xi_{2}).
$$
So, $\frac{1}{\lambda}\mathcal{I}$ is an anti-Leibniz algebra isomorphism.

Second, denote the dual of $\Delta_{\mathcal{I}}$ by $\cdot_{\mathcal{I}}$. Since
$(\frac{1}{\lambda}\mathcal{I})^{\ast}=-\frac{1}{\lambda}\mathcal{I}$, we get
$$
(\mbox{$\frac{1}{\lambda}$}\mathcal{I})^{\ast}(\xi_{1}\cdot_{\mathcal{I}}\xi_{2})
=\mbox{$\frac{1}{\lambda}$}\mathcal{I}(\xi_{1})\mbox{$\frac{1}{\lambda}$}\mathcal{I}(\xi_{2})
=(\mbox{$\frac{1}{\lambda}$}\mathcal{I})^{\ast}(\xi_{1})(\mbox{$\frac{1}{\lambda}$}
\mathcal{I})^{\ast}(\xi_{2}),
$$
which means $(\frac{1}{\lambda}\mathcal{I})^{\ast}:(A^{\ast}, \cdot_{\mathcal{I}})
\rightarrow(A, \cdot)$ is an isomorphism of anti-Leibniz algebras.
Hence, $\frac{1}{\lambda}\mathcal{I}: (A^{\ast}, \Delta_{A^{\ast}})\rightarrow
(A, \Delta_{\mathcal{I}})$ is an isomorphism of anti-Leibniz coalgebras.
Therefore, $\frac{1}{\lambda}\mathcal{I}$ is an isomorphism of anti-Leibniz
bialgebras and $(A, \cdot_{P}, \Delta_{\mathcal{I}})$ is an anti-Leibniz bialgebra.
\end{proof}

\subsection{Anti-Leibniz bialgebras via Leibniz bialgebras}\label{subsec:tensor}
Here, we constrict the anti-Leibniz bialgebras form Leibniz bialgebras and anti-commutative
anti-associative algebras. By using this method, we can construct a large number of
examples for anti-Leibniz algebra and anti-Leibniz bialgebra.
Recall that a {\it(left) Leibniz algebra} is a pair $(L, [-,-])$, where $L$ is a vector space
and $[-,-]: L\otimes L\rightarrow L$ is a multiplication such that
\begin{align*}
[x_{1}, [x_{2}, x_{3}]]=[[x_{1}, x_{2}], x_{3}]+[x_{2}, [x_{1}, x_{3}]],
\end{align*}
for any $x_{1}, x_{2}, x_{3}\in L$. An {\it anti-commutative anti-associative algebra}
$(B, \cdot)$ is a vector space $B$ with a bilinear operations $\cdot: B\otimes B
\rightarrow B$, $(b_{1}, b_{2})\mapsto b_{1}b_{2}:=b_{1}\cdot b_{2}$, such that
\begin{align*}
b_{1}b_{2}=-b_{2}b_{1},\qquad \mbox{ and }\qquad b_{1}(b_{2}b_{3})=-(b_{1}b_{2})b_{3},
\end{align*}
for any $b_{1}, b_{2}, b_{3}\in B$. The anti-commutative anti-associative algebra is also
called the dual mock-Lie algebra, since the operad of anti-commutative anti-associative
algebra is the Koszul dual to the mock-Lie operad.

\begin{lem}\label{lem:Leib-ant}
Let $(L, [-,-])$ be a Leibniz algebra, $(B, \cdot)$ be an anti-commutative anti-associative
algebra. Define a binary operation on $A:=L\otimes B$ by
$$
(x_{1}\otimes b_{1})\circ(x_{2}\otimes b_{2})=[x_{1}, x_{2}]\otimes (b_{1}b_{2}),
$$
for any $x_{1}, x_{2}\in L$ and $b_{1}, b_{2}\in B$. Then $(A, \circ)$ is an
anti-Leibniz algebra, which is called the anti-Leibniz algebra induced by $(L, [-,-])$
and $(B, \cdot)$.
\end{lem}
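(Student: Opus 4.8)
The plan is to verify the defining identity of a left anti-Leibniz algebra, namely $a_1(a_2a_3)+(a_1a_2)a_3+a_2(a_1a_3)=0$, directly on the new product $\circ$. Since $\circ$ is bilinear, it suffices to check this on elementary tensors $a_i=x_i\otimes b_i$ with $x_i\in L$ and $b_i\in B$; the general case then follows by linearity. First I would expand each of the three summands using the definition of $\circ$, obtaining
\begin{align*}
a_1(a_2a_3)&=[x_1,[x_2,x_3]]\otimes\big(b_1(b_2b_3)\big),\\
(a_1a_2)a_3&=[[x_1,x_2],x_3]\otimes\big((b_1b_2)b_3\big),\\
a_2(a_1a_3)&=[x_2,[x_1,x_3]]\otimes\big(b_2(b_1b_3)\big).
\end{align*}

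The key idea is that the Koszul-dual relations on $B$ convert the Leibniz identity on $L$ into the anti-Leibniz identity on $A$. Concretely, I would rewrite every $B$-factor in terms of the single element $b_1(b_2b_3)$. Anti-associativity gives $(b_1b_2)b_3=-b_1(b_2b_3)$ immediately, so the second summand's $B$-part is $-b_1(b_2b_3)$. For the third summand one uses both relations: anti-associativity yields $b_2(b_1b_3)=-(b_2b_1)b_3$, and then anti-commutativity $b_2b_1=-b_1b_2$ turns this into $(b_1b_2)b_3=-b_1(b_2b_3)$. After these substitutions the three $B$-factors become $b_1(b_2b_3)$, $-b_1(b_2b_3)$, and $-b_1(b_2b_3)$ respectively.

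Factoring out the common tensor factor $b_1(b_2b_3)$, the sum of the three terms equals
$$
\big([x_1,[x_2,x_3]]-[[x_1,x_2],x_3]-[x_2,[x_1,x_3]]\big)\otimes b_1(b_2b_3).
$$
The bracketed expression in $L$ vanishes precisely by the (left) Leibniz identity $[x_1,[x_2,x_3]]=[[x_1,x_2],x_3]+[x_2,[x_1,x_3]]$, so the whole expression is zero, proving that $(A,\circ)$ is an anti-Leibniz algebra.

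There is no genuine analytic obstacle here; the argument is purely a matter of bookkeeping. The only point requiring a little care is the treatment of the third term $b_2(b_1b_3)$, since it is the one place where anti-commutativity and anti-associativity must be combined rather than applied singly; getting the sign right there is what makes the Leibniz identity (with its asymmetric placement of $[[x_1,x_2],x_3]$ and $[x_2,[x_1,x_3]]$) match the anti-Leibniz identity exactly. I would therefore present that sign computation explicitly and keep the remaining steps brief.
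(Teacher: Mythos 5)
Your proof is correct and follows exactly the same route as the paper: expand on elementary tensors, use anti-associativity and anti-commutativity to rewrite the three $B$-factors as $b_1(b_2b_3)$, $-b_1(b_2b_3)$, $-b_1(b_2b_3)$, and factor out so that the Leibniz identity kills the $L$-component. The sign bookkeeping you flag for the third term is handled correctly.
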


\begin{proof}
For any $x_{1}, x_{2}, x_{3}\in L$ and $b_{1}, b_{2}, b_{3}\in B$, note that
\begin{align*}
&\;(x_{1}\otimes b_{1})\circ\big((x_{2}\otimes b_{2})\circ(x_{3}\otimes b_{3})\big)
+\big((x_{1}\otimes b_{1})\circ(x_{2}\otimes b_{2})\big)\circ(x_{3}\otimes b_{3})\\[-1mm]
&\qquad+(x_{2}\otimes b_{2})\circ\big((x_{1}\otimes b_{1})\circ(x_{3}\otimes b_{3})\big)\\
=&\; [x_{1}, [x_{2}, x_{3}]]\otimes(b_{1}(b_{2}b_{3}))
+[[x_{1}, x_{2}], x_{3}]\otimes((b_{1}b_{2})b_{3})
+[x_{2}, [x_{1}, x_{3}]]\otimes(b_{2}(b_{1}b_{3}))\\
=&\; \Big([x_{1}, [x_{2}, x_{3}]]-[[x_{1}, x_{2}], x_{3}]-[x_{2}, [x_{1}, x_{3}]]\Big)
\otimes(b_{1}(b_{2}b_{3}))\\
=&\; 0,
\end{align*}
we get $(A, \circ)$ is an anti-Leibniz algebra.
\end{proof}

For the classification of anti-Leibniz algebras, we still know very little.
But for the classification of Leibniz algebras, some results have been obtained \cite{AOR}.
Recall that the non trivial anti-commutative anti-associative algebra of dimensional two
is only isomorphic to the algebra $(B=\Bbbk\{e_{1}, e_{2}\}, \cdot)$, where the nonzero
multiplication is given by $e_{1}e_{1}=e_{2}$.
Two dimensional Leibniz algebras have been classified by Cuvier \cite{Cuv}.
More precisely, the non trivial Leibniz algebraic structure on $L=\Bbbk\{x_{1}, x_{2}\}$
is only isomorphic to the algebra with nonzero multiplication as follows:
$$
L_{1}: [x_{1}, x_{1}]=x_{2};\quad\qquad
L_{2}: [x_{1}, x_{2}]=-[x_{2}, x_{1}]=x_{2};\quad\qquad
L_{1}: [x_{1}, x_{2}]=x_{1}=[x_{2}, x_{2}].
$$
Thus, by Lemma \ref{lem:Leib-ant}, we get some $4$-dimensional anti-Leibniz algebra
as follows: $A=\Bbbk\{a_{1}, a_{2}, a_{3}, a_{4}\}$ with nonzero multiplication
$$
A_{1}: a_{1}\circ a_{1}=a_{2};\quad\qquad
A_{2}: a_{1}\circ a_{2}=-a_{2}\circ a_{1}=a_{3};\quad\qquad
A_{1}: a_{1}\circ a_{2}=a_{3}=a_{2}\circ a_{2}.
$$

Let $B$ be a vector space and $\Delta: B\rightarrow B\otimes B$ be a linear map.
Then $(B, \Delta)$ is called an {\it anti-cocommutative anti-coassociative coalgebra} if
$$
\tau\Delta=-\Delta,\qquad \mbox{ and }\qquad(\id\otimes\Delta)\Delta=-(\Delta\otimes\id)\Delta.
$$
Clearly, the notion of anti-cocommutative anti-coassociative coalgebras is the dualization
of the notion of anti-commutative anti-associative algebras, that is, $(B, \Delta)$ is an
anti-cocommutative anti-coassociative coalgebra if and only if $(B^{\ast}, \Delta^{\ast})$
is an anti-commutative anti-associative algebra. Recall that a {\it Leibniz coalgebra}
is a pair $(L, \delta)$, where $L$ is a vector space and $\delta:
L\rightarrow L\otimes L$ is a comultiplication such that
$$
(\delta\otimes\id)\delta(x)+(\tau\otimes\id)(\id\otimes\delta)\delta(x)
-(\id\otimes\delta)\delta(x)=0,
$$
for any $x\in L$. Now, we give the dual version of Lemma \ref{lem:Leib-ant}.

\begin{lem}\label{lem:Leibco-ant}
Let $(L, \delta)$ be a Leibniz coalgebra, $(B, \Delta)$ be an anti-cocommutative
anti-coassociative coalgebra. Define $A:=L\otimes B$ and a linear map $\tilde{\Delta}:
A\rightarrow A\otimes A$ by
$$
\tilde{\Delta}(x\otimes b)=\delta(x)\bullet\Delta(b)=\sum_{(x)}\sum_{(b)}
(x_{(1)}\otimes b_{(1)})\otimes(x_{(2)}\otimes b_{(2)}),
$$
for any $x\in L$ and $b\in B$, where $\Delta(b)=\sum_{(b)}b_{(1)}\otimes b_{(2)}$
and $\delta(x)=\sum_{(x)}x_{(1)}\otimes x_{(2)}$ in the Sweedler notation.
Then $(A, \tilde{\Delta})$ is an anti-Leibniz algebra.
\end{lem}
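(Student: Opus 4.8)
The plan is to prove the statement by dualization, reducing it directly to Lemma~\ref{lem:Leib-ant} rather than expanding the coalgebra axiom in Sweedler notation. (Since $\tilde\Delta$ is a comultiplication, the intended conclusion is that $(A,\tilde\Delta)$ is an anti-Leibniz \emph{coalgebra}.) All spaces being finite-dimensional, I would invoke the transpose dualities recorded earlier in the paper: $(A,\tilde\Delta)$ is an anti-Leibniz coalgebra if and only if $(A^\ast,\tilde\Delta^\ast)$ is an anti-Leibniz algebra; $(B,\Delta)$ is anti-cocommutative anti-coassociative if and only if $(B^\ast,\Delta^\ast)$ is anti-commutative anti-associative; and, by the same transpose argument, $(L,\delta)$ is a Leibniz coalgebra if and only if $(L^\ast,\delta^\ast)$ is a Leibniz algebra. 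Thus $(L^\ast,\delta^\ast)$ and $(B^\ast,\Delta^\ast)$ are exactly the two ingredients required by Lemma~\ref{lem:Leib-ant}.

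The key step is to identify the transpose $\tilde\Delta^\ast$ with the product $\circ$ that Lemma~\ref{lem:Leib-ant} builds from $(L^\ast,\delta^\ast)$ and $(B^\ast,\Delta^\ast)$. Using $A^\ast\cong L^\ast\otimes B^\ast$ and $(A\otimes A)^\ast\cong A^\ast\otimes A^\ast$, I would compute, for $\alpha,\gamma\in L^\ast$ and $\beta,\eta\in B^\ast$,
\begin{align*}
\langle\tilde\Delta^\ast((\alpha\otimes\beta)\otimes(\gamma\otimes\eta)),\; x\otimes b\rangle
&=\langle(\alpha\otimes\beta)\otimes(\gamma\otimes\eta),\; \tilde\Delta(x\otimes b)\rangle\\
&=\langle\alpha\otimes\gamma,\; \delta(x)\rangle\,\langle\beta\otimes\eta,\; \Delta(b)\rangle\\
&=\langle\delta^\ast(\alpha\otimes\gamma)\otimes\Delta^\ast(\beta\otimes\eta),\; x\otimes b\rangle,
\end{align*}
where the middle line is just the definition $\tilde\Delta(x\otimes b)=\delta(x)\bullet\Delta(b)$, which separates the $L$- and $B$-legs of the pairing. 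Hence $\tilde\Delta^\ast((\alpha\otimes\beta)\otimes(\gamma\otimes\eta))=\delta^\ast(\alpha\otimes\gamma)\otimes\Delta^\ast(\beta\otimes\eta)$, and this is precisely $(\alpha\otimes\beta)\circ(\gamma\otimes\eta)$ in the notation of Lemma~\ref{lem:Leib-ant}.

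With this identification, Lemma~\ref{lem:Leib-ant} immediately gives that $(A^\ast,\tilde\Delta^\ast)=(L^\ast\otimes B^\ast,\circ)$ is an anti-Leibniz algebra, and transposing back yields that $(A,\tilde\Delta)$ is an anti-Leibniz coalgebra, as claimed. The main obstacle is the bookkeeping of the twist $\tau$ under transposition: one must confirm that the flip on $A\otimes A$ dualizes to the flip on $A^\ast\otimes A^\ast$, so that the anti-Leibniz coalgebra axiom transposes exactly onto the anti-Leibniz algebra axiom. This is harmless, since $\tau^\ast=\tau$ under the canonical pairing, and in fact it is already absorbed into the coalgebra/algebra duality cited in the first paragraph. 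If one instead prefers to avoid dualization entirely, the same conclusion follows from a direct Sweedler-notation expansion of $(\tilde\Delta\otimes\id)\tilde\Delta+(\id\otimes\tilde\Delta)\tilde\Delta+(\tau\otimes\id)(\id\otimes\tilde\Delta)\tilde\Delta$, splitting each term into its $L$- and $B$-tensor legs and applying the Leibniz co-identity for $\delta$ together with anti-cocommutativity and anti-coassociativity of $\Delta$; there the delicate part is again tracking the $\tau$'s through the two legs.
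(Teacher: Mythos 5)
Your proposal is correct, but it takes a genuinely different route from the paper. The paper proves the lemma by a direct computation: it introduces the $\bullet$-notation on triple tensors, expands
$(\tilde{\Delta}\otimes\id)\tilde{\Delta}+(\id\otimes\tilde{\Delta})\tilde{\Delta}+(\tau\otimes\id)(\id\otimes\tilde{\Delta})\tilde{\Delta}$
into three terms of the form $(\text{$L$-leg})\bullet(\text{$B$-leg})$, uses anti-coassociativity and anti-cocommutativity to convert every $B$-leg into $\pm(\Delta\otimes\id)\Delta(b)$, and then factors out the Leibniz coalgebra identity $(\delta\otimes\id)\delta+(\tau\otimes\id)(\id\otimes\delta)\delta-(\id\otimes\delta)\delta=0$ in the $L$-leg. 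You instead dualize: you identify $\tilde{\Delta}^{\ast}$ with the product $\circ$ of Lemma \ref{lem:Leib-ant} built from $(L^{\ast},\delta^{\ast})$ and $(B^{\ast},\Delta^{\ast})$, and transpose back. Your key pairing computation is correct (it implicitly handles the middle-two-interchange $(L\otimes B)^{\ast}\otimes(L\otimes B)^{\ast}\cong(L^{\ast}\otimes L^{\ast})\otimes(B^{\ast}\otimes B^{\ast})$), and the one point you should make explicit is that the paper's Leibniz coalgebra axiom dualizes precisely to its \emph{left} Leibniz algebra axiom — checking the three terms, $(\delta\otimes\id)\delta\mapsto[[\alpha,\beta],\gamma]$, $(\tau\otimes\id)(\id\otimes\delta)\delta\mapsto[\beta,[\alpha,\gamma]]$, $(\id\otimes\delta)\delta\mapsto[\alpha,[\beta,\gamma]]$ — so the conventions do match what Lemma \ref{lem:Leib-ant} requires. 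What your approach buys is economy and a clean conceptual explanation (the lemma is literally the dual of Lemma \ref{lem:Leib-ant}); what it costs is reliance on finite-dimensionality for the identifications $A^{\ast}\cong L^{\ast}\otimes B^{\ast}$ and $(A\otimes A)^{\ast}\cong A^{\ast}\otimes A^{\ast}$ (harmless here, given the paper's standing convention, but the paper's direct Sweedler computation is the one that survives in the infinite-dimensional setting of Section 4). You are also right that the statement's conclusion should read ``anti-Leibniz coalgebra''; the paper's own proof ends with exactly that.
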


\begin{proof}
For any $x^{i}_{1}, x^{i}_{2}, x^{i}_{3}\in L$ and $b^{j}_{1}, b^{j}_{2}, b^{j}_{3}\in B$,
we denote
$$
\Big(\sum_{i}x^{i}_{1}\otimes x^{i}_{2}\otimes x^{i}_{3}\Big)\bullet
\Big(\sum_{j}b^{j}_{1}\otimes b^{j}_{2}\otimes b^{j}_{3}\Big)
=\sum_{i,j}(x^{i}_{1}\otimes b^{j}_{1})\otimes(x^{i}_{2}\otimes b^{j}_{2})
\otimes(x^{i}_{3}\otimes b^{j}_{3}).
$$
Then, for any $x\in L$ and $b\in B$, we have
\begin{align*}
&\;(\tilde{\Delta}\otimes\id)\tilde{\Delta}(x\otimes b)
+(\id\otimes\tilde{\Delta})\tilde{\Delta}(x\otimes b)
+(\tau\otimes\id)(\id\otimes\tilde{\Delta})\tilde{\Delta}(x\otimes b)\\
=&\;(\delta\otimes\id)\delta(x)\bullet(\Delta\otimes\id)\Delta(b)
+(\id\otimes\delta)\delta(x)\bullet(\id\otimes\Delta)\Delta(b)
+(\tau\otimes\id)(\id\otimes\delta)\delta(x)\bullet(\tau\otimes\id)(\id\otimes\Delta)\Delta(b)\\
=&\;\Big((\delta\otimes\id)\delta(x)+(\tau\otimes\id)(\id\otimes\delta)\delta(x)
-(\id\otimes\delta)\delta(x)\Big)\bullet(\Delta\otimes\id)\Delta(b)\\
=&\;0.
\end{align*}
Thus, $(A, \tilde{\Delta})$ is an anti-Leibniz coalgebra.
\end{proof}

Let $(B, \cdot)$ be an anti-commutative anti-associative algebra. A bilinear form
$\varpi(-, -)$ on $(B, \cdot)$ is called {\it nondegenerate} if
$\varpi(b_{1}, b_{2})=0$ for any $b_{2}\in B$, then $b_{1}=0$;
is called {\it symmetric} if $\varpi(b_{1}, b_{2})=\varpi(b_{2}, b_{1})$;
is called {\it invariant} if $\varpi(b_{1}b_{2},\, b_{3})=\varpi(b_{1},\, b_{2}b_{3})$,
for any $b_{1}, b_{2}, b_{3}\in B$.
A {\it quadratic anti-commutative anti-associative algebra}, denoted by
$(B, \varpi)$, is an anti-commutative anti-associative algebra $(B, \cdot)$
together with a symmetric invariant nondegenerate bilinear form.

\begin{lem}\label{lem:quad-dual}
Let $(B, \varpi)$ be a quadratic anti-commutative anti-associative algebra.
Suppose $\Delta_{\varpi}: B\rightarrow B\otimes B$ be the dual of the multiplication
of $(B, \varpi)$ under the bilinear form $\varpi(-,-)$, i,e.,
\begin{align*}
\varpi(\Delta_{\varpi}(b_{1}),\; b_{2}\otimes b_{3})=\varpi(b_{1},\; b_{2}b_{3}),
\end{align*}
for any $b_{1}, b_{2}, b_{3}\in B$, where $\varpi(\Delta_{\varpi}(b_{1}),\; b_{2}
\otimes b_{3})=\sum_{(b_{1})}\varpi((b_{1})_{(1)}, b_{2})\varpi((b_{1})_{(2)}, b_{3})$
if $\Delta_{\varpi}(b_{1})=\sum_{(b_{1})}(b_{1})_{(1)}$ $\otimes(b_{1})_{(2)}$.
Then $(B, \Delta_{\varpi})$ is an anti-cocommutative anti-coassociative coalgebra.
\end{lem}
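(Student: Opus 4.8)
The plan is to exploit the nondegeneracy of $\varpi$ to turn both defining conditions of an anti-cocommutative anti-coassociative coalgebra into the anti-commutativity and anti-associativity of $(B,\cdot)$, by testing each proposed identity against elements of $B$ through the bilinear form. Conceptually, since $\varpi$ is nondegenerate it induces an isomorphism $\varphi\colon B\to B^{\ast}$, $\langle\varphi(b),c\rangle=\varpi(b,c)$, and the defining relation $\varpi(\Delta_{\varpi}(b_{1}),\,b_{2}\otimes b_{3})=\varpi(b_{1},\,b_{2}b_{3})$ says precisely that $(\varphi\otimes\varphi)\Delta_{\varpi}=\mu^{\ast}\varphi$, where $\mu\colon B\otimes B\to B$ denotes the multiplication. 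Thus $\Delta_{\varpi}=(\varphi^{-1}\otimes\varphi^{-1})\mu^{\ast}\varphi$ is the transport of the comultiplication $\mu^{\ast}$ on $B^{\ast}$ along the isomorphism $\varphi^{-1}\colon B^{\ast}\to B$. Since $(B,\cdot)$ is anti-commutative anti-associative, $(B^{\ast},\mu^{\ast})$ is anti-cocommutative anti-coassociative by the duality remark preceding the lemma, and transport along a linear isomorphism preserves the coalgebra axioms; this alone proves the claim.

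For a self-contained verification I would instead carry out two short computations, using that the induced forms $\varpi^{\otimes2}$ on $B\otimes B$ and $\varpi^{\otimes3}$ on $B\otimes B\otimes B$ remain nondegenerate. First, for anti-cocommutativity, pairing $\tau\Delta_{\varpi}(b_{1})$ with $b_{2}\otimes b_{3}$ gives $\varpi(\Delta_{\varpi}(b_{1}),\,b_{3}\otimes b_{2})=\varpi(b_{1},\,b_{3}b_{2})$, which by $b_{3}b_{2}=-b_{2}b_{3}$ equals $-\varpi(b_{1},\,b_{2}b_{3})=-\varpi(\Delta_{\varpi}(b_{1}),\,b_{2}\otimes b_{3})$; nondegeneracy of $\varpi^{\otimes2}$ then forces $\tau\Delta_{\varpi}=-\Delta_{\varpi}$. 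Second, for anti-coassociativity, I would apply the defining relation twice: pairing $(\id\otimes\Delta_{\varpi})\Delta_{\varpi}(b_{1})$ with $b_{2}\otimes b_{3}\otimes b_{4}$ collapses, after moving $\Delta_{\varpi}$ through the last factor, to $\varpi(b_{1},\,b_{2}(b_{3}b_{4}))$, while pairing $(\Delta_{\varpi}\otimes\id)\Delta_{\varpi}(b_{1})$ with the same tensor collapses to $\varpi(b_{1},\,(b_{2}b_{3})b_{4})$. The anti-associativity relation $b_{2}(b_{3}b_{4})=-(b_{2}b_{3})b_{4}$ makes these negatives of one another, and nondegeneracy of $\varpi^{\otimes3}$ yields $(\id\otimes\Delta_{\varpi})\Delta_{\varpi}=-(\Delta_{\varpi}\otimes\id)\Delta_{\varpi}$.

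There is no serious obstacle here; this is essentially a duality bookkeeping argument. The only points requiring care are the manipulation of the Sweedler notation when pushing $\Delta_{\varpi}$ through a single tensor factor, and recognizing that each collapse invokes the defining relation of $\Delta_{\varpi}$ with a product ($b_{3}b_{4}$, respectively $b_{2}b_{3}$) inserted as one test element. I would also note, as a sanity check on the hypotheses, that the symmetry and invariance of $\varpi$ are not actually used in this lemma: only nondegeneracy of $\varpi$ together with the anti-commutativity and anti-associativity of $\cdot$ enter. The full quadratic structure is therefore needed only for the subsequent construction of an anti-Leibniz bialgebra, not for this coalgebra-duality statement.
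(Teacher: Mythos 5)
Your proof is correct and its self-contained verification (pairing $\tau\Delta_{\varpi}$ and the two iterated coproducts against test tensors, then invoking nondegeneracy of the induced forms) is exactly the argument the paper gives; the conceptual transport-of-structure framing via $\varphi$ is a valid bonus but not a genuinely different route. Your side remark that only nondegeneracy of $\varpi$ (not its symmetry or invariance) is used in this lemma is also accurate.
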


\begin{proof}
For any $b\in B$ and $b_{1}\otimes b_{2}\otimes b_{3}\in B\otimes B\otimes B$,
by the definition of $\Delta_{\varpi}$, we get $\varpi(\tau\Delta_{\varpi}(b),\;
b_{1}\otimes b_{2})=\varpi(b,\; b_{2}b_{1})=-\varpi(b,\; b_{1}b_{2})=
\varpi(-\Delta_{\varpi}(b),\; b_{1}\otimes b_{2})$ and
\begin{align*}
\varpi\big((\Delta_{\varpi}\otimes\id)\Delta_{\varpi}(b),\ \
b_{1}\otimes b_{2}\otimes b_{3}\big)
&=\varpi\big(b,\; (b_{1}b_{2})b_{3})\big)\\
&=-\varpi\big(b,\; b_{1}(b_{2}b_{3})\big)\\
&=\varpi\big(-(\id\otimes\Delta_{\varpi})\Delta_{\varpi}(b),\ \
b_{1}\otimes b_{2}\otimes b_{3}\big).
\end{align*}
That is, $(\Delta_{\varpi}\otimes\id)\Delta_{\varpi}
=-(\id\otimes\Delta_{\varpi})\Delta_{\varpi}$ and $\Delta_{\varpi}=-\tau\Delta_{\varpi}$.
Thus $(B, \Delta_{\varpi})$ is an anti-cocommutative anti-coassociative coalgebra.
\end{proof}

\begin{ex}\label{ex:quad-coa}
Consider the $2$-dimensional anti-commutative anti-associative algebra $(B=\Bbbk\{e_{1},
e_{2}\},\; \cdot)$, where the nonzero multiplication is given by $e_{1}e_{1}=e_{2}$.
Define a bilinear form $\varpi(-,-)$ on $(B, \cdot)$ by $\varpi(e_{1}, e_{2})=
\varpi(e_{2}, e_{1})=1$ and $\varpi(e_{1}, e_{1})=\varpi(e_{2}, e_{2})=0$. Then
$(B, \varpi)$ is a quadratic anti-commutative anti-associative algebra. Thus, we get
an anti-cocommutative anti-coassociative coalgebra $(B, \Delta_{\varpi})$, where
$\Delta_{\varpi}(e_{1})=e_{2}\otimes e_{2}$ and $\Delta_{\varpi}(e_{2})=0$.
\end{ex}

Recall that a {\it Leibniz bialgebra} is a triple $(L, [-,-], \delta)$, where
$(L, [-,-])$ is a Leibniz algebra, $(L, \delta)$ is a Leibniz coalgebra and the
following equations hold:
\begin{align*}
&\qquad\qquad\qquad\qquad\tau(\fr_{L}(y)\otimes\id)\delta(x)=(\fr_{L}(x)\otimes\id)\delta(y),\\
&\delta([x, y])=\big(\id\otimes\,\fr_{L}(y)-(\fl_{L}+\fr_{L})(y)\otimes\id\big)
(\id\otimes\id+\tau)\delta(x)+(\id\otimes\,\fl_{L}(x)+\fl_{L}(x)\otimes\id)\delta(y),
\end{align*}
for all $x, y\in L$.

\begin{thm}\label{thm:Lei-antLei}
Let $(L, [-,-], \delta)$ be a Leibniz bialgebra and $(B, \varpi)$ be a quadratic
anti-commutative anti-associative algebra. Suppose $(A=L\otimes B, \circ)$ is the anti-Leibniz
algebra induced by $(L, [-,-])$ by $(B, \cdot)$, $\Delta_{\varpi}: B\rightarrow B\otimes B$
is the dual of the multiplication of $(B, \varpi)$ under the bilinear form $\varpi(-,-)$,
and $\tilde{\Delta}: A\rightarrow A\otimes A$ is the linear map defined by
$\tilde{\Delta}(x\otimes b)=\delta(x)\bullet\Delta_{\varpi}(b)$ for $x\in L$, $b\in B$.
Then $(A, \circ, \tilde{\Delta})$ is an anti-Leibniz bialgebra, which is called the
anti-Leibniz bialgebra induced by $(L, [-,-], \delta)$ and $(B, \varpi)$.
\end{thm}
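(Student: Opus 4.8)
The plan is to exploit the factorizations already in place: by Lemma \ref{lem:Leib-ant} the product $\circ$ makes $A=L\otimes B$ an anti-Leibniz algebra, and by Lemma \ref{lem:quad-dual} together with Lemma \ref{lem:Leibco-ant} the comultiplication $\tilde\Delta(x\otimes b)=\delta(x)\bullet\Delta_{\varpi}(b)$ makes $(A,\tilde\Delta)$ an anti-Leibniz coalgebra. Hence only the two compatibility conditions \eqref{bialg1} and \eqref{bialg2} of Definition \ref{def:bialg} remain to be verified. First I would record the factorized form of all structure maps under the reordering isomorphism $(L\otimes B)^{\otimes 2}\cong(L\otimes L)\otimes(B\otimes B)$: namely $\fl_{A}(x\otimes b)=\fl_{L}(x)\otimes\fl_{B}(b)$ and $\fr_{A}(x\otimes b)=\fr_{L}(x)\otimes\fr_{B}(b)$, the twist $\tau$ on $A\otimes A$ corresponds to $\tau_{L}\bullet\tau_{B}$, and $\tilde\Delta=\delta\bullet\Delta_{\varpi}$. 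Anti-commutativity of $B$ gives the key operator identity $\fr_{B}(b)=-\fl_{B}(b)$, while Lemma \ref{lem:quad-dual} gives the anti-cocommutativity $\tau\Delta_{\varpi}=-\Delta_{\varpi}$.

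Next I would establish two Frobenius-type identities for the quadratic algebra $(B,\varpi)$, obtained by choosing bases $\{u_{i}\},\{v_{i}\}$ that are dual with respect to $\varpi$ and unwinding the defining relation $\varpi(\Delta_{\varpi}(b_{1}),b_{2}\otimes b_{3})=\varpi(b_{1},b_{2}b_{3})$ using symmetry, invariance, anti-commutativity and anti-associativity:
$$(\id\otimes\fl_{B}(c))\Delta_{\varpi}(b)=\Delta_{\varpi}(bc),\qquad (\fl_{B}(b)\otimes\id)\Delta_{\varpi}(c)=-\Delta_{\varpi}(bc).$$
The opposite signs here are exactly the imprint of anti-associativity, and they are what ultimately produces an anti-Leibniz rather than a Leibniz structure.

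The heart of the argument is then a factorization of each compatibility condition into an $L$-part times a $B$-part. For \eqref{bialg1}, writing $a_{1}=x\otimes b$ and $a_{2}=y\otimes c$, I would first use anti-cocommutativity to compute $(\id\otimes\id-\tau)\tilde\Delta(a_{1})=\big((\id+\tau_{L})\delta(x)\big)\bullet\Delta_{\varpi}(b)$; this is the crucial sign flip that converts the $(\id\otimes\id-\tau)$ of \eqref{bialg1} into the $(\id+\tau_{L})$ appearing in the Leibniz bialgebra axiom. Applying the operator $\id\otimes\fr_{A}(a_{2})-\fr_{A}(a_{2})\otimes\id+\fl_{A}(a_{2})\otimes\id$ and using $\fr_{B}=-\fl_{B}$ together with the two Frobenius identities, every $B$-component collapses to the single tensor $-\Delta_{\varpi}(cb)$, so this term becomes $\big((\id\otimes\fr_{L}(y)-(\fl_{L}+\fr_{L})(y)\otimes\id)(\id+\tau_{L})\delta(x)\big)\bullet\Delta_{\varpi}(cb)$. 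The term $(\id\otimes\fl_{A}(a_{1})+\fl_{A}(a_{1})\otimes\id)\tilde\Delta(a_{2})$ reduces, via the same Frobenius identities, to $\big((\id\otimes\fl_{L}(x)+\fl_{L}(x)\otimes\id)\delta(y)\big)\bullet\Delta_{\varpi}(cb)$, while $\tilde\Delta(a_{1}a_{2})=\delta([x,y])\bullet\Delta_{\varpi}(bc)=-\delta([x,y])\bullet\Delta_{\varpi}(cb)$. Summing the three contributions, the common factor $\Delta_{\varpi}(cb)$ pulls out and the $L$-bracket is precisely the defining relation for $\delta([x,y])$ in the Leibniz bialgebra $(L,[-,-],\delta)$, so the sum vanishes. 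For \eqref{bialg2} the same scheme applies: the $L$-part collapses by the first Leibniz bialgebra axiom $\tau(\fr_{L}(y)\otimes\id)\delta(x)=(\fr_{L}(x)\otimes\id)\delta(y)$, and the $B$-part reduces to the identity $(\fr_{B}(b)\otimes\id)\Delta_{\varpi}(c)=\tau\big((\fr_{B}(c)\otimes\id)\Delta_{\varpi}(b)\big)$, which again follows from invariance and anti-associativity of $\varpi$.

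The main obstacle is not conceptual but the sign and reordering bookkeeping: one must track the reordering isomorphism, the sign in $\fr_{B}=-\fl_{B}$, the anti-cocommutativity $\tau\Delta_{\varpi}=-\Delta_{\varpi}$, and the two opposite-signed Frobenius identities simultaneously. It is precisely the interplay of these signs that turns the Leibniz compatibility on $L$ into the anti-Leibniz compatibility on $A$, and getting any one of them wrong would break the cancellation.
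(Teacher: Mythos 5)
Your proposal is correct and follows essentially the same route as the paper: establish the Frobenius-type identities expressing $(\fl_{B}(b)\otimes\id)\Delta_{\varpi}$ and $(\id\otimes\fl_{B}(b))\Delta_{\varpi}$ in terms of $\Delta_{\varpi}$ of a product (the paper's identity $\Delta_{\varpi}(bb')=-\sum_{(b)}b_{(1)}b'\otimes b_{(2)}$ and its rewritings), then factor each compatibility condition into an $L$-part bulleted with a common $B$-part $\pm\Delta_{\varpi}(bc)$ and reduce to the two Leibniz bialgebra axioms, with the sign flip $(\id\otimes\id-\tau)\tilde{\Delta}(a_{1})=\big((\id+\tau_{L})\delta(x)\big)\bullet\Delta_{\varpi}(b)$ coming from anti-cocommutativity exactly as in the paper. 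I checked your two Frobenius identities and the resulting collapsed formulas; they are correct.
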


\begin{proof}
Since $\varpi(-, -)$ is an invariant bilinear form, for any $b, b', c, d\in B$, we have
\begin{align*}
\varpi\Big(\sum_{(b)}b_{(1)}b'\otimes b_{(2)},\; c\otimes d\Big)
&=\varpi\Big(\sum_{(b)}b_{(1)}\otimes b_{(1)},\;
b'c\otimes d\Big)\\[-2mm]
&=\varpi(b,\; (b'c)d)\\
&=-\varpi(\Delta_{\varpi}(bb'),\; c\otimes d).
\end{align*}
That is to say, $\Delta_{\varpi}(bb')=-\sum_{(b)}b_{(1)}b'\otimes b_{(2)}$. Moreover, since
$(B, \cdot)$ is anti-commutative and $\Delta_{\varpi}$ is anti-cocommutative, we get
$-\Delta_{\varpi}(b'b)=\Delta_{\varpi}(bb')=\sum_{(b')}b'_{(1)}b\otimes b'_{(2)}
=\sum_{(b)}b_{(2)}\otimes b_{(1)}b'=\sum_{(b)}b'b_{(1)}\otimes b_{(2)}
=-\sum_{(b)}b_{(1)}b'\otimes b_{(2)}=\sum_{(b)}b_{(2)}\otimes b_{(1)}b'
=-\sum_{(b)}b'b_{(2)}\otimes b_{(1)}=\sum_{(b)}b_{(2)}b'\otimes b_{(1)}
=-\sum_{(b')}b'_{(1)}\otimes bb'_{(2)}=-\sum_{(b')}bb'_{(1)}\otimes b'_{(2)}$.
Thus, for any $x\otimes b$, $x'\otimes b'\in A$,
\begin{align*}
&\;\big(\fr_{A}(x\otimes b)\otimes\id\big)\tilde{\Delta}(x'\otimes b')
-\hat{\tau}\big(\big(\fr_{A}(x'\otimes b')\otimes\id\big)\tilde{\Delta}
(x\otimes b)\big)\\
=&\;\sum_{(x')}\sum_{(b')}([x'_{(1)}, x]\otimes x'_{(2)})\bullet(b'_{(1)}b\otimes b'_{(2)})
-\sum_{(x)}\sum_{(b)}(x_{(2)}\otimes[x_{(1)}, x'])\bullet(b_{(2)}\otimes b_{(1)}b')\\
=&\;\Big((\fr_{L}(x)\otimes\id)\delta(x')
-\tau\big((\fr_{L}(x')\otimes\id)\delta(x)\big)\Big)\bullet\Delta_{\varpi}(bb')\\
=&\;0,
\end{align*}
and
\begin{align*}
&\;\tilde{\Delta}((x\otimes b)\circ(a'\otimes c'))+\Big(\big(\id\otimes
\fr_{\la}(x'\otimes b')+(\fl_{A}-\fr_{A})(x'\otimes b')\otimes\id\big)
(\id\otimes\id-\tau)\Big)\tilde{\Delta}(x\otimes b)\\[-2mm]
&\qquad+\Big(\id\otimes\fl_{A}(x\otimes b)
+\fl_{A}(x\otimes b)\otimes\id\Big)\tilde{\Delta}(x'\otimes b')\\
=&\;\delta([x, x'])\bullet\Delta_{\varpi}(bb')
+\sum_{(x)}\sum_{(b)}(x_{(1)}\otimes[x_{(2)}, x'])\bullet(b_{(1)}\otimes b_{(2)}b')
+\sum_{(x)}\sum_{(b)}([x', x_{(1)}]\otimes x_{(2)})\bullet(b'b_{(1)}\otimes b_{(2)})\\[-2mm]
&\qquad-\sum_{(x)}\sum_{(b)}([x_{(1)}, x']\otimes x_{(2)})\bullet(b_{(1)}b'\otimes b_{(2)})
-\sum_{(x)}\sum_{(b)}(x_{(2)}\otimes[x_{(1)}, x'])\bullet(b_{(2)}\otimes b_{(1)}b')\\[-2mm]
&\qquad-\sum_{(x)}\sum_{(b)}([x', x_{(2)}]\otimes x_{(1)})\bullet(b'b_{(2)}\otimes b_{(1)})
+\sum_{(x)}\sum_{(b)}([x_{(2)}, x']\otimes x_{(1)})\bullet(b_{(2)}b'\otimes b_{(1)})\\[-2mm]
&\qquad+\sum_{(x')}\sum_{(b')}(x'_{(1)}\otimes[x, x'_{(2)}])\bullet(b'_{(1)}\otimes bb'_{(2)})
+\sum_{(x')}\sum_{(b')}([x, x'_{(1)}]\otimes x'_{(2)})\bullet(bb'_{(1)}\otimes b'_{(2)})\\
=&\;\Big(\Delta([x, x'])-\big(\big(\id\otimes\,\fr_{L}(x')+\fr_{L}(x')\otimes\id
+\fl_{L}(x')\otimes\id\big)(\id\otimes\id+\tau)\big)\delta(x)\Big)
\bullet\Delta_{\varpi}(bb')\\[-2mm]
&\qquad-\Big(\big(\id\otimes\,\fl_{L}(x)+\fl_{L}(x)\otimes\id\big)
\delta(x')\Big)\bullet\Delta_{\varpi}(bb')\\
=&\;0,
\end{align*}
since $(L, [-,-], \delta)$ is a Leibniz bialgebra.
By the definition, we get that $(A, \circ, \tilde{\Delta})$ is a completed
anti-Leibniz bialgebra.
\end{proof}

\begin{ex}\label{ex:ten-bial}
In \cite{LMW}, the authors have given some examples of triangular Leibniz bialgebras.
Here we using Theorem {\rm \ref{thm:Lei-antLei}} to give some examples of anti-Leibniz
bialgebras. We fix $(B, \cdot)$ as the $2$-dimensional anti-commutative anti-associative
algebra given in Example {\rm \ref{ex:quad-coa}}, i.e., $B=\Bbbk\{e_{1}, e_{2}\}$ and
$e_{1}e_{1}=e_{2}$ By the bilinear form $\varpi(-,-)$ on $(B, \cdot)$ defined in Example
{\rm \ref{ex:quad-coa}}, we get an anti-cocommutative anti-coassociative coalgebra
$(B, \Delta_{\varpi})$, where $\Delta_{\varpi}(e_{1})=e_{2}\otimes e_{2}$.

$(I)$ Let $(L, [-,-], \delta)$ be a $2$-dimensional Leibniz bialgebra with basis
$\{x_{1}, x_{2}\}$, and $(A=L\otimes B, \circ, \tilde{\Delta})$ be the induced
$4$-dimensional anti-Leibniz bialgebra with basis $\{a_{1}, a_{2}, a_{3}, a_{4}\}$.
\begin{enumerate}\itemsep=0pt
\item[$(i)$] If $[x_{1}, x_{1}]=x_{2}$, $\delta(x_{1})=k(x_{2}\otimes x_{2})$, then
     $a_{1}\circ a_{1}=a_{2}$ and $\tilde{\Delta}(a_{1})=k(a_{2}\otimes a_{2})$, where
     $k\in\Bbbk$. That is, the Leibniz bialgebra $(L, [-,-], \delta)$ is also an
     anti-Leibniz bialgebra.
\item[$(ii)$] If $[x_{1}, x_{2}]=x_{1}=[x_{2}, x_{1}]$, $\delta(x_{1})=\delta(x_{2})=
     k(x_{1}\otimes x_{2}-x_{2}\otimes x_{1})$, then $a_{1}\circ a_{2}=a_{3}=a_{2}\circ
     a_{2}$ and $\tilde{\Delta}(a_{1})=\tilde{\Delta}(a_{1})=k(a_{2}\otimes a_{4}
     -a_{4}\otimes a_{2})$, where $k\in\Bbbk$.
\end{enumerate}

$(II)$ Let $(L, [-,-], \delta)$ be a $3$-dimensional Leibniz bialgebra with basis
$\{x_{1}, x_{2}, x_{3}\}$, and $(A=L\otimes B, \circ, \tilde{\Delta})$ be the induced
$6$-dimensional anti-Leibniz bialgebra with basis $\{a_{1}, a_{2}, a_{3},
a_{4}, a_{5}, a_{6}\}$.
\begin{enumerate}\itemsep=0pt
\item[$(i)$] If $[x_{1}, x_{3}]=x_{1}+x_{2}$, $[x_{3}, x_{3}]=x_{1}$,
     $\delta(x_{3})=k(x_{1}\otimes x_{1}+x_{2}\otimes x_{1})+l(x_{1}\otimes x_{2}
     +x_{2}\otimes x_{2})$, then $a_{1}\circ a_{5}=a_{2}+a_{4}$, $a_{5}\circ a_{5}=a_{2}$
     and $\tilde{\Delta}(a_{5})=k(a_{2}\otimes a_{2}+a_{4}\otimes a_{2})+l(a_{1}\otimes a_{4}
     +a_{4}\otimes a_{4})$, where $k, l\in\Bbbk$.
\item[$(ii)$] If $[x_{2}, x_{3}]=x_{2}=-[x_{3}, x_{2}]$, $[x_{3}, x_{3}]=x_{1}$,
     $\delta(x_{2})=k(x_{1}\otimes x_{2})$, $\delta(x_{3})=k(x_{1}\otimes x_{1})
     -l(x_{1}\otimes x_{2})$, then $a_{3}\circ a_{5}=a_{4}=-a_{5}\circ a_{3}$,
     $a_{5}\circ a_{5}=a_{3}$, $\tilde{\Delta}(a_{3})=k(a_{2}\otimes a_{4})$ and
     $\tilde{\Delta}(a_{5})=k(a_{2}\otimes a_{2})-l(a_{2}\otimes a_{4})$, where
     $k, l\in\Bbbk$.
\item[$(iii)$] If $[x_{2}, x_{2}]=x_{1}=[x_{3}, x_{3}]$, $\delta(x_{2})=k(x_{1}\otimes
     x_{1})$, then $a_{3}\circ a_{3}=a_{2}=a_{5}\circ a_{5}$ and $\tilde{\Delta}(a_{3})=
     k(a_{2}\otimes a_{2})$, where $k\in\Bbbk$.
\end{enumerate}
\end{ex}

\section{Infinite-dimensional anti-Leibniz bialgebras}\label{sec:infin}
In this section, we constrict infinite-dimensional anti-Leibniz bialgebras
form finite-dimensional anti-Leibniz bialgebras by the completed tensor product.
In this section, a commutative algebra means a commutative associative algebra, and
a cocommutative coalgebra means a cocommutative coassociative coalgebra for simply.

\begin{defi}\label{def:zgrad-alg}
A {\rm $\bz$-graded commutative algebra} (resp. a {\rm $\bz$-graded
anti-Leibniz algebra}) is a commutative algebra $(C, \cdot)$
(resp. an anti-Leibniz algebra $(A, \cdot)$) with a linear decomposition
$C=\oplus_{i\in\bz}C_{i}$ (resp. $A=\oplus_{i\in\bz}A_{i}$) such that each $C_{i}$
(resp. $A_{i}$) is finite-dimensional and $C_{i}\cdot C_{j}\subseteq C_{i+j}$
(resp. $A_{i}\cdot A_{j}\subseteq A_{i+j}$) for all $i, j\in\bz$.
\end{defi}

Clearly, the Laurent polynomial algebra $\Bbbk[\kt, \kt^{-1}]$ is a $\bz$-graded
commutative algebra. We now consider a tensor product of anti-Leibniz
algebras and $\bz$-graded commutative algebras.

\begin{lem}\label{lem:aff-anL}
Let $(A, \cdot)$ be a finite-dimensional anti-Leibniz algebra, $(C=\oplus_{i\in\bz}C_{i},
\cdot)$ be a $\bz$-graded commutative algebra. Define a binary operation on
$\la:=A\otimes C$ by
$$
(a_{1}\otimes c_{1})\diamond(a_{2}\otimes c_{2})=(a_{1}a_{2})\otimes(c_{1}c_{2}),
$$
for any $a_{1}, a_{2}\in A$ and $c_{1}, c_{2}\in C$. Then $(\la, \diamond)$ is a
$\bz$-graded anti-Leibniz algebra, which is called {\rm affine anti-Leibniz
algebra} from $(A, \cdot)$ by $(C=\oplus_{i\in\bz}C_{i}, \cdot)$.
Moreover, if $(C=\oplus_{i\in\bz}C_{i}, \cdot)=\Bbbk[\kt, \kt^{-1}]$ is the Laurent
polynomial algebra, then $(\la, \diamond)$ is a $\bz$-graded anti-Leibniz
algebra if and only if $(A, \cdot)$ is an anti-Leibniz algebra.
\end{lem}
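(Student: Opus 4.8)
The plan is to verify the defining axioms of a $\bz$-graded anti-Leibniz algebra directly, reducing everything to the structures already present on $A$ and on $C$. First I would check the anti-Leibniz identity for $(\la,\diamond)$. Since that identity is trilinear, it suffices to test it on simple tensors $a_i\otimes c_i$ with $a_i\in A$, $c_i\in C$, $i=1,2,3$. Expanding the three required terms produces
\[
a_1(a_2a_3)\otimes c_1(c_2c_3)+(a_1a_2)a_3\otimes(c_1c_2)c_3+a_2(a_1a_3)\otimes c_2(c_1c_3).
\]
Because $C$ is commutative and associative, the three tensor-right factors all equal the common product $c_1c_2c_3$, so the expression collapses to
\[
\bigl(a_1(a_2a_3)+(a_1a_2)a_3+a_2(a_1a_3)\bigr)\otimes(c_1c_2c_3),
\]
which vanishes by the anti-Leibniz identity in $A$ (Definition \ref{def:anti}). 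By multilinearity the identity then holds on all of $\la$, so $(\la,\diamond)$ is an anti-Leibniz algebra.

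Next I would install the grading. Setting $\la_i:=A\otimes C_i$ gives $\la=\oplus_{i\in\bz}\la_i$; each $\la_i$ is finite-dimensional because $A$ is finite-dimensional and each $C_i$ is, and from $(a_1\otimes c_1)\diamond(a_2\otimes c_2)=(a_1a_2)\otimes(c_1c_2)$ together with $C_i\cdot C_j\subseteq C_{i+j}$ we obtain $\la_i\diamond\la_j\subseteq\la_{i+j}$. This completes the first assertion.

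For the equivalence in the Laurent-polynomial case, the forward implication is precisely the first part specialized to $C=\Bbbk[\kt,\kt^{-1}]$. The extra ingredient needed for the converse is the unit $1=\kt^0\in C_0$: the assignment $a\mapsto a\otimes 1$ is an injective homomorphism identifying $(A,\cdot)$ with the subalgebra $A\otimes C_0\subseteq\la$. Assuming $(\la,\diamond)$ is anti-Leibniz, I would evaluate its defining identity on $a_1\otimes 1,\,a_2\otimes 1,\,a_3\otimes 1$ to get $\bigl(a_1(a_2a_3)+(a_1a_2)a_3+a_2(a_1a_3)\bigr)\otimes 1=0$; injectivity of $a\mapsto a\otimes 1$ then forces the bracketed element to vanish, so $A$ is anti-Leibniz.

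There is no serious obstacle here: the argument is routine multilinear bookkeeping once one observes that commutativity and associativity of $C$ are exactly what is needed to fuse the three $C$-factors. The only genuinely structural point worth flagging is that the converse is stated for the Laurent polynomial algebra specifically because it relies on a unit to embed $A$ back into $\la$; for a general $\bz$-graded commutative algebra lacking a unit one cannot in general recover the axioms of $A$ from those of $\la$.
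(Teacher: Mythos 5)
Your proof is correct and is exactly the routine verification the paper intends (its own proof is simply ``It is straightforward''): fusing the three $C$-factors via commutativity and associativity, factoring out the anti-Leibniz expression in $A$, checking the grading, and using the unit $\kt^{0}$ to recover the identity in $A$ for the converse. Your closing remark that the converse genuinely needs an element $c$ with $c\cdot c\cdot c\neq 0$ (supplied here by the unit of $\Bbbk[\kt,\kt^{-1}]$) is a correct and worthwhile observation that the paper leaves implicit.
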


\begin{proof}
It is straightforward.
\end{proof}

To carry out the affine anti-Leibniz coalgebra, we need to extend the codomain
of the coproduct $\Delta$ to allow infinite sums.
Let $U=\oplus_{i\in\bz}U_{i}$ and $V=\oplus_{j\in\bz}V_{j}$ be $\bz$-graded vector spaces.
We call the {\it completed tensor product} of $U$ and $V$ to be the vector space
$$
U\,\hat{\otimes}\,V:=\prod_{i,j\in\bz}U_{i}\otimes V_{j}.
$$
If $U$ and $V$ are finite-dimensional, then $U\,\hat{\otimes}\,V$ is just the usual
tensor product $U\otimes V$. In general, an element in $U\,\hat{\otimes}\,V$ is an
infinite formal sum $\sum_{i,j\in\bz}X_{ij} $ with $X_{ij}\in U_{i}\otimes V_{j}$.
So $X_{ij}=\sum_{\alpha} u_{i, \alpha}\otimes v_{j, \alpha}$ for pure tensors
$u_{i, \alpha}\otimes v_{j, \alpha}\in U_{i}\otimes V_{j}$ with $\alpha$ in a finite
index set. Thus a general term of $U\,\hat{\otimes}\,V$ is a possibly infinite sum
$\sum_{i,j,\alpha}u_{i\alpha}\otimes v_{j\alpha}$, where $i, j\in\bz$ and $\alpha$
is in a finite index set (which might depend on $i, j$). With these notations, for
linear maps $f: U\rightarrow U'$ and $g: V\rightarrow V'$, define
$$
f\,\hat{\otimes}\,g: U\,\hat{\otimes}\,V\rightarrow U'\,\hat{\otimes}\,V',
\qquad \sum_{i,j,\alpha}u_{i,\alpha}\otimes v_{j, \alpha}\mapsto
\sum_{i,j,\alpha} f(u_{i, \alpha})\otimes g(v_{j, \alpha}).
$$
Also the twist map $\tau$ has its completion
$$
\hat{\tau}: V\,\hat{\otimes}\,V\rightarrow V\,\hat{\otimes}\,V, \qquad
\sum_{i,j,\alpha}u_{i, \alpha}\otimes v_{j, \alpha}\mapsto
\sum_{i,j,\alpha}v_{j, \alpha}\otimes u_{i, \alpha}.
$$
Finally, we define a (completed) coproduct to be a linear map
$$
\hat{\Delta}: V\rightarrow V\,\hat{\otimes}\,V, \qquad
\hat{\Delta}(v):=\sum_{i, j, \alpha}v_{1, i, \alpha}\otimes v_{2, j, \alpha}.
$$
Then we have the well-defined map
$$
(\hat{\Delta}\,\hat{\otimes}\,\id)\hat{\Delta}(v)=(\hat{\Delta}\,\hat{\otimes}\,\id)
\Big(\sum_{i,j,\alpha}v_{1, i, \alpha}\otimes v_{2, j, \alpha}\Big)
:=\sum_{i,j,\alpha}\hat{\Delta}(v_{1, i, \alpha})\otimes v_{2, j, \alpha}
\in V\,\hat{\otimes}\,V\,\hat{\otimes}\,V.
$$

\begin{defi}\label{def:anL-coa}
$(i)$ A {\rm completed cocommutative coalgebra} is a pair
$(C, \bar{\Delta})$ where $C=\oplus_{i\in\bz}C_{i}$ is a $\bz$-graded vector space and
$\bar{\Delta}: C\rightarrow C\,\hat{\otimes}\, C$ is a linear map satisfying
\begin{align*}
\hat{\tau}\bar{\Delta}=\bar{\Delta},\qquad\qquad (\bar{\Delta}\,\hat{\otimes}\,\id)
\bar{\Delta}=(\id\,\hat{\otimes}\,\bar{\Delta})\bar{\Delta}.
\end{align*}
$(ii)$ A {\rm completed anti-Leibniz coalgebra} is a pair $(A, \hat{\Delta})$ where
$A=\oplus_{i\in\bz}A_{i}$ is a $\bz$-graded vector space and $\hat{\Delta}: A\rightarrow
A\,\hat{\otimes}\, A$ is a linear map satisfying
$$
(\hat{\Delta}\otimes\id)\hat{\Delta}+(\id\otimes\hat{\Delta})\hat{\Delta}
+(\hat{\tau}\otimes\id)(\id\otimes\hat{\Delta})\hat{\Delta}=0.
$$
\end{defi}

\begin{ex}\label{ex:lau-coa}
Consider the Laurent polynomial algebra $\Bbbk[\kt, \kt^{-1}]$, there is a naturally
completed cocommutative coalgebra structure on $\Bbbk[\kt, \kt^{-1}]$, which
is given by $\bar{\Delta}(\kt^{k})=\sum_{i\in\bz}\kt^{i}\otimes\kt^{k-i}$ for all $k\in\bz$.
This completed cocommutative coalgebra is called
{\rm Laurent polynomial coalgebra}.
\end{ex}

For completed anti-Leibniz coalgebra, we give the dual version
of Lemma \ref{lem:aff-anL}.

\begin{lem}\label{lem:canL-co}
Let $(A, \Delta)$ be a finite-dimensional anti-Leibniz coalgebra,
$(C, \bar{\Delta})$ be a completed cocommutative coalgebra and
$\la:=A\otimes C$. Define a linear map $\hat{\Delta}: \la\rightarrow\la\,
\hat{\otimes}\,\la$ by
\begin{align}
\hat{\Delta}(a\otimes c)=\Delta(a)\bullet\bar{\Delta}(c)=\sum_{(a)}\sum_{i,j,\alpha}
(a_{(1)}\otimes c_{1,i,\alpha})\otimes(a_{(2)}\otimes c_{2,j,\alpha}),        \label{coalg}
\end{align}
for any $a\in A$ and $c\in C$, where $\Delta(a)=\sum_{(a)}a_{(1)}\otimes a_{(2)}$
in the Sweedler notation and $\bar{\Delta}(c)=\sum _{i,j,\alpha}c_{1,i,\alpha}\otimes
c_{2, j, \alpha}$. Then $(\la, \hat{\Delta})$ is a completed anti-Leibniz coalgebra.

Furthermore, if $A$ is finite-dimensional, $(C, \bar{\Delta})$ is the Laurent polynomial
coalgebra $(\Bbbk[\kt, \kt^{-1}], \bar{\Delta})$, then $(\la, \hat{\Delta})$ is a completed
anti-Leibniz coalgebra if and only if $(A, \Delta)$ is an anti-Leibniz coalgebra.
\end{lem}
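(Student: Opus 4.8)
The plan is to follow the pattern of Lemma \ref{lem:Leibco-ant}, separating the $A$-factor from the $C$-factor in the completed coassociator and reducing the whole identity to the anti-Leibniz coalgebra identity of $(A,\Delta)$. First I would record how the three operators in Definition \ref{def:anL-coa}$(ii)$ act on a $\bullet$-product. Since $\hat\Delta(a\otimes c)=\Delta(a)\bullet\bar\Delta(c)$ and $\bullet$ simply interleaves the two tensor words, applying $\hat\Delta$ again in the first or second slot should peel off onto each factor separately, giving
$$
(\hat\Delta\otimes\id)\hat\Delta(a\otimes c)
=\big((\Delta\otimes\id)\Delta(a)\big)\bullet\big((\bar\Delta\,\hat\otimes\,\id)\bar\Delta(c)\big),
$$
$$
(\id\otimes\hat\Delta)\hat\Delta(a\otimes c)
=\big((\id\otimes\Delta)\Delta(a)\big)\bullet\big((\id\,\hat\otimes\,\bar\Delta)\bar\Delta(c)\big).
$$

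The only genuinely delicate point is the third term. Because $\hat\tau$ permutes two full tensor slots of $\la\,\hat\otimes\,\la\,\hat\otimes\,\la$, and each slot carries both an $A$-component and a $C$-component, applying $\hat\tau\otimes\id$ to a $\bullet$-product must distribute as $\tau\otimes\id$ on the $A$-factor and $\hat\tau\otimes\id$ on the $C$-factor simultaneously. I would verify this carefully on pure tensors, obtaining
$$
(\hat\tau\otimes\id)(\id\otimes\hat\Delta)\hat\Delta(a\otimes c)
=\big((\tau\otimes\id)(\id\otimes\Delta)\Delta(a)\big)\bullet\big((\hat\tau\otimes\id)(\id\,\hat\otimes\,\bar\Delta)\bar\Delta(c)\big).
$$

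Next I would exploit that $(C,\bar\Delta)$ is cocommutative and coassociative. Coassociativity gives $(\bar\Delta\,\hat\otimes\,\id)\bar\Delta=(\id\,\hat\otimes\,\bar\Delta)\bar\Delta=:\bar\Delta^{(2)}$, and combining this with cocommutativity $\hat\tau\bar\Delta=\bar\Delta$ shows that $\bar\Delta^{(2)}(c)$ is invariant under the transposition of its first two factors, i.e. $(\hat\tau\otimes\id)\bar\Delta^{(2)}(c)=\bar\Delta^{(2)}(c)$. Hence all three $C$-factors above coincide with $\bar\Delta^{(2)}(c)$, and bilinearity of $\bullet$ in its $A$-argument lets me factor it out:
$$
(\hat\Delta\otimes\id)\hat\Delta(a\otimes c)+(\id\otimes\hat\Delta)\hat\Delta(a\otimes c)+(\hat\tau\otimes\id)(\id\otimes\hat\Delta)\hat\Delta(a\otimes c)
=\Big((\Delta\otimes\id)\Delta(a)+(\id\otimes\Delta)\Delta(a)+(\tau\otimes\id)(\id\otimes\Delta)\Delta(a)\Big)\bullet\bar\Delta^{(2)}(c).
$$
The bracketed $A$-factor vanishes precisely because $(A,\Delta)$ is an anti-Leibniz coalgebra, so the whole expression is zero and $(\la,\hat\Delta)$ is a completed anti-Leibniz coalgebra.

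For the final equivalence I would specialize $C$ to the Laurent polynomial coalgebra. The forward direction is immediate from the first part, since Example \ref{ex:lau-coa} exhibits $(\Bbbk[\kt,\kt^{-1}],\bar\Delta)$ as a completed cocommutative coalgebra. For the converse, the displayed computation shows that the completed anti-Leibniz coalgebra identity for $\la$ amounts to $D(a)\bullet\bar\Delta^{(2)}(\kt^{k})=0$ for all $a\in A$ and $k\in\bz$, where $D(a)$ is the (finite) anti-Leibniz coassociator of $(A,\Delta)$. Since $\bar\Delta^{(2)}(\kt^{k})=\sum_{i,j}\kt^{i}\otimes\kt^{j}\otimes\kt^{k-i-j}$, the component of $D(a)\bullet\bar\Delta^{(2)}(\kt^{k})$ in the graded piece indexed by a fixed triple of degrees is identified, via the fixed basis vectors $\kt^{i},\kt^{j},\kt^{k-i-j}$, with an element of the finite-dimensional space $A\otimes A\otimes A$; its vanishing forces $D(a)=0$, so $(A,\Delta)$ is an anti-Leibniz coalgebra. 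I expect the main obstacle to be purely bookkeeping: pinning down how $\hat\tau$ and the completed tensor product interact with $\bullet$, and justifying the termwise factorization over the possibly infinite index set, which is harmless because $\Delta(a)$ is a finite sum when $A$ is finite-dimensional.
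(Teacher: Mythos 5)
Your proposal is correct and follows essentially the same route as the paper: factor each of the three terms of the completed coassociator as an $A$-part $\bullet$ a $C$-part, use coassociativity and cocommutativity of $\bar{\Delta}$ to identify all three $C$-parts with $(\bar{\Delta}\,\hat{\otimes}\,\id)\bar{\Delta}(c)$, and pull out the anti-Leibniz coassociator of $(A,\Delta)$; the converse is likewise obtained by extracting a graded component (the paper picks the coefficient of $1\otimes 1\otimes\kt^{k}$, you argue componentwise, which is the same bookkeeping). No gaps.
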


\begin{proof}
First, for any $\sum_{l}a'_{l}\otimes a''_{l}\otimes a'''_{l}\in A\otimes A\otimes A$ and
$\sum_{i,j,k,\alpha}c'_{i,\alpha}\otimes c''_{j,\alpha}\otimes c'''_{k,\alpha}\in
C\,\hat{\otimes}\,C\,\hat{\otimes}\,C$, we denote
$$
\Big(\sum_{l}a'_{l}\otimes a''_{l}\otimes a'''_{l}\Big)\bullet
\Big(\sum_{i,j,k,\alpha}c'_{i,\alpha}\otimes c''_{j,\alpha}\otimes c'''_{k,\alpha}\Big)
=\sum_{l}\sum_{i,j,k,\alpha}(a'_{l}\otimes c'_{i,\alpha})\otimes
(a''_{l}\otimes c''_{j,\alpha})\otimes(a'''_{l}\otimes c'''_{k,\alpha}).
$$
Using the above notations, for any $a\otimes c\in\la$, we have
\begin{align*}
&\;(\hat{\Delta}\,\hat{\otimes}\,\id)\hat{\Delta}(a\otimes c)
+(\id\,\hat{\otimes}\,\hat{\Delta})\hat{\Delta}(a\otimes c)
+(\hat{\tau}\,\hat{\otimes}\,\id)(\id\,\hat{\otimes}\,\hat{\Delta})\hat{\Delta}(a\otimes c)\\
=&\;(\Delta\otimes\id)\Delta(a)\bullet(\bar{\Delta}\,\hat{\otimes}\,\id)\bar{\Delta}(c)
+(\id\otimes\Delta)\Delta(a)\bullet(\id\,\hat{\otimes}\,\bar{\Delta})\bar{\Delta}(c)
+(\tau\otimes\id)(\id\otimes\Delta)\Delta(a)\bullet
(\hat{\tau}\,\hat{\otimes}\,\id)(\id\,\hat{\otimes}\,\bar{\Delta})\bar{\Delta}(c)\\
=&\;\Big((\Delta\otimes\id)\Delta(a)+(\id\otimes\Delta)\Delta(a)
+(\tau\otimes\id)(\id\otimes\Delta)\Delta(a)\Big)\bullet
(\bar{\Delta}\,\hat{\otimes}\,\id)\bar{\Delta}(c)\\
=&\;0.
\end{align*}
That is, $(\la, \hat{\Delta})$ is a completed anti-Leibniz coalgebra.

Second, if $(C, \bar{\Delta})=(\Bbbk[\kt, \kt^{-1}], \bar{\Delta})$ and
$(\la, \hat{\Delta})$ is a completed anti-Leibniz coalgebra constructed
as above, we abbreviated $a\otimes\kt^{k}$ as $a\kt^{k}$ for $a\in A$, then
\begin{align*}
(\hat{\Delta}\,\hat{\otimes}\,\id)\hat{\Delta}(a\kt^{k})
&=\sum_{(a)}\sum_{i,j}(a_{11}\kt^{j})\otimes
(a_{12}\kt^{i-j})\otimes(a_{2}\kt^{k-i}),\\[-1mm]
(\id\,\hat{\otimes}\,\hat{\Delta})\hat{\Delta}(a\kt^{k})
&=\sum_{(a)}\sum_{i,j}(a_{1}\kt^{i})\otimes
(a_{21}\kt^{j})\otimes(a_{22}\kt^{k-i-j}),\\[-1mm]
(\hat{\tau}\,\hat{\otimes}\,\id)(\id\,\hat{\otimes}\,\hat{\Delta})\hat{\Delta}(a\kt^{k})
&=\sum_{(a)}\sum_{i,j}(a_{21}\kt^{j})\otimes(a_{1}\kt^{i})
\otimes(a_{22}\kt^{k-i-j}).
\end{align*}
Let $i=j=0$. Comparing the coefficients of $1\otimes1\otimes\kt^{k}$, we obtain
$\sum_{(a)}a_{11}\otimes a_{12}\otimes a_{2}+a_{1}\otimes a_{21}\otimes a_{22}
+a_{21}\otimes a_{1}\otimes a_{22}$, i.e., $(\Delta\otimes\id)\Delta+(\id\otimes\Delta)\Delta
+(\tau\otimes\id)(\id\otimes\Delta)\Delta$. Thus $(A, \Delta)$ is an
anti-Leibniz coalgebra. The proof is finished.
\end{proof}

Finally, we extend the affine anti-Leibniz algebras and the affine
anti-Leibniz coalgebras to the affine anti-Leibniz bialgebra.

\begin{defi}\label{def:quad}
Let $\varpi(-, -)$ be a bilinear form on a $\bz$-graded commutative
algebra $(C=\oplus_{i\in\bz}C_{i}, \cdot)$. Then
\begin{itemize}
\item[$(i)$] $\varpi(-, -)$ is called {\rm invariant}, if $\varpi(c_{1}c_{2},\, c_{3})
     =\varpi(c_{1},\, c_{2}c_{3})$, for any $c_{1}, c_{2}, c_{3}\in C$;
\item[$(ii)$] $\varpi(-, -)$ is called {\rm graded}, if there exists
     $m\in\bz$ such that $\varpi(C_{i}, C_{j})=0$ when $i+j+m\neq0$.
\end{itemize}
A {\rm quadratic $\bz$-graded commutative algebra}, denoted by
$(C=\oplus_{i\in\bz}C_{i},\; \varpi)$, is a $\bz$-graded commutative
algebra together with a symmetric invariant nondegenerate graded bilinear form.
\end{defi}

Clearly, in Definition \ref{def:quad}, if $C=C_{0}$, then a quadratic $\bz$-graded
commutative algebra is just a quadratic commutative algebra.

\begin{ex}\label{ex:lau-quad}
Consider the Laurent polynomial algebra $\Bbbk[\kt, \kt^{-1}]$, there is a natural
bilinear form $\varpi(-,-)$ on $\Bbbk[\kt, \kt^{-1}]$ which is given by
$\varpi(\kt^{i}, \kt^{j})=\delta_{i, j}$, for $i, j\in\bz$, where $\delta_{i, j}$
is the Kronecker delta. It is easy to see that this bilinear form is a symmetric
invariant nondegenerate graded bilinear form. Thus, $(\Bbbk[\kt, \kt^{-1}], \varpi)$
is a quadratic $\bz$-graded commutative algebra, and it is called a
{\rm quadratic $\bz$-graded Laurent polynomial algebra}.
\end{ex}

For a quadratic $\bz$-graded commutative algebra $(C=\oplus_{i\in\bz}C_{i},\;
\varpi)$, the nondegenerate symmetric bilinear form $\varpi(-,-)$ induces bilinear forms
$$
(\underbrace{C\,\hat{\otimes}\,C\,\hat{\otimes}\,\cdots\,\hat{\otimes}\,C}_{n\text{-fold}})
\otimes(\underbrace{C\otimes C\otimes\cdots\otimes C}_{n\text{-fold}})\longrightarrow\Bbbk,
$$
for all $n\geq2$, which is denoted by $\bar{\varpi}(-,-)$ and defined by
$$
\bar{\varpi}\Big(\sum_{i_{1},\cdots,i_{n},\alpha} c'_{1, i_{1}, \alpha}
\otimes\cdots\otimes c'_{n, i_{n}, \alpha},\quad c_{1}\otimes\cdots\otimes c_{n}\Big)
=\sum_{i_{1},\cdots,i_{n},\alpha}\prod_{j=1}^{n}
\varpi(c'_{j, i_{j}, \alpha},\; c_{j}).
$$
One can check that $\bar{\varpi}(-,-)$ is {\it left nondegenerate}, i.e., if
$$
\bar{\varpi}\Big(\sum_{i_{1},\cdots,i_{n},\alpha} c'_{1, i_{1}, \alpha}
\otimes\cdots\otimes c'_{n, i_{n}, \alpha},\quad c_{1}\otimes\cdots\otimes c_{n}\Big)
=\bar{\varpi}\Big(\sum_{i_{1},\cdots,i_{n},\alpha} c''_{1, i_{1}, \alpha}
\otimes\cdots\otimes c''_{n, i_{n}, \alpha},\quad c_{1}\otimes\cdots\otimes c_{n}\Big)
$$
for any homogeneous elements $c_{1}, c_{2},\cdots, c_{n}\in C$, then
$$
\sum_{i_{1},\cdots,i_{n},\alpha} c'_{1, i_{1}, \alpha}
\otimes\cdots\otimes c'_{n, i_{n}, \alpha}
=\sum_{i_{1},\cdots,i_{n},\alpha} c''_{1, i_{1}, \alpha}
\otimes\cdots\otimes c''_{n, i_{n}, \alpha}.
$$

\begin{lem}\label{lem:qdual}
Let $(C=\oplus_{i\in\bz}C_{i},\; \varpi)$ be a quadratic $\bz$-graded commutative
associative algebra. Suppose $\bar{\Delta}: C\rightarrow C\,\hat{\otimes}\,C$
be the dual of the multiplication of $C$ under the left nondegenerate bilinear form
$\bar{\varpi}(-,-)$, i,e.,
\begin{align}
\bar{\varpi}(\bar{\Delta}(c_{1}),\; c_{2}\otimes c_{3})
=\varpi(c_{1},\; c_{2}c_{3}),                          \label{ccomu}
\end{align}
for any $c_{1}, c_{2}, c_{3}\in C$. Then $(C, \bar{\Delta})$ is a completed
cocommutative coalgebra.
\end{lem}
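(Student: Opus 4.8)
The plan is to mimic the finite-dimensional argument of Lemma \ref{lem:quad-dual}, transporting the two defining identities of a cocommutative coassociative coalgebra across the left nondegenerate pairing $\bar{\varpi}(-,-)$, while checking that all the relevant (possibly infinite) sums genuinely land in the completed tensor products and that $\bar{\varpi}$ behaves as expected with respect to $\hat{\tau}$ and $\hat{\otimes}$. Since everything is controlled by the grading, no analytic convergence is involved: the work is bookkeeping, and the algebraic content is the graded analogue of Lemma \ref{lem:quad-dual}.

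First I would record that $\bar{\Delta}$ is well defined. For homogeneous $c\in C_{k}$, the defining equation $\bar{\varpi}(\bar{\Delta}(c),\; c_{2}\otimes c_{3})=\varpi(c,\; c_{2}c_{3})$ together with the graded hypothesis (say $\varpi(C_{i}, C_{j})=0$ unless $i+j+m=0$) forces the $C_{i}\otimes C_{j}$-component of $\bar{\Delta}(c)$ to vanish unless $i+j=k-m$. As each $C_{i}$ is finite-dimensional, every such component is a genuine element of $C_{i}\otimes C_{k-m-i}$, and their sum over $i\in\bz$ is exactly an element of $C\,\hat{\otimes}\,C=\prod_{i,j}C_{i}\otimes C_{j}$. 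This is the only place where the completion is essential.

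Next, for cocommutativity, I would note that for all homogeneous $c_{1}, c_{2}\in C$ and any $X\in C\,\hat{\otimes}\,C$ one has $\bar{\varpi}(\hat{\tau}(X),\; c_{1}\otimes c_{2})=\bar{\varpi}(X,\; c_{2}\otimes c_{1})$, directly from the definitions. Taking $X=\bar{\Delta}(c)$ and using commutativity $c_{2}c_{1}=c_{1}c_{2}$ gives
\[
\bar{\varpi}(\hat{\tau}\bar{\Delta}(c),\; c_{1}\otimes c_{2})
=\varpi(c,\; c_{2}c_{1})=\varpi(c,\; c_{1}c_{2})
=\bar{\varpi}(\bar{\Delta}(c),\; c_{1}\otimes c_{2})
\]
for all $c_{1}, c_{2}$, whence $\hat{\tau}\bar{\Delta}=\bar{\Delta}$ by left nondegeneracy of $\bar{\varpi}$. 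For coassociativity I would pair both $(\bar{\Delta}\,\hat{\otimes}\,\id)\bar{\Delta}(c)$ and $(\id\,\hat{\otimes}\,\bar{\Delta})\bar{\Delta}(c)$ against an arbitrary homogeneous $c_{1}\otimes c_{2}\otimes c_{3}$. Writing $\bar{\Delta}(c)=\sum c_{(1)}\otimes c_{(2)}$ and applying the defining identity twice collapses the first to $\sum\varpi(c_{(1)},\; c_{1}c_{2})\varpi(c_{(2)},\; c_{3})=\varpi(c,\; (c_{1}c_{2})c_{3})$ and the second to $\varpi(c,\; c_{1}(c_{2}c_{3}))$; associativity of $C$ equates them, and left nondegeneracy again yields the desired equality of tensors.

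The main obstacle is purely the bookkeeping in the completed setting: I must verify that $(\bar{\Delta}\,\hat{\otimes}\,\id)\bar{\Delta}$ and $(\id\,\hat{\otimes}\,\bar{\Delta})\bar{\Delta}$ are well defined as elements of $C\,\hat{\otimes}\,C\,\hat{\otimes}\,C$, and that the threefold $\bar{\varpi}$ remains left nondegenerate. For the former, fixing the third degree $r$ determines the degree $k-m-r$ of the factor to which the inner $\bar{\Delta}$ is applied, and then fixing the first degree determines the second; so each fixed tridegree receives only a finite-dimensional contribution, exactly as the formal-sum conventions set up before the lemma require. Once these conventions are confirmed, the computation is the verbatim graded analogue of Lemma \ref{lem:quad-dual}, and $(C, \bar{\Delta})$ is a completed cocommutative coalgebra.
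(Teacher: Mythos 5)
Your proposal is correct and follows essentially the same route as the paper: pair the iterated coproducts against homogeneous triples, use the defining identity \eqref{ccomu} twice together with associativity and commutativity of $C$, and conclude by left nondegeneracy of $\bar{\varpi}$. The additional bookkeeping you supply on well-definedness of $\bar{\Delta}$ and of the iterated coproducts in $C\,\hat{\otimes}\,C\,\hat{\otimes}\,C$ is sound and merely makes explicit what the paper leaves implicit.
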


\begin{proof}
For any $c\in C$ and $c_{1}\otimes c_{2}\otimes c_{3}\in C_{i}\otimes C_{j}\otimes C_{k}$,
by the definition of $\bar{\Delta}$, we get
\begin{align*}
\bar{\varpi}\big((\bar{\Delta}\,\hat{\otimes}\,\id)(\bar{\Delta}(c)),\ \
c_{1}\otimes c_{2}\otimes c_{3}\big)
&=\varpi\big(c,\; (c_{1}c_{2})c_{3})\big)=\varpi\big(c,\; c_{1}(c_{2}c_{3})\big)\\
&=\bar{\varpi}\big((\id\,\hat{\otimes}\,\bar{\Delta})(\bar{\Delta}(c)),\ \
c_{1}\otimes c_{2}\otimes c_{3}\big).
\end{align*}
That is, $(\bar{\Delta}\,\hat{\otimes}\,\id)\bar{\Delta}
=(\id\,\hat{\otimes}\,\bar{\Delta})\bar{\Delta}$. Similarly, one can check that
$\bar{\Delta}=\hat{\tau}\bar{\Delta}$. Thus $(C, \bar{\Delta})$ is a completed
cocommutative coalgebra.
\end{proof}

\begin{ex}\label{ex:lau-quad-co}
Consider the quadratic $\bz$-graded commutative Laurent polynomial algebra
$(\Bbbk[\kt, \kt^{-1}],\; \varpi)$ which is given in Example {\rm\ref{ex:lau-quad}}, by
Lemma {\rm\ref{lem:qdual}}, we get a completed cocommutative coalgebra
$(\Bbbk[\kt, \kt^{-1}],\; \bar{\Delta})$, where $\bar{\Delta}$ is given by:
$\bar{\Delta}(\kt^{k})=\sum_{i\in\bz}\kt^{i}\otimes\kt^{k-i}$ for all $k\in\bz$.
This completed cocommutative coalgebra $(\Bbbk[\kt, \kt^{-1}],\; \bar{\Delta})$
is just the Laurent polynomial coalgebra given in Example {\rm\ref{ex:lau-coa}}.
\end{ex}

We now give the notion and results on completed anti-Leibniz bialgebras.

\begin{defi}\label{def:com-dipoi}
A {\rm completed anti-Leibniz bialgebra} is a triple $(A=\oplus_{i\in\bz}A_{i},
\cdot, \hat{\Delta})$ such that $(A=\oplus_{i\in\bz}A_{i}, \cdot)$ is a $\bz$-graded
anti-Leibniz algebra, $(A=\oplus_{i\in\bz}A_{i}, \hat{\Delta})$ is a completed
anti-Leibniz coalgebra, and the following compatibility condition holds:
\begin{align*}
&\qquad\quad\qquad\quad\qquad\quad\qquad\quad
\big(\fr_{A}(a_{1})\,\hat{\otimes}\,\id\big)\hat{\Delta}(a_{2})
-\hat{\tau}\Big(\big(\fr_{A}(a_{2})\,\hat{\otimes}\,\id\big)\hat{\Delta}(a_{1})\Big)=0. \\
&\hat{\Delta}(a_{1}a_{2})+\Big(\big(\id\,\hat{\otimes}\,\fr_{A}(a_{2})
+(\fl_{A}-\fr_{A})(a_{2})\,\hat{\otimes}\,\id\big)
(\id\,\hat{\otimes}\,\id-\hat{\tau})\Big)\hat{\Delta}(a_{1})
+\big(\id\,\hat{\otimes}\,\fl_{A}(a_{1})+\fl_{A}(a_{1})\,\hat{\otimes}\,\id\big)
\hat{\Delta}(a_{2})=0,
\end{align*}
for any $a_{1}, a_{2}\in A$.
\end{defi}

In the definition above, if $A=A_{0}$, the completed anti-Leibniz bialgebra
is just the anti-Leibniz bialgebra given in Definition \ref{def:bialg}.

\begin{thm}\label{thm:aff-antLei}
Let $(A, \cdot, \Delta)$ be a finite-dimensional anti-Leibniz bialgebra and
$(C=\oplus_{i\in\bz}C_{i}$,\; $\varpi)$ be a quadratic $\bz$-graded commutative
associative algebra. Suppose that $(\la, \diamond)$ is the $\bz$-graded
anti-Leibniz algebra from $(A, \cdot)$ by $(C=\oplus_{i\in\bz}C_{i}, \cdot)$ in
Lemma {\rm \ref{lem:aff-anL}}, $\bar{\Delta}: C\rightarrow C\,\hat{\otimes}\,C$
is the linear map defined by Eq. \eqref{ccomu} and $\hat{\Delta}: \la\rightarrow
\la\,\hat{\otimes}\,\la$ is the linear map defined by Eq. \eqref{coalg}.
Then $(\la, \diamond, \hat{\Delta})$ is a completed anti-Leibniz bialgebra,
which is called an {\rm affine anti-Leibniz bialgebra} from $(A, \cdot, \Delta)$
by $(C=\oplus_{i\in\bz}C_{i},\; \varpi)$.

Furthermore, if $A$ is finite-dimensional and $(C=\oplus_{i\in\bz}C_{i},\; \varpi)$
is the quadratic $\bz$-graded Laurent polynomial algebra $\big(\Bbbk[\kt, \kt^{-1}],\;
\varpi\big)$, then $(\la, \diamond, \hat{\Delta})$ is a completed anti-Leibniz
bialgebra constructed as above if and only if $(A, \cdot, \Delta)$ is an
anti-Leibniz bialgebra.
\end{thm}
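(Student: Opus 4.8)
The plan is to reduce the entire statement to the bialgebra axioms \eqref{bialg1}--\eqref{bialg2} for $(A, \cdot, \Delta)$, since the $\bz$-graded anti-Leibniz algebra structure on $(\la, \diamond)$ and the completed anti-Leibniz coalgebra structure on $(\la, \hat\Delta)$ have already been supplied by Lemmas \ref{lem:aff-anL} and \ref{lem:canL-co} (the latter via Lemma \ref{lem:qdual}, which guarantees that $(C, \bar\Delta)$ is a completed cocommutative coalgebra). Thus only the two compatibility conditions of Definition \ref{def:com-dipoi} remain, and the whole argument runs parallel to the proof of Theorem \ref{thm:Lei-antLei}.

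First I would record the key structural identity for $\bar\Delta$. Writing $\bar\Delta(c) = \sum_{(c)} c_{(1)}\otimes c_{(2)}$, the defining relation \eqref{ccomu} together with the invariance, symmetry and commutativity of $\varpi$ and the cocommutativity of $\bar\Delta$ forces $\bar\Delta(cc') = \sum_{(c)} c_{(1)} c' \otimes c_{(2)} = \sum_{(c)} c_{(1)}\otimes c_{(2)} c'$, and likewise $\bar\Delta(cc') = \bar\Delta(c'c)$ with the factor $c$ attached to either tensor leg of $\bar\Delta(c')$. This is the affine analogue of the identity $\Delta_{\varpi}(bb') = -\sum_{(b)} b_{(1)} b'\otimes b_{(2)}$ isolated at the start of the proof of Theorem \ref{thm:Lei-antLei}, except that commutativity replaces anti-commutativity and removes every sign flip.

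With this in hand I would compute, for $a_1 = a\otimes c$ and $a_2 = a'\otimes c'$, how each of $\fl_{\la}$, $\fr_{\la}$ and $\hat\tau$ acts on a bullet product $\Delta(a)\bullet\bar\Delta(c)$. The point is that $\hat\tau(P\bullet Q) = (\tau P)\bullet(\tau Q)$, that $\tau\bar\Delta = \bar\Delta$ by cocommutativity, and that every occurrence of multiplication by $c$ or $c'$ on a single leg of $\bar\Delta(c')$ or $\bar\Delta(c)$ collapses, via the identity above, to the single element $\bar\Delta(cc')$. Consequently the $\fr_{\la}$-compatibility condition (the first displayed identity in Definition \ref{def:com-dipoi}) factors as $\big((\fr_A(a)\otimes\id)\Delta(a') - \tau((\fr_A(a')\otimes\id)\Delta(a))\big)\bullet\bar\Delta(cc')$, which vanishes by \eqref{bialg2} for $A$; and the remaining condition factors as $\big[\,\Delta(aa') + (\id\otimes\fr_A(a') + (\fl_A-\fr_A)(a')\otimes\id)(\id\otimes\id-\tau)\Delta(a) + (\id\otimes\fl_A(a)+\fl_A(a)\otimes\id)\Delta(a')\,\big]\bullet\bar\Delta(cc')$, whose $A$-bracket is precisely \eqref{bialg1} and hence zero. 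The main obstacle is pure bookkeeping: keeping straight which tensor leg each of $\fl_{\la}(a')$, $\fr_{\la}(a')$, $\fl_{\la}(a)$ acts on after $(\id\,\hat{\otimes}\,\id - \hat\tau)$ is applied, and confirming that the $C$-side of every resulting term really is $\bar\Delta(cc')$ and not some twisted variant — this is where cocommutativity of $\bar\Delta$ and commutativity of $C$ are invoked repeatedly.

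For the converse I would specialize $(C, \varpi)$ to the quadratic $\bz$-graded Laurent polynomial algebra, so that $\bar\Delta(\kt^k) = \sum_{i\in\bz}\kt^i\otimes\kt^{k-i}$. Taking $a_1 = a\otimes\kt^0$ and $a_2 = a'\otimes\kt^0$, each compatibility condition for $(\la, \diamond, \hat\Delta)$ becomes $(\text{an } A\text{-expression})\bullet\sum_{i}\kt^i\otimes\kt^{-i} = 0$; extracting the coefficient of the graded component $\kt^0\otimes\kt^0$ (the $i=0$ summand) isolates that $A$-expression and returns \eqref{bialg1} and \eqref{bialg2}. Combined with the converse halves of Lemmas \ref{lem:aff-anL} and \ref{lem:canL-co}, which already recover the anti-Leibniz algebra and coalgebra axioms for $A$ by the same coefficient-extraction device, this shows $(A, \cdot, \Delta)$ is an anti-Leibniz bialgebra, completing the equivalence.
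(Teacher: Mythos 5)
Your proposal is correct and follows essentially the same route as the paper: establish the identity $\bar\Delta(cc')=\sum_{(c)}c_{(1)}c'\otimes c_{(2)}$ (and its cocommutative/commutative variants) from Eq.~\eqref{ccomu}, factor each compatibility condition of Definition~\ref{def:com-dipoi} as an $A$-expression bulleted with $\bar\Delta(cc')$, and invoke Eqs.~\eqref{bialg1}--\eqref{bialg2}, with the converse obtained by component extraction over $\Bbbk[\kt,\kt^{-1}]$. The only cosmetic difference is that the paper runs the converse with general powers $\kt^{k},\kt^{l}$ and factors through $\bar\Delta(\kt^{k+l})$ rather than specializing to $\kt^{0}$, which amounts to the same coefficient-isolation device.
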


\begin{proof}
First, since the bilinear form $\varpi(-, -)$ on $C$ is invariant and $(C, \cdot)$ is
an associative algebra, for any $c, c', x, y\in C$,
we have
\begin{align*}
\bar{\varpi}\Big(\sum_{i,j,\alpha}c'_{1,i,\alpha}c\otimes c'_{2,j,\alpha},\; x\otimes y\Big)
&=\bar{\varpi}\Big(\sum_{i,j,\alpha}c'_{1,i,\alpha}\otimes c'_{2,j,\alpha},\;
cx\otimes y\Big)\\[-2mm]
&=\varpi(c',\, cxy)=\varpi(c'c,\; xy)=\bar{\varpi}(\bar{\Delta}(c'c),\; x\otimes y).
\end{align*}
That is, $\bar{\Delta}(c'c)=\sum_{i,j,\alpha}c'_{1,i,\alpha}c\otimes c'_{2,j,\alpha}$.
Note that $(C, \cdot)$ is commutative and $\bar{\Delta}$ is cocommutative, we have
$\bar{\Delta}(c'c)=\bar{\Delta}(cc')=\sum_{i,j,\alpha}c'_{1,i,\alpha}\otimes cc'_{2,j,\alpha}
=\sum_{i,j,\alpha}c_{1,i,\alpha}c'\otimes c_{2,j,\alpha}
=\sum_{i,j,\alpha}cc'_{1,i,\alpha}\otimes c'_{2,j,\alpha}
=\sum_{i,j,\alpha}c'_{1,i,\alpha}\otimes c'_{2,j,\alpha}c
=\sum_{i,j,\alpha}c_{2,j,\alpha}c'\otimes c_{1,i,\alpha}
=\sum_{i,j,\alpha}c_{2,j,\alpha}\otimes c'c_{1,i,\alpha}$.
Thus, for any $a\otimes c$, $a'\otimes c'\in\la$,
\begin{align*}
&\;\big(\fr_{\la}(a\otimes c)\,\hat{\otimes}\,\id\big)\hat{\Delta}(a'\otimes c')
-\hat{\tau}\big(\big(\fr_{\la}(a'\otimes c')\,\hat{\otimes}\,\id\big)\hat{\Delta}
(a\otimes c)\big)\\
=&\;\sum_{(a')}\sum_{i,j,\alpha}(a'_{(1)}a\otimes a'_{(2)})\bullet(c'_{1,i,\alpha}c
\otimes c'_{2,j,\alpha})-\sum_{(a)}\sum_{i,j,\alpha}(a_{(2)}\otimes a_{(1)}a')\bullet
(c_{2,j,\alpha}\otimes c_{1,i,\alpha}c')\\
=&\;\Big((\fr_{A}(a)\otimes\id)\Delta(a')
-\tau\big((\fr_{A}(a')\otimes\id)\Delta(a)\big)\Big)\bullet\bar{\Delta}(cc')\\
=&\;0,
\end{align*}
and
\begin{align*}
&\;\hat{\Delta}((a\otimes c)\diamond(a'\otimes c'))+\Big(\big(\id\,\hat{\otimes}\,
\fr_{\la}(a'\otimes c')+(\fl_{\la}-\fr_{\la})(a'\otimes c')\,\hat{\otimes}\,\id\big)
(\id\,\hat{\otimes}\,\id-\hat{\tau})\Big)\hat{\Delta}(a\otimes c)\\[-2mm]
&\qquad+\Big(\id\,\hat{\otimes}\,\fl_{\la}(a\otimes c)
+\fl_{\la}(a\otimes c)\,\hat{\otimes}\,\id\Big)\hat{\Delta}(a'\otimes c')\\
=&\;\Delta(aa')\bullet\bar{\Delta}(cc')
+\sum_{(a)}\sum_{i,j,\alpha}(a_{(1)}\otimes a_{(2)}a')\bullet(c_{1,i,\alpha}
\otimes c_{2,j,\alpha}c')
+\sum_{(a)}\sum_{i,j,\alpha}(a'a_{(1)}\otimes a_{(2)})\bullet(c'c_{1,i,\alpha}
\otimes c_{2,j,\alpha})\\[-2mm]
&\qquad-\sum_{(a)}\sum_{i,j,\alpha}(a_{(1)}a'\otimes a_{(2)})\bullet(c_{1,i,\alpha}c'
\otimes c_{2,j,\alpha})
-\sum_{(a)}\sum_{i,j,\alpha}(a_{(2)}\otimes a_{(1)}a')\bullet(c_{2,i,\alpha}
\otimes c_{1,j,\alpha}c')\\[-2mm]
&\qquad-\sum_{(a)}\sum_{i,j,\alpha}(a'a_{(2)}\otimes a_{(1)})\bullet(c'c_{2,i,\alpha}
\otimes c_{1,j,\alpha})
+\sum_{(a)}\sum_{i,j,\alpha}(a_{(2)}a'\otimes a_{(1)})\bullet(c_{2,i,\alpha}c'
\otimes c_{1,j,\alpha})\\[-2mm]
&\qquad+\sum_{(a')}\sum_{i,j,\alpha}(a'_{(1)}\otimes aa'_{(2)})\bullet(c'_{1,i,\alpha}
\otimes cc'_{2,j,\alpha})
+\sum_{(a')}\sum_{i,j,\alpha}(aa'_{(1)}\otimes a'_{(2)})\bullet(cc'_{1,i,\alpha}
\otimes c'_{2,j,\alpha})\\
=&\;\Big(\Delta(aa')+\big(\big(\id\otimes\,\fr_{A}(a')-\fr_{A}(a')\otimes\id
+\fl_{A}(a')\otimes\id\big)(\id\otimes\id-\tau)\big)\Delta(a)\Big)
\bullet\bar{\Delta}(cc')\\[-2mm]
&\qquad+\Big(\big(\id\otimes\,\fl_{A}(a)+\fl_{A}(a)\otimes\id\big)
\Delta(a')\Big)\bullet\bar{\Delta}(cc')\\
=&\;0,
\end{align*}
since $(A, \cdot, \Delta)$ is an anti-Leibniz bialgebra.
By the definition, we get that $(\la, \diamond, \hat{\Delta})$ is a completed
anti-Leibniz bialgebra.

Second, if $(\la, \diamond, \hat{\Delta})$ is a completed anti-Leibniz
bialgebra, where $(C=\oplus_{i\in\bz}C_{i},\; \varpi)$ is the quadratic
$\bz$-graded Laurent polynomial algebra $(\Bbbk[\kt, \kt^{-1}]$,\; $\varpi)$,
denote $a\otimes\kt^{k}$ by $a\kt^{k}$ for $a\in A$, then for any $a, a'\in A$ and
$k, l\in\bz$, we have
\begin{align*}
0=&\;\big(\fr_{\la}(a\kt^{k})\,\hat{\otimes}\,\id\big)\hat{\Delta}(a'\kt^{l})
-\hat{\tau}\big(\big(\fr_{\la}(a'\kt^{l})\,\hat{\otimes}\,\id\big)\hat{\Delta}(a\kt^{k})\big)\\
=&\;\Big((\fr_{A}(a)\otimes\id)\Delta(a')-\tau\big((\fr_{A}(a')\otimes\id)\Delta
(a)\big)\Big)\bullet\bar{\Delta}(\kt^{k+l}).
\end{align*}
Thus, we get Eq. \eqref{bialg2} holds. Similarly, we can get Eq. \eqref{bialg1} also holds.
That is, $(A, \cdot, \Delta)$ is an anti-Leibniz bialgebra.
\end{proof}

\begin{ex}\label{ex:comp-bia}
$(I)$ Let $(\Lambda^{2}_{1}, \Delta)$ be the $2$-dimension anti-Leibniz bialgebra given
in Example {\rm \ref{ex:bialg}}. Consider the $\bz$-graded vector space
$\Lambda^{2}_{1}\otimes\Bbbk[\kt, \kt^{-1}]$, we denote the homogeneous element in
$\Lambda^{2}_{1}\otimes\Bbbk[\kt, \kt^{-1}]$ by $a\kt^{k}$, where $a\in\Lambda^{2}$.
Then the multiplication $\diamond$ on $\Lambda^{2}_{1}\otimes\Bbbk[\kt, \kt^{-1}]$
is given by $e_{1}\kt^{i}\diamond e_{1}\kt^{j}=e_{2}\kt^{i+j}$, and the completed
comultiplication $\hat{\Delta}$ on $\Lambda^{2}_{1}\otimes\Bbbk[\kt, \kt^{-1}]$ is
given by $\hat{\Delta}(e_{1}\kt^{k})=\sum_{i\in\bz}e_{2}\kt^{i}\otimes e_{2}\kt^{k-i}$,
for any $i, j, k\in\bz$. This anti-Leibniz bialgebra is also a Leibniz bialgebra,
is also an anti-associative bialgebra (see \cite{HC}).

$(II)$ Let $(A=\Bbbk\{e_{1}, e_{2}, e_{3}\}, \cdot, \Delta)$ be an anti-Leibniz bialgebra
with nonzero multiplication and comultiplication: $e_{2}e_{2}=e_{1}=e_{3}e_{3}$ and
$\Delta(e_{2})=e_{1}\otimes e_{1}$. Then we get an infinite-dimensional anti-Leibniz
bialgebraic structure on $A\otimes\Bbbk[\kt, \kt^{-1}]$ under the multiplication
$e_{2}\kt^{i}\diamond e_{2}\kt^{j}=e_{1}\kt^{i+j}=e_{3}\kt^{i}\ast e_{3}\kt^{j}$ and
completed comultiplication $\hat{\Delta}(e_{2}\kt^{k})=\sum_{i\in\bz}e_{1}\kt^{i}
\otimes e_{1}\kt^{k-i}$, for any $i, j, k\in\bz$.
\end{ex}

\begin{rmk}\label{rmk:aff}
In this section, we using the $\bz$-graded commutative associative algebra to construct
infinite-dimensional anti-Leibniz bialgebra. In fact, we can using the $\bz$-graded perm
algebra to construct infinite-dimensional anti-Leibniz bialgebra. Recently, Lin, Zhou
and Bai construct a $\bz$-graded perm algebra in \cite{LZB} as follows. Let
$P=\{f_{1}\partial_{1}+f_{2}\partial_{2}\mid f_{1}, f_{2}\in\Bbbk[x_{1}^{\pm},
x_{2}^{\pm}]\}$ and define a binary operation $\cdot: P\otimes P\rightarrow P$ by
$$
(x_{1}^{i_{1}}x_{2}^{i_{2}}\partial_{s})\cdot(x_{1}^{j_{1}}x_{2}^{j_{2}}\partial_{t})
=\delta_{s,1}x_{1}^{i_{1}+j_{1}+1}x_{2}^{i_{2}+j_{2}}\partial_{t}
+\delta_{s,2}x_{1}^{i_{1}+j_{1}}x_{2}^{i_{2}+j_{2}+1}\partial_{t},
$$
for any $i_{1}, i_{2}, j_{1}, j_{2}\in\bz$ and $s, t\in\{1, 2\}$.
Then $(P, \cdot)$ is a $\bz$-graded perm algebra with the linear decomposition
$P=\oplus_{i\in\bz}P_{i}$, where
$$
P_{i}=\Big\{\sum_{k=1}^{2}f_{k}\partial_{k}\mid f_{k}\text{ is a homogeneous polynomial with }
\deg(f_{k})=i-1,\, k=1, 2\Big\},
$$
for all $i\in\bz$. If we replace the Laurent polynomial algebra with this $\bz$-graded
perm algebra, the conclusions of this chapter can also be obtained.
\end{rmk}

\bigskip
\noindent
{\bf Acknowledgements. } This work was financially supported by National
Natural Science Foundation of China (No. 11801141).

%


\begin{thebibliography}{abc}

\bibitem{Agu} M. Aguiar,
   \emph{On the associative analog of Lie bialgebras},
         J. Algebra  \textbf{244} (2001), 492--532.

\bibitem{AOR} S. Ayupov, B. Omirov, I. Rakhimov,
   \emph{Leibniz algebras: structure and classification},
    CRC Press, Taylor \& Francis Group, 2020.

\bibitem{Bai} C. Bai,
   \emph{Left-symmetric bialgebras and an analogue of the classical Yang-Baxter equation},
        Commun. Contemp. Math. {\bf 10} (2008), 221--260.

\bibitem{Bai1} C. Bai,
   \emph{Double constructions of Frobenius algebras, Connes cocycles and their duality},
        J. Noncommut. Geom. \textbf{4} (2010), 475--530.

\bibitem{BGN} C. Bai, L. Guo, X. Ni,
   \emph{{$\mathcal{O}$}-operators on associative algebras and associative Yang-Baxter
        equations,}
        Pacific J. Math. \textbf{256} (2012), 257--289.

\bibitem{BLST} C. Bai, G. Liu, Y. Sheng, R. Tang,
   \emph{Quasi-triangular, factorizable Leibniz bialgebras and relative Rota-Baxter operators},
        arXiv: 2410.03089.

\bibitem{BB} A. Baklouti, S. Benayadi,
   \emph{Symplectic Jacobi-Jordan algebras},
        Linear and Multilinear Algebra {\bf 69} (2019), 1--22.

\bibitem{BBMM} A. Baklouti, S. Benayadi, A. Makhlouf and S. Mansour,
   \emph{Cohomology and deformations of Jacobi-Jordan algebras},
    arXiv: 2109.12364.

\bibitem{BCM} S. Braiek, T. Chtioui, S. Mabrouk,
   \emph{Anti-Leibniz algebras: A non-commutative version of mock-Lie algebras},
        J. Geom. Phys. {\bf 209} (2025), 105385, 15PP.

\bibitem{Ben} K. Benali,
   \emph{Bialgebras via Manin triples of compatible mock-Lie algebras},
        Comm. Algebra {\it 52} (2024), 1--14.

\bibitem{BCHM} K. Benali, T. Chtioui, A. Hajjaji, S. Mabrouk,
   \emph{Bialgebras, the Yang-Baxter equation and Manin triples for mock-Lie algebras},
        Acta et Commentationes Universitatis Tartuensis de Mathematica
        {\bf 27} (2023), 211--233.

\bibitem{BF} D. Burde, A. Fialowski,
   \emph{Jacobi-Jordan algebras},
        Linear Algebra Appl. {\bf 459} (2014), 586--594.

\bibitem{Cuv} C. Cuvier,
   \emph{Alg\`{e}bres de Leibnitz: d\'{e}finitions, propri\'{e}t\'{e}s},
        Ann. Sci. Ecole Norm. Sup. {\bf 27 (4)} (1994), 1--45.

\bibitem{Dri} V.G. Drinfeld,
   \emph{Hamiltonian structures of Lie groups, Lie bialgebras and the geometric meaning of
        the classical Yang-Baxter equations},
        Soviet Math. Dokl. {\bf 27} (1983), 68--71.

\bibitem{HC} B. Hou, Z. Cui,
   \emph{A bialgebra theory of anti-associative algebras},
        in preparation.

\bibitem{HC1} B. Hou, Z. Cui,
   \emph{Factorizable mock-Lie bialgebras and Factorizable Rota-Baxter mock-Lie bialgebras},
        in preparation.

\bibitem{LS} H. Lang, Y. Sheng,
   \emph{Factorizable Lie bialgebras, quadratic Rota-Baxter Lie algebras and Rota-Baxter
        Lie bialgebras},
        Comm. Math. Phy. {\bf 397(2)} (2023), 763--791.

\bibitem{Lar} I. Laraiedh,
   \emph{Rota-Baxter mock-Lie bialgebras and related structures},
	    Int. J. Geom. Methods Mod. Phys. {\bf 21} (2024), 79--96.

\bibitem{Lar1} I. Laraiedh,
   \emph{Differential mock-Lie bialgebras and admissible mock-Lie Yang-Baxter equations},
        J. Geom. Phys. {\bf 198} (2024), 105135, 16PP.

\bibitem{LMW} H. Li, T. Ma, S. Wang,
   \emph{When Leibniz algebras are Nijenhuis?},
        arXiv: 2310.14267.

\bibitem{LZB} Y. Lin, P. Zhou, C. Bai,
   \emph{Infinite-dimensional Lie bialgebras via affinization of perm bialgebras and
         pre-Lie bialgebras},
         J. Algebra \textbf{663} (2025), 210--258.

\bibitem{LB} G. Liu, C. Bai,
   \emph{A bialgebra theory for transposed Poisson algebras via anti-pre-Lie bialgebras
         and anti-pre-Lie-Poisson bialgebras},
         Commun. Contemp. Math. {\bf 26} (2024), 2350050, 49pp.

\bibitem{Lod} J.L. Loday,
   \emph{Une version non commutative des alg\`{e}bres de Lie: les alg\`{e}bres de Leibniz},
         Enseign. Math. {\bf 39} (1993), 269--293.

\bibitem{NB} X. Ni, C. Bai,
   \emph{Poisson bialgebras},
        J. Math. Phys. {\bf 54} (2013), 023515, 14pp.

\bibitem{RS} N. Reshetikhin, M.A. Semenov-Tian-Shansky,
   \emph{Quantum $R$-matrices and factorization problems},
        J. Geom. Phys. {\bf 5} (1988), 533--550.

\bibitem{SW} Y. Sheng, Y. Wang,
   \emph{Quasi-triangular and factorizable antisymmetric infinitesimal bialgebras},
        J. Algebra {\bf 628} (2023), 415--433.

\bibitem{Sem} M.A. Semenov-Tian-Shansky,
   \emph{Integrable systems and factorization problems},
        Oper. Theory, Adv. Appl. {\bf 141} (2003), 155--218.

\bibitem{TS} R. Tang, Y. Sheng,
   \emph{Leibniz bialgebras, relative Rota-Baxter operators, and the classical Leibniz
         Yang-Baxter equation},
         J. Noncommut. Geom. {\bf 16} (2022), 1179--1211.

\bibitem{Zhe} V.N. Zhelyabin,
   \emph{Jordan bialgebras and their relation to Lie bialgebras},
        Algebra Logic, {\bf 36} (1997), 1--15.

\bibitem{Zus} P. Zusmanovich,
   \emph{Special and exceptional mock-Lie algebras},
        Linear Algebra Appl. {\bf 518} (2017), 79--96.

\end{thebibliography}
 \end{document}